\documentclass[11pt,a4paper]{article}
\usepackage[utf8]{inputenc}
\usepackage[T1]{fontenc}
\usepackage{lmodern}
\usepackage{amsmath,amssymb,amsthm}
\usepackage{mathtools}
\usepackage{hyperref}
\usepackage{geometry}
\usepackage{enumerate}
\usepackage{bm}
\usepackage{graphicx}
\usepackage{cite}
\usepackage{xcolor}
\newcommand{\rev}[1]{#1}
\newcommand{\revblock}[1]{{#1}}
\newif\ifmarkchanges
\markchangesfalse
\newcommand{\edit}[1]{{\ifmarkchanges\color{blue}\fi#1}}
\newif\ifcompactchanges
\compactchangesfalse
\newif\ifbibchanges
\bibchangesfalse
\newcommand{\bibedit}[1]{{\ifbibchanges\color{blue}\fi#1}}
\newcommand{\compactedit}[1]{{\ifcompactchanges\color{blue}\fi#1}}
\newif\ifnotationchanges
\notationchangesfalse
\newcommand{\notationedit}[1]{{\ifnotationchanges\color{blue}\else\color{black}\fi#1}}
\newif\ifoctoberchanges
\octoberchangesfalse
\newcommand{\octoberedit}[1]{{\ifoctoberchanges\color{blue}\fi#1}}
\newif\ifreviewchanges
\reviewchangesfalse
\newcommand{\reviewedit}[1]{{\ifreviewchanges\color{blue}\fi#1}}
\newtheorem{theorem}{Theorem}[section]
\newtheorem{proposition}[theorem]{Proposition}
\newtheorem{lemma}[theorem]{Lemma}
\newtheorem{corollary}[theorem]{Corollary}
\theoremstyle{definition}
\newtheorem{definition}[theorem]{Definition}
\newtheorem{remark}[theorem]{Remark}

\newcommand{\T}{\mathbb{T}}
\newcommand{\Z}{\mathbb{Z}}
\newcommand{\R}{\mathbb{R}}
\newcommand{\C}{\mathbb{C}}
\newcommand{\N}{\mathbb{N}}
\newcommand{\Sph}{\mathbb{S}}
\newcommand{\E}{\mathcal{E}}
\newcommand{\cH}{\mathcal{H}}
\newcommand{\cO}{\mathcal{O}}
\newcommand{\cG}{\mathcal{G}}
\newcommand{\cU}{\mathcal{U}}
\newcommand{\cB}{\mathcal{B}}
\newcommand{\cR}{\mathcal{R}}
\newcommand{\cW}{\mathcal{W}}
\newcommand{\cD}{\mathcal{D}}
\newcommand{\cS}{\mathcal{S}}

\newcommand{\cQ}{\mathcal{Q}}
\newcommand{\frH}{\mathfrak{H}}
\newcommand{\frh}{\mathfrak{h}}

\newcommand{\spec}{\operatorname{spec}}
\newcommand{\sess}{\operatorname{spec}_{\mathrm{ess}}}

\newcommand{\Ran}{\operatorname{Ran}}
\hypersetup{
    colorlinks=true,
    linkcolor=black,
    citecolor=black,
    urlcolor=black
}
\title{Efimov effect in an internal spectral gap of a three-particle lattice Hamiltonian}
\author{%
  Saidakhmat N.~Lakaev\quad\compactedit{\small ORCID: \href{https://orcid.org/0000-0003-4951-9340}{0000-0003-4951-9340}}\\
  \small Department of Mathematics, Samarkand State University, Samarkand, Uzbekistan\\
  \small \texttt{slakaev2019@gmail.com}
}
\date{}
\begin{document}
\maketitle
\begin{abstract}
\reviewedit{We prove the existence of an Efimov sequence in an internal gap
of the essential spectrum of a three-particle lattice Hamiltonian.
The system consists of two identical spinless fermions and one
distinguishable particle on \(\Z^3\), with repulsive pair interactions.
For each fixed fermion-to-third-particle mass ratio
\(\gamma>\gamma_*\approx13.6069657\) and total quasi-momentum near
\(\pi\mathbf1\), the mixed zero-range coupling is tuned to a
simultaneous upper-threshold resonance.
Sufficiently strong nearest-neighbour repulsion between the fermions
separates the upper fermion-pair two-cluster spectrum from the free
three-particle band. Infinitely many discrete eigenvalues accumulate
at the lower endpoint of the resulting gap, which coincides with
the free upper edge. We establish logarithmic eigenvalue-counting
asymptotics with the same positive coefficient as at zero
fermion--fermion coupling. This coefficient also governs accumulation
above the essential spectrum throughout the closed weak-coupling
interval, including its critical endpoint. We classify the channel
spectrum near the free upper edge for all nonnegative fermion
couplings.}
\end{abstract}

\medskip
\noindent\textbf{Keywords:} Efimov effect, lattice Hamiltonian,
essential spectrum, threshold resonance, Birman--Schwinger operator.

\medskip
\noindent\textbf{MSC 2020:} Primary 81Q10; Secondary 81Q35, 81V70, 47A10.

\section{Introduction}
\label{sec:intro}
We prove Efimov accumulation at the lower endpoint of an internal
spectral gap for two identical spinless fermions and a distinguishable
particle on a lattice. With critically tuned mixed interactions,
sufficiently strong nearest-neighbour fermion repulsion separates
the upper two-cluster spectrum from the free band while preserving
the leading counting coefficient of the system without that interaction.

\subsection{\edit{Main contributions}}
\label{subsec:intro-novelty}
\edit{The main contributions are the following:}
\begin{itemize}
\item[(N1)] the identification of an internal gap
\((E_{+,\gamma}(K),\tau_{12}^+(K,\lambda))\) in the essential
spectrum, whose lower endpoint carries Efimov accumulation;
\item[(N2)] \reviewedit{the proof that the leading logarithmic counting
coefficient \(\cU_+(K)\) of the compact part of the channel system
is independent of the fermion--fermion coupling for every
\(\lambda\ge0\), and coincides with the coefficient of the discrete
eigenvalue-counting asymptotics in the weak and separated regimes;}
\item[(N3)] a complete classification of the fermion channel
near the free upper edge for every nonnegative \(\lambda\),
\edit{with exact variational characterizations of the onset
and separation thresholds;}
\item[(N4)] \edit{the transfer of these accumulation results to
the attractive counterpart by spectral reflection
(Corollary~\ref{cor:lower-reflection}).}
\end{itemize}

\subsection{Relation to previous results}
\label{subsec:intro-previous}
The Efimov effect, predicted in \cite{Efimov1970,Efimov1971},
is the accumulation of infinitely many three-particle bound
states at a spectral threshold under suitable two-particle
resonance conditions. \octoberedit{Its rigorous continuum theory was
developed for broad classes of short-range pair potentials in
\cite{Yafaev1974,OvchinnikovSigal1979,Tamura1991}}, and logarithmic
eigenvalue-counting asymptotics were established in
\cite{Sobolev1993,Tamura1993}. \octoberedit{Albeverio, H\o egh-Krohn, and Wu
\cite{AlbeverioHoeghKrohnWu} announced, without proof,
exponential asymptotics for the eigenvalues corresponding
to spherically symmetric three-body bound states in
continuum models with pairwise point interactions.} For three-particle Schr\"odinger operators
on lattices, the essential spectrum was shown to be a finite
union of closed bounded intervals in \cite{ALM2007}. Threshold
accumulation and spectral asymptotics were studied in
\cite{Lakaev1993,ALM1998,ALM2004,ALD2007,DMS2011}, while a
general treatment of two-particle threshold effects was given in
\cite{ALMM2006}. Connections between Efimov physics and three-body
forces in atomic and nuclear systems are reviewed in
\cite{HammerNoggaSchwenk2013}.

\octoberedit{On the cubic lattice, on-site pair interactions, of rank one in
the two-particle fibres, can produce Efimov accumulation
\cite{Lakaev1993,ALM2004,ALD2007}. Higher-rank interactions allow
several bound-state branches and richer channel spectra,
including the internal-gap configuration studied here.}

\edit{For three-particle lattice Schr\"odinger operators,
Albeverio, Lakaev, and Muminov \cite{ALM2004} established a
Birman--Schwinger counting identity below the essential spectrum
(Lemmas~6.3 and~6.5) and a spectral equivalence with an auxiliary
integral operator for energies outside the union of channel
spectra (Theorem~6.6). For a lattice model of two identical
fermions and a third particle, a Birman--Schwinger counting
identity below the essential spectrum is formulated in
\cite[Theorem~7.2]{ALD2007}. The relative counting identity used here
inside the gap is formulated in
Proposition~\ref{prop:gap-BS-counting}, within the framework
of \cite{Pushnitski2009}.}

For a system of two identical fermions and a third particle,
Lakaev and Shermatov \cite{LShermatov1999} established the lattice
Efimov effect when the fermion-to-third-particle mass ratio
satisfies \(\gamma>\gamma_*\). Their result holds for total
quasi-momenta near \(3p_0\), where \(p_0\) is the unique
nondegenerate minimum of the one-particle dispersion, at a
quasi-momentum-dependent critical coupling satisfying
condition~\revblock{(5) in \cite{LShermatov1999}}.
\edit{For an attractive zero-range lattice model, Albeverio,
Dell'Antonio, and Lakaev \cite{ALD2007} studied the dependence
of the discrete spectrum on the total quasi-momentum, including
threshold counting asymptotics, with the coupling fixed at its
zero-momentum critical value. This fixed-coupling setting differs
from the fibrewise critical tuning used here. The mass-ratio
condition for the present antisymmetric model is verified
directly by the angular analysis in
Lemma~\ref{lem:upper-angular-comparison} and
Proposition~\ref{prop:upper-supercriticality}.}

In the continuum, Basti and Teta \cite{BastiTeta2017} proved
logarithmic eigenvalue-counting asymptotics for \(m<m_*\) and
finiteness of the discrete spectrum for \(m>m_*\), under their
resonance and spectral assumptions. Here \(m\) is the mass of the
third particle, with both fermion masses normalized to one. In our
notation, \(\gamma=1/m\) and \(\gamma_*=1/m_*\approx13.6069657\).
This critical value agrees with the mass-ratio criterion in
\cite{LShermatov1999} and is characterized by the scalar equation
\eqref{eq:upper-qstar} and the relation \eqref{eq:upper-gammastar}
below. Repulsively bound states for the same particle statistics
in dimensions one and two were established in
\cite{LakaevLakaev2017}.

\edit{The possibility of discrete spectrum in a gap between the
two-particle and three-particle branches of the essential spectrum
was pointed out in the author's doctoral dissertation abstract
\cite{Lakaev1992Thesis}. In particular, the Efimov effect in such
a gap was anticipated: an infinite sequence of eigenvalues may
accumulate at an edge of the three-particle branch.
This possibility was related to the boundedness and possible
separation of the two spectral branches, allowing Efimov
accumulation even in the presence of two-particle bound states.
}

Abdullaev, Khalkhuzhaev, and Kholmatov \cite{AKK2026}
considered the fibre Hamiltonians of a three-particle system on
$\mathbb{Z}^3$ consisting of two identical fermions and a third
particle, with attractive zero-range interactions only between
each fermion and the third particle. For mass ratios above a
quasimomentum-dependent threshold and sufficiently strong
attraction, they proved the existence of at least three eigenvalues,
counted with multiplicities, in the spectral gap near the upper
edge of the two-particle branch of the essential spectrum.

Symmetries between attractive and repulsive lattice systems were
studied in \cite{DeuchertEtAl2012}. For the cosine dispersion,
Remark~\ref{rem:spectral-reflection} and Corollary~\ref{cor:lower-reflection}
transfer our upper-threshold results to the attractive lower threshold
by a quasi-momentum shift and spectral reflection.

Bounded lattice dispersion permits such internal gaps. For
translation-invariant continuum three-particle Hamiltonians with
quadratic kinetic energy and short-range pair potentials satisfying
the HVZ hypotheses, the essential spectrum at fixed total momentum
is a half-line \cite{ReedSimon1978}.

\subsection{\edit{Model, notation, and main result}}
\label{subsec:intro-model}
Let \(\T=\R/(2\pi\Z)\), represented by \((-\pi,\pi]\), and equip
\(\T^3\) with normalized Haar measure \(\eta(dp)=dp/(2\pi)^3\).
Let \(\varepsilon(p)=2\sum_{j=1}^3(1-\cos p_j)\). The fibre
Hamiltonian
\[
H_{\lambda,\mu,\gamma}(K)
=H_\gamma^{\mathrm{free}}(K)+\lambda V_{12}+\mu(V_{13}+V_{23})
\]
\reviewedit{acts on the subspace \(\cH^{\mathrm a}\) of
\(L^2((\T^3)^2,\eta\otimes\eta)\) consisting of functions
antisymmetric in \(p,q\). Section~\ref{sec:model} gives its
construction in coordinate and quasi-momentum space.} The free part is multiplication by
\(\E_\gamma(K;p,q)=\varepsilon(p)+\varepsilon(q)
+\gamma\varepsilon(K-p-q)\). Here \(\gamma>0\) is the
fermion-to-third-particle mass ratio. The mixed interactions have
zero range and strength \(\mu>0\); the fermion--fermion
interaction has nearest-neighbour range and strength \(\lambda\ge0\),
with unscaled pair fibre a rank-three orthogonal projection.

For \(\gamma>\gamma_*\) and \(K\) near \(\pi\mathbf1\), the free
dispersion has a unique nondegenerate maximum at \((k_K^+,k_K^+)\).
Write \(E_+=E_{+,\gamma}(K)=\max_{p,q}\E_\gamma(K;p,q)\). The
coupling \(\mu_{\mathrm c}^+:=\mu_{\mathrm c}^+(K,\gamma)\), defined in
\eqref{eq:three-body-upper-critical}, makes both mixed-channel
fibres with spectator momentum \(k_K^+\) resonant at \(E_+\). The
fermion-channel thresholds satisfy
\(0<\lambda_{\mathrm{crit}}<\lambda_{\mathrm{sep}}\). They mark,
respectively, the onset of channel spectrum above \(E_+\) and the
separation of the upper two-cluster spectrum from the free band;
their exact definitions are
\eqref{eq:fermion-critical-coupling} and
\eqref{eq:fermion-separation-coupling}. Their dependence on
\((K,\gamma)\) is suppressed below. For
\(\lambda>\lambda_{\mathrm{sep}}\), let
\(\tau_{12}^+(K,\lambda)\) be the lower edge of the separated
upper two-cluster spectrum, as in
\eqref{eq:fermion-upper-channel-bottom}, and set
\(\cG_\lambda^+(K)=(E_+,\tau_{12}^+(K,\lambda))\).

\revblock{\noindent\begin{minipage}{\linewidth}\paragraph*{\edit{Notation.}}
\label{subsec:intro-notation-table}
The main notation is collected below.
\begin{center}
\renewcommand{\arraystretch}{1.15}
\begin{tabular}{ll}
\hline
Symbol & Meaning \\
\hline
\(\gamma\) & fermion-to-third-particle mass ratio; fixed \(>0\) \\
\(\gamma_*\) & \edit{critical mass ratio at \(K=\pi\mathbf1\); \(\approx13.6069657\)} \\
\(K\) & total quasi-momentum of the three-particle fibre \\
\(k_K^+\) & common fermion momentum at the free maximum \\
\(\kappa_K^+\) & \(K-k_K^+\); mixed-pair total momentum \\
\(\rho_K^+\) & \(K-2k_K^+\); third-particle momentum at the maximum \\
\(E_{\pm,\gamma}(K)\) & free three-particle band edges \\
\(\mu_{\mathrm c}^+=\mu_{\mathrm c}^+(K,\gamma)\) & critical mixed coupling \\
\(\lambda_{\mathrm{crit}}(K,\gamma)\) & onset of upper channel spectrum \\
\(\lambda_{\mathrm{sep}}(K,\gamma)\) & separation of upper two-cluster branches \\
\(\cG_\lambda^+(K)\) & internal gap \((E_{+,\gamma}(K),\tau_{12}^+(K,\lambda))\) \\
\(\cU_+(K)\) & Efimov counting coefficient (see Definition~\ref{def:upper-U}) \\
\(\Delta_K^+(q,z)\) & mixed-channel determinant \\
\(T_K^+(z)\) & Birman--Schwinger operator for \(\lambda=0\) \\
\(\notationedit{\widetilde T}_{\lambda,K}^+(z)\) & Birman--Schwinger operator with fermion correction \\
\(\cS_K^+(u)\), \(\widehat\cS_K^+(s)\) & logarithmic model kernel and its Fourier symbol \\
\hline
\end{tabular}
\end{center}\end{minipage}}

\begin{theorem}[Efimov accumulation at an internal threshold]
\label{thm:intro-main}
Fix \(\gamma>\gamma_*\). There is a neighbourhood \(\cO_+\) of
\(\pi\mathbf1\) such that, for each fixed \(K\in\cO_+\) and
\(\mu=\mu_{\mathrm c}^+\), the following assertions hold with a coefficient
\(\cU_+(K)\in(0,\infty)\) independent of \(\lambda\).
\begin{enumerate}[(i)]
\item For \(\lambda>\lambda_{\mathrm{sep}}\),
\(\cG_\lambda^+(K)\) is a nonempty internal gap in the
essential spectrum, and both of its endpoints belong
to that spectrum. Infinitely many discrete
eigenvalues accumulate at \(E_+\) from this gap, and for every
fixed
\(\zeta\in\cG_\lambda^+(K)\setminus
\sigma(H_{\lambda,\mu_{\mathrm c}^+,\gamma}(K))\), their number in
\((E_++\delta,\zeta)\), counted with multiplicities, satisfies
\begin{equation}
\label{eq:intro-gap-count}
N_{\mathrm{gap}}^{(\lambda)}(K,E_++\delta;\zeta)
=\cU_+(K)|\log\delta|+o(|\log\delta|),\qquad\delta\downarrow0.
\end{equation}
Here \(0<\delta<\zeta-E_+\).
\item For \(0\le\lambda\le\lambda_{\mathrm{crit}}\), the upper
essential edge is \(E_+\). Let
\(N_+^{(\lambda)}(K,E_++\delta)\) denote the number of
eigenvalues above \(E_++\delta\), counted with multiplicities.
Then
\begin{equation}
\label{eq:intro-upper-count}
N_+^{(\lambda)}(K,E_++\delta)
=\cU_+(K)|\log\delta|+o(|\log\delta|),\qquad\delta\downarrow0.
\end{equation}
\item For
\(\lambda_{\mathrm{crit}}<\lambda\le\lambda_{\mathrm{sep}}\),
the essential spectrum contains an interval immediately to the
right of \(E_+\), excluding discrete accumulation there.
\end{enumerate}
The coefficient \(\cU_+(K)\) is characterized in
Definition~\ref{def:upper-U}. All threshold limits are taken with
\(K\), \(\gamma\), \(\lambda\) and (in the gap assertions) \(\zeta\)
fixed.
\end{theorem}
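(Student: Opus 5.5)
The plan is to reduce all three assertions to a Birman--Schwinger analysis of a channel system on the spectator momentum \(q\in\T^3\), and to treat the fermion interaction as a finite-rank, threshold-regular perturbation that does not alter the singular part of that system.

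\textbf{Step 1: essential spectrum and the thresholds.} By an HVZ-type description, the essential spectrum of \(H_{\lambda,\mu,\gamma}(K)\) is the union of the free band \([E_{-,\gamma}(K),E_+]\), the mixed two-cluster spectra, and the fermion two-cluster spectrum. The mixed pair fibres have rank one and a zero-range repulsion; at \(\mu=\mu_{\mathrm c}^+\) they are resonant, with no bound state, at the edge reached from spectator momentum \(k_K^+\). Near \(\pi\mathbf1\) they therefore contribute no spectrum above \(E_+\). The fermion pair fibre is \(\lambda\) times a rank-three projection onto the odd nearest-neighbour harmonics \(\sin p_j\). Its eigenvalues above the two-particle band are zeros of a \(3\times3\) determinant \(\det(I-\lambda G(\kappa,z))\). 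The variational characterizations \eqref{eq:fermion-critical-coupling} and \eqref{eq:fermion-separation-coupling} then give the trichotomy. For \(\lambda\le\lambda_{\mathrm{crit}}\) no branch lies above \(E_+\). For \(\lambda_{\mathrm{crit}}<\lambda\le\lambda_{\mathrm{sep}}\), the union over \(\kappa\) of the upper branches together with the continuous band is a connected set containing \([E_+,E_++\epsilon)\); this is assertion (iii), since \(E_+\) is then not an accumulation point of isolated discrete spectrum from the right. For \(\lambda>\lambda_{\mathrm{sep}}\), the lower edge \(\tau_{12}^+(K,\lambda)>E_+\) of that union belongs to the essential spectrum, which gives the nonempty gap \(\cG_\lambda^+(K)\) of (i).

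\textbf{Step 2: Birman--Schwinger reduction and the weak regime.} For \(z>E_+\) outside the channel spectra, I would use the counting identity of Proposition~\ref{prop:gap-BS-counting}. In the weak regime I use its version above the essential spectrum. In the gap I use the relative, Pushnitski-type version, which counts eigenvalues in \((E_++\delta,\zeta)\) as the difference of the numbers of eigenvalues \(>1\) of \(\widetilde T_{\lambda,K}^+(E_++\delta)\) and \(\widetilde T_{\lambda,K}^+(\zeta)\). Since \(\zeta\notin\sigma(H)\) is fixed, the second term is a finite constant. It is therefore enough to show that
\[
n\bigl(1,\widetilde T_{\lambda,K}^+(E_++\delta)\bigr)=\cU_+(K)|\log\delta|+o(|\log\delta|).
\]
I would decompose \(\widetilde T^+_{\lambda,K}(z)=T_K^+(z)+R_\lambda(z)\). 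Here \(R_\lambda\) collects the fermion-channel entries and the cross terms; it is built from \((\Delta^+_K)^{-1/2}\), the fermion \(3\times3\) resolvent, and mixed free resolvents. Because the fermion channel is non-resonant at \(E_+\) for \(\lambda\ne\lambda_{\mathrm{crit}}\), and its form factors \(\sin p_j\) vanish to first order relative to the degenerate point, I expect \(R_\lambda(z)\) to stay norm-continuous and compact up to \(z=E_+\). Under a compact, norm-convergent perturbation the logarithmic asymptotics is stable by Weyl's inequality, \(n(1\pm\epsilon,A+B)\le n(1,A)+n(\epsilon,B)\), applied with \(\epsilon\downarrow0\).

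\textbf{Step 3: the model operator and the main obstacle.} For \(T_K^+(E_++\delta)\), the resonance of the mixed fibres makes \(\Delta_K^+(q,E_++\delta)\sim c\,(|q-k_K^+|^2+\delta)^{1/2}\). After localization to a ball around \(k_K^+\) and quadratic approximation of \(\E_\gamma\), the operator is unitarily equivalent, up to compact errors uniform in \(\delta\), to an integral operator on the annulus \(\delta^{1/2}<|q|<1\). With \(u=\log|q|\) this becomes a convolution by \(\cS_K^+(u)\) on an interval of length \(\tfrac12|\log\delta|\), tensored with the angular part restricted to the antisymmetric sector. Standard Toeplitz/Szegő counting then gives
\[
\tfrac{1}{4\pi}|\log\delta|\;\bigl|\{s:\widehat\cS_K^+(s)>1\}\bigr|,
\]
summed over angular channels, and this is \(\cU_+(K)\) of Definition~\ref{def:upper-U}. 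Positivity uses \(\gamma>\gamma_*\) via Lemma~\ref{lem:upper-angular-comparison} and Proposition~\ref{prop:upper-supercriticality}, together with continuity in \(K\); this determines \(\cO_+\). I expect the main obstacle to be Step 2 in the gap regime: proving uniform compactness of \(R_\lambda(E_++\delta)\) as \(\delta\downarrow0\) when \(\lambda>\lambda_{\mathrm{sep}}\). There the fermion resolvent is evaluated at energies just above the free band but below \(\tau_{12}^+\), so the fermion-channel inverse is bounded; that boundedness is what I would verify first. Independence of \(\cU_+\) from \(\lambda\) then follows, because \(\lambda\) enters only through \(R_\lambda\).
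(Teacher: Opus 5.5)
Your architecture matches the paper's. It has the channel description of the essential spectrum, the Birman--Schwinger operator $\widetilde T_{\lambda,K}^+=T_K^++\cR_{\lambda,K}^+$ with the weak-regime and relative gap counting identities, and localisation plus homogeneous reduction to a truncated convolution with symbol $\widehat\cS_K^+(s)$. It also has Weyl-type stability and the degree-one angular computation for positivity.

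The genuine gap is the endpoint $\lambda=\lambda_{\mathrm{crit}}$ of assertion (ii). The statement includes it, and your Step 2 explicitly sets it aside (``non-resonant at $E_+$ for $\lambda\ne\lambda_{\mathrm{crit}}$''). At this coupling the threshold multiplier $D_{\lambda_{\mathrm{crit}},K}(E_+)=\operatorname{diag}_j\{1-\lambda_{\mathrm{crit}}b_j(K,r,0)\}$ vanishes at the maximiser $r_j(K)$ of each active $b_j(K,\cdot,0)$. Hence $D_{\lambda,K}(E_++\delta)^{-1}$ is unbounded as $\delta\downarrow0$. Your plan gives no reason why $\cR_{\lambda,K}^+(E_++\delta)=2\mu\lambda\,\cW_K^*D_{\lambda,K}^{-1}\cW_K$ should converge, or even stay uniformly Hilbert--Schmidt. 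Without that, the Weyl-inequality step yields only the lower bound, which already follows from $\cR_{\lambda,K}^+\ge0$, and not the upper one. The problem is sharpest at $K=\pi\mathbf1$, where $r_j(K_0)=K_0=\rho_{K_0}^+$. There the zero of the multiplier sits exactly at the spectator momentum at which $\cW_K$ is itself singular.

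Two ingredients are missing.
\begin{enumerate}
\item The map $r\mapsto b_j(K,r,0)$ must have a unique nondegenerate maximum, uniformly for $K$ near $\pi\mathbf1$. The paper proves this at $K_0$ by a denominator comparison using $\gamma>1$, then transfers it by $C^2$-regularity of the threshold integrals. This gives $d_{\lambda_{\mathrm{crit}},j}(K,r,\delta)\ge c\,(\delta+d_{\T^3}(r,r_j(K))^2)$.
\item One writes $D^{-1}=|D|^{-1/2}J|D|^{-1/2}$ and proves a $\delta$-independent square-integrable majorant for the kernel of $|D_{\lambda,K}|^{-1/2}\cW_K$. In the coincident case this majorant is $C|v|^{-2}(|x|^2+|v|^2)^{-1}|x|^{-1}$. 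It is locally integrable in $\R^6$ only because the odd form factor contributes $(|x|+|y|)^2$; without it the bound is homogeneous of degree $-7$.
\end{enumerate}

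So the vanishing of the form factors you invoke is needed precisely at $\lambda_{\mathrm{crit}}$. Your stated main obstacle is therefore misplaced. For $\lambda>\lambda_{\mathrm{sep}}$ one has $1-\lambda b_j(K,r,0)\le1-\lambda\min_j\underline b_j<0$ uniformly. So $D_{\lambda,K}(E_+)^{-1}$ is bounded and norm-continuous up to the threshold, and the gap regime is as easy as $\lambda<\lambda_{\mathrm{crit}}$.

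A smaller gap sits in Step 1. Resonance at the spectator $k_K^+$ does not by itself keep the mixed channels below $E_+$. At $K=\pi\mathbf1$ one has $\mu_{\mathrm c}^+=1/J_\gamma(0)>1/J_\gamma(K-q)$ for every $q\ne k_K^+$, so those mixed fibres do carry bound states above their bands. You need $\Delta_K^+(q,E_+)>0$ for all $q\ne k_K^+$; the paper proves it by comparison with $J_\gamma(0)$ at $K_0$ followed by perturbation. The same positivity is what justifies discarding the region away from $k_K^+$ in your localisation.
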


The precise counting functions and the full statement appear in
Theorem~\ref{thm:main-upper}, proved in
Section~\ref{sec:upper-efimov}.

\begin{corollary}[Exponential accumulation in the gap]
\label{cor:intro-exponential}
Under the gap assumptions of Theorem~\ref{thm:intro-main},
enumerate the eigenvalues in \((E_+,\zeta)\) in nonincreasing
order, with multiplicities, as \(E_n^+(K,\lambda;\zeta)\). Then
\[
E_n^+(K,\lambda;\zeta)-E_+
=\exp[-n/\cU_+(K)+o(n)],\qquad n\to\infty.
\]
\end{corollary}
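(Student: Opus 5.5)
The plan is to obtain the corollary from the counting asymptotics \eqref{eq:intro-gap-count} of Theorem~\ref{thm:intro-main}(i) by inverting the counting function; no further spectral input is needed. Fix \(\zeta\in\cG_\lambda^+(K)\setminus\sigma(H_{\lambda,\mu_{\mathrm c}^+,\gamma}(K))\). By Theorem~\ref{thm:intro-main}(i) there are infinitely many eigenvalues in \((E_+,\zeta)\). They are discrete, so each has finite multiplicity and they can accumulate only at \(E_+\). Hence they can be listed in nonincreasing order \(E_1^+\ge E_2^+\ge\dots>E_+\), and \(E_n^+\to E_+\). Set \(\delta_n:=E_n^+-E_+\downarrow0\). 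Write \(N(\delta):=N_{\mathrm{gap}}^{(\lambda)}(K,E_++\delta;\zeta)\). This function is nonincreasing in \(\delta\) and counts the indices \(m\) with \(\delta_m>\delta\).

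The key step is a two-sided comparison between \(N\) and the index \(n\). For \(\delta\) slightly smaller than \(\delta_n\), every index \(m\le n\) satisfies \(\delta_m\ge\delta_n>\delta\). Hence \(N(\delta)\ge n\), and letting \(\delta\uparrow\delta_n\) gives \(n\le N(\delta_n-0)\). For \(\delta\) slightly larger than \(\delta_n\), only indices with \(\delta_m>\delta_n\) are counted, and all of these satisfy \(m\le n-1\). Hence \(N(\delta_n+0)\le n-1\).

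Now apply \eqref{eq:intro-gap-count}. Take \(\delta=\delta_n(1\pm\epsilon)\) with a fixed small \(\epsilon>0\). The shift \(\log(1\pm\epsilon)\) is \(O(1)=o(|\log\delta_n|)\), so
\[
n=\cU_+(K)|\log\delta_n|+o(|\log\delta_n|).
\]
Since \(\cU_+(K)\in(0,\infty)\), this gives \(|\log\delta_n|=n/\cU_+(K)+o(|\log\delta_n|)\). It follows that \(|\log\delta_n|\) grows linearly in \(n\), so \(o(|\log\delta_n|)=o(n)\). Because \(\delta_n<1\) for large \(n\), we have \(\log\delta_n=-|\log\delta_n|\). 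Exponentiating yields \(\delta_n=\exp[-n/\cU_+(K)+o(n)]\).

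The only delicate point is applying the asymptotics along the sequence \(\delta_n\), where \(N\) jumps. Multiplicities, possibly unbounded, could in principle make the jumps large. This is handled by the one-sided inequalities above together with the uniformity of the \(o(|\log\delta|)\) term as \(\delta\downarrow0\) through all values, not only along some subsequence. The restriction \(0<\delta<\zeta-E_+\) holds for all large \(n\), so it causes no difficulty.
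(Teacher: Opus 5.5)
Your argument is correct and is essentially the paper's proof. The paper obtains the corollary by applying the elementary counting-to-accumulation equivalence of Proposition~\ref{prop:counting-exponential-equivalence} to the gap asymptotics \eqref{eq:intro-gap-count}, which is the same inversion you carry out, read in the other direction: the paper evaluates the counting function at $\delta=e^{-(1\pm\epsilon)n/\cU_+(K)}$ to bound $-\log(E_n^+-E_+)$, whereas you evaluate it at $\delta_n(1\pm\epsilon)$ to bound $n$.
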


This follows from
Proposition~\ref{prop:counting-exponential-equivalence} in
Appendix~\ref{app:abstract-counting}. The exact separation
threshold satisfies
\(\lambda_{\mathrm{sep}}(K,\gamma)\le12(1+\gamma)\), with equality
at \(K=\pi\mathbf1\). At this quasi-momentum the gap is
\((24+12\gamma,\lambda+12)\) for \(\lambda>12(1+\gamma)\). For
general \(K\), Theorem~\ref{thm:tau12-asymptotic} gives
\[
\tau_{12}^+(K,\lambda)=\lambda+12+\frac{b_*(K)}{\lambda}
+O(\lambda^{-2}),\qquad\lambda\to\infty,
\]
with explicit \(b_*(K)\ge0\).

\subsection{Experimental context}
\label{subsec:intro-experiment}
\octoberedit{Experimental motivation comes from Efimov resonances
in ultracold gases
\cite{Kraemer2006,Huang2014,Zaccanti2009,NaidonEndoReview},
repulsively bound atom pairs in optical lattices
\cite{WinklerEtAl2006}, and tunable few-fermion systems
\cite{Zurn2012,Wenz2013}. Related lattice theory concerns
interacting bound pairs \cite{PetrosyanEtAl2007} and three-body
bound states \cite{ValientePetrosyanSaenz2010}.
Realizing the present regime would require resonant mixed
interactions and sufficiently strong nearest-neighbour
fermion--fermion repulsion to separate the upper two-cluster
component from the free band.}

\subsection{Proof strategy and organisation}
\label{subsec:intro-organisation}
\reviewedit{For \(\lambda=0\), localisation near the critical spectator momentum
separates the principal singular
operator from a compact remainder norm-continuous up to \(E_+\).
Homogeneous approximation and logarithmic radial coordinates give
a truncated convolution operator whose Fourier symbol determines
\(\cU_+(K)\).

Compact perturbations norm-continuous up to \(E_+\) preserve this
coefficient for the compact channel system at every fixed
\(\lambda\ge0\), and for the effective Birman--Schwinger operator
in the weak and separated regimes. The corresponding counting
identities then give the discrete eigenvalue asymptotics.}

\compactedit{Section~\ref{sec:model} constructs the model and its fibre decompositions.
Section~\ref{sec:mixed} gives the pair and channel spectra, threshold
estimates, and essential-spectrum classification. Section~\ref{sec:upper-efimov}
proves the accumulation laws, their stability, the strong-coupling
expansion, and the attractive counterpart; Section~\ref{sec:conclusion}
discusses extensions. Appendices~\ref{app:abstract-counting} and
\ref{app:technical-lemmas} give the counting-to-accumulation equivalence
and technical threshold estimates.}

\begin{remark}[Notation and conventions]
\label{rem:intro-notation}
\reviewedit{In the Efimov analysis, the mass ratio
\(\gamma>\gamma_*\) is fixed.} The neighbourhood \(\cO_+\) of
\(\pi\mathbf1\) is chosen sufficiently small and may be reduced
when needed. Unless stated otherwise, threshold limits are taken
at fixed \(K\), \(\gamma\), and coupling constants. We write
\(\cU_+(K)=\cU_K^+(1)\), suppressing the dependence on \(\gamma\).
\end{remark}

\section{Hamiltonians and direct-integral decompositions}
\label{sec:model}
\compactedit{We define the two-particle and three-particle Hamiltonians in
coordinate space, pass to the quasi-momentum representation,
and decompose them with respect to total quasi-momentum.}

\subsection{Coordinate-space formulation}
\label{subsec:coordinate-model}

\subsubsection{Physical space and one-particle Hamiltonian}
Let
\[
\widehat{\frH}=\ell^2((\Z^3)^3).
\]
Particles \(1\) and \(2\) are identical fermions, so the
physical Hilbert space is
\begin{equation}
\label{eq:position-antisymmetric-space}
\widehat{\frH}^{\mathrm a}
=
\bigl\{
\widehat f\in\widehat{\frH}:
\widehat f(x_2,x_1,x_3)=-\widehat f(x_1,x_2,x_3)
\text{ for all }x_1,x_2,x_3\in\Z^3
\bigr\}.
\end{equation}
The exchange operator is
\[
(\widehat P_{12}\widehat f)(x_1,x_2,x_3)
=
\widehat f(x_2,x_1,x_3).
\]
The one-particle kinetic energy is
\begin{equation}
\label{eq:one-particle-coordinate-hamiltonian}
(\widehat h_0u)(x)
=
\sum_{j=1}^3
\bigl[2u(x)-u(x+e_j)-u(x-e_j)\bigr],
\qquad u\in\ell^2(\Z^3),
\end{equation}
where \(e_1,e_2,e_3\) are the coordinate unit vectors.
This operator is bounded and self-adjoint, with
\[
\sigma(\widehat h_0)=[0,12],
\qquad
\|\widehat h_0\|=12.
\]
The third particle has kinetic energy \(\gamma\widehat h_0\),
where \(\gamma>0\) is the fermion-to-third-particle mass
ratio. The symbol \(I\) denotes the identity on the space
specified by the context.

\subsubsection{Two-particle operators}
On
\[
\widehat{\frh}_2
=
\ell^2(\Z^3)\otimes\ell^2(\Z^3),
\]
define the contact interaction by
\begin{equation}
\label{eq:coordinate-contact-pair}
(\widehat v\widehat u)(x,y)
=
\delta_{xy}\widehat u(x,y).
\end{equation}
Here \(\delta_{xy}\) is the Kronecker delta.
The operator \(\widehat v\) is the orthogonal projection
onto functions supported on \(x=y\). The mixed-pair
Hamiltonian is
\[
\widehat h_{\mu,\gamma}
=
\widehat h_0\otimes I
+\gamma I\otimes\widehat h_0+\mu\widehat v.
\]
Although \(\widehat v\) has infinite rank, its fibres
at fixed total pair quasi-momentum have rank one.
For the identical fermions, set
\[
\widehat{\frh}_2^{\mathrm a}
=
\bigl\{
\widehat u\in\widehat{\frh}_2:
\widehat u(x,y)=-\widehat u(y,x)
\text{ for all }x,y\in\Z^3
\bigr\}.
\]
The contact interaction vanishes on this space.
We therefore use the nearest-neighbour interaction
\begin{equation}
\label{eq:coordinate-nearest-neighbour}
(\widehat w\widehat u)(x,y)
=
\sum_{j=1}^3
\bigl[\delta_{x-y,e_j}+\delta_{x-y,-e_j}\bigr]
\widehat u(x,y).
\end{equation}
Its multiplier is the indicator of the nearest-neighbour
configurations. Hence \(\widehat w\) is an orthogonal
projection and preserves \(\widehat{\frh}_2^{\mathrm a}\).
The two-fermion Hamiltonian is
\[
\widehat h_{12,\lambda}
=
\left.
\bigl(\widehat h_0\otimes I+I\otimes\widehat h_0
+\lambda\widehat w\bigr)
\right|_{\widehat{\frh}_2^{\mathrm a}}.
\]

\subsubsection{Three-particle Hamiltonian}
The free Hamiltonian on the physical space is
\begin{equation}
\label{eq:free-three-coordinate-hamiltonian}
\widehat H_\gamma^{\mathrm{free}}
=
\left.
\bigl(
\widehat h_0\otimes I\otimes I
+I\otimes\widehat h_0\otimes I
+\gamma I\otimes I\otimes\widehat h_0
\bigr)
\right|_{\widehat{\frH}^{\mathrm a}}.
\end{equation}
\edit{On the full space \(\widehat{\frH}\), define the pair
interactions by
\[
\begin{aligned}
(\widehat V_{13}\widehat f)(x_1,x_2,x_3)
&=\delta_{x_1x_3}\widehat f(x_1,x_2,x_3),\\
(\widehat V_{23}\widehat f)(x_1,x_2,x_3)
&=\delta_{x_2x_3}\widehat f(x_1,x_2,x_3),\\
(\widehat V_{12}\widehat f)(x_1,x_2,x_3)
&=\mathbf1_{\{|x_1-x_2|_1=1\}}\widehat f(x_1,x_2,x_3),
\end{aligned}
\]
where \(|x|_1=|x_1|+|x_2|+|x_3|\). Their momentum-space fibre
representations are given in
\eqref{eq:fiber-V13}--\eqref{eq:fiber-fermion-interaction}.
They satisfy
\(\widehat P_{12}\widehat V_{13}\widehat P_{12}=\widehat V_{23}\).
The sum \(\widehat V_{13}+\widehat V_{23}\) preserves
\(\widehat{\frH}^{\mathrm a}\), although the individual operators
do not, and \(\widehat V_{12}\) commutes with \(\widehat P_{12}\).}
For fixed \(\lambda,\mu\ge0\), define
\begin{equation}
\label{eq:full-coordinate-model-with-lambda}
\widehat H_{\lambda,\mu,\gamma}
=
\widehat H_\gamma^{\mathrm{free}}
+\lambda\widehat V_{12}|_{\widehat{\frH}^{\mathrm a}}
+\mu(\widehat V_{13}+\widehat V_{23})
|_{\widehat{\frH}^{\mathrm a}}.
\end{equation}
It is bounded and self-adjoint on
\(\widehat{\frH}^{\mathrm a}\). Positive couplings describe
repulsion; \(\lambda=0\) removes the direct
fermion--fermion interaction.
Each full Hamiltonian has fixed couplings. In the threshold
results, we select a fibre \(K\) and choose
\(\mu=\mu_{\mathrm c}^+(K,\gamma)\) for that fibre.

\subsection{Quasi-momentum representation}
\label{subsec:quasimomentum-model}
Let \(\T=\R/(2\pi\Z)\), represented by \((-\pi,\pi]\),
and equip \(\T^3\) with normalized Haar measure
\[
\eta(dp)=\frac{dp_1\,dp_2\,dp_3}{(2\pi)^3}.
\]
All quasi-momentum operations are taken modulo \(2\pi\);
all torus \(L^2\) spaces use Haar measure or its products.
The Fourier transform
\[
\mathcal F:\ell^2(\Z^3)\longrightarrow L^2(\T^3,\eta)
\]
is the unitary extension of
\begin{equation}
\label{eq:one-particle-Fourier-transform}
(\mathcal Fu)(p)
=
\sum_{x\in\Z^3}u(x)e^{-i(p,x)}
\end{equation}
from finitely supported sequences. Its inverse is
\begin{equation}
\label{eq:one-particle-inverse-Fourier-transform}
(\mathcal F^{-1}g)(x)
=
\int_{\T^3}g(p)e^{i(p,x)}\,\eta(dp).
\end{equation}
One has
\begin{equation}
\label{eq:one-particle-momentum-hamiltonian}
\mathcal F\widehat h_0\mathcal F^{-1}=h_0,
\end{equation}
where \(h_0\) is multiplication by
\begin{equation}
\label{eq:cosine-dispersion-model}
\varepsilon(p)=2\sum_{j=1}^3(1-\cos p_j).
\end{equation}
Its unique minimum and maximum on the torus are
\(\varepsilon(0)=0\) and \(\varepsilon(\pi\mathbf1)=12\).
Set
\[
\mathcal F_3=\mathcal F\otimes\mathcal F\otimes\mathcal F,
\qquad
\frH=L^2((\T^3)^3).
\]
The transform \(\mathcal F_3\) maps
\(\widehat{\frH}^{\mathrm a}\) unitarily onto
\begin{equation}
\label{eq:nonfiber-momentum-antisymmetric-space}
\frH^{\mathrm a}
=
\bigl\{
F\in\frH:
F(p,q,r)=-F(q,p,r)
\text{ for almost every }(p,q,r)
\bigr\}.
\end{equation}
Using this restriction, define
\[
H_{\lambda,\mu,\gamma}
=
\mathcal F_3\widehat H_{\lambda,\mu,\gamma}
\mathcal F_3^{-1}
\quad\text{on }\frH^{\mathrm a}.
\]
Set \(\mathcal F_2=\mathcal F\otimes\mathcal F\).
For \(G\in L^2((\T^3)^2,\eta\otimes\eta)\), the mixed-pair
Hamiltonian in this representation acts as
\[
\begin{aligned}
(\mathcal F_2\widehat h_{\mu,\gamma}\mathcal F_2^{-1}G)(p,q)
={}&[\varepsilon(p)+\gamma\varepsilon(q)]G(p,q)\\
&+\mu\int_{\T^3}G(t,p+q-t)\,\eta(dt).
\end{aligned}
\]
\edit{The momentum-space Hamiltonian preserves the total
quasi-momentum. Its direct-integral decomposition and the action
of the fibre interactions are given in
Subsection~\ref{subsec:direct-integral-decompositions}, in
particular \eqref{eq:fiber-V13}--\eqref{eq:fiber-fermion-interaction}.}

\subsection{Von Neumann direct-integral decompositions}
\label{subsec:direct-integral-decompositions}
\edit{The Fourier transform \(\mathcal F_2\) identifies
\(\widehat{\frh}_2\) with \(L^2((\T^3)^2,\eta\otimes\eta)\)
and maps \(\widehat{\frh}_2^{\mathrm a}\) onto its antisymmetric
subspace. We reserve \(\cH^{\mathrm a}\) for the three-particle
fibre space defined below.}

For \(n=2,3\) and \(a\in\Z^3\), let
\[
(\widehat T_a^{(n)}\widehat f)(x_1,\ldots,x_n)
=\widehat f(x_1+a,\ldots,x_n+a).
\]
The kinetic energies commute with these simultaneous translations,
and the interaction potentials depend only on coordinate differences.
Thus the corresponding Hamiltonians commute with every
\(\widehat T_a^{(n)}\). The translations also preserve the
antisymmetric subspaces. With \(\mathcal F_n=\mathcal F^{\otimes n}\),
\[
(\mathcal F_n\widehat T_a^{(n)}\mathcal F_n^{-1}F)(p_1,\ldots,p_n)
=e^{ia\cdot(p_1+\cdots+p_n)}F(p_1,\ldots,p_n).
\]
The characters \(e^{ia\cdot Q}\), \(a\in\Z^3\), generate the
von Neumann algebra of multiplication by functions of
\(Q=p_1+\cdots+p_n\) in \(L^\infty(\T^3)\).
Commutation with this algebra implies decomposability with
respect to \(Q\); see also \cite[Section~3]{ALM2007}.

\subsubsection{Two-particle fibres}
Define
\[
(\mathcal U_2G)(k;p)=G(p,k-p).
\]
The map \((p,q)\mapsto(p+q,p)\) preserves product Haar measure.
Consequently,
\[
\mathcal U_2:L^2((\T^3)^2,\eta\otimes\eta)
\longrightarrow\int_{\T^3}^{\oplus}L^2(\T^3,\eta)\,\eta(dk)
\]
is unitary, with
\((\mathcal U_2^{-1}g)(p,q)=g(p+q;p)\).
In these variables, the translation characters depend only on \(k\), and
\[
\mathcal U_2\mathcal F_2\widehat h_{\mu,\gamma}
\mathcal F_2^{-1}\mathcal U_2^{-1}
=\int_{\T^3}^{\oplus}h_{\mu,\gamma}(k)\,\eta(dk).
\]
The mixed-pair fibre on \(\frh=L^2(\T^3,\eta)\) is
\begin{equation}
\label{eq:mixed-pair-fibre-definition}
h_{\mu,\gamma}(k)=h_{0,\gamma}(k)+\mu v,
\qquad
(h_{0,\gamma}(k)u)(p)=\E_\gamma(k;p)u(p),
\end{equation}
where
\[
\E_\gamma(k;p)=\varepsilon(p)+\gamma\varepsilon(k-p),
\qquad
(vu)(p)=\int_{\T^3}u(t)\,\eta(dt).
\]
Thus \(v\) is the rank-one orthogonal projection onto the constants.

For the fermion pair, define
\begin{equation}
\label{eq:odd-pair-space}
\frh^{\mathrm a}(k)
=
\bigl\{
u\in L^2(\T^3):
u(p)=-u(k-p)\text{ for almost every }p
\bigr\}.
\end{equation}
The restriction of \(\mathcal U_2\mathcal F_2\) gives
\[
\mathcal U_2\mathcal F_2
\widehat h_{12,\lambda}
\mathcal F_2^{-1}\mathcal U_2^{-1}
=
\int_{\T^3}^{\oplus}h_{12,\lambda}(k)\,\eta(dk)
\]
on \(\int_{\T^3}^{\oplus}\frh^{\mathrm a}(k)\,\eta(dk)\).

The field of antisymmetric pair spaces is measurable: its orthogonal
projection is
\[
(\Pi^{\mathrm a}(k)u)(p)=\tfrac12[u(p)-u(k-p)],
\]
and \(k\mapsto\Pi^{\mathrm a}(k)\) is strongly continuous.
To give the fermion-pair fibres explicitly, choose a measurable
representative of \(k\) in \(\R^3\), use its half, and set
\begin{equation}
\label{eq:odd-form-factors}
\phi_{j,k}(p)=\sqrt2\sin(p_j-k_j/2),\qquad j=1,2,3.
\end{equation}
These functions form an orthonormal triple in \(\frh^{\mathrm a}(k)\).
Changing the lift changes only their signs and leaves the following
operator unchanged:
\begin{equation}
\label{eq:odd-rank-three-interaction}
v_{12}(k)=\sum_{j=1}^3|\phi_{j,k}\rangle\langle\phi_{j,k}|.
\end{equation}
\revblock{Indeed, in the centred coordinates \(p=k/2+x\),
\(t=k/2+y\), antisymmetry becomes oddness under \(x\mapsto-x\),
and the kernel reduces to
\(2\sum_{j=1}^3\sin x_j\sin y_j\).}
The fermion-pair Hamiltonian is therefore
\begin{equation}
\label{eq:fermion-pair-fiber}
\begin{aligned}
h_{12,\lambda}(k)&=h_{12,0}(k)+\lambda v_{12}(k)
\quad\text{on }\frh^{\mathrm a}(k),\\
(h_{12,0}(k)u)(p)&=\mathcal E_{12}(k;p)u(p),
\qquad\mathcal E_{12}(k;p)=\varepsilon(p)+\varepsilon(k-p).
\end{aligned}
\end{equation}

\subsubsection{Three-particle fibres}
For the three-particle decomposition, let
\[
\cH=L^2((\T^3)^2,\eta\otimes\eta),
\]
and
\begin{equation}
\label{eq:antisym-space}
\cH^{\mathrm a}
=
\bigl\{
f\in\cH:
f(p,q)=-f(q,p)
\text{ for almost every }(p,q)
\bigr\}.
\end{equation}
Define
\begin{equation}
\label{eq:three-particle-fiber-map}
(\mathcal U_3F)(K;p,q)=F(p,q,K-p-q)
\quad\text{for almost every }(K,p,q).
\end{equation}
The measure-preserving change of variables
\((p,q,r)\mapsto(p+q+r,p,q)\) shows that
\[
\mathcal U_3:\frH^{\mathrm a}
\longrightarrow
\int_{\T^3}^{\oplus}\cH^{\mathrm a}\,\eta(dK)
\]
is unitary, with inverse
\[
(\mathcal U_3^{-1}g)(p,q,r)
=
g(p+q+r;p,q)
\quad\text{for almost every }(p,q,r).
\]
Consequently,
\begin{equation}
\label{eq:three-particle-direct-integral}
\mathcal U_3H_{\lambda,\mu,\gamma}\mathcal U_3^{-1}
=
\int_{\T^3}^{\oplus}
H_{\lambda,\mu,\gamma}(K)\,\eta(dK).
\end{equation}
Equivalently, with \(\mathcal U=\mathcal U_3\mathcal F_3\),
\[
\mathcal U\widehat H_{\lambda,\mu,\gamma}\mathcal U^{-1}
=\int_{\T^3}^{\oplus}H_{\lambda,\mu,\gamma}(K)\,\eta(dK).
\]

On the full space \(\cH\), let \(H_\gamma^{\mathrm{free}}(K)\)
be multiplication by
\begin{equation}
\label{eq:free-three}
\E_\gamma(K;p,q)
=
\varepsilon(p)+\varepsilon(q)
+\gamma\varepsilon(K-p-q).
\end{equation}
On the full space \(\cH\), the interaction operators are
\begin{align}
(V_{13}f)(p,q)
&=\int_{\T^3}f(t,q)\,\eta(dt),
\label{eq:fiber-V13}\\
(V_{23}f)(p,q)
&=\int_{\T^3}f(p,t)\,\eta(dt),
\label{eq:fiber-V23}\\
(V_{12}f)(p,q)
&=
2\sum_{j=1}^3\int_{\T^3}
\cos(p_j-t_j)f(t,p+q-t)\,\eta(dt),
\label{eq:fiber-fermion-interaction}
\end{align}
with equalities understood almost everywhere.
The free multiplier is symmetric in \(p,q\). Moreover,
for the exchange operator \(P_{12}f(p,q)=f(q,p)\),
\[
P_{12}V_{12}P_{12}=V_{12},
\qquad
P_{12}V_{13}P_{12}=V_{23}.
\]
Hence \(\cH^{\mathrm a}\) reduces the free operator,
\(V_{12}\), and \(V_{13}+V_{23}\). The fibre Hamiltonian is
\begin{equation}
\label{eq:full-fiber-model-with-lambda}
H_{\lambda,\mu,\gamma}(K)
=
H_\gamma^{\mathrm{free}}(K)|_{\cH^{\mathrm a}}
+\lambda V_{12}|_{\cH^{\mathrm a}}
+\mu(V_{13}+V_{23})|_{\cH^{\mathrm a}}.
\end{equation}
Below, these restrictions are implicit when the ambient
space is \(\cH^{\mathrm a}\). The individual operators
\(V_{13}\) and \(V_{23}\) are defined on \(\cH\) and do
not generally preserve \(\cH^{\mathrm a}\).
The ranks one and three refer to the interactions in the
two-particle fibres. The corresponding three-particle interactions
have infinite rank because the spectator momentum remains a free
variable.

\compactedit{All pair fibres and the physical three-particle fibres are bounded
and self-adjoint for real couplings. For fixed couplings, the
three-particle family is continuous in \(K\) in operator norm.}

\section{Two-particle spectra and the essential spectrum}
\label{sec:mixed}
\compactedit{We study the mixed pair and its channels
(\S\ref{subsec:mixed-spectral-properties} and
\S\ref{subsec:critical-mixed-channels}), followed by the fermion
pair and its channel (\S\ref{subsec:fermion-interaction}).
For \(\gamma>1\), we classify the fermion-channel spectrum
relative to the free upper edge for every nonnegative coupling
and establish the quadratic estimates at the critical coupling.
Subsection~\ref{subsec:essential-threshold-and-gap} then identifies
the essential spectrum and internal gap. These results provide
the spectral and threshold estimates used in
Section~\ref{sec:upper-efimov}. Throughout this section,
\(\gamma>0\) is fixed.}

\subsection{Spectral properties of the mixed pair and its channels}
\label{subsec:mixed-spectral-properties}
We use the mixed-pair fibre \eqref{eq:mixed-pair-fibre-definition}.
We first describe its band geometry and spectrum and then transfer
these results to the mixed-channel fibres.

\subsubsection{Band geometry}
\label{subsec:mixed-band-geometry}
Set
\begin{equation}
\label{eq:r-gamma}
r_{\gamma,j}(k)
=
\sqrt{1+\gamma^2+2\gamma\cos k_j},
\qquad j=1,2,3.
\end{equation}
We call \(k\) regular if \(r_{\gamma,j}(k)>0\) for every \(j\).
For regular \(k\), define \(p_*(k)\in\T^3\) by
\[
\cos p_{*,j}(k)
=
\frac{1+\gamma\cos k_j}{r_{\gamma,j}(k)},
\qquad
\sin p_{*,j}(k)
=
\frac{\gamma\sin k_j}{r_{\gamma,j}(k)}.
\]
Then
\begin{equation}
\label{eq:coordinate-phase-representation}
\cos p_j+\gamma\cos(k_j-p_j)
=
r_{\gamma,j}(k)\cos(p_j-p_{*,j}(k)).
\end{equation}
Denote the band edges by
\begin{align}
m_\gamma(k)&=\min_{p\in\T^3}\E_\gamma(k;p),
\label{eq:mixed-band-lower-edge}\\
M_\gamma(k)&=\max_{p\in\T^3}\E_\gamma(k;p).
\label{eq:mixed-band-upper-edge}
\end{align}
\begin{lemma}[Band extrema and normal form]
\label{lem:mixed-band}
For regular \(k\),
\begin{align}
m_\gamma(k)
&=2\sum_{j=1}^3(1+\gamma-r_{\gamma,j}(k)),
\label{eq:mixed-min}\\
M_\gamma(k)
&=2\sum_{j=1}^3(1+\gamma+r_{\gamma,j}(k)).
\label{eq:mixed-max}
\end{align}
The extrema are nondegenerate and attained uniquely on the
torus at
\[
p^-(k)=p_*(k),
\qquad
p^+(k)=p_*(k)+\pi\mathbf1,
\]
respectively, with addition understood modulo \(2\pi\). Moreover,
\begin{equation}
\label{eq:mixed-upper-normal-form}
\begin{aligned}
\E_\gamma(k;p^-(k)+u)-m_\gamma(k)
&=M_\gamma(k)-\E_\gamma(k;p^+(k)+u)\\
&=2\sum_{j=1}^3r_{\gamma,j}(k)(1-\cos u_j).
\end{aligned}
\end{equation}
and
\begin{equation}
\label{eq:mixed-upper-Hessian}
-\frac12D_p^2\E_\gamma(k;p^+(k))
=
\operatorname{diag}
\bigl(r_{\gamma,1}(k),r_{\gamma,2}(k),r_{\gamma,3}(k)\bigr)>0.
\end{equation}
In local coordinates, the common energy difference is
\[
\sum_{j=1}^3r_{\gamma,j}(k)u_j^2+O(|u|^4),\qquad u\to0.
\]
\end{lemma}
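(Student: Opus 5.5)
The lemma is a coordinatewise computation, and the plan is to reduce everything to the phase identity \eqref{eq:coordinate-phase-representation}. Write
\[
\E_\gamma(k;p)=\varepsilon(p)+\gamma\varepsilon(k-p)
=2\sum_{j=1}^3\bigl[1+\gamma-\cos p_j-\gamma\cos(k_j-p_j)\bigr].
\]
Expanding \(\cos(k_j-p_j)=\cos k_j\cos p_j+\sin k_j\sin p_j\) gives
\[
\cos p_j+\gamma\cos(k_j-p_j)=(1+\gamma\cos k_j)\cos p_j+\gamma\sin k_j\sin p_j .
\]
The coefficient vector \((1+\gamma\cos k_j,\gamma\sin k_j)\) has Euclidean length \(r_{\gamma,j}(k)\), since \((1+\gamma\cos k_j)^2+\gamma^2\sin^2k_j=1+\gamma^2+2\gamma\cos k_j\). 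For regular \(k\) this length is positive, so the angle \(p_{*,j}(k)\) is well defined by the stated cosine and sine. The addition formula then yields \eqref{eq:coordinate-phase-representation}, and hence
\[
\E_\gamma(k;p)=2\sum_{j=1}^3\bigl[1+\gamma-r_{\gamma,j}(k)\cos(p_j-p_{*,j}(k))\bigr].
\]
This function is separable, a sum of one-variable functions of \(p_j\).

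The extrema now follow coordinate by coordinate. Each summand is minimized exactly when \(\cos(p_j-p_{*,j})=1\), that is \(p_j=p_{*,j}\) modulo \(2\pi\). It is maximized exactly when the cosine equals \(-1\), that is \(p_j=p_{*,j}+\pi\). Because \(r_{\gamma,j}>0\), these points are unique in \(\T\). Summing over \(j\) gives \eqref{eq:mixed-min} and \eqref{eq:mixed-max}, together with the unique attainment at \(p^\pm(k)\).

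For the normal form, substitute \(p=p^-(k)+u\). Then \(\cos(p_j-p_{*,j})=\cos u_j\), so
\[
\E_\gamma-m_\gamma=2\sum_{j}r_{\gamma,j}(1-\cos u_j).
\]
Substitute \(p=p^+(k)+u\). Then \(\cos(p_j-p_{*,j})=-\cos u_j\), so
\[
M_\gamma-\E_\gamma=2\sum_j r_{\gamma,j}(1-\cos u_j).
\]
Both differences are the same expression, which is \eqref{eq:mixed-upper-normal-form}.

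Differentiating twice in \(u\) at \(u=0\) gives \(D_p^2\E_\gamma(k;p^+)=-2\operatorname{diag}(r_{\gamma,j})\). This is \eqref{eq:mixed-upper-Hessian}, and it proves nondegeneracy at the maximum; the minimum is identical with the opposite sign. Finally, \(1-\cos u_j=u_j^2/2+O(u_j^4)\), which gives the local expansion \(\sum_j r_{\gamma,j}u_j^2+O(|u|^4)\).

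No step is a genuine obstacle. The only point needing care is that regularity is exactly what makes \(p_*(k)\) well defined and the extrema strict. If some \(r_{\gamma,j}=0\), which happens only at \(\gamma=1\) with \(k_j=\pi\), the \(j\)-th summand is constant and uniqueness fails, so the hypothesis cannot be dropped.
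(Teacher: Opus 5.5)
Your proposal is correct and follows the paper's own proof: rewrite \(\E_\gamma(k;p)\) via \eqref{eq:coordinate-phase-representation} as the separable sum \(2\sum_j[1+\gamma-r_{\gamma,j}(k)\cos(p_j-p_{*,j}(k))]\), obtain the unique extrema by setting each cosine equal to \(\pm1\) (using \(r_{\gamma,j}(k)>0\)), and get the normal form and Hessian by translation and differentiation. The only addition is that you also verify the phase identity itself, which the paper states just before the lemma without proof.
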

\begin{proof}
By \eqref{eq:coordinate-phase-representation},
\[
\E_\gamma(k;p)
=
2\sum_{j=1}^3
\bigl[1+\gamma-r_{\gamma,j}(k)
\cos(p_j-p_{*,j}(k))\bigr].
\]
Since all \(r_{\gamma,j}(k)\) are positive, the extrema
and their uniqueness follow by setting each cosine equal
to \(1\) or \(-1\). Translation by \(p^+(k)\) gives
\eqref{eq:mixed-upper-normal-form}; differentiation gives
\eqref{eq:mixed-upper-Hessian}. The Hessian at the minimum
is \(2\operatorname{diag}(r_{\gamma,1},r_{\gamma,2},
r_{\gamma,3})>0\).
\end{proof}
The free operator is multiplication by the continuous
function \(\E_\gamma(k;\cdot)\), whose range is
\([m_\gamma(k),M_\gamma(k)]\). Since \(v\) has rank one,
Weyl's theorem yields
\[
\sigma_{\mathrm{ess}}(h_{\mu,\gamma}(k))
=
[m_\gamma(k),M_\gamma(k)].
\]
For \(\mu>0\), the inequality
\(h_{\mu,\gamma}(k)\ge m_\gamma(k)I\) excludes spectrum
below \(m_\gamma(k)\).
For regular \(k\), there are no eigenvalues in the band. Indeed,
the eigenvalue equation is
\((\E_\gamma(k;p)-e)u(p)=-\mu c\), where
\(c=\int u\,d\eta\). Each level set of the dispersion has measure
zero, since its equation has at most two solutions in \(p_1\) for
fixed \(p_2,p_3\). Thus \(c=0\) implies \(u=0\); otherwise
\(u=-\mu c/(\E_\gamma(k;\cdot)-e)\).
At a band edge this behaves as \(|p-p^\pm(k)|^{-2}\), which is
not locally square integrable. For an interior energy, choose
\(t\in(0,\pi)\) with \(\E_\gamma(k;p_*(k)+t\mathbf1)=e\).
The derivative \(\partial_{p_1}\E_\gamma=2r_{\gamma,1}(k)\sin t\)
is nonzero there. Using \(s=\E_\gamma(k;p)-e\) as one local
coordinate, the squared reciprocal has the divergent integral
\(\int_{-a}^a|s|^{-2}\,ds\). Consequently,
\begin{equation}
\label{eq:no-lower-mixed-eigenvalues}
\sigma_{\mathrm p}(h_{\mu,\gamma}(k))\cap(-\infty,M_\gamma(k)]
=\varnothing.
\end{equation}

\subsubsection{Discrete spectrum and upper-threshold resonance}
\label{subsec:upper-critical-coupling}
For regular \(k\), define
\begin{equation}
\label{eq:J-gamma}
J_\gamma(k)
=
\int_{\T^3}
\frac{\eta(dp)}{M_\gamma(k)-\E_\gamma(k;p)}.
\end{equation}
By the normal form in Lemma~\ref{lem:mixed-band}, the integrand
is comparable to \(|u|^{-2}\) near the unique maximum and is
bounded away from it. Hence \(0<J_\gamma(k)<\infty\).
The upper critical coupling is
\begin{equation}
\label{eq:pair-critical}
\mu_{2,\mathrm c}^+(k,\gamma)=J_\gamma(k)^{-1}.
\end{equation}
For \(z>M_\gamma(k)\), set
\[
A_\gamma^+(k;z)
=
\int_{\T^3}\frac{\eta(dp)}{z-\E_\gamma(k;p)}.
\]
The eigenvalue equation gives the rank-one
Birman--Schwinger criterion
\begin{equation}
\label{eq:pair-eigenvalue-determinant-equivalence}
z\in\sigma_{\mathrm{disc}}(h_{\mu,\gamma}(k))
\quad\Longleftrightarrow\quad
\mu A_\gamma^+(k;z)=1,
\qquad z>M_\gamma(k).
\end{equation}
\begin{proposition}[Repulsive mixed-pair critical coupling]
\label{prop:mixed-critical}
Let \(k\) be regular and \(\mu>0\).
\begin{enumerate}
\item
If \(0<\mu<\mu_{2,\mathrm c}^+(k,\gamma)\), then
\(h_{\mu,\gamma}(k)\) has no discrete eigenvalues.
\item
If \(\mu=\mu_{2,\mathrm c}^+(k,\gamma)\), then
\(h_{\mu,\gamma}(k)\) has no discrete eigenvalues and
\(M_\gamma(k)\) is an upper-threshold resonance, with
generalised eigenfunction
\begin{equation}
\label{eq:upper-virtual-function}
\psi_k^+(p)
=
\frac{1}{M_\gamma(k)-\E_\gamma(k;p)}
\in L^1(\T^3)\setminus L^2(\T^3),
\end{equation}
defined almost everywhere.
\item
If \(\mu>\mu_{2,\mathrm c}^+(k,\gamma)\), then
\(h_{\mu,\gamma}(k)\) has exactly one discrete eigenvalue
\(e_{\mu,\gamma}^+(k)>M_\gamma(k)\), and it is simple.
\end{enumerate}
\end{proposition}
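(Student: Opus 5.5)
The plan is to reduce everything to the scalar function \(z\mapsto A_\gamma^+(k;z)\) on \((M_\gamma(k),\infty)\) and the criterion \eqref{eq:pair-eigenvalue-determinant-equivalence}. Spectrum below \(m_\gamma(k)\) is already excluded, and \eqref{eq:no-lower-mixed-eigenvalues} excludes eigenvalues in \([m_\gamma(k),M_\gamma(k)]\). Hence every discrete eigenvalue lies above \(M_\gamma(k)\) and is a root of \(\mu A_\gamma^+(k;z)=1\).

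First I establish the monotonicity and limits of \(A_\gamma^+(k;\cdot)\). For \(z>M_\gamma(k)\) the integrand \(1/(z-\E_\gamma(k;p))\) is positive and bounded, and it is strictly decreasing in \(z\). So \(A_\gamma^+\) is continuous and strictly decreasing, with \(A_\gamma^+(k;z)\to0\) as \(z\to\infty\) (since \(A_\gamma^+(k;z)\le(z-M_\gamma(k))^{-1}\)). As \(z\downarrow M_\gamma(k)\), the monotone convergence theorem gives \(A_\gamma^+(k;z)\uparrow J_\gamma(k)\), which is finite by the normal form of Lemma~\ref{lem:mixed-band}. Thus \(A_\gamma^+(k;\cdot)\) maps \((M_\gamma(k),\infty)\) bijectively onto \((0,J_\gamma(k))\).

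Next I read off the three cases. If \(\mu\le\mu_{2,\mathrm c}^+=J_\gamma^{-1}\), then \(\mu A_\gamma^+(k;z)<\mu J_\gamma\le1\) for every \(z>M_\gamma(k)\), so there are no roots and no discrete eigenvalues. This covers (1) and the first half of (2). If \(\mu>J_\gamma^{-1}\), then \(1/\mu\in(0,J_\gamma)\), so there is exactly one root \(e_{\mu,\gamma}^+(k)\). For simplicity, note that any eigenfunction at \(z>M_\gamma(k)\) has the form \(u=\mu c/(z-\E_\gamma)\) with \(c=\int u\,d\eta\neq0\) (if \(c=0\) then \(u=0\)). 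So the eigenspace is spanned by \(1/(z-\E_\gamma)\) and is one-dimensional. This proves (3). I should double-check the sign in the eigenvalue equation: \((\E_\gamma-z)u+\mu c=0\) gives \(u=\mu c/(z-\E_\gamma)\), and integrating gives \(c=\mu cA_\gamma^+\), which is consistent with the criterion.

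For the resonance in (2), I verify directly that \(\psi_k^+\) solves the eigenvalue equation at \(z=M_\gamma(k)\) in the generalised sense. Since \(\psi_k^+\in L^1\), the average \(v\psi_k^+=J_\gamma(k)\) is well defined. Then
\[
(\E_\gamma-M_\gamma)\psi_k^++\mu_{2,\mathrm c}^+J_\gamma=-1+1=0
\]
almost everywhere. For the integrability claims, the normal form \eqref{eq:mixed-upper-normal-form} gives \(\psi_k^+\asymp|u|^{-2}\) near \(p^+(k)\), while \(\psi_k^+\) is bounded elsewhere. In three dimensions \(|u|^{-2}\) is integrable but \(|u|^{-4}\) is not, so \(\psi_k^+\in L^1\setminus L^2\). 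The only delicate point is justifying the comparison \(M_\gamma-\E_\gamma\asymp|u|^2\) uniformly on a neighbourhood of the maximum. This follows from \(2(1-\cos u_j)\ge c\,u_j^2\) for \(|u_j|\le\pi\) together with \(r_{\gamma,j}(k)>0\), so I do not expect a real obstacle anywhere.
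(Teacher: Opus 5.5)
Your proposal is correct and follows essentially the same route as the paper. Both arguments use strict monotonicity of \(A_\gamma^+(k;\cdot)\) from \(J_\gamma(k)\) down to \(0\), the scalar root criterion together with \eqref{eq:no-lower-mixed-eigenvalues}, simplicity from the explicit form \(\mu c/(z-\E_\gamma)\) of every eigenfunction, and a direct check of the resonance equation with \(\psi_k^+\asymp|u|^{-2}\) obtained from the normal form.
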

\begin{proof}
The function \(A_\gamma^+(k;z)\) decreases continuously and strictly
from \(J_\gamma(k)\) to zero as \(z\) runs from \(M_\gamma(k)\)
to infinity. Hence the scalar equation
\(\mu A_\gamma^+(k;z)=1\) has one solution exactly when
\(\mu J_\gamma(k)>1\). At a solution \(z_*\), every eigenfunction
is a multiple of \((z_*-\E_\gamma(k;\cdot))^{-1}\in L^2(\T^3)\):
the eigenvalue equation gives this formula, and its integral cannot
vanish for a nonzero eigenfunction. The eigenvalue is therefore
simple. Together with \eqref{eq:no-lower-mixed-eigenvalues}, this
proves the discrete-spectrum assertions.
At the critical coupling, \eqref{eq:mixed-upper-normal-form} gives
\(\psi_k^+(p^+(k)+u)\asymp|u|^{-2}\), so
\(\psi_k^+\in L^1\setminus L^2\). The defining formulas extend
\(h_{0,\gamma}(k)\) and \(v\) to bounded operators on \(L^1\), where
\[
\bigl(h_{\mu_{2,\mathrm c}^+(k,\gamma),\gamma}(k)-M_\gamma(k)\bigr)
\psi_k^+
=\bigl[-1+\mu_{2,\mathrm c}^+(k,\gamma)J_\gamma(k)\bigr]\mathbf1=0.
\]
Thus the resonance equation holds almost everywhere and in the
distributional sense.
\end{proof}

\subsubsection{Mixed channels}
For fixed total quasi-momentum \(K\), define on the full space \(\cH\)
\[
H_{13,\mu}^{\mathrm{ch}}(K)=H_\gamma^{\mathrm{free}}(K)+\mu V_{13},
\qquad
H_{23,\mu}^{\mathrm{ch}}(K)=H_\gamma^{\mathrm{free}}(K)+\mu V_{23}.
\]
The exchange operator intertwines these channels:
\(P_{12}H_{13,\mu}^{\mathrm{ch}}(K)P_{12}=H_{23,\mu}^{\mathrm{ch}}(K)\).
Neither channel separately preserves \(\cH^{\mathrm a}\).
Under the unitary identification
\[
(\mathcal W_{13}f)(q;p)=f(p,q),
\qquad
\mathcal W_{13}:\cH\longrightarrow
\int_{\T^3}^{\oplus}L^2(\T^3,\eta)\,\eta(dq),
\]
the first channel has the decomposition
\begin{equation}
\label{eq:mixed-channel-direct-integral}
\mathcal W_{13}H_{13,\mu}^{\mathrm{ch}}(K)\mathcal W_{13}^{-1}
=\int_{\T^3}^{\oplus}H_{\mathrm{mix},\mu,\gamma}(K;q)\,\eta(dq),
\qquad
H_{\mathrm{mix},\mu,\gamma}(K;q)=\varepsilon(q)I+h_{\mu,\gamma}(K-q).
\end{equation}
The second channel has the same representation with the roles of
\(p\) and \(q\) interchanged. Each fibre spectrum is therefore
\[
\sigma(H_{\mathrm{mix},\mu,\gamma}(K;q))
=\varepsilon(q)+\sigma(h_{\mu,\gamma}(K-q)),
\]
and its essential spectrum is
\[
[\varepsilon(q)+m_\gamma(K-q),\,
  \varepsilon(q)+M_\gamma(K-q)].
\]
For \(\mu>0\) and regular \(K-q\), Proposition~\ref{prop:mixed-critical} gives
the complete off-band classification. The critical coupling is
\(\mu_{2,\mathrm c}^+(K-q,\gamma)\); at equality the upper
channel edge is a resonance, and above this coupling the unique
upper eigenvalue is \(\varepsilon(q)+e_{\mu,\gamma}^+(K-q)\).
Since the fibres depend continuously on \(q\) in operator norm,
their spectra determine the channel spectrum:
\[
\sigma(H_{13,\mu}^{\mathrm{ch}}(K))
=\sigma(H_{23,\mu}^{\mathrm{ch}}(K))
=\bigcup_{q\in\T^3}
\sigma(H_{\mathrm{mix},\mu,\gamma}(K;q)).
\]
The union is closed by norm continuity and compactness of the
spectator torus; see \cite[Theorem~XIII.85]{ReedSimon1978}.

\subsection{Critical tuning and threshold properties of the mixed channels}
\label{subsec:critical-mixed-channels}
\reviewedit{Fix \(\gamma>0\). For each \(K\) sufficiently close to
\(\pi\mathbf1\), we choose a mixed coupling independent of the
spectator momentum. At the spectator momentum corresponding to the
free maximum, both mixed-channel fibres have an upper-threshold
resonance at \(E_{+,\gamma}(K)\). We prove that this energy is also
the upper edge of each mixed-channel spectrum.}

\subsubsection{Free spectrum}
Define
\begin{align}
E_{-,\gamma}(K)
&=\min_{p,q\in\T^3}\E_\gamma(K;p,q),
\label{eq:free-three-lower-edge}\\
E_{+,\gamma}(K)
&=\max_{p,q\in\T^3}\E_\gamma(K;p,q).
\label{eq:free-three-upper-edge}
\end{align}
\begin{lemma}[Free spectrum on the antisymmetric spaces]
\label{lem:free-essential-spectrum}
For every \(K\in\T^3\), the restriction of
\(H_\gamma^{\mathrm{free}}(K)\) to \(\cH^{\mathrm a}\) satisfies
\[
\sigma(H_\gamma^{\mathrm{free}}(K)|_{\cH^{\mathrm a}})
=
\sigma_{\mathrm{ess}}(H_\gamma^{\mathrm{free}}(K)|_{\cH^{\mathrm a}})
=
[E_{-,\gamma}(K),E_{+,\gamma}(K)].
\]
On \(\widehat{\frH}^{\mathrm a}\),
\[
\sigma(\widehat H_\gamma^{\mathrm{free}})
=
\sigma_{\mathrm{ess}}(\widehat H_\gamma^{\mathrm{free}})
=
[0,12(2+\gamma)].
\]
\end{lemma}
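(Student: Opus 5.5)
The proof has two parts: the fibre statement on \(\cH^{\mathrm a}\), and then the coordinate-space statement via the direct integral \eqref{eq:three-particle-direct-integral}.

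\emph{Fibre statement.} The restriction of \(H_\gamma^{\mathrm{free}}(K)\) to \(\cH^{\mathrm a}\) is multiplication by the continuous function \(\E_\gamma(K;\cdot,\cdot)\), which is symmetric in \((p,q)\), on the antisymmetric functions.
\begin{enumerate}[(a)]
\item \emph{Inclusion \(\subseteq\).} Multiplication by a real continuous function is self-adjoint with spectrum in the closure of its range. The torus is compact and connected, so the range is exactly \([E_{-,\gamma}(K),E_{+,\gamma}(K)]\).
\item \emph{Reverse inclusion, via Weyl sequences.} Fix \(e\) in this interval and pick \((p_0,q_0)\) with \(\E_\gamma(K;p_0,q_0)=e\). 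If \(p_0\ne q_0\), take \(g_n\) normalized and supported in shrinking balls \(B_n\) around \((p_0,q_0)\) with \(B_n\) disjoint from its reflection. Set \(f_n=(g_n-P_{12}g_n)/\sqrt2\). Then \(f_n\in\cH^{\mathrm a}\), \(\|f_n\|=1\), and \(f_n\) is supported where \(|\E_\gamma-e|\) tends to zero, because \(\E_\gamma\) is symmetric. Hence \(\|(H^{\mathrm{free}}-e)f_n\|\to0\).
\item \emph{Orthogonality.} Take radii decreasing fast enough that the supports are pairwise disjoint, or use weak convergence to zero as the supports shrink. This makes \((f_n)\) a singular Weyl sequence, so \(e\in\sigma_{\mathrm{ess}}\).
\item \emph{Diagonal points.} If \(p_0=q_0\), note that \(e\) lies in the closure of the image of the off-diagonal set. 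That set is dense in \((\T^3)^2\) and \(\E_\gamma\) is continuous. Since the essential spectrum is closed, \(e\in\sigma_{\mathrm{ess}}\). Together with (a), this gives \(\sigma=\sigma_{\mathrm{ess}}=[E_-,E_+]\).
\end{enumerate}

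\emph{Coordinate-space statement.} By the direct-integral decomposition, the full spectrum is contained in the closure of \(\bigcup_K[E_{-,\gamma}(K),E_{+,\gamma}(K)]\). Conversely, for multiplication operators it is simpler to argue without fibres. After the Fourier transform, \(\widehat H_\gamma^{\mathrm{free}}\) is multiplication by \(\varepsilon(p)+\varepsilon(q)+\gamma\varepsilon(r)\) on \(\frH^{\mathrm a}\). The same antisymmetrized Weyl-sequence argument as in (b)--(d) applies: the function is symmetric in \((p,q)\), and off-diagonal points are dense. So the spectrum equals the closure of the range, and all of it is essential.

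\emph{Range computation.} The range is \([0,12+12+12\gamma]=[0,12(2+\gamma)]\). The minimum \(0\) is attained at \(p=q=r=0\), and the maximum at \(p=q=r=\pi\mathbf1\), since \(\varepsilon\) ranges over \([0,12]\). Both extremal points lie on the diagonal \(p=q\), so they are reached only as limits of off-diagonal values. Closedness of the essential spectrum again handles them.

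The only mildly delicate point is (d): the diagonal values, where no antisymmetric localized function exists. I expect it to be handled cleanly by the density and closedness argument rather than by a direct construction.
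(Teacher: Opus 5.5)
Your proof is correct and follows essentially the paper's argument: localize at an off-diagonal point of the exchange-symmetric multiplier, antisymmetrize, and then apply the same reasoning to \(\varepsilon(p)+\varepsilon(q)+\gamma\varepsilon(r)\) after the Fourier transform. The only difference is packaging. The paper shows that the spectral subspace of every interval \((e-\epsilon,e+\epsilon)\) is infinite-dimensional; since an open preimage always contains off-diagonal points, diagonal values need no separate step. You instead use singular Weyl sequences and treat diagonal values through closedness of the essential spectrum.
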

\begin{proof}
Continuity and connectedness imply that the range of each
free multiplier is an interval. Every open neighbourhood
of a value in that range has a nonempty open preimage.
For the fibre multiplier, this preimage is invariant under
exchange of \(p,q\) and contains a point with \(p\ne q\).
Choose an open neighbourhood \(U\) of that point such that
\(U\) and its exchange image are disjoint and contained
in the preimage. Antisymmetric extensions of functions
in \(L^2(U)\) form an infinite-dimensional subspace of
the corresponding spectral subspace. Thus every value
in the range belongs to the essential spectrum of the
antisymmetric restriction. No spectrum lies outside
the range.
The same argument applies to
\(\varepsilon(p)+\varepsilon(q)+\gamma\varepsilon(r)\),
whose range is \([0,12(2+\gamma)]\).
Fourier equivalence gives the coordinate-space assertion.
\end{proof}

\subsubsection{Geometry of the free upper edge}
\label{subsec:upper-geometry}
Near \(\pi\mathbf1\), write
\(K=\pi\mathbf1+\widetilde K\) in local torus coordinates.
\begin{lemma}[Analytic upper-threshold point]
\label{lem:analytic-upper-threshold}
There exists an open neighbourhood \(\mathcal O_0^+\) of
\(\pi\mathbf1\) such that, for every \(K\in\mathcal O_0^+\),
the function \(\E_\gamma(K;\cdot,\cdot)\) has a
unique nondegenerate global maximum at \((k_K^+,k_K^+)\).
The map \(K\mapsto k_K^+\) is real analytic,
\(k_{\pi\mathbf1}^+=\pi\mathbf1\), and
\begin{equation}
\label{eq:upper-stationarity}
\sin k_{K,j}^+
=
\gamma\sin(K_j-2k_{K,j}^+),
\qquad j=1,2,3.
\end{equation}
Set
\[
\kappa_K^+=K-k_K^+,
\qquad
\rho_K^+=K-2k_K^+.
\]
\reviewedit{At the free maximum, \(k_K^+\) is the momentum of each
fermion, \(\rho_K^+\) is that of the third particle, and
\(\kappa_K^+\) is the total momentum of either mixed pair.
The free upper edge is therefore attained at spectator momentum
\(q=k_K^+\) in the mixed channels and \(r=\rho_K^+\) in the
fermion-pair channel.}
In local coordinates centred at \(K=\pi\mathbf1\), the following
expansions hold as \(\widetilde K\to0\):
\begin{align}
k_K^+
&=
\pi\mathbf1
+\frac{\gamma}{1+2\gamma}\widetilde K
+O(|\widetilde K|^3),
\label{eq:upper-k-expansion}
\\
\kappa_K^+
&=
\frac{1+\gamma}{1+2\gamma}\widetilde K
+O(|\widetilde K|^3),
\label{eq:upper-kappa-expansion}
\\
\rho_K^+
&\equiv
\pi\mathbf1
+\frac{1}{1+2\gamma}\widetilde K
+O(|\widetilde K|^3)
\pmod{2\pi\mathbb Z^3}.
\label{eq:upper-rho-expansion}
\end{align}
\end{lemma}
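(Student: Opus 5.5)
At \(K=\pi\mathbf1\) I would check the maximum directly, then extend to nearby \(K\) by the implicit function theorem and a compactness argument. I would finish with a Taylor expansion of the stationarity equations.

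\emph{The point \(K=\pi\mathbf1\).} Here
\[
\E_\gamma(\pi\mathbf1;p,q)=\varepsilon(p)+\varepsilon(q)+\gamma\varepsilon(\pi\mathbf1-p-q).
\]
Each term is at most \(12\), with equality only at \(p=\pi\mathbf1\), \(q=\pi\mathbf1\) and \(p+q\equiv0\), respectively. So the global maximum \(12(2+\gamma)\) is attained uniquely at \((\pi\mathbf1,\pi\mathbf1)\), and all three conditions are compatible there. The problem splits into coordinates. In each coordinate, the Hessian of \(-\E\) at the maximum is \(2\begin{pmatrix}1+\gamma&\gamma\\ \gamma&1+\gamma\end{pmatrix}\). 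Its determinant is \(4(1+2\gamma)>0\), so the maximum is nondegenerate.

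\emph{Nearby \(K\).} The gradient equations \(\partial_p\E=\partial_q\E=0\) read
\[
\sin p_j=\gamma\sin(K_j-p_j-q_j),\qquad \sin q_j=\gamma\sin(K_j-p_j-q_j).
\]
The function is real analytic and its Hessian is invertible at \((\pi\mathbf1;\pi\mathbf1,\pi\mathbf1)\). The analytic implicit function theorem therefore gives a unique analytic critical point \((p(K),q(K))\) near \((\pi\mathbf1,\pi\mathbf1)\), and it is a nondegenerate local maximum for \(K\) close to \(\pi\mathbf1\). The exchange symmetry \((p,q)\mapsto(q,p)\) maps critical points to critical points. By local uniqueness, \(p(K)=q(K)=:k_K^+\), and this yields \eqref{eq:upper-stationarity}.

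\emph{Global uniqueness.} This is the only step needing care. I would argue by compactness. Outside a small neighbourhood \(W\) of \((\pi\mathbf1,\pi\mathbf1)\), the function \(\E_\gamma(\pi\mathbf1;\cdot)\) is at most \(12(2+\gamma)-c\) for some \(c>0\), by the uniqueness shown above. Uniform continuity in \(K\) keeps \(\E_\gamma(K;\cdot)\) below \(12(2+\gamma)-c/2\) outside \(W\) for \(K\) near \(\pi\mathbf1\). Inside \(W\), its value at the critical point is at least \(12(2+\gamma)-c/4\). Hence every global maximiser lies in \(W\). Choosing \(W\) inside the implicit-function neighbourhood, every maximiser there is a critical point, so it equals \((k_K^+,k_K^+)\).

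\emph{Expansions.} Write \(k=\pi\mathbf1+x\) and \(K=\pi\mathbf1+\widetilde K\). Then \(K-2k\equiv\pi\mathbf1+\widetilde K-2x\), and the stationarity equation becomes
\[
-\sin x_j=-\gamma\sin(\widetilde K_j-2x_j).
\]
At linear order this gives \(x=\gamma(\widetilde K-2x)\), that is, \(x=\frac{\gamma}{1+2\gamma}\widetilde K\). The map \(\widetilde K\mapsto x\) is odd, because the equation is invariant under \((x,\widetilde K)\mapsto(-x,-\widetilde K)\) and the solution is unique. So there are no quadratic terms, and the error is \(O(|\widetilde K|^3)\), which is \eqref{eq:upper-k-expansion}. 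The formulas for \(\kappa_K^+=K-k_K^+\) and \(\rho_K^+=K-2k_K^+\) then follow by direct substitution. The neighbourhood \(\mathcal O_0^+\) is the intersection of the neighbourhoods used above.
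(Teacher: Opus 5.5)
Your proposal is correct and follows essentially the same route as the paper. Both arguments identify the unique nondegenerate maximum at \(K=\pi\mathbf1\), apply the analytic implicit function theorem with exchange symmetry to obtain the symmetric critical point \((k_K^+,k_K^+)\), use compactness for global uniqueness, and invoke oddness of \(\widetilde K\mapsto k_K^+-\pi\mathbf1\) to remove the quadratic Taylor term. You are slightly more explicit (the termwise bound at \(\pi\mathbf1\), the \(2\times2\) Hessian block, the \(c/2\), \(c/4\) bookkeeping), but there is no substantive difference.
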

\begin{proof}
At \(K=\pi\mathbf1\),
\[
\E_\gamma(\pi\mathbf1;p,q)\le12(2+\gamma),
\]
with equality only at \(p=q=\pi\mathbf1\). Writing
\(p=\pi\mathbf1+x\), \(q=\pi\mathbf1+y\), we obtain
\[
E_{+,\gamma}(\pi\mathbf1)
-\E_\gamma(\pi\mathbf1;p,q)
=
|x|^2+|y|^2+\gamma|x+y|^2
+O((|x|+|y|)^4).
\]
The quadratic form is positive definite. The analytic
implicit function theorem therefore gives a unique nearby
critical point, which remains a nondegenerate maximum.
Exchange symmetry forces its components to coincide.
By compactness, the energy remains uniformly below the
maximum outside a fixed neighbourhood of
\((\pi\mathbf1,\pi\mathbf1)\). After shrinking
\(\mathcal O_0^+\), the continued maximum is consequently
the unique global maximum.
The critical-point equations give
\eqref{eq:upper-stationarity}. Write
\(k_{\pi\mathbf1+\widetilde K}^+
=\pi\mathbf1+a(\widetilde K)\). Then
\[
\sin a_j(\widetilde K)
=
\gamma\sin(\widetilde K_j-2a_j(\widetilde K)),
\qquad
Da(0)=\frac{\gamma}{1+2\gamma}I.
\]
Uniqueness implies \(a(-\widetilde K)=-a(\widetilde K)\);
hence the quadratic Taylor term vanishes.
The three expansions follow.
\end{proof}
For the spectator momentum \(q\), define
\[
\tau_K^+(q)
=
\varepsilon(q)+M_\gamma(K-q)
=
\max_{p\in\T^3}\E_\gamma(K;p,q).
\]
Thus \(\tau_K^+(q)\) is the upper edge of the free
mixed-channel fibre with spectator momentum \(q\).
\begin{proposition}[Free upper edge and mixed-pair maximum]
\label{prop:upper-edge-coincidence}
For \(K\in\mathcal O_0^+\),
\begin{equation}
\label{eq:upper-edge-coincidence}
E_{+,\gamma}(K)
=
\varepsilon(k_K^+)+M_\gamma(\kappa_K^+),
\end{equation}
and the unique maximiser of
\(\E_\gamma(\kappa_K^+;\cdot)\) satisfies
\begin{equation}
\label{eq:pair-maximizer-coincidence}
p^+(\kappa_K^+)=k_K^+.
\end{equation}
\end{proposition}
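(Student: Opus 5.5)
The plan is to write the three-particle maximum as an iterated maximum: first over the fermion momentum \(p\) at fixed spectator momentum \(q\), then over \(q\). For fixed \(K\) and \(q\) we have \(\E_\gamma(K;p,q)=\varepsilon(q)+\E_\gamma(K-q;p)\). Hence
\[
E_{+,\gamma}(K)=\max_{q}\max_p\E_\gamma(K;p,q)=\max_q\tau_K^+(q),
\]
with \(\tau_K^+(q)=\varepsilon(q)+M_\gamma(K-q)\).

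By Lemma~\ref{lem:analytic-upper-threshold}, the global maximum of \(\E_\gamma(K;\cdot,\cdot)\) is attained only at \((k_K^+,k_K^+)\). It follows that \(\max_q\tau_K^+(q)\) is attained at \(q=k_K^+\). Indeed, if some other \(q'\) satisfied \(\tau_K^+(q')=E_{+,\gamma}(K)\), a maximiser \(p'\) of \(\E_\gamma(K;\cdot,q')\), which exists by compactness, would give a second global maximiser \((p',q')\). This contradicts uniqueness. Substituting \(q=k_K^+\) and \(K-k_K^+=\kappa_K^+\) gives \eqref{eq:upper-edge-coincidence}.

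For \eqref{eq:pair-maximizer-coincidence}, we use the inequality chain
\[
E_{+,\gamma}(K)=\E_\gamma(K;k_K^+,k_K^+)=\varepsilon(k_K^+)+\E_\gamma(\kappa_K^+;k_K^+)\le\varepsilon(k_K^+)+M_\gamma(\kappa_K^+)=E_{+,\gamma}(K).
\]
Equality throughout shows that \(k_K^+\) maximises \(\E_\gamma(\kappa_K^+;\cdot)\). Lemma~\ref{lem:mixed-band} says the maximiser \(p^+(\kappa_K^+)\) is unique, so it equals \(k_K^+\).

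The only genuine obstacle is that Lemma~\ref{lem:mixed-band} requires \(\kappa_K^+\) to be regular, i.e. \(r_{\gamma,j}(\kappa_K^+)>0\) for all \(j\). By \eqref{eq:upper-kappa-expansion}, \(\kappa_K^+\to0\) as \(K\to\pi\mathbf1\), and \(r_{\gamma,j}(0)=1+\gamma>0\). So after shrinking \(\mathcal O_0^+\) if necessary, and using continuity of \(K\mapsto\kappa_K^+\), regularity holds throughout \(\mathcal O_0^+\). It is in fact enough that \(\cos\kappa_{K,j}^+\ne-1\) whenever \(\gamma=1\); for \(\gamma\ne1\), \(r_{\gamma,j}\ge|1-\gamma|>0\) always.

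As a sanity check, the stationarity equation \eqref{eq:upper-stationarity} is exactly the condition \(\sin p_j=\gamma\sin(\kappa_j-p_j)\) for a critical point of \(\E_\gamma(\kappa_K^+;\cdot)\) at \(p=k_K^+\). This is consistent with \(p^+(\kappa_K^+)=p_*(\kappa_K^+)+\pi\mathbf1\).
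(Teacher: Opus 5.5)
Your proposal is correct and follows essentially the same argument as the paper: you write \(E_{+,\gamma}(K)\) as the iterated maximum, first over \(p\) at fixed spectator \(q\) and then over \(q\), and use uniqueness of the global maximiser \((k_K^+,k_K^+)\) to obtain both \eqref{eq:upper-edge-coincidence} and \(p^+(\kappa_K^+)=k_K^+\). One small simplification: uniqueness of the maximiser of \(\E_\gamma(\kappa_K^+;\cdot)\) already follows from global uniqueness, because a second maximiser \(p'\) would make \((p',k_K^+)\) a second global maximiser; this also forces \(\kappa_K^+\) to be regular, so your shrinking of \(\mathcal O_0^+\) is harmless but not needed.
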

\begin{proof}
The unique maximizer \((k_K^+,k_K^+)\) of the full dispersion
maximizes first over \(p\) at fixed \(q=k_K^+\), and then over
\(q\). Hence \(\tau_K^+(k_K^+)=E_{+,\gamma}(K)\) and the unique
mixed-pair maximizer at \(\kappa_K^+\) is \(k_K^+\).
\end{proof}
Define
\begin{align}
B_K^+
&=
-\frac12\bigl[
D^2\varepsilon(k_K^+)
+\gamma D^2\varepsilon(\rho_K^+)
\bigr],
\label{eq:upper-B}\\
C_K^+
&=
-\frac{\gamma}{2}D^2\varepsilon(\rho_K^+).
\label{eq:upper-C}
\end{align}
Nondegeneracy of the maximum gives
\[
-\frac12D^2_{(p,q)}
\E_\gamma(K;k_K^+,k_K^+)
=
\begin{pmatrix}
B_K^+ & C_K^+\\
C_K^+ & B_K^+
\end{pmatrix}>0.
\]
The corresponding Schur complement
\begin{equation}
\label{eq:upper-R}
R_K^+
=
B_K^+-C_K^+(B_K^+)^{-1}C_K^+
\end{equation}
satisfies
\begin{equation}
\label{eq:upper-BR-positive}
B_K^+>0,
\qquad R_K^+>0.
\end{equation}
At the symmetric point,
\begin{equation}
\label{eq:upper-matrices-at-pi}
B_{\pi\mathbf1}^+=(1+\gamma)I,
\qquad
C_{\pi\mathbf1}^+=\gamma I,
\qquad
R_{\pi\mathbf1}^+
=\frac{1+2\gamma}{1+\gamma}I.
\end{equation}
Taylor expansion yields
\begin{equation}
\label{eq:upper-energy-expansion}
\begin{aligned}
&E_{+,\gamma}(K)
-\E_\gamma(K;k_K^++x,k_K^++y)\\
&\quad=
\langle B_K^+x,x\rangle
+2\langle C_K^+y,x\rangle
+\langle B_K^+y,y\rangle
+O((|x|+|y|)^3).
\end{aligned}
\end{equation}
The remainder is uniform for \(K\) in compact subsets
of \(\mathcal O_0^+\).
For \(q=k_K^++y\), the maximising momentum
is \(p=k_K^++x_K(y)\), where
\[
x_K(y)=-(B_K^+)^{-1}C_K^+y+O(|y|^2).
\]
Substitution into \eqref{eq:upper-energy-expansion} gives
\begin{equation}
\label{eq:upper-moving-threshold}
E_{+,\gamma}(K)-\tau_K^+(k_K^++y)
=
\langle R_K^+y,y\rangle+O(|y|^3),
\end{equation}
again uniformly on compact subsets of \(\mathcal O_0^+\).

\subsubsection{Critical coupling and simultaneous resonance}
\label{subsec:upper-critical}
Shrink \(\mathcal O_0^+\) so that \(\kappa_K^+\) remains
in a regular neighbourhood of \(0\), and define
\begin{equation}
\label{eq:three-body-upper-critical}
\mu_{\mathrm c}^+(K,\gamma)
=
\mu_{2,\mathrm c}^+(\kappa_K^+,\gamma)
=
\frac{1}{J_\gamma(\kappa_K^+)}.
\end{equation}
\begin{lemma}[Regularity of the critical coupling]
\label{lem:upper-critical-regularity}
The function \(K\mapsto\mu_{\mathrm c}^+(K,\gamma)\) is
positive and real analytic near \(\pi\mathbf1\), and
\begin{equation}
\label{eq:upper-critical-expansion}
\mu_{\mathrm c}^+(K,\gamma)
=
\mu_{\mathrm c}^+(\pi\mathbf1,\gamma)
+O(|\widetilde K|^2).
\end{equation}
\end{lemma}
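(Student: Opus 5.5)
The plan is to prove analyticity of \(K\mapsto J_\gamma(\kappa_K^+)\) near \(\pi\mathbf1\), then positivity, and finally to read off the expansion from a parity argument. Since \(\mu_{\mathrm c}^+=1/J_\gamma(\kappa_K^+)\) and \(0<J_\gamma<\infty\) for regular arguments, positivity is immediate. Analyticity of \(\mu_{\mathrm c}^+\) follows once we show that \(k\mapsto J_\gamma(k)\) is real analytic on a regular neighbourhood of \(k=0\), because Lemma~\ref{lem:analytic-upper-threshold} gives that \(K\mapsto\kappa_K^+\) is real analytic.

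To obtain analyticity of \(J_\gamma\), use the normal form \eqref{eq:mixed-upper-normal-form}. Substituting \(p=p^+(k)+u\), which is legitimate by translation invariance of Haar measure on \(\T^3\), gives
\[
J_\gamma(k)=\int_{\T^3}\frac{\eta(du)}{2\sum_{j=1}^3 r_{\gamma,j}(k)(1-\cos u_j)}.
\]
The singular point is now fixed at \(u=0\), and \(k\) enters only through the coefficients \(r_{\gamma,j}(k)=\sqrt{1+\gamma^2+2\gamma\cos k_j}\). These are real analytic and positive near \(k=0\); in fact they are close to \(1+\gamma\). It therefore suffices to show that
\[
F(r_1,r_2,r_3)=\int_{\T^3}\frac{\eta(du)}{2\sum_j r_j(1-\cos u_j)}
\]
is real analytic on \((0,\infty)^3\). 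Fix a real point \(r^0\). For complex \(r\) with \(|r_j-r_j^0|<\epsilon\) and \(\epsilon\) small, the real part of the denominator is at least \(c\sum_j(1-\cos u_j)\gtrsim|u|^2\). The integrand is then holomorphic in \(r\) and dominated by the integrable function \(C|u|^{-2}\), so Morera's theorem, or differentiation under the integral, gives holomorphy of \(F\) in this complex neighbourhood. Composing \(F\) with \(r_\gamma(\kappa_K^+)\) proves that \(K\mapsto\mu_{\mathrm c}^+(K,\gamma)\) is analytic.

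For the expansion \eqref{eq:upper-critical-expansion}, the idea is to exploit evenness. Each \(r_{\gamma,j}\) depends on \(k\) only through \(\cos k_j\), so \(J_\gamma(-k)=J_\gamma(k)\). From the proof of Lemma~\ref{lem:analytic-upper-threshold}, the map \(\widetilde K\mapsto a(\widetilde K)\) is odd, and hence \(\kappa_{\pi\mathbf1+\widetilde K}^+=\widetilde K-a(\widetilde K)\) is odd in \(\widetilde K\), as also reflected in \eqref{eq:upper-kappa-expansion}. It follows that \(\widetilde K\mapsto\mu_{\mathrm c}^+(\pi\mathbf1+\widetilde K,\gamma)\) is an even analytic function. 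Its first-order Taylor term therefore vanishes, and the remainder is \(O(|\widetilde K|^2)\). A direct alternative is to note that \(\partial_{k_j}r_{\gamma,j}(0)=-\gamma\sin k_j/r_{\gamma,j}\big|_{k=0}=0\), so \(DJ_\gamma(0)=0\), and then apply the chain rule.

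The only step requiring care is the uniform domination near \(u=0\) when \(r\) is complexified. It is handled by taking \(\epsilon<\min_j r_j^0/2\). With this choice, \(\operatorname{Re}\sum_j r_j(1-\cos u_j)\ge\tfrac12\min_j r_j^0\sum_j(1-\cos u_j)\), and this lower bound is comparable to \(|u|^2\) on the torus.
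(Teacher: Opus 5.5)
Your proposal is correct and is essentially the paper's argument: the paper likewise uses the normal form to fix the singularity at \(u=0\), proves holomorphy of \(J_\gamma\) by a dominated holomorphic parameter integral (Lemma~\ref{lem:J-finite}), obtains \(\nabla J_\gamma(0)=0\) from evenness in each \(k_j\), and composes with the analytic map \(K\mapsto\kappa_K^+\). The differences are cosmetic. You complexify the coefficients \(r_j\) rather than \(k\), and you dominate via a lower bound on the real part instead of the relative bound \(|a(k,u)-a(k_0,u)|\le c\,a(k_0,u)\). You also add the equivalent observation that the composite is even because \(\kappa_{\pi\mathbf1+\widetilde K}^+\) is odd in \(\widetilde K\).
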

\begin{proof}
By Lemma~\ref{lem:J-finite} (see
Appendix~\ref{app:technical-lemmas}), \(J_\gamma\) is positive
and real analytic near \(0\), with \(\nabla J_\gamma(0)=0\).
The assertions follow by composition with the analytic
map \(K\mapsto\kappa_K^+\) and
\eqref{eq:upper-kappa-expansion}.
\end{proof}
\begin{proposition}[Simultaneous mixed-channel resonance]
\label{prop:simultaneous-upper-resonance}
For \(K\in\mathcal O_0^+\), write
\[
k_+=k_K^+,\qquad
\kappa_+=\kappa_K^+,\qquad
\mu_{\mathrm c}^+=\mu_{\mathrm c}^+(K,\gamma).
\]
Then \(M_\gamma(\kappa_+)\) is an upper-threshold resonance
of \(h_{\mu_{\mathrm c}^+,\gamma}(\kappa_+)\), with generalised
eigenfunction
\[
\psi_{\kappa_+}^+(p)
=
\frac{1}{M_\gamma(\kappa_+)-\E_\gamma(\kappa_+;p)}
\in L^1(\T^3)\setminus L^2(\T^3),
\]
and
\[
\sigma(h_{\mu_{\mathrm c}^+,\gamma}(\kappa_+))
\cap(M_\gamma(\kappa_+),\infty)=\varnothing.
\]
Consequently,
\begin{equation}
\label{eq:simultaneous-edge-coincidence}
E_{+,\gamma}(K)
=
\varepsilon(k_+)+M_\gamma(\kappa_+)
\end{equation}
is an upper-threshold resonance energy of both mixed-channel
fibres with spectator momentum \(k_+\).
\end{proposition}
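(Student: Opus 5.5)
The plan is to reduce everything to the pair-level result, Proposition~\ref{prop:mixed-critical}, applied at the regular pair momentum \(k=\kappa_+\). Then transport the conclusion to the mixed-channel fibres using the decomposition \eqref{eq:mixed-channel-direct-integral} and Proposition~\ref{prop:upper-edge-coincidence}.

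First, after the shrinking of \(\mathcal O_0^+\) made just before \eqref{eq:three-body-upper-critical}, \(\kappa_+\) lies in a regular neighbourhood of \(0\); indeed \(r_{\gamma,j}(0)=1+\gamma>0\). So Lemma~\ref{lem:mixed-band} and Proposition~\ref{prop:mixed-critical} apply at \(k=\kappa_+\). By the definition \eqref{eq:three-body-upper-critical}, \(\mu_{\mathrm c}^+=J_\gamma(\kappa_+)^{-1}=\mu_{2,\mathrm c}^+(\kappa_+,\gamma)\). Part~2 of Proposition~\ref{prop:mixed-critical} then gives three facts at once: \(M_\gamma(\kappa_+)\) is an upper-threshold resonance, the generalised eigenfunction is \(\psi_{\kappa_+}^+\in L^1\setminus L^2\), and there are no discrete eigenvalues above \(M_\gamma(\kappa_+)\). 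Since \(\sigma_{\mathrm{ess}}(h_{\mu,\gamma}(\kappa_+))=[m_\gamma(\kappa_+),M_\gamma(\kappa_+)]\), any spectrum in \((M_\gamma(\kappa_+),\infty)\) would be discrete. This yields \(\sigma(h_{\mu_{\mathrm c}^+,\gamma}(\kappa_+))\cap(M_\gamma(\kappa_+),\infty)=\varnothing\).

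Second, for the channel statement, I would use \eqref{eq:mixed-channel-direct-integral} with spectator \(q=k_+\). The fibre is \(H_{\mathrm{mix},\mu_{\mathrm c}^+,\gamma}(K;k_+)=\varepsilon(k_+)I+h_{\mu_{\mathrm c}^+,\gamma}(K-k_+)\), and \(K-k_+=\kappa_+\) by definition. Shifting by the scalar \(\varepsilon(k_+)\) carries the resonance at \(M_\gamma(\kappa_+)\) to a resonance at \(\varepsilon(k_+)+M_\gamma(\kappa_+)\), with the same generalised eigenfunction \(\psi_{\kappa_+}^+(p)\); the resonance equation holds in \(L^1\) exactly as before. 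By \eqref{eq:upper-edge-coincidence}, this energy equals \(E_{+,\gamma}(K)\), which gives \eqref{eq:simultaneous-edge-coincidence}. For the \(23\)-channel, the intertwining \(P_{12}H_{13,\mu}^{\mathrm{ch}}(K)P_{12}=H_{23,\mu}^{\mathrm{ch}}(K)\) swaps the roles of \(p\) and \(q\). Its fibre at spectator momentum \(p=k_+\) is therefore unitarily the same operator, so it has the same resonance, with \(\psi_{\kappa_+}^+(q)\).

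There is no substantial obstacle. The only point to check carefully is bookkeeping: the mixed-pair total momentum in the fibre with spectator \(k_+\) is exactly \(\kappa_+\), and the coupling in \eqref{eq:three-body-upper-critical} is precisely the pair critical value at \(\kappa_+\), so that the spectator-independent choice of \(\mu\) is resonant in that particular fibre. Optionally, one can note from \eqref{eq:pair-maximizer-coincidence} that the singularity of \(\psi_{\kappa_+}^+\) sits at \(p=k_+\), consistent with the three-particle maximiser \((k_+,k_+)\).
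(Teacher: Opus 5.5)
Your proposal is correct and follows the paper's proof: apply Proposition~\ref{prop:mixed-critical} at the regular momentum \(\kappa_+\), where \(\mu_{\mathrm c}^+=\mu_{2,\mathrm c}^+(\kappa_+,\gamma)\), use \eqref{eq:upper-edge-coincidence} to identify the shifted resonance energy with \(E_{+,\gamma}(K)\), and pass to the second mixed channel by exchange symmetry. Your added details, namely deducing the empty spectrum above \(M_\gamma(\kappa_+)\) from \(\sigma_{\mathrm{ess}}=[m_\gamma,M_\gamma]\) together with the absence of discrete eigenvalues, and the scalar shift by \(\varepsilon(k_+)\), only spell out what the paper leaves implicit.
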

\begin{proof}
Apply Proposition~\ref{prop:mixed-critical} and
\eqref{eq:upper-edge-coincidence}. The two mixed-channel
fibres are unitarily equivalent under exchange of particles
\(1\) and \(2\).
\end{proof}

\subsubsection{Channel determinant asymptotics and transversality}
\label{subsec:upper-transversality}
The leading term of the channel determinant determines the singular
part of the Birman--Schwinger operator. The expansion below will later
be used to separate this part from a remainder that remains compact
at the threshold itself.
For \(z\ge E_{+,\gamma}(K)\), define
\begin{equation}
\label{eq:upper-channel-determinant}
\Delta_K^+(q,z)
=
1-\mu_{\mathrm c}^+(K,\gamma)
\int_{\T^3}
\frac{\eta(dp)}{z-\E_\gamma(K;p,q)}.
\end{equation}
At \(z=E_{+,\gamma}(K)\), the denominator can vanish only
when \(p=q=k_K^+\). The resulting singularity in the
\(p\)-integral is integrable in dimension three.
For regular \(k\), let
\[
B_\gamma^+(k)
=
-\frac12D_p^2\E_\gamma(k;p^+(k)).
\]
By \eqref{eq:pair-maximizer-coincidence},
\[
\begin{aligned}
B_\gamma^+(\kappa_K^+)
&=-\frac12D_p^2\E_\gamma(\kappa_K^+;k_K^+)\\
&=-\frac12\bigl[D^2\varepsilon(k_K^+)
+\gamma D^2\varepsilon(\rho_K^+)\bigr]
=B_K^+.
\end{aligned}
\]
Set
\begin{align}
\mathfrak c_K^+
&=\frac{1}{4\pi\sqrt{\det B_K^+}},
\label{eq:upper-cK}\\
\ell_K^+
&=-\nabla J_\gamma(\kappa_K^+).
\label{eq:upper-ellK}
\end{align}
\begin{theorem}[Critical upper-channel determinant asymptotics]
\label{thm:upper-determinant-asymptotics}
For \(K\in\mathcal O_0^+\), \(\delta\ge0\), and
\(\delta+|y|^2\to0\),
\begin{equation}
\label{eq:upper-determinant-asymptotic}
\begin{aligned}
&\Delta_K^+
\bigl(k_K^++y,E_{+,\gamma}(K)+\delta\bigr)\\
&\quad=
\mu_{\mathrm c}^+(K,\gamma)
\left[
\mathfrak c_K^+
\sqrt{\delta+\langle R_K^+y,y\rangle}
-\langle\ell_K^+,y\rangle
\right]
+O(\delta+|y|^2).
\end{aligned}
\end{equation}
The remainder estimate holds uniformly for \(K\) in
compact subsets of \(\mathcal O_0^+\), with a common
sufficiently small range of \(\delta+|y|^2\).
\end{theorem}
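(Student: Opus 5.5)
We write \(E_+=E_{+,\gamma}(K)\), \(\mu=\mu_{\mathrm c}^+(K,\gamma)\), \(\kappa=\kappa_K^+\), \(k=k_K^+\), and \(q=k+y\). Since \(\mu J_\gamma(\kappa)=1\), the determinant splits as
\[
\Delta_K^+(k+y,E_++\delta)
=\mu\bigl[J_\gamma(\kappa)-J_\gamma(\kappa-y)\bigr]
+\mu\bigl[J_\gamma(\kappa-y)-A(y,\delta)\bigr],
\]
where
\[
A(y,\delta)=\int_{\T^3}\frac{\eta(dp)}{E_++\delta-\E_\gamma(K;p,k+y)}.
\]
Note that \(\E_\gamma(K;p,k+y)=\varepsilon(k+y)+\E_\gamma(\kappa-y;p)\), so the \(p\)-integral is a mixed-pair resolvent integral at pair momentum \(\kappa-y\). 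Its energy gap from the pair upper edge is
\[
d(y,\delta):=E_++\delta-\tau_K^+(k+y)=\delta+\langle R_K^+y,y\rangle+O(|y|^3),
\]
by \eqref{eq:upper-moving-threshold}. We estimate the two brackets separately.

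\textbf{Smooth part.} Lemma~\ref{lem:J-finite} gives that \(J_\gamma\) is real analytic near \(0\), hence \(C^2\) near \(\kappa\). Therefore
\[
J_\gamma(\kappa)-J_\gamma(\kappa-y)=\langle\nabla J_\gamma(\kappa),y\rangle+O(|y|^2)=-\langle\ell_K^+,y\rangle+O(|y|^2).
\]
This is uniform for \(K\) in compacts, since \(\kappa_K^+\) stays in a fixed regular neighbourhood.

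\textbf{Singular part.} Set \(k'=\kappa-y\), \(M=M_\gamma(k')\), and \(d=z-M\ge0\). We must show
\[
J_\gamma(k')-A_\gamma^+(k';M+d)=\mathfrak c\sqrt d+O(d),
\qquad
\mathfrak c=\frac{1}{4\pi\sqrt{\det B_\gamma^+(k')}}.
\]
The difference has integrand \(d/[(M-\E)(M+d-\E)]\), so we localise with a cutoff \(\chi\) near \(p^+(k')\). Outside the cutoff the integrand is \(O(d)\) uniformly. Inside, use the normal form \eqref{eq:mixed-upper-normal-form}, \(M-\E=\sum_j r_{\gamma,j}(k')u_j^2+O(|u|^4)\), and substitute \(u=B^{-1/2}w\). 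Replacing the dispersion by its quadratic part, the model integral is
\[
\frac{d}{(2\pi)^3\sqrt{\det B}}\int_{\R^3}\frac{dw}{|w|^2(|w|^2+d)}
=\frac{d}{(2\pi)^3\sqrt{\det B}}\cdot\frac{2\pi^2}{\sqrt d}
=\frac{\sqrt d}{4\pi\sqrt{\det B}},
\]
up to an \(O(d)\) tail from the cutoff. The quartic corrections contribute at most \(d\int|u|^4/(|u|^4(|u|^2+d))\,du=O(d)\).

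Next replace \(B_\gamma^+(k')\) by \(B_K^+\). By analyticity in \(k'\) this changes \(\mathfrak c\) by \(O(|y|)\), so the error is \(O(|y|\sqrt d)=O(\delta+|y|^2)\). Finally, \(\sqrt{d}=\sqrt{\delta+\langle R y,y\rangle}+O(|y|^3/\sqrt{\delta+|y|^2})=\sqrt{\delta+\langle Ry,y\rangle}+O(|y|^2)\), using \(R_K^+>0\) from \eqref{eq:upper-BR-positive}. Adding the smooth and singular parts and multiplying by \(\mu\) gives \eqref{eq:upper-determinant-asymptotic}.

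\textbf{Main obstacle.} The hard step is making the singular-part expansion uniform in \(k'\) and \(K\), with an \(O(d)\) error rather than \(O(d|\log d|)\). The normal form is exact in cosines, so I would change variables \(w_j=2\sqrt{r_{\gamma,j}}\sin(u_j/2)\), which makes the local denominator exactly quadratic. The Jacobian is then \(\prod_j r_{\gamma,j}^{-1/2}(1+O(|w|^2))\). Its deviation from its value at \(w=0\) contributes \(d\int|w|^2/(|w|^2(|w|^2+d))\,dw=O(d)\), so no logarithm appears. All constants depend continuously on \(r_{\gamma,j}(k')\), which are bounded away from zero, and this gives the claimed uniformity on compact subsets of \(\cO_0^+\).
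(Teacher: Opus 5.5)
Your proposal is correct and follows essentially the same route as the paper. The paper likewise rewrites the determinant as a mixed-pair integral at pair momentum \(\kappa_K^+-y\) with energy offset \(s=\delta+E_{+,\gamma}(K)-\tau_K^+(k_K^++y)\), expands \(J_\gamma(\kappa_K^+-y)\) to first order, and applies the square-root law of Lemma~\ref{lem:upper-square-root-law}; that lemma is proved there by the same localisation and mean-value comparison with the quadratic form that you sketch, and your exact substitution \(w_j=2\sqrt{r_{\gamma,j}}\sin(u_j/2)\) is only a harmless variant of that step. The final replacements of \(\det B_\gamma^+(\kappa_K^+-y)\) by \(\det B_K^+\) and of \(\sqrt{s}\) by \(\sqrt{\delta+\langle R_K^+y,y\rangle}\), each with an \(O(\delta+|y|^2)\) error, are also the same as in the paper.
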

\begin{proof}
Put \(q=k_K^++y\), \(k=\kappa_K^+-y\), and
\[
s=\delta+E_{+,\gamma}(K)-\tau_K^+(k_K^++y).
\]
Then
\[
E_{+,\gamma}(K)+\delta-\E_\gamma(K;p,q)
=
M_\gamma(k)-\E_\gamma(k;p)+s,
\]
and \eqref{eq:upper-moving-threshold} gives
\[
s=\delta+\langle R_K^+y,y\rangle+O(|y|^3).
\]
Positive definiteness of \(R_K^+\) implies
\[
s\asymp\delta+|y|^2.
\]
Put \(a=\delta+\langle R_K^+y,y\rangle\).
For \(\delta+|y|^2>0\), rationalising the difference gives
\[
\bigl|\sqrt{s}-\sqrt a\bigr|
=\frac{|s-a|}{\sqrt{s}+\sqrt a}
\le\frac{C|y|^3}{\sqrt{\delta+|y|^2}}
\le C|y|^2.
\]
Hence \(\sqrt{s}=\sqrt a+O(|y|^2)\), also at
\(\delta=0\), \(y=0\), where both square roots vanish.
Moreover,
\[
\frac{1}{4\pi\sqrt{\det B_\gamma^+(k)}}
=
\mathfrak c_K^++O(|y|),
\]
and
\[
J_\gamma(k)
=
J_\gamma(\kappa_K^+)
+\langle\ell_K^+,y\rangle+O(|y|^2).
\]
Apply Lemma~\ref{lem:upper-square-root-law} (see
Appendix~\ref{app:technical-lemmas}) and use
\[
\mu_{\mathrm c}^+(K,\gamma)J_\gamma(\kappa_K^+)=1,
\qquad
|y|\sqrt{\delta+|y|^2}\le\delta+|y|^2.
\]
This proves the expansion. All estimates are uniform
on the stated compact parameter sets.
\end{proof}
\begin{definition}[Strict upper-threshold transversality]
\label{def:upper-transversality}
A point \(K\in\mathcal O_0^+\) satisfies strict
upper-threshold transversality if
\begin{equation}
\label{eq:upper-transversality}
\langle(R_K^+)^{-1}\ell_K^+,\ell_K^+\rangle
<
(\mathfrak c_K^+)^2.
\end{equation}
Set
\[
d_K^+(y)
=
\mathfrak c_K^+\sqrt{\langle R_K^+y,y\rangle}
-\langle\ell_K^+,y\rangle.
\]
\end{definition}
\begin{proposition}[Positivity criterion]
\label{prop:upper-positivity-criterion}
For fixed \(K\in\mathcal O_0^+\), one has
\(d_K^+(y)>0\) for every \(y\ne0\) if and only if
\eqref{eq:upper-transversality} holds.
Under this condition, there exist \(\rho_0>0\) and
\(0<C_1\le C_2<\infty\) such that
\begin{equation}
\label{eq:upper-determinant-two-sided}
C_1\sqrt{\delta+|y|^2}
\le
\Delta_K^+
\bigl(k_K^++y,E_{+,\gamma}(K)+\delta\bigr)
\le
C_2\sqrt{\delta+|y|^2}
\end{equation}
whenever
\begin{equation}
\label{eq:upper-determinant-small-region}
\delta\ge0,
\qquad
\delta+|y|^2\le\rho_0^2.
\end{equation}
The constants can be chosen uniformly on compact subsets
of the strict transversality region.
\end{proposition}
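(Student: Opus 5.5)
The plan has two parts. First comes the elementary positivity criterion for the homogeneous function \(d_K^+\). Then the two-sided bound \eqref{eq:upper-determinant-two-sided} follows from Theorem~\ref{thm:upper-determinant-asymptotics} by a compactness and homogeneity argument.

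\emph{Criterion.} Since \(R_K^+>0\), put \(w=(R_K^+)^{1/2}y\) and \(m=(R_K^+)^{-1/2}\ell_K^+\). Then
\[
d_K^+(y)=\mathfrak c_K^+|w|-\langle m,w\rangle,
\qquad |m|^2=\langle(R_K^+)^{-1}\ell_K^+,\ell_K^+\rangle .
\]
By Cauchy--Schwarz, \(\langle m,w\rangle\le|m||w|\), with equality when \(w\) is a positive multiple of \(m\) (or \(m=0\)). Hence \(d_K^+>0\) on \(w\ne0\) if and only if \(|m|<\mathfrak c_K^+\), which is exactly \eqref{eq:upper-transversality}. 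In that case \(d_K^+\) is continuous and positively homogeneous of degree one. Its minimum on the unit sphere is
\[
c_0=\min_{|y|=1}d_K^+(y)>0,
\]
so \(d_K^+(y)\ge c_0|y|\).

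\emph{Lower bound.} Set \(F(\delta,y)=\mathfrak c_K^+\sqrt{\delta+\langle R_K^+y,y\rangle}-\langle\ell_K^+,y\rangle\). We aim for \(F\ge c_1\sqrt{\delta+|y|^2}\). When \(\delta\le|y|^2\), monotonicity of the square root gives \(F(\delta,y)\ge d_K^+(y)\ge c_0|y|\ge (c_0/\sqrt2)\sqrt{\delta+|y|^2}\). When \(\delta\ge|y|^2\), we have \(\delta+|y|^2\le 2\delta\), and the square-root term is at least \(\mathfrak c_K^+\sqrt\delta\). We then split once more:
\begin{itemize}
\item if \(|\ell_K^+||y|\le\tfrac12\mathfrak c_K^+\sqrt\delta\), then \(F\ge\tfrac12\mathfrak c_K^+\sqrt\delta\);
\item otherwise \(\sqrt\delta\lesssim|y|\), so \(\delta+|y|^2\lesssim|y|^2\), and again \(F\ge d_K^+(y)\ge c_0|y|\).
\end{itemize}
A cleaner alternative is homogeneity. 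The function \(F(\delta,y)/\sqrt{\delta+|y|^2}\) is invariant under \((\delta,y)\mapsto(t^2\delta,ty)\) and continuous on the compact set \(\{\delta\ge0,\ \delta+|y|^2=1\}\). It is positive there: at \(\delta=0\) it equals \(d_K^+(y)\), and for \(\delta>0\) we have \(F>d_K^+(y)\ge0\). Its minimum \(c_1\) is therefore positive. The upper bound \(F\le C\sqrt{\delta+|y|^2}\) is immediate.

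\emph{Transfer to \(\Delta_K^+\).} By \eqref{eq:upper-determinant-asymptotic}, \(\Delta_K^+=\mu_{\mathrm c}^+F+O(\delta+|y|^2)\). Since \(\delta+|y|^2\le\rho_0\sqrt{\delta+|y|^2}\) once \(\delta+|y|^2\le\rho_0^2\), we choose \(\rho_0\) so small that the remainder is at most \(\tfrac12\mu_{\mathrm c}^+c_1\sqrt{\delta+|y|^2}\). This gives \(C_1=\tfrac12\mu_{\mathrm c}^+c_1\) and \(C_2=\mu_{\mathrm c}^+C+\tfrac12\mu_{\mathrm c}^+c_1\).

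\emph{Uniformity.} On a compact subset of the strict transversality region, \(K\mapsto(\mathfrak c_K^+,R_K^+,\ell_K^+)\) is continuous, since all three are analytic by Lemmas~\ref{lem:analytic-upper-threshold} and \ref{lem:upper-critical-regularity}. Hence \(c_1(K)\), being the minimum over a compact set of a function jointly continuous in \(K\), has a positive lower bound. The remainder in Theorem~\ref{thm:upper-determinant-asymptotics} is uniform on such compacts, so \(\rho_0\), \(C_1\) and \(C_2\) can be chosen uniformly.

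The only delicate point is the lower bound near the boundary direction \(\delta=0\). This is handled by the joint homogeneity and compactness argument rather than by treating \(d_K^+\) alone.
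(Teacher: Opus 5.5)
Your proof is correct and follows the paper's argument. The criterion comes from Cauchy--Schwarz after normalising by \((R_K^+)^{1/2}\), with the extremal direction \(y=(R_K^+)^{-1}\ell_K^+\) (your \(w=m\)), and the transfer to \(\Delta_K^+\) comes from absorbing the \(O(\delta+|y|^2)\) remainder into \(\sqrt{\delta+|y|^2}\) for small \(\rho_0\).

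The only difference is the lower bound for the leading term, which the paper gets in one line instead of your case split or homogeneity--compactness argument. With \(a_K=|m|\), the same inequality \(|\langle\ell_K^+,y\rangle|\le a_K\sqrt{\langle R_K^+y,y\rangle}\le a_K\sqrt{\delta+\langle R_K^+y,y\rangle}\) directly gives \((\mathfrak c_K^+-a_K)\sqrt{\delta+\langle R_K^+y,y\rangle}\le F\le(\mathfrak c_K^++a_K)\sqrt{\delta+\langle R_K^+y,y\rangle}\). These explicit, continuously varying constants make uniformity on compact sets immediate, and they show the direction \(\delta=0\) is not actually delicate.
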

\begin{proof}
Set
\begin{equation}
\label{eq:upper-aK}
a_K=\sqrt{\langle(R_K^+)^{-1}\ell_K^+,\ell_K^+\rangle}.
\end{equation}
Cauchy--Schwarz gives
\begin{equation}
\label{eq:upper-R-Cauchy-Schwarz}
|\langle\ell_K^+,y\rangle|
\le a_K\sqrt{\langle R_K^+y,y\rangle}.
\end{equation}
Thus \(a_K<\mathfrak c_K^+\) implies strict positivity.
Conversely, if \(\ell_K^+\ne0\), evaluation at
\(y=(R_K^+)^{-1}\ell_K^+\) gives
\[
d_K^+(y)=a_K(\mathfrak c_K^+-a_K),
\]
so positivity requires \(a_K<\mathfrak c_K^+\).
For \(\ell_K^+=0\), this condition holds automatically.
For \(\delta\ge0\), the same estimate yields
\[
\begin{aligned}
(\mathfrak c_K^+-a_K)
\sqrt{\delta+\langle R_K^+y,y\rangle}
&\le
\mathfrak c_K^+\sqrt{\delta+\langle R_K^+y,y\rangle}
-\langle\ell_K^+,y\rangle\\
&\le
(\mathfrak c_K^++a_K)
\sqrt{\delta+\langle R_K^+y,y\rangle}.
\end{aligned}
\]
The leading term in \eqref{eq:upper-determinant-asymptotic}
is between \(bt\) and \(Bt\), where
\(t=\sqrt{\delta+|y|^2}\) and \(b,B>0\).
Its remainder satisfies \(|r|\leq Lt^2\).
Choose \(\rho_0\) so that \(L\rho_0\leq b/2\), and take
\(C_1=b/2\), \(C_2=B+b/2\).
For fixed \(\gamma\), these choices are uniform for \(K\)
in compact subsets of the strict transversality region.
\end{proof}
Since \(\kappa_{\pi\mathbf1}^+=0\) and
\(\nabla J_\gamma(0)=0\), one has
\(\ell_{\pi\mathbf1}^+=0\). Continuity therefore gives
the following consequence.
\begin{corollary}[Persistence of upper transversality]
\label{cor:upper-transversality-persistence}
There exists an open neighbourhood
\(\mathcal O_1^+\subset\mathcal O_0^+\) of \(\pi\mathbf1\)
on which \eqref{eq:upper-transversality} holds.
\end{corollary}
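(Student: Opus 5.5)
The plan is to derive the corollary from two facts: at the symmetric point the strict inequality \eqref{eq:upper-transversality} holds trivially, and both of its sides depend continuously on \(K\). At \(K=\pi\mathbf1\), Lemma~\ref{lem:upper-critical-regularity} (via Lemma~\ref{lem:J-finite}) gives \(\nabla J_\gamma(0)=0\). Together with \(\kappa_{\pi\mathbf1}^+=0\), this yields \(\ell_{\pi\mathbf1}^+=-\nabla J_\gamma(0)=0\). The left-hand side of \eqref{eq:upper-transversality} therefore vanishes. The right-hand side is \((\mathfrak c_{\pi\mathbf1}^+)^2=\bigl(16\pi^2\det B_{\pi\mathbf1}^+\bigr)^{-1}=\bigl(16\pi^2(1+\gamma)^3\bigr)^{-1}>0\) by \eqref{eq:upper-matrices-at-pi}. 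So the inequality holds strictly at \(\pi\mathbf1\).

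Next I would define
\[
F(K)=(\mathfrak c_K^+)^2-\langle(R_K^+)^{-1}\ell_K^+,\ell_K^+\rangle
\]
on \(\mathcal O_0^+\) and check that \(F\) is continuous. This reduces to continuity of the three ingredients.

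\begin{enumerate}[(i)]
\item The map \(K\mapsto k_K^+\) is real analytic by Lemma~\ref{lem:analytic-upper-threshold}. Hence \(\rho_K^+\) and \(\kappa_K^+\) are analytic.
\item Since \(\varepsilon\) is smooth, \(B_K^+\) and \(C_K^+\) from \eqref{eq:upper-B}--\eqref{eq:upper-C} are continuous. By \eqref{eq:upper-BR-positive}, \(B_K^+>0\) and \(R_K^+>0\), so inversion is continuous. Therefore \(R_K^+\), \((R_K^+)^{-1}\) and \(\det B_K^+\) are continuous, and \(\mathfrak c_K^+\) is continuous and positive.
\item \(J_\gamma\) is real analytic near \(0\) (Lemma~\ref{lem:J-finite}), and \(\kappa_K^+\) stays in that regular neighbourhood after the shrinking made before \eqref{eq:three-body-upper-critical}. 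Hence \(\ell_K^+=-\nabla J_\gamma(\kappa_K^+)\) is continuous.
\end{enumerate}

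Since \(F(\pi\mathbf1)>0\) and \(F\) is continuous, the preimage \(F^{-1}((0,\infty))\) is open in \(\mathcal O_0^+\) and contains \(\pi\mathbf1\). I would take \(\mathcal O_1^+\) to be this set, or a ball around \(\pi\mathbf1\) inside it. On \(\mathcal O_1^+\), \eqref{eq:upper-transversality} holds.

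None of the steps is a real obstacle. The only point needing care is the continuity of \(\ell_K^+\), which depends on the analyticity of \(J_\gamma\) established in the appendix; everything else is composition of continuous maps. If desired, one can add a quantitative version. The expansion \eqref{eq:upper-kappa-expansion} gives \(\kappa_K^+=O(|\widetilde K|)\), and \(\nabla J_\gamma(0)=0\) gives \(\ell_K^+=O(|\widetilde K|)\). The left side of \eqref{eq:upper-transversality} is then \(O(|\widetilde K|^2)\), while the right side stays bounded below. This yields an explicit radius for \(\mathcal O_1^+\).
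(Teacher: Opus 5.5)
Your proposal is correct and follows the paper's argument. The paper also uses \(\kappa_{\pi\mathbf1}^+=0\) and \(\nabla J_\gamma(0)=0\) to get \(\ell_{\pi\mathbf1}^+=0\), so the strict inequality holds at \(\pi\mathbf1\), and then extends it to a neighbourhood by continuity in \(K\); you only write out the continuity of \(\mathfrak c_K^+\), \((R_K^+)^{-1}\) and \(\ell_K^+\), which the paper leaves implicit.
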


\subsubsection{Positivity away from the critical spectator}
\label{subsec:upper-isolation}
\reviewedit{At \(K=\pi\mathbf1\), comparison with
\(J_\gamma(0)\) gives positivity of the threshold determinant away
from the free maximum. The local lower bound near \(k_K^+\) and
compactness away from it extend this positivity to nearby \(K\).}
\begin{proposition}[Uniqueness of the critical spectator]
\label{prop:upper-unique-spectator}
After shrinking \(\mathcal O_1^+\), for every
\(K\in\mathcal O_1^+\),
\begin{equation}
\label{eq:upper-unique-zero}
\Delta_K^+(k_K^+,E_{+,\gamma}(K))=0,
\qquad
\Delta_K^+(q,E_{+,\gamma}(K))>0
\quad(q\ne k_K^+).
\end{equation}
\end{proposition}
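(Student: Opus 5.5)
The zero at \(q=k_K^+\) is immediate: by \eqref{eq:upper-edge-coincidence} and the definition of \(\mu_{\mathrm c}^+\),
\[
\Delta_K^+(k_K^+,E_{+,\gamma}(K))=1-\mu_{\mathrm c}^+J_\gamma(\kappa_K^+)=0 .
\]
For positivity at \(q\ne k_K^+\), I will write \(k=K-q\) and
\[
s(q)=E_{+,\gamma}(K)-\tau_K^+(q)\ge0 .
\]
Then
\[
\int\frac{\eta(dp)}{E_{+,\gamma}(K)-\E_\gamma(K;p,q)}
=\int\frac{\eta(dp)}{M_\gamma(k)-\E_\gamma(k;p)+s(q)}=:F_K(q).
\]
Note that \(s(q)>0\) for \(q\ne k_K^+\), by uniqueness of the global maximum in Lemma~\ref{lem:analytic-upper-threshold}. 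The claim is therefore \(\mu_{\mathrm c}^+(K,\gamma)F_K(q)<1\) for all \(q\ne k_K^+\). I split \(q\) into a small ball \(|q-k_K^+|<\rho\) and its complement.

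\emph{Near region.} For \(|y|<\rho_0\) with \(q=k_K^++y\), Proposition~\ref{prop:upper-positivity-criterion} applies at \(\delta=0\). This uses strict transversality on \(\mathcal O_1^+\) from Corollary~\ref{cor:upper-transversality-persistence}. It gives
\[
\Delta_K^+(k_K^++y,E_{+,\gamma}(K))\ge C_1|y|>0 ,
\]
with \(C_1,\rho_0\) uniform for \(K\) in a compact neighbourhood of \(\pi\mathbf1\). Fixing such a compact neighbourhood first, this settles \(0<|y|<\rho_0\) uniformly.

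\emph{Far region at \(K=\pi\mathbf1\).} Here \(\kappa^+=0\) and \(k^+=\pi\mathbf1\). I compare \(F_{\pi\mathbf1}(q)\) with \(J_\gamma(0)\). By Lemma~\ref{lem:mixed-band}, with \(k=\pi\mathbf1-q\), the denominator is
\[
2\sum_j r_{\gamma,j}(k)(1-\cos u_j)+s(q), \qquad s(q)=\sum_j\bigl[2(1+\cos q_j)+2(1+\gamma)-2r_{\gamma,j}(k)\bigr].
\]
This separates coordinatewise, although the integral itself does not factor. The natural route is to show that the integrand is pointwise at least \(M_\gamma(0)-\E_\gamma(0;p)=2(1+\gamma)\sum_j(1-\cos u_j)\) after the shift \(u\). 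Since \(r_{\gamma,j}\le1+\gamma\), the pointwise comparison needs, per coordinate,
\[
2r_j(1-\cos u_j)+s_j\ge 2(1+\gamma)(1-\cos u_j),
\]
and this fails for \(u_j\) near \(\pi\). So a pointwise bound is not available, and I will compare after integration instead. Using \(\int_0^\infty e^{-tD}dt\), the \(p\)-integral becomes
\[
\int_0^\infty e^{-ts}\prod_j\bigl[e^{-2tr_j}I_0(2tr_j)\bigr]\,dt .
\]
The function \(r\mapsto e^{-2tr}I_0(2tr)\) is decreasing, so it is larger for smaller \(r_j\). The question is then whether the factor \(e^{-ts_j}\) beats this gain. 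This is where I expect the main obstacle. In the worst case \(q_j\to0\), one has \(r_j\to|1-\gamma|\) and \(s_j=4+2(1+\gamma)-2(\gamma-1)=8\), which is large. I will try to prove the one-dimensional inequality
\[
e^{-ts_j}e^{-2tr_j}I_0(2tr_j)\le e^{-2t(1+\gamma)}I_0(2t(1+\gamma))
\]
for all \(t\ge0\), with equality only when \(q_j=\pi\). If that inequality fails for some \(t\), I will fall back on monotonicity of \(F_{\pi\mathbf1}\) along rays, or on a direct convexity argument in \(\cos q_j\). Granting the inequality, \(F_{\pi\mathbf1}(q)<J_\gamma(0)\) for \(q\ne\pi\mathbf1\), hence \(\Delta^+_{\pi\mathbf1}(q,E_+)>0\) there.

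\emph{Extension to nearby \(K\).} The map \((K,q)\mapsto\Delta_K^+(q,E_{+,\gamma}(K))\) is continuous on
\[
\{|q-k_K^+|\ge\rho_0\},
\]
because there \(s(q)\) is bounded below and the integrand is bounded uniformly. By compactness, the minimum over this set at \(K=\pi\mathbf1\) is a positive number \(m\). Shrinking \(\mathcal O_1^+\), the minimum stays above \(m/2\) for all \(K\) in it. Combined with the uniform near-region bound, this yields \eqref{eq:upper-unique-zero}.
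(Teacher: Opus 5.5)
There is a genuine gap in the far-region step at \(K=\pi\mathbf1\). The strict inequality \(F_{\pi\mathbf1}(q)<J_\gamma(0)\) for \(q\ne\pi\mathbf1\) is never proved. You reduce it to a one-dimensional Bessel inequality that you only ``try to prove'', with unspecified fall-backs. Everything downstream rests on it: the positive minimum \(m\), and hence the extension to nearby \(K\).

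Your reason for abandoning the direct pointwise route is a computational error; the per-coordinate comparison does hold, including at \(u_j=\pi\). Write \(Q=q-\pi\mathbf1\) and \(x_j=1-\cos Q_j\). Then your \(r_j\) is \(\sqrt{1+\gamma^2+2\gamma\cos Q_j}\). With \(a_j:=1+\gamma-r_j\ge0\), your \(s_j\) equals \(2(x_j+a_j)\), and
\[
2r_j(1-\cos u_j)+s_j-2(1+\gamma)(1-\cos u_j)=2\bigl(x_j+a_j\cos u_j\bigr)\ge 2(x_j-a_j)\ge0 .
\]
The last step uses \(a_j=2\gamma x_j/(1+\gamma+r_j)\le x_j\), which holds because \(r_j\ge|1-\gamma|\). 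At \(u_j=\pi\) the difference is \(2(x_j-a_j)\), which is nonnegative. In your ``worst case'' \(q_j\to0\) one has \(x_j=a_j=2\) for \(\gamma\ge1\), so the difference is exactly zero, not negative.

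This pointwise bound is the paper's argument. After the measure-preserving coordinatewise translation, the denominator dominates \(2(1+\gamma)\sum_j(1-\cos u_j)\). If some \(x_j>0\), then \(x_j+a_j\cos u_j>0\) for almost every \(u_j\), so the domination is strict on a set of positive measure. Both integrals are finite, since \(F_{\pi\mathbf1}(q)\le s(q)^{-1}\). Integrating reciprocals therefore gives \(F_{\pi\mathbf1}(q)<J_\gamma(0)\) directly.

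Your inequality \(e^{-ts_j}e^{-2tr_j}I_0(2tr_j)\le e^{-2t(1+\gamma)}I_0(2t(1+\gamma))\) is in fact true. However, it holds only because it follows from this same pointwise bound by integrating \(e^{-t(\cdot)}\) over \(u_j\), so the Laplace-transform detour adds nothing.

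Once this step is supplied, your near-region estimate from Proposition~\ref{prop:upper-positivity-criterion} completes the proof along the paper's lines. So does your compactness extension to nearby \(K\), which is valid because \(\{(K,q):|q-k_K^+|\ge\rho_0\}\) is compact and \(s\) is uniformly positive there.
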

\begin{proof}
First take \(K=\pi\mathbf1\), and write
\(p=\pi\mathbf1+P\), \(q=\pi\mathbf1+Q\). Then
\[
E_{+,\gamma}(\pi\mathbf1)
-\E_\gamma(\pi\mathbf1;p,q)
=
\varepsilon(P)+\varepsilon(Q)+\gamma\varepsilon(P+Q).
\]
Set
\[
\mathcal A(Q)
=
\int_{\T^3}
\frac{\eta(dP)}
{\varepsilon(P)+\varepsilon(Q)+\gamma\varepsilon(P+Q)}.
\]
\notationedit{Since \(M_\gamma(0)=12(1+\gamma)\) and
\(\varepsilon(P+\pi\mathbf1)=12-\varepsilon(P)\), the
measure-preserving substitution \(p=P+\pi\mathbf1\) in
\eqref{eq:J-gamma} gives
\[
J_\gamma(0)=\frac{1}{1+\gamma}
\int_{\T^3}\frac{\eta(dP)}{\varepsilon(P)}=\mathcal A(0).
\]}
Let
\[
x_j=1-\cos Q_j,\qquad
r_j=\sqrt{1+\gamma^2+2\gamma\cos Q_j},
\qquad
a_j=1+\gamma-r_j.
\]
A coordinatewise translation of \(P\) transforms the
denominator into
\[
2\sum_{j=1}^3
\bigl[x_j+a_j+r_j(1-\cos u_j)\bigr].
\]
If \(r_j=0\), the corresponding summand is independent
of \(u_j\). Since \(r_j\ge|1-\gamma|\),
\[
0\le a_j
=\frac{2\gamma x_j}{1+\gamma+r_j}
\le x_j.
\]
Consequently,
\[
x_j+a_j+r_j(1-\cos u_j)
-(1+\gamma)(1-\cos u_j)
=
x_j+a_j\cos u_j\ge0.
\]
For \(Q\ne0\), at least one \(x_j\) is positive, and
the inequality is strict on a set of positive measure.
For fixed \(Q\ne0\), the original denominator is bounded
below by \(\varepsilon(Q)>0\); hence
\(\mathcal A(Q)\le\varepsilon(Q)^{-1}<\infty\).
Taking reciprocals and integrating gives
\[
\mathcal A(Q)<\mathcal A(0)=J_\gamma(0),
\qquad Q\ne0.
\]
Since
\(\mu_{\mathrm c}^+(\pi\mathbf1,\gamma)J_\gamma(0)=1\),
this proves the assertion at \(K=\pi\mathbf1\).
Choose a neighbourhood \(\mathcal V\) of \(\pi\mathbf1\)
whose closure is contained in \(\mathcal O_1^+\).
The uniform form of
Proposition~\ref{prop:upper-positivity-criterion} yields
constants \(c,r>0\), independent of \(K\in\mathcal V\),
such that, in the local torus chart centred at \(k_K^+\),
\[
\Delta_K^+(k_K^++y,E_{+,\gamma}(K))
\ge c|y|,
\qquad 0<|y|<r.
\]
At \(K=\pi\mathbf1\), continuity and the strict inequality
already proved imply
\[
m_r:=
\min_{\operatorname{dist}_{\T^3}(q,\pi\mathbf1)\ge r/2}
\Delta_{\pi\mathbf1}^+
 (q,E_{+,\gamma}(\pi\mathbf1))>0.
\]
On this compact set the denominator in the defining integral
is bounded away from zero. After decreasing \(\mathcal V\),
the same is true uniformly for \(K\in\mathcal V\), and joint
continuity in \((K,q)\) gives
\[
\Delta_K^+(q,E_{+,\gamma}(K))\ge \frac{m_r}{2}
\]
whenever
\(\operatorname{dist}_{\T^3}(q,\pi\mathbf1)\ge r/2\).
Decrease \(\mathcal V\) once more so that
\(\operatorname{dist}_{\T^3}(k_K^+,\pi\mathbf1)<r/2\).
The local estimate and the last compact-set estimate then
cover all \(q\ne k_K^+\), proving the required positivity.
The equality at \(q=k_K^+\) follows from
\eqref{eq:three-body-upper-critical} and
\eqref{eq:upper-edge-coincidence}.
\end{proof}
\notationedit{For the remainder of this section, fix an open
neighbourhood \(\cO_+\) of \(\pi\mathbf1\) such that
\[
\overline{\cO_+}\subset\cO_1^+\subset\cO_0^+,
\]
where \(\cO_1^+\) is the neighbourhood retained in
Proposition~\ref{prop:upper-unique-spectator}. Write
\(\mu_{\mathrm c}^+=\mu_{\mathrm c}^+(K,\gamma)\).}
\begin{lemma}[Uniform positivity on compact energy intervals]
\label{lem:Delta-uniform-positivity}
For every \(K\in\cO_+\) and every compact interval
\([a,b]\subset(E_{+,\gamma}(K),\infty)\),
\[
\inf_{q\in\T^3,\ z\in[a,b]}\Delta_K^+(q,z)>0.
\]
\end{lemma}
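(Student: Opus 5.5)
Since \(z\ge a>E_{+,\gamma}(K)\ge\E_\gamma(K;p,q)\) for all \(p,q\), the integrand in \eqref{eq:upper-channel-determinant} is positive and bounded by \((a-E_{+,\gamma}(K))^{-1}\). Hence \(\Delta_K^+(q,z)\) is jointly continuous in \((q,z)\in\T^3\times[a,b]\), by dominated convergence and uniform continuity of \(\E_\gamma\). The set \(\T^3\times[a,b]\) is compact, so the infimum is attained. It therefore suffices to show \(\Delta_K^+(q,z)>0\) at every point of this set.

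Pointwise positivity follows from monotonicity in \(z\). For fixed \(q\), the map \(z\mapsto\int\eta(dp)/(z-\E_\gamma(K;p,q))\) is strictly decreasing on \((E_{+,\gamma}(K),\infty)\), because each integrand is. It also increases, by monotone convergence, to its (possibly infinite) value at \(z=E_{+,\gamma}(K)\) as \(z\downarrow E_{+,\gamma}(K)\). Consequently
\[
\Delta_K^+(q,z)>\Delta_K^+(q,E_{+,\gamma}(K))\ge0,
\qquad z>E_{+,\gamma}(K).
\]
The threshold value is understood as defined in \eqref{eq:upper-channel-determinant}. It is finite, since \(E_{+,\gamma}(K)-\E_\gamma(K;p,q)\ge\tau_K^+(q)-\E_\gamma(K;p,q)\ge0\) and the singularity in \(p\) is integrable in dimension three.

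Proposition~\ref{prop:upper-unique-spectator} gives \(\Delta_K^+(q,E_{+,\gamma}(K))\ge0\) for all \(q\), with equality only at \(q=k_K^+\). This applies because \(K\in\cO_+\subset\cO_1^+\). The strict decrease of the integral in \(z\) then gives strict positivity for \(z>E_{+,\gamma}(K)\), including at \(q=k_K^+\).

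The only step needing care is the strictness at the critical spectator \(q=k_K^+\), where the threshold value is exactly zero. There the strict inequality between integrals is immediate, because \(1/(z-e)<1/(E_+-e)\) for every \(e\le E_+\) and the right-hand integral is finite. Combining pointwise positivity with continuity and compactness yields the uniform lower bound.
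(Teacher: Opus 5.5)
Your proof is correct and follows essentially the paper's route. Both combine strict monotonicity of \(\Delta_K^+(q,\cdot)\) in the energy with \(\Delta_K^+(q,E_{+,\gamma}(K))\ge0\) from Proposition~\ref{prop:upper-unique-spectator}, and then use compactness. The only difference is that the paper uses monotonicity to reduce to the single energy \(z=a\), so it needs only continuity of \(\Delta_K^+(\cdot,a)\) on the torus, whereas you use joint continuity on \(\T^3\times[a,b]\); both work.
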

\begin{proof}
Strict monotonicity in the energy and
Proposition~\ref{prop:upper-unique-spectator} give
\[
\Delta_K^+(q,z)\geq\Delta_K^+(q,a)>0,
\qquad q\in\T^3,\quad z\in[a,b].
\]
The continuous function \(\Delta_K^+(\cdot,a)\) has a positive
minimum on the compact torus.
\end{proof}
\begin{corollary}[Upper edge of the mixed-channel spectra]
\label{cor:mixed-channel-upper-edge}
For every \(K\in\cO_+\) and \(\alpha\in\{13,23\}\),
\[
\sigma\bigl(H_{\alpha,\mu_{\mathrm c}^+}^{\mathrm{ch}}(K)\bigr)
\cap(E_{+,\gamma}(K),\infty)=\varnothing,
\qquad
\sup\sigma\bigl(H_{\alpha,\mu_{\mathrm c}^+}^{\mathrm{ch}}(K)\bigr)
=E_{+,\gamma}(K).
\]
\end{corollary}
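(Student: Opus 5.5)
The corollary concerns the channel operators \(H_{\alpha,\mu_{\mathrm c}^+}^{\mathrm{ch}}(K)\) on the full space \(\cH\). By exchange symmetry, \(P_{12}H_{13,\mu}^{\mathrm{ch}}(K)P_{12}=H_{23,\mu}^{\mathrm{ch}}(K)\), so it suffices to treat \(\alpha=13\). We then use the direct-integral decomposition \eqref{eq:mixed-channel-direct-integral}. Its fibres \(H_{\mathrm{mix},\mu_{\mathrm c}^+,\gamma}(K;q)=\varepsilon(q)I+h_{\mu_{\mathrm c}^+,\gamma}(K-q)\) are norm continuous in \(q\), so the channel spectrum is the closed union of the fibre spectra. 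The proof therefore reduces to two fibrewise statements for every \(q\in\T^3\):
\begin{itemize}
\item[(a)] the fibre has no spectrum above \(E_{+,\gamma}(K)\);
\item[(b)] the value \(E_{+,\gamma}(K)\) is attained by some fibre.
\end{itemize}

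For the essential part, the fibre's essential spectrum is \([\varepsilon(q)+m_\gamma(K-q),\tau_K^+(q)]\). Since \(\tau_K^+(q)=\max_p\E_\gamma(K;p,q)\le E_{+,\gamma}(K)\), this never exceeds \(E_{+,\gamma}(K)\). At \(q=k_K^+\), Proposition~\ref{prop:upper-edge-coincidence} gives \(\tau_K^+(k_K^+)=E_{+,\gamma}(K)\). Hence \(E_{+,\gamma}(K)\) belongs to the channel spectrum, which proves (b) and the equality for the supremum once (a) is known.

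For the discrete part, any fibre spectrum above \(E_{+,\gamma}(K)\) lies above the fibre's band edge \(\tau_K^+(q)\). It is therefore a discrete eigenvalue of the rank-one perturbation. The rank-one criterion \eqref{eq:pair-eigenvalue-determinant-equivalence}, shifted by \(\varepsilon(q)\), says \(z>\tau_K^+(q)\) is an eigenvalue if and only if
\[
\mu_{\mathrm c}^+\int_{\T^3}\frac{\eta(dp)}{z-\E_\gamma(K;p,q)}=1,
\]
that is, if and only if \(\Delta_K^+(q,z)=0\). For \(z>E_{+,\gamma}(K)\), Lemma~\ref{lem:Delta-uniform-positivity}, applied on the compact interval \([z,z]\), gives \(\Delta_K^+(q,z)>0\) for all \(q\). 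So no fibre has an eigenvalue above \(E_{+,\gamma}(K)\), which proves (a).

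One technical point needs care. The criterion \eqref{eq:pair-eigenvalue-determinant-equivalence} was stated for regular pair momenta \(k=K-q\), whereas \(q\) here ranges over the whole torus. It should be checked directly instead: the eigenvalue equation \((\E-z)u=-\mu c\) with \(z\) above the range of \(\E\) forces \(c\ne0\) and \(u=-\mu c/(\E-z)\). Integrating this identity gives the scalar equation, and that step does not use regularity. With this check the union over \(q\) yields both assertions of the corollary. The main potential obstacle is only this uniformity in \(q\), including nonregular \(K-q\), and the argument above handles it directly.
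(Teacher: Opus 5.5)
Your argument is correct, but it takes a different route from the paper's.

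\textbf{The paper's route.} The paper does not analyse the fibre spectra. For each spectator $q$ it sets $b_q(p)=E_{+,\gamma}(K)-\E_\gamma(K;p,q)$. By Proposition~\ref{prop:upper-unique-spectator}, $\mu_{\mathrm c}^+\int b_q^{-1}\,d\eta=1-\Delta_K^+(q,E_{+,\gamma}(K))\le1$. The weighted Cauchy--Schwarz inequality $\mu_{\mathrm c}^+|\int u\,d\eta|^2\le\int b_q|u|^2\,d\eta$ then gives the form bound $H_{\mathrm{mix},\mu_{\mathrm c}^+,\gamma}(K;q)\le E_{+,\gamma}(K)I$, and this bound passes directly to the direct integral. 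Equality of the supremum follows from $H_{\alpha,\mu_{\mathrm c}^+}^{\mathrm{ch}}(K)\ge H_\gamma^{\mathrm{free}}(K)$, whose upper edge is $E_{+,\gamma}(K)$.

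\textbf{Your route.} You split each fibre spectrum into two parts. The band ends at $\tau_K^+(q)\le E_{+,\gamma}(K)$. Any eigenvalues above the band are, by the rank-one eigenvalue equation, zeros of $\Delta_K^+(q,\cdot)$, and strict monotonicity in $z$ rules these out. You then locate $E_{+,\gamma}(K)$ as the band edge of the fibre at $q=k_K^+$.

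\textbf{Comparison.} Both proofs rest on the same input, the threshold nonnegativity $\Delta_K^+(q,E_{+,\gamma}(K))\ge0$ for all $q$. The paper's form argument is shorter for three reasons:
\begin{enumerate}
\item it works at the threshold itself and needs no monotonicity in energy;
\item it needs only that fibrewise form bounds integrate, not the identification of the channel spectrum with the union of fibre spectra;
\item it makes your check for nonregular $K-q$ unnecessary.
\end{enumerate}
Your spectral route gives slightly finer fibrewise information: spectrum of a fibre above its band edge is exactly the zero set of $\Delta_K^+(q,\cdot)$, the characterisation that reappears in the Birman--Schwinger reduction. Your direct verification of the eigenvalue equation without the regularity assumption is correct and is needed for this route.
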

\begin{proof}
For each spectator \(q\), put
\(b_q(p)=E_{+,\gamma}(K)-\E_\gamma(K;p,q)\). By
Proposition~\ref{prop:upper-unique-spectator},
\(\mu_{\mathrm c}^+\int b_q^{-1}\,d\eta=1-\Delta_K^+(q,E_{+,\gamma}(K))\le1\).
Weighted Cauchy--Schwarz gives
\[
\mu_{\mathrm c}^+\left|\int u\,d\eta\right|^2
\le\mu_{\mathrm c}^+\left(\int b_q^{-1}\,d\eta\right)
       \left(\int b_q|u|^2\,d\eta\right)
\le\int b_q|u|^2\,d\eta.
\]
Thus \(H_{\mathrm{mix},\mu_{\mathrm c}^+,\gamma}(K;q)\le E_{+,\gamma}(K)I\).
The direct integral \eqref{eq:mixed-channel-direct-integral} gives
the same bound for \(H_{13,\mu_{\mathrm c}^+}^{\mathrm{ch}}(K)\), and exchange
symmetry gives it for the other mixed channel. Finally,
\(H_{\alpha,\mu_{\mathrm c}^+}^{\mathrm{ch}}(K)\ge H_\gamma^{\mathrm{free}}(K)\)
on \(\cH\), whose free upper edge is \(E_{+,\gamma}(K)\).
This proves equality of the upper edges.
\end{proof}

\subsection{Spectral properties of the fermion pair and its channel}
\label{subsec:fermion-interaction}
\compactedit{We study the fermion-pair upper eigenvalues and identify the
couplings at which its channel spectrum first extends above the
free three-particle edge and then separates from the free band.}

Recall that
\[
h_{12,\lambda}(k)=h_{12,0}(k)+\lambda v_{12}(k),
\qquad \lambda\geq0,
\]
acts in
\[
\frh^{\mathrm a}(k)
=\{u\in L^2(\T^3,\eta):u(p)=-u(k-p)\text{ for a.e. }p\},
\]
\reviewedit{Here \(h_{12,0}(k)\) is multiplication by
\(\mathcal E_{12}(k;p)=\varepsilon(p)+\varepsilon(k-p)\).}
The interaction is the orthogonal
projection onto the span of the orthonormal functions
\[
\phi_{j,k}(p)=\sqrt2\sin(p_j-k_j/2),\qquad j=1,2,3.
\]
A measurable choice of representatives of \(k\) is fixed in this
formula. Changing a representative changes only the signs of the
corresponding form factors and leaves the projection unchanged.

\subsubsection{Essential spectrum and upper eigenvalues}
For each \(k\in\T^3\), denote the free pair band edges by
\[
m_{12}(k):=\min_{p\in\T^3}\mathcal E_{12}(k;p),
\qquad
M_{12}(k):=\max_{p\in\T^3}\mathcal E_{12}(k;p).
\]
In the centred coordinates \(x=p-k/2\),
\[
\mathcal E_{12}(k;p)
=12-4\sum_{\ell=1}^3\cos(k_\ell/2)\cos x_\ell,
\]
and hence
\[
m_{12}(k)=12-4\sum_{\ell=1}^3|\cos(k_\ell/2)|,
\qquad
M_{12}(k)=12+4\sum_{\ell=1}^3|\cos(k_\ell/2)|.
\]
The spectrum of the free operator on \(\frh^{\mathrm a}(k)\)
is the entire interval \([m_{12}(k),M_{12}(k)]\), and every point
of this interval belongs to its essential spectrum. Indeed, a
localisation near a nonfixed point of \(p\mapsto k-p\), minus
its reflected localisation, gives an antisymmetric approximate
eigenfunction. The supports may be chosen to have measure tending
to zero, so that the normalised functions converge weakly to zero.
Approximating a fixed point by nonfixed points gives the same
conclusion at its dispersion value. If the dispersion is constant,
the operator is a scalar multiple of the identity on an
infinite-dimensional space.
Since \(v_{12}(k)\) has rank three, invariance of the essential
spectrum under finite-rank perturbations yields
\begin{equation}
\label{eq:fermion-pair-essential-spectrum}
\sigma_{\mathrm{ess}}(h_{12,\lambda}(k))
=\sigma_{\mathrm{ess}}(h_{12,0}(k))
=[m_{12}(k),M_{12}(k)]\subset[0,24].
\end{equation}
Positivity of the interaction excludes spectrum below \(m_{12}(k)\),
and its rank bounds the number of eigenvalues above \(M_{12}(k)\)
by three, counting multiplicities.
\begin{proposition}[Fermion-pair eigenvalues at strong coupling]
\label{prop:strong-fermion-pair}
Let \(\lambda>24\) and \(k\in\T^3\). Then
\(h_{12,\lambda}(k)\) has exactly three eigenvalues above
\(M_{12}(k)\), counted with multiplicities. Ordered as
\(e_{12,1}^+(k;\lambda)\leq e_{12,2}^+(k;\lambda)
\leq e_{12,3}^+(k;\lambda)\), they satisfy
\begin{equation}
\label{eq:strong-pair-eigenvalue-bounds}
\lambda+12\leq e_{12,j}^+(k;\lambda)\leq\lambda+24,
\qquad j=1,2,3,
\end{equation}
uniformly in \(k\in\T^3\).
\end{proposition}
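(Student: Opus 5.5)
We argue by min--max on the rank-three perturbation, comparing \(h_{12,\lambda}(k)\) with the operators \(h_{12,0}(k)+\lambda v_{12}(k)\) on the range and the complement of \(v_{12}(k)\). Since \(0\le h_{12,0}(k)\le 24\) (as \(\mathcal E_{12}\le 24\)) and \(v_{12}(k)\) is a rank-three orthogonal projection, we have
\[
\lambda v_{12}(k)\le h_{12,\lambda}(k)\le 24\,I+\lambda v_{12}(k).
\]
The operator \(\lambda v_{12}(k)\) has eigenvalue \(\lambda\) with multiplicity three and \(0\) otherwise. The upper bound then gives, by min--max, that every eigenvalue of \(h_{12,\lambda}(k)\) is at most \(\lambda+24\). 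Equivalently, \(\|h_{12,\lambda}(k)\|\le 24+\lambda\). The count of at most three eigenvalues above \(M_{12}(k)\) was already noted before the statement.

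The lower bound \(\lambda+12\) is the substantive part. We take the trial space \(\mathrm{span}\{\phi_{1,k},\phi_{2,k},\phi_{3,k}\}\). For \(u=\sum_j c_j\phi_{j,k}\) we compute the quadratic form in centred coordinates \(x=p-k/2\):
\[
\langle h_{12,\lambda}(k)u,u\rangle=\lambda\|u\|^2+\int\Bigl(12-4\sum_\ell\cos(k_\ell/2)\cos x_\ell\Bigr)\Bigl|\sqrt2\sum_j c_j\sin x_j\Bigr|^2\eta(dx).
\]
The cross terms \(\int\cos x_\ell\sin x_j\sin x_i\,\eta(dx)\) vanish by oddness when \(i\ne j\). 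For \(i=j\) the integral vanishes whenever \(\ell\ne j\), and it also vanishes when \(\ell=j\), because \(\int\cos x\sin^2x\,dx/(2\pi)=0\). Hence the free form equals exactly \(12\|u\|^2\). This yields the identity \(\langle h_{12,\lambda}(k)u,u\rangle=(\lambda+12)\|u\|^2\) on the three-dimensional space.

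Min--max then gives three eigenvalues, counted from the top, of \(h_{12,\lambda}(k)\) that are at least \(\lambda+12\), provided \(\lambda+12\) exceeds \(\sup\sigma_{\mathrm{ess}}=M_{12}(k)\). This holds because \(M_{12}(k)\le 24<\lambda+12\) for \(\lambda>12\), and in particular for \(\lambda>24\). So the top three min--max values lie in \([\lambda+12,\lambda+24]\) and above the essential spectrum. They are therefore genuine discrete eigenvalues above \(M_{12}(k)\), and the rank bound shows there are no others. Uniformity in \(k\) is automatic, since all constants are \(k\)-independent.

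The main obstacle is the vanishing of the cross and diagonal terms in the trial-form computation. We verify it coordinatewise using the product structure of Haar measure. The signs of the \(\phi_{j,k}\) are irrelevant, because the computation depends only on the projection.
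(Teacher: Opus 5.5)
Your proof is correct and takes essentially the same route as the paper. The paper records the compression identity \(P_kh_{12,0}(k)P_k=12P_k\) for \(P_k=v_{12}(k)\), which is exactly what your parity computation verifies, applies min--max on \(\operatorname{Ran}P_k\) for the lower bound, and uses \(h_{12,\lambda}(k)\le(\lambda+M_{12}(k))I\) for the upper bound; the differences (bounding by \(24\) instead of \(M_{12}(k)\), and citing the rank-three count rather than deriving it from \(\langle h_{12,\lambda}(k)u,u\rangle\le M_{12}(k)\|u\|^2\) on \(\ker P_k\)) are cosmetic.
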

\begin{proof}
Set \(P_k=v_{12}(k)\). Parity and orthogonality in centred
coordinates give
\[
P_kh_{12,0}(k)P_k=12P_k.
\]
Thus
\[
\langle h_{12,\lambda}(k)u,u\rangle
=(\lambda+12)\|u\|^2,\qquad u\in\operatorname{Ran}P_k,
\]
whereas
\[
\langle h_{12,\lambda}(k)u,u\rangle
\leq M_{12}(k)\|u\|^2,\qquad u\in\ker P_k.
\]
Since \(\dim\operatorname{Ran}P_k=3\) and
\(\lambda+12>24\geq M_{12}(k)\), the min--max principle gives
exactly three upper eigenvalues and the stated lower bound.
The upper bound follows from
\[
h_{12,\lambda}(k)\leq(\lambda+M_{12}(k))I
\leq(\lambda+24)I.
\]
\end{proof}
\reviewedit{Define the scalar functions}
\begin{equation}
\label{eq:fermion-scalar-BS-functions}
\mathfrak s_j(k;w)
=\int_{\T^3}
\frac{\sin^2x_j}
{w-12+4\sum_{\ell=1}^3\cos(k_\ell/2)\cos x_\ell}
\,\eta(dx),\qquad w>M_{12}(k).
\end{equation}
The free denominator is even in every centred coordinate.
Consequently, the Birman--Schwinger matrix in the basis
\(\{\phi_{j,k}\}_{j=1}^3\) is diagonal, with diagonal entries
\(2\lambda\mathfrak s_j(k;w)\). Each function
\(w\mapsto\mathfrak s_j(k;w)\) is strictly decreasing.
For \(\lambda>24\), the preceding proposition and the
Birman--Schwinger principle therefore imply that each equation
\begin{equation}
\label{eq:fermion-pair-scalar-roots}
1=2\lambda\mathfrak s_j(k;\nu_{12,j}^+(k;\lambda)),
\qquad j=1,2,3,
\end{equation}
has exactly one solution above \(M_{12}(k)\). The ordered
eigenvalues \(e_{12,j}^+\) are the increasing rearrangement of
these labelled roots.
The implicit function theorem gives continuity of the labelled
roots in each local momentum chart. Changing a lift of \(k_\ell\)
by \(2\pi\) changes the sign of \(\cos(k_\ell/2)\); the change
of variable \(x_\ell\mapsto x_\ell+\pi\) leaves the defining
integrals unchanged. Thus the roots are well defined and continuous
on \(\T^3\). Their increasing rearrangement is also continuous,
including at eigenvalue crossings.

\subsubsection{The fermion-pair channel}
Define the channel operator on \(\cH^{\mathrm a}\) by
\[
H_{12,\lambda}^{\mathrm{ch}}(K)
=H_{\lambda,0,\gamma}(K)
=H_\gamma^{\mathrm{free}}(K)+\lambda V_{12}.
\]
The spectator change of variables is implemented by the unitary map
\[
(\mathcal W_{12,K}f)(r;p)=f(p,K-r-p),
\qquad
\mathcal W_{12,K}:\cH^{\mathrm a}\longrightarrow
\int_{\T^3}^{\oplus}\frh^{\mathrm a}(K-r)\,\eta(dr).
\]
Its inverse is \((\mathcal W_{12,K}^{-1}g)(p,q)=g(K-p-q;p)\).
The fibre corresponding to the third-particle momentum \(r\) is
\begin{equation}
\label{eq:fermion-channel-fiber}
\mathcal H_{12}^{\mathrm{ch}}(K;r;\lambda,\gamma)
=\gamma\varepsilon(r)I+h_{12,\lambda}(K-r),
\end{equation}
and
\begin{equation}
\label{eq:fermion-channel-direct-integral}
\mathcal W_{12,K}H_{12,\lambda}^{\mathrm{ch}}(K)
\mathcal W_{12,K}^{-1}
=\int_{\T^3}^{\oplus}
\mathcal H_{12}^{\mathrm{ch}}(K;r;\lambda,\gamma)\,\eta(dr).
\end{equation}
After local centring, the fibres act on the fixed space of odd
functions and depend continuously on \(r\) in operator norm.
Consequently, the channel spectrum is the union of the fibre
spectra; see also \cite{ReedSimon1978}. To justify the use of the
ordinary union, a spectral point of one fibre can be approximated
by vectors supported in arbitrarily small spectator neighbourhoods
of positive measure. Conversely, if a real number belongs to no
fibre spectrum, local norm continuity and compactness of \(\T^3\)
give a uniform bound on the fibre resolvents.
The shifted free pair bands fill precisely
\([E_{-,\gamma}(K),E_{+,\gamma}(K)]\): their union is the range
of the continuous function \((r,p)\mapsto
\E_\gamma(K;p,K-r-p)\) on the connected torus \((\T^3)^2\).
Each such band remains in its interacting fibre spectrum.
Positivity excludes spectrum below the corresponding lower edge.
For \(\lambda>24\), define the continuous upper branches by
\[
Z_{12,j}^+(K;r;\lambda)
=\gamma\varepsilon(r)+e_{12,j}^+(K-r;\lambda),
\qquad j=1,2,3,
\]
and set
\[
\Sigma_{12}^+(K,\lambda)
=\bigcup_{j=1}^3
\{Z_{12,j}^+(K;r;\lambda):r\in\T^3\}.
\]
This is a compact set with lower endpoint
\begin{equation}
\label{eq:fermion-upper-channel-bottom}
\tau_{12}^+(K,\lambda)
=\min_{\substack{r\in\T^3\\1\leq j\leq3}}
Z_{12,j}^+(K;r;\lambda)\geq\lambda+12.
\end{equation}
The fibre spectral description therefore gives
\begin{equation}
\label{eq:fermion-channel-spectrum}
\sigma(H_{12,\lambda}^{\mathrm{ch}}(K))
=[E_{-,\gamma}(K),E_{+,\gamma}(K)]\cup\Sigma_{12}^+(K,\lambda),
\qquad\lambda>24.
\end{equation}

\subsubsection{Critical fermion coupling at the free upper edge}
Fix \(\gamma>1\). In the remainder of this subsection,
\(K\) belongs to a sufficiently small neighbourhood \(\cO_+\)
of \(K_0=\pi\mathbf1\). By Lemma~\ref{lem:analytic-upper-threshold},
the unique free maximum is \((k_K^+,k_K^+)\) and is nondegenerate.
Write
\[
E_+=E_{+,\gamma}(K),\qquad k_+=k_K^+,
\qquad \rho_+=K-2k_+.
\]
All reductions of \(\cO_+\) below are made for this fixed
\(\gamma\); constants in uniform estimates may depend on
\(\gamma\) and the chosen neighbourhood.
For \(r\in\T^3\) and \(\delta\geq0\), define
\begin{equation}
\label{eq:fermion-critical-b}
b_j(K,r,\delta)
=\int_{\T^3}
\frac{|\phi_{j,K-r}(p)|^2}
{E_++\delta-\E_\gamma(K;p,K-r-p)}\,\eta(dp),
\qquad j=1,2,3.
\end{equation}
The integrals at \(\delta=0\) are finite, as shown below.
For \(\delta>0\), they equal
\(2\mathfrak s_j(K-r;E_++\delta-\gamma\varepsilon(r))\).
Thus the channel Birman--Schwinger matrix is diagonal, with
associated scalar factors
\begin{equation}
\label{eq:fermion-critical-d}
d_{\lambda,j}(K,r,\delta)=1-\lambda b_j(K,r,\delta).
\end{equation}
Put
\begin{equation}
\label{eq:fermion-critical-coupling}
\beta(K,\gamma)=\max_{\substack{r\in\T^3\\1\leq j\leq3}}
b_j(K,r,0),
\qquad
\lambda_{\mathrm{crit}}(K,\gamma)=\beta(K,\gamma)^{-1}.
\end{equation}
This coupling describes the onset of channel spectrum above the
fixed three-particle edge \(E_+\). The existence of a bound state
in an individual pair fibre is a different threshold question.
\begin{lemma}[Regularity of the threshold integrals]
\label{lem:fermion-critical-regularity}
After reducing \(\cO_+\), the functions \(b_j(K,r,\delta)\)
are jointly continuous for \(\delta\geq0\). At \(\delta=0\)
they are twice continuously differentiable in \(r\), and these
derivatives are jointly continuous in \((K,r)\).
\end{lemma}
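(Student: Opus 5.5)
The plan is to reduce \(b_j\) to an integral over the mixed dispersion and isolate the only possible singularity. The denominator is \(E_++\delta-\E_\gamma(K;p,K-r-p)\), and its zero set in \((r,p)\) at \(\delta=0\) is exactly the free maximum. In the variables \((p,q)=(p,K-r-p)\), that maximum is \(p=q=k_+\), i.e. \(r=\rho_+\). Everywhere else the denominator is at least \(\delta\), and at \(\delta=0\) it is positive.

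For \((r,\delta)\) with \(\operatorname{dist}(r,\rho_+)\ge r_0\) or \(\delta\ge\delta_0\), the denominator is bounded below uniformly in \(p\) and \(K\in\overline{\cO_+}\). On such sets, joint continuity and smoothness in \(r\) (indeed real analyticity) follow by differentiating under the integral. The numerator \(|\phi_{j,K-r}(p)|^2=2\sin^2(p_j-(K_j-r_j)/2)\) is smooth; it is well defined without choosing a lift because it is a squared sine. So everything reduces to a neighbourhood of \((r,\delta)=(\rho_+,0)\).

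Near that point, set \(r=\rho_++z\) and \(p=k_++x\). Then \(q=k_+-x-z\), and \eqref{eq:upper-energy-expansion} gives
\[
E_+-\E_\gamma=Q_K(x,z)+O(|x|^3+|z|^3),
\]
where \(Q_K\) is a positive definite quadratic form in \((x,z)\). Minimising over \(x\) gives \(\langle\widetilde R_K z,z\rangle\) with \(\widetilde R_K>0\); this is the fermion-channel analogue of \eqref{eq:upper-moving-threshold}. For the fermion-channel fibre at \(K-r\), I will use the Morse lemma with parameters, applied analytically to the pair dispersion \(\mathcal E_{12}(K-r;\cdot)\). This gives coordinates in which the denominator equals \(s+|\xi|^2\), where
\[
s=\delta+E_+-\gamma\varepsilon(r)-M_{12}(K-r)\asymp\delta+|z|^2 .
\]
The Jacobian and the transported numerator are smooth in \((K,r,\xi)\).

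The key observation concerns the numerator at the pair maximum. At \(p=k_+\) with \(K-r=2k_+\), the numerator is \(2\sin^2(k_{+,j}-k_{+,j})=0\), and it vanishes quadratically: it is \(\asymp|x+z/2|^2\) in these coordinates. More precisely, the pair maximiser sits at the centred point \(x=0\) of \(\phi_{j,K-r}\), exactly where the odd form factor vanishes. Hence the integrand near the critical point looks like
\[
\frac{a(\xi)\,\xi_j^2+\dots}{s+|\xi|^2},
\]
which is bounded in dimension three, uniformly in \(s\ge0\). Joint continuity in \((K,r,\delta)\) then follows by dominated convergence.

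For the \(C^2\) statement at \(\delta=0\), I differentiate \(G(s,K,r)=\int g(\xi;K,r)(s+|\xi|^2)^{-1}\,d\xi\) with \(g=O(|\xi|^2)\), and compose with \(s(r)\asymp|z|^2\), which is smooth. I expand \(g\) to second order in \(\xi\) with smooth coefficients. The function \(\int\xi_j^2(s+|\xi|^2)^{-1}\chi\,d\xi\) equals an analytic function minus \(c\,s^{3/2}\) in three dimensions. Since \(s^{3/2}=(\langle\widetilde R z,z\rangle+O(|z|^3))^{3/2}\) is \(C^2\) in \(z\) (it is homogeneous of degree \(3\)), the composition is \(C^2\). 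Higher terms of \(g\) give more regular contributions.

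The main obstacle is this last step. One must check that no term of the form \(s^{1/2}\) survives, since \(\sqrt{\langle\widetilde R z,z\rangle}\) is not \(C^1\). This is exactly what the vanishing of the odd form factor at the centred maximiser guarantees, provided the Morse change of variables preserves the quadratic vanishing. I will verify that directly via the explicit centred formula \(\mathcal E_{12}=12-4\sum\cos(k_\ell/2)\cos x_\ell\), which makes the maximiser \(x=0\) exact and lets one avoid an abstract Morse lemma altogether.
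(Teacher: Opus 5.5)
Your proposal is correct. Its skeleton matches the paper's proof:
\begin{itemize}
\item localise at \((r,\delta)=(\rho_+,0)\);
\item use the explicit centred pair dispersion, so that the pair maximiser sits exactly at the origin for every nearby \(r\);
\item exploit the quadratic vanishing of the odd form factor there.
\end{itemize}
The paper writes \(p=(K-r)/2+\pi\mathbf1+u\), with \(K-r\) lifted near \(0\). The denominator is then exactly \(s(K,r)+4\sum_\ell\cos((K-r)_\ell/2)(1-\cos u_\ell)\) on the whole torus, and the numerator is the \(r\)-independent function \(2\sin^2u_j\). No Morse lemma is needed, as you anticipated at the end.

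The routes differ in the \(C^2\) step. The paper never isolates the singularity. Because the \(r\)-dependence sits only in the denominator \(a\), one has \(|\nabla_r a|\le C(|t|+|u|^2)\) and \(|D_r^2a|\le C\). Hence the second \(r\)-derivatives of \(2\sin^2u_j/a\) are bounded by \(C(1+|u|^{-2})\), uniformly in \(r\) and \(K\). Dominated differentiation then gives \(C^2\) regularity with jointly continuous derivatives at once.

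You instead flatten the denominator to \(s+|\xi|^2\), which moves the \(r\)-dependence into the numerator and the Jacobian. You then compute the model integral as an analytic function of \(s\) plus a multiple of \(s^{3/2}\), and compose with \(s(K,r)\).
\begin{itemize}
\item \emph{What your route buys:} it shows why no \(s^{1/2}\) term appears (such a term would spoil even \(C^1\) regularity). It also exhibits the \(s^{3/2}\) term that limits the regularity at \(\rho_+\) to \(C^2\).
\item \emph{What it costs:} bookkeeping for the Taylor remainder of the numerator and for the composition. The paper's majorant avoids this. In your coordinates, the same kind of uniform bound, for instance \(\xi_j^2|\nabla_rs|^2(s+|\xi|^2)^{-3}\le C|\xi|^{-2}\) together with its companions involving \(D_r^2s\) and the \(r\)-derivatives of the Jacobian, disposes of every term directly. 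The explicit \(s^{3/2}\) computation is therefore unnecessary for the lemma as stated.
\end{itemize}

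Two small corrections for the write-up:
\begin{itemize}
\item \(s^{3/2}\) is not homogeneous of degree three, since \(s=\langle\widetilde R_Kz,z\rangle+O(|z|^3)\). Its \(C^2\) property, with second derivatives jointly continuous in \((K,r)\), follows instead from \(s\asymp|z|^2\), \(|\nabla_rs|\le C|z|\), \(|D_r^2s|\le C\), and the identity \(D_r^2s^{3/2}=\tfrac34s^{-1/2}\nabla_rs\otimes\nabla_rs+\tfrac32s^{1/2}D_r^2s\).
\item The numerator is comparable to \((x_j+z_j/2)^2\), not to \(|x+z/2|^2\). Only the \(j\)-th component vanishes, and that is all your argument uses.
\end{itemize}
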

\begin{proof}
We first establish regularity at the only potentially singular
spectator momentum \(\rho_+\). Represent \(k=K-r\) near zero
and write \(p=k/2+\pi\mathbf1+u\). The denominator at
\(\delta=0\) becomes
\[
a(K,r,u)=s(K,r)+4\sum_{\ell=1}^3
c_\ell(K-r)(1-\cos u_\ell),
\]
where
\[
s(K,r)=E_+-\gamma\varepsilon(r)-M_{12}(K-r),
\qquad c_\ell(k)=\cos(k_\ell/2)>0.
\]
The numerator is \(2\sin^2u_j\). The function \(s\) is
nonnegative and vanishes only at \(\rho_+\). Its Hessian there
is positive definite: it is the Schur complement of the pair
block in the positive definite Hessian of the free energy
deficit. Thus, with \(t=r-\rho_+\), uniformly near \((t,u)=(0,0)\),
\[
a(K,r,u)\asymp |t|^2+|u|^2,
\qquad |\nabla_r a(K,r,u)|\leq C(|t|+|u|^2),
\qquad |D_r^2a(K,r,u)|\leq C.
\]
These estimates prove finiteness of the threshold integrals.
The second \(r\)-derivatives of the integrand are bounded by
\[
C\left\{
\frac{|u|^2}{(|t|^2+|u|^2)^2}
+\frac{|u|^2(|t|^2+|u|^4)}{(|t|^2+|u|^2)^3}
\right\}
\leq C(1+|u|^{-2}).
\]
The same integrable majorant controls the integrand and its
first \(r\)-derivatives. Dominated differentiation therefore
gives the claimed \(C^2\) regularity and joint continuity of
these derivatives. Outside this neighbourhood the denominator
is uniformly positive. The same domination, with \(a+\delta\)
in place of \(a\), proves joint continuity down to \(\delta=0\).
\end{proof}

\reviewedit{Lemmas~\ref{lem:fermion-critical-symmetric}
and~\ref{lem:fermion-critical-maxima} in
Appendix~\ref{app:technical-lemmas} show that each function
\(r\mapsto b_j(K,r,0)\) has a unique nondegenerate maximum
at \(r_j(K)\), with \(r_j(K_0)=K_0\). They also give the
uniform quadratic bounds \eqref{eq:fermion-critical-quadratic}
and~\eqref{eq:fermion-critical-bound} for the threshold integrals
and the critical multipliers.}

In particular, multiplication by
\(\operatorname{diag}(d_{\lambda_{\mathrm{crit}},j}(K,r,0))_{j=1}^3\)
on \(L^2(\T^3,\eta;\mathbb C^3)\) has no bounded inverse.
Although its zeros form a finite set, continuity makes its
smallest diagonal entry arbitrarily small on sets of positive
measure. For \(\delta>0\), the corresponding multiplication
operator is boundedly invertible.
\begin{proposition}[The channel spectrum for weak coupling]
\label{prop:fermion-weak-spectrum}
For \(K\in\cO_+\) and \(\lambda\geq0\),
\[
\sup\sigma(H_{12,\lambda}^{\mathrm{ch}}(K))=E_+
\quad\Longleftrightarrow\quad
0\leq\lambda\leq\lambda_{\mathrm{crit}}(K,\gamma).
\]
In this closed interval,
\[
\sigma(H_{12,\lambda}^{\mathrm{ch}}(K))
=[E_{-,\gamma}(K),E_+].
\]
At the symmetric point the critical coupling is independent of
\(\gamma>1\) and equals
\begin{equation}
\label{eq:fermion-critical-symmetric}
\lambda_{\mathrm{crit}}(K_0,\gamma)
=\left[2\int_{\T^3}
\frac{\sin^2u_1}{4\sum_{\ell=1}^3(1-\cos u_\ell)}
\,\eta(du)\right]^{-1}.
\end{equation}
\end{proposition}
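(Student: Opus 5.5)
The argument works fibre by fibre in \(r\), using the direct integral \eqref{eq:fermion-channel-direct-integral}. The channel spectrum is the union of the fibre spectra. The shifted free bands fill \([E_{-,\gamma}(K),E_+]\), and nothing lies below the lower edges. So the only question is whether some fibre \(\mathcal H_{12}^{\mathrm{ch}}(K;r;\lambda,\gamma)\) has spectrum above \(E_+\). Any such spectrum must consist of eigenvalues of \(h_{12,\lambda}(K-r)\), shifted by \(\gamma\varepsilon(r)\), lying above \(M_{12}(K-r)\). Fix \(r\) and an energy \(z=E_++\delta\) with \(\delta>0\). Because the Birman--Schwinger matrix is diagonal in the basis \(\{\phi_{j,K-r}\}\), the point \(z\) is an eigenvalue of the fibre exactly when \(d_{\lambda,j}(K,r,\delta)=0\) for some \(j\), that is, when \(\lambda b_j(K,r,\delta)=1\). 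Each \(b_j(K,r,\cdot)\) is strictly decreasing in \(\delta\), tends to \(0\) as \(\delta\to\infty\), and is continuous down to \(\delta=0\) by Lemma~\ref{lem:fermion-critical-regularity}. Hence the fibre has an eigenvalue above \(E_+\) if and only if \(\lambda b_j(K,r,0)>1\) for some \(j\).

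This gives both directions of the equivalence. Suppose \(\lambda\le\lambda_{\mathrm{crit}}=\beta^{-1}\). Then for every \(r\), \(j\) and \(\delta>0\) we have \(\lambda b_j(K,r,\delta)<\lambda b_j(K,r,0)\le 1\), so no fibre has spectrum in \((E_+,\infty)\). The union therefore equals \([E_{-,\gamma}(K),E_+]\), and its supremum is \(E_+\). Now suppose \(\lambda>\lambda_{\mathrm{crit}}\). Take the maximiser \((r,j)\) in \eqref{eq:fermion-critical-coupling}; it exists by continuity on the compact torus. For this pair, \(\lambda b_j(K,r,0)>1\), so the intermediate value theorem gives a root \(\delta>0\). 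That root is an eigenvalue above \(E_+\), and it belongs to the channel spectrum by the union description. Hence \(\sup\sigma>E_+\). No uniformity in \(r\) is needed in this step. Its only delicate ingredient is the identification of fibre eigenvalues with zeros of \(d_{\lambda,j}\) for \(z>E_+\). This follows from the rank-three Birman--Schwinger principle: since \(z>E_+\ge\gamma\varepsilon(r)+M_{12}(K-r)\), the fibre energy \(z-\gamma\varepsilon(r)\) lies above the pair band, and the identity \(b_j=2\mathfrak s_j\) holds there.

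For the formula at \(K_0=\pi\mathbf1\), the main step, and the expected obstacle, is locating the maximiser of \(b_j(K_0,\cdot,0)\). Lemmas~\ref{lem:fermion-critical-symmetric} and~\ref{lem:fermion-critical-maxima} assert that the unique maximum is at \(r_j(K_0)=K_0\), so I invoke them. At \(r=K_0\) the pair momentum is \(k=K_0-r=0\), and \(\gamma\varepsilon(\pi\mathbf1)=12\gamma\), while \(E_+(K_0)=12(2+\gamma)\). Write \(p=\pi\mathbf1+u\), so that \(\phi_{j,0}(p)=\sqrt2\sin p_j\) has square \(2\sin^2u_j\). The denominator becomes
\[
E_+-\E_\gamma(K_0;p,-p)=24+12\gamma-2\varepsilon(p)-12\gamma=2\bigl(12-\varepsilon(\pi\mathbf1+u)\bigr)=2\varepsilon(u)=4\sum_\ell(1-\cos u_\ell).
\]
This gives \(b_j(K_0,K_0,0)=2\int\sin^2u_j/(4\sum_\ell(1-\cos u_\ell))\,\eta(du)\). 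The value is independent of \(j\) by coordinate symmetry and independent of \(\gamma\). Taking the reciprocal gives \eqref{eq:fermion-critical-symmetric}. If one wanted to avoid the appendix lemmas, one would instead compare \(b_j(K_0,r,0)\) with its value at \(r=K_0\) by the same translation trick as in Proposition~\ref{prop:upper-unique-spectator}. That comparison uses \(\gamma>1\) to control the deficit \(s(K_0,r)\ge0\) against the loss in the pair coefficients \(c_\ell\), and this is where I would expect the real work to lie.
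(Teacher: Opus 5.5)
Your proposal is correct and follows the paper's proof essentially step by step: the diagonal fibrewise Birman--Schwinger factors \(d_{\lambda,j}\); strict decrease of \(b_j\) in \(\delta\) together with the intermediate value theorem at a maximiser when \(\lambda>\lambda_{\mathrm{crit}}\); and, for the symmetric formula, locating the maximum at \(r=K_0\) via the appendix lemmas and then evaluating \(b_j(K_0,K_0,0)\) explicitly. The differences are minor: for \(\lambda\le\lambda_{\mathrm{crit}}\) you use the already justified union-of-fibre-spectra description, whereas the paper bounds the fibre resolvents uniformly in \(r\) via \(d_{\lambda,j}\ge d_{\lambda_{\mathrm{crit}},j}>0\), and at \(\lambda=0\) your strict inequality \(\lambda b_j(K,r,\delta)<\lambda b_j(K,r,0)\) becomes \(0<0\), though the needed bound \(\lambda b_j<1\) is then trivial.
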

\begin{proof}
Let \(0\leq\lambda\leq\lambda_{\mathrm{crit}}\) and
\(z=E_++\delta\), with \(\delta>0\). Then
\[
d_{\lambda,j}(K,r,\delta)
\geq d_{\lambda_{\mathrm{crit}},j}(K,r,\delta)>0,
\]
uniformly in \(r\). The free fibre resolvents have norm at
most \(\delta^{-1}\), and the finite-rank resolvent formula
therefore gives a uniform bound on the interacting fibre
resolvents. Hence \(z\) belongs to the channel resolvent set.
The channel contains the free band, and positivity excludes
spectrum below it, proving the asserted equality.
If \(\lambda>\lambda_{\mathrm{crit}}\), choose \(j\) and
\(r_j(K)\) for which \(b_j(K,r_j(K),0)=\beta\). Then
\(\lambda b_j(K,r_j(K),0)>1\). The function
\(\delta\mapsto b_j(K,r_j(K),\delta)\) is continuous,
strictly decreasing, and tends to zero as \(\delta\to\infty\).
It therefore takes the value \(\lambda^{-1}\) at a positive
\(\delta\). By the fibre Birman--Schwinger principle,
\(E_++\delta\) is a fibre eigenvalue, and local norm continuity
places it in the channel spectrum. Finally, at \(K_0\) the
maxima occur at \(r=K_0\); coordinate symmetry gives the
displayed formula for \(\lambda_{\mathrm{crit}}\).
\end{proof}

\subsubsection{Separation of the upper two-cluster branches}
The same fibre criterion gives, for every \(\lambda\geq0\),
\begin{equation}
\label{eq:fermion-channel-all-couplings}
\begin{split}
\sigma(H_{12,\lambda}^{\mathrm{ch}}(K))\cap(E_+,\infty)
=\{E_++\delta:\ &\delta>0,\\[-2pt]
&\lambda b_j(K,r,\delta)=1
\text{ for some }j,r\}.
\end{split}
\end{equation}
Indeed, a zero of a scalar factor gives a fibre eigenvalue.
If no factor vanishes at a fixed \(\delta>0\), compactness of
the spectator torus bounds their absolute values away from
zero, and the fibre resolvents are uniformly bounded.
Put
\begin{equation}
\label{eq:fermion-separation-coupling}
\begin{aligned}
\underline b_j(K,\gamma)&=\min_{r\in\T^3}b_j(K,r,0),
&\overline b_j(K,\gamma)&=\max_{r\in\T^3}b_j(K,r,0),\\
\lambda_{\mathrm{sep}}(K,\gamma)
&=\left[\min_{1\leq j\leq3}\underline b_j(K,\gamma)\right]^{-1}.
\end{aligned}
\end{equation}
All these extrema are positive and continuous in \(K\).
\begin{proposition}[Channel classification near the free upper edge]
\label{prop:fermion-all-couplings}
For each fixed \(\gamma>1\), after reducing \(\cO_+\), the
following holds for every \(K\in\cO_+\):
\[
0<\lambda_{\mathrm{crit}}(K,\gamma)
<\lambda_{\mathrm{sep}}(K,\gamma)
\le 12(1+\gamma),
\]
and moreover \(24<\lambda_{\mathrm{sep}}(K,\gamma)\).
\begin{enumerate}[(i)]
\item If \(0\leq\lambda\leq\lambda_{\mathrm{crit}}\), then
\[
\sigma(H_{12,\lambda}^{\mathrm{ch}}(K))
=[E_{-,\gamma}(K),E_+].
\]
\item If \(\lambda_{\mathrm{crit}}<\lambda\leq\lambda_{\mathrm{sep}}\),
there exists \(a>0\) such that
\[
[E_+,E_++a]\subset\sigma(H_{12,\lambda}^{\mathrm{ch}}(K)).
\]
\item If \(\lambda>\lambda_{\mathrm{sep}}\), all three upper
two-cluster branches lie strictly above \(E_+\), and
\[
E_+<\tau_{12}^+(K,\lambda),
\qquad
(E_+,\tau_{12}^+(K,\lambda))
\subset\rho(H_{12,\lambda}^{\mathrm{ch}}(K)).
\]
\end{enumerate}
At the symmetric point,
\begin{equation}
\label{eq:fermion-separation-symmetric}
\lambda_{\mathrm{sep}}(\pi\mathbf1,\gamma)=12(1+\gamma).
\end{equation}
\end{proposition}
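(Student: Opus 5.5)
The proof rests on the fibre criterion \eqref{eq:fermion-channel-all-couplings} and on monotonicity in \(\delta\). For each \(j,r\), the map \(\delta\mapsto b_j(K,r,\delta)\) is continuous on \([0,\infty)\), strictly decreasing, and tends to zero; this uses Lemma~\ref{lem:fermion-critical-regularity}. Hence \(E_++\delta\), with \(\delta>0\), lies in the channel spectrum if and only if \(\lambda b_j(K,r,\delta)=1\) for some \(j,r\). For fixed \((j,r)\), such a \(\delta>0\) exists exactly when \(\lambda b_j(K,r,0)>1\). So \(\sigma(H_{12,\lambda}^{\mathrm{ch}}(K))\cap(E_+,\infty)\) is the union, over \((j,r)\) with \(\lambda b_j(K,r,0)>1\), of the unique roots \(\delta_j(r)>0\), which depend continuously on \(r\).

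Part (i) is Proposition~\ref{prop:fermion-weak-spectrum}.

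For (ii), take \(\lambda_{\mathrm{crit}}<\lambda\le\lambda_{\mathrm{sep}}\). There is \(j_0\) with \(\underline b_{j_0}\le\lambda^{-1}<\beta=\overline b_{j}\) for some \(j\). I would pick the index \(j\) attaining \(\beta\) and also satisfying \(\underline b_j\le\lambda^{-1}\). If no single index does both, I would first show using the symmetric structure near \(K_0\) that all three \(\underline b_j\) and \(\overline b_j\) are close to each other. At \(K_0\) all three coincide by coordinate permutation, so continuity in \(K\) reduces the problem to the case where one index has \(\underline b_j\le\lambda^{-1}<\overline b_j\).

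On the connected torus, the set \(U=\{r:\lambda b_j(K,r,0)>1\}\) is open and nonempty. Its boundary contains points where \(\lambda b_j=1\), unless \(U=\T^3\), which is impossible since \(\underline b_j\le\lambda^{-1}\). As \(r\) runs along a path from \(r_j(K)\) to a boundary point, \(\delta_j(r)\) varies continuously from \(\delta_j(r_j(K))>0\) down to \(0\). By the intermediate value theorem it fills \((0,\delta_j(r_j(K))]\). Setting \(a=\delta_j(r_j(K))\) and using closedness of the spectrum gives \([E_+,E_++a]\subset\sigma\).

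For (iii), \(\lambda>\lambda_{\mathrm{sep}}\) gives \(\lambda b_j(K,r,0)>1\) for all \(j,r\). Compactness then gives \(\min_{j,r}\delta_j(r)=:\delta_0>0\), and the spectrum above \(E_+\) lies in \([E_++\delta_0,\infty)\). To identify these roots with the branches \(Z^+_{12,j}\), I need \(\lambda>24\), which follows from \(\lambda_{\mathrm{sep}}>24\). With that, \(\tau_{12}^+(K,\lambda)=E_++\delta_0>E_+\) and the gap lies in the resolvent set.

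The numerical relations come from explicit evaluation at \(K_0\) and continuity in \(K\). At \(K_0\) we have \(\rho_+=\pi\mathbf1\) and \(c_\ell(K_0-r)=\cos((\pi-r_\ell)/2)=\sin(r_\ell/2)\). The denominator in \(b_j(K_0,r,0)\) is \(E_+-\gamma\varepsilon(r)-12+4\sum_\ell c_\ell\cos x_\ell\), with \(E_+=24+12\gamma\). The minimum of \(b_j\) should be attained where the denominator is largest, at \(r=0\). There \(c_\ell=0\) and the denominator is the constant \(12+12\gamma\), so \(b_j(K_0,0,0)=2\cdot\tfrac12/(12(1+\gamma))\). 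This gives \(\underline b_j=1/(12(1+\gamma))\) and \(\lambda_{\mathrm{sep}}(K_0,\gamma)=12(1+\gamma)>24\) for \(\gamma>1\).

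To prove that \(r=0\) is the minimiser, I would use Jensen's inequality. Since \(\int\sin^2x_j\cos x_\ell\,d\eta=0\), the weighted average of the denominator against \(2\sin^2x_j\,\eta(dx)\) equals \(12(1+\gamma)-\gamma\varepsilon(r)\le12(1+\gamma)\). Jensen for \(1/t\) then gives \(b_j(K_0,r,0)\ge1/(12(1+\gamma)-\gamma\varepsilon(r))\ge1/(12(1+\gamma))\), with equality only at \(r=0\).

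For general \(K\), the bound \(\lambda_{\mathrm{sep}}\le12(1+\gamma)\) follows the same way: \(\underline b_j\ge1/\max_r(\text{average denominator})\), and the average denominator is \(E_+-\gamma\varepsilon(r)-12\). This needs \(E_+(K)\le24+12\gamma\), which holds since \(\varepsilon\le12\). The strict inequality \(\lambda_{\mathrm{crit}}<\lambda_{\mathrm{sep}}\) follows because \(b_j(K,\cdot,0)\) is nonconstant, hence \(\overline b>\underline b\); the same holds near \(K_0\) by continuity.

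The main obstacle is (ii): I need a single index \(j\) whose range \([\underline b_j,\overline b_j]\) contains \(\lambda^{-1}\) in its interior from above. I expect to handle it through the near-equality of the three indices close to \(K_0\) described above.
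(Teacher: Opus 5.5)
Your proposal is correct and follows essentially the paper's proof: the compression identity with Cauchy--Schwarz (your Jensen step) gives \(\lambda_{\mathrm{sep}}\le12(1+\gamma)\), with equality at \(K_0\), \(r=0\). Coordinate permutation at \(K_0\) plus continuity in \(K\), using the strict inequality \(\underline b_j(K_0)<\overline b_j(K_0)\) that you obtain from nonconstancy, makes the three transition intervals overlap in (ii). Compactness of the continuous roots \(\delta_j(r)\) gives (iii), and \(\lambda_{\mathrm{sep}}>24\) identifies these roots with the branches.

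The only difference is cosmetic. In (ii) you apply the intermediate value theorem to \(r\mapsto\delta_j(r)\) along a path, which needs \(\delta_j(r)\to0\) at \(\partial U\); this follows from joint continuity and strict monotonicity in \(\delta\). The paper instead fixes \(\delta\in(0,a]\) and uses connectedness of \(\T^3\) in \(r\), and the two arguments are equivalent.
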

\begin{proof}
We first estimate the separation coupling. Reduce \(\cO_+\)
so that \(E_+>12\gamma+12\), which is possible because
\(E_{+,\gamma}(K_0)=12\gamma+24\). With respect to the
probability measure \(|\phi_{j,K-r}(p)|^2\eta(dp)\),
the compression identity \(P_kh_{12,0}(k)P_k=12P_k\) gives
\[
\int_{\T^3}|\phi_{j,K-r}(p)|^2
\bigl[E_+-\E_\gamma(K;p,K-r-p)\bigr]\,\eta(dp)
=E_+-\gamma\varepsilon(r)-12>0.
\]
The Cauchy--Schwarz inequality, first for \(\delta>0\) and
then by passage to the limit, yields
\[
b_j(K,r,0)
\geq\frac{1}{E_+-\gamma\varepsilon(r)-12}
\geq\frac{1}{12(1+\gamma)}.
\]
Here we used \(E_+\leq12(2+\gamma)\) and
\(\varepsilon(r)\geq0\). Therefore
\(\lambda_{\mathrm{sep}}(K,\gamma)\leq12(1+\gamma)\).
At \(K=K_0\) and \(r=0\), the pair dispersion is identically
\(12\), so
\[
b_j(K_0,0,0)=\frac{1}{12(1+\gamma)},\qquad j=1,2,3.
\]
\reviewedit{Thus the lower bound is attained at \(r=0\), and}
\[
\lambda_{\mathrm{sep}}(K_0,\gamma)=12(1+\gamma).
\]
\revblock{Thus \(\lambda_{\mathrm{sep}}\) is determined by the
minimum of \(b_j(K_0,\cdot,0)\) at \(r=0\), whereas
\(\lambda_{\mathrm{crit}}\) is determined by its maximum at
\(r=K_0\).}
Since \(\gamma>1\), continuity gives
\(\lambda_{\mathrm{sep}}(K,\gamma)>24\) after a further
reduction of \(\cO_+\).
We next show that the transition intervals of the three modes
overlap. Coordinate permutations at \(K_0\) imply
\[
\overline b_j(K_0,\gamma)=\beta_0,
\qquad
\underline b_j(K_0,\gamma)=\alpha_0,
\qquad j=1,2,3,
\]
where \(0<\alpha_0<\beta_0\), because each threshold integral
has a unique maximum. By continuity, we may arrange that
\[
\max_j\overline b_j(K,\gamma)^{-1}
<\min_j\underline b_j(K,\gamma)^{-1},
\qquad K\in\cO_+.
\]
Hence the three intervals
\((\overline b_j^{-1},\underline b_j^{-1}]\)
have a common nonempty intersection. \rev{Consequently,}
\[
\bigcup_{j=1}^3(\overline b_j^{-1},\underline b_j^{-1}]
=\left(\min_j\overline b_j^{-1},
        \max_j\underline b_j^{-1}\right]
=(\lambda_{\mathrm{crit}},\lambda_{\mathrm{sep}}].
\]
This also proves \(0<\lambda_{\mathrm{crit}}<\lambda_{\mathrm{sep}}\).
Assertion \textup{(i)} is the weak-coupling result proved above.
For \textup{(ii)}, fix
\(\lambda\in(\lambda_{\mathrm{crit}},\lambda_{\mathrm{sep}}]\).
The interval identity supplies an index \(j\) such that
\[
\lambda\overline b_j>1,\qquad
\lambda\underline b_j\leq1.
\]
Let \(r_{\max}\) and \(r_{\min}\) attain these extrema.
Continuity at \(\delta=0\) gives \(a>0\) for which
\[
\lambda b_j(K,r_{\max},\delta)>1,
\qquad 0\leq\delta\leq a.
\]
Strict decrease in \(\delta\) gives
\[
\lambda b_j(K,r_{\min},\delta)
<\lambda b_j(K,r_{\min},0)\leq1,
\qquad \delta>0.
\]
For each \(\delta\in(0,a]\), continuity in \(r\) and
connectedness of \(\T^3\) therefore provide a momentum
\(r_\delta\) with \(\lambda b_j(K,r_\delta,\delta)=1\).
The fibre spectral criterion places \(E_++\delta\) in the
channel spectrum. Since \(E_+\) already belongs to the free
band, assertion \textup{(ii)} follows, including
\(\lambda=\lambda_{\mathrm{sep}}\).
Finally, let \(\lambda>\lambda_{\mathrm{sep}}\). Then
\[
\lambda b_j(K,r,0)\geq\lambda\underline b_j>1,
\qquad r\in\T^3,\quad j=1,2,3.
\]
For every \(r,j\), strict decrease from a value greater than
\(\lambda^{-1}\) to zero gives a unique positive solution
\(\delta_j(K,r,\lambda)\) of
\[
\lambda b_j(K,r,\delta_j(K,r,\lambda))=1.
\]
Since \(\partial_\delta b_j(K,r,\delta)<0\) for \(\delta>0\),
the implicit function theorem gives continuity of these roots
in \(r\). Compactness then implies
\[
\min_{\substack{r\in\T^3\\1\leq j\leq3}}
\delta_j(K,r,\lambda)>0.
\]
The energies \(E_++\delta_j(K,r,\lambda)\) are exactly the
three upper fibre eigenvalues, counting multiplicities.
Indeed, the three scalar equations give three eigenvalues,
and the rank-three interaction permits no additional upper
eigenvalues. Since \(\lambda>\lambda_{\mathrm{sep}}>24\),
the branches and their lower endpoint defined above apply, and
\[
\tau_{12}^+(K,\lambda)
=E_++\min_{\substack{r\in\T^3\\1\leq j\leq3}}
\delta_j(K,r,\lambda)>E_+.
\]
The channel spectrum is the free band together with these
branches, proving \textup{(iii)}.
\end{proof}

\subsubsection{Factorisation of the fermion interaction}
For use in the three-particle Birman--Schwinger reduction, define
\(\Phi_K:\cH^{\mathrm a}\to L^2(\T^3,\eta;\mathbb C^3)\) by
\begin{equation}
\label{eq:Phi-factor-map-recalled}
(\Phi_Kf)_j(r)
=\int_{\T^3}\phi_{j,K-r}(p)
f(p,K-r-p)\,\eta(dp),\qquad j=1,2,3.
\end{equation}
All integral identities are understood almost everywhere, with
the same measurable representatives in every occurrence of a
form factor. Bessel's inequality in each spectator fibre gives
\(\|\Phi_K\|\leq1\). A change of variables shows that
\[
(\Phi_K^*v)(p,q)
=\sum_{j=1}^3\phi_{j,p+q}(p)v_j(K-p-q).
\]
The right-hand side is antisymmetric in \(p,q\).
Orthonormality of the form factors yields \(\Phi_K\Phi_K^*=I\),
and hence \(\|\Phi_K\|=1\). Moreover,
\begin{equation}
\label{eq:V12-factorization}
\begin{split}
V_{12}&=\Phi_K^*\Phi_K,\\
(V_{12}f)(p,q)
&=\sum_{j=1}^3\phi_{j,p+q}(p)
\int_{\T^3}\phi_{j,p+q}(p')
f(p',p+q-p')\,\eta(dp').
\end{split}
\end{equation}
Thus \(V_{12}\) is an orthogonal projection of infinite rank
on \(\cH^{\mathrm a}\); the rank-three property holds for
each pair fibre \(v_{12}(k)\).

\subsection{Essential spectrum and opening of an internal gap}
\label{subsec:essential-threshold-and-gap}
\reviewedit{At the critical mixed coupling and for \(K\) near
\(\pi\mathbf1\), both mixed-channel spectra coincide with the
free band. The fermion-pair channel therefore determines the
essential spectrum of the full operator and the opening of its
internal gap.}

\begin{proposition}[Essential spectrum in terms of the channel spectra]
\label{prop:channel-essential-spectrum}
Let \(\gamma>0\) and \(\lambda,\mu\ge0\). For every \(K\in\T^3\),
\begin{equation}
\label{eq:essential-spectrum-channel-union}
\begin{aligned}
\sigma_{\mathrm{ess}}\bigl(H_{\lambda,\mu,\gamma}(K)\bigr)
={}&\sigma\bigl(H_{12,\lambda}^{\mathrm{ch}}(K)\bigr)\\
&\cup\sigma\bigl(H_{13,\mu}^{\mathrm{ch}}(K)\bigr)
\cup\sigma\bigl(H_{23,\mu}^{\mathrm{ch}}(K)\bigr).
\end{aligned}
\end{equation}
Here \(H_{12,\lambda}^{\mathrm{ch}}(K)\) acts on
\(\cH^{\mathrm a}\), whereas the mixed channels act on \(\cH\).
The two mixed-channel spectra coincide by exchange symmetry.
\end{proposition}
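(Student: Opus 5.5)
We prove the HVZ-type identity \eqref{eq:essential-spectrum-channel-union} in two inclusions. Throughout, let \(\Sigma\) denote the right-hand side.

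\textbf{The inclusion \(\Sigma\subset\sigma_{\mathrm{ess}}\).} We use Weyl sequences. Take \(E\in\sigma(H_{13,\mu}^{\mathrm{ch}}(K))\). By the direct integral \eqref{eq:mixed-channel-direct-integral} and norm continuity in \(q\), there are a spectator \(q_0\) and an approximate eigenvector \(g\) of the fibre \(H_{\mathrm{mix},\mu,\gamma}(K;q_0)\).

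We form \(f_n(p,q)=\chi_n(q)g(p)\), where \(\chi_n\) is normalised in \(L^2\) and supported in shrinking balls around \(q_0\) (or around \(q_n\to q_0\), with \(q_n\ne\) the fixed points of the reflection). Then \(f_n\) is an approximate eigenvector of \(H_{13,\mu}^{\mathrm{ch}}\) and converges weakly to zero. We then antisymmetrise: \(F_n=(f_n-P_{12}f_n)/\sqrt2\).

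We must check three things.
\begin{enumerate}[(a)]
\item \(\mu V_{23}f_n\to0\). This holds because \((V_{23}f_n)(p,q)=\chi_n(q)\)-independent averaging: \(\|V_{23}f_n\|\le\|\chi_n\|_{L^1}\|g\|\to0\) as the supports shrink. The same argument shows \(\lambda V_{12}f_n\to0\): after the change of variables in \eqref{eq:V12-factorization}, \(\Phi_Kf_n\) is a smoothing integral of \(\chi_n\) in the third-particle variable, and the \(L^1\) norm of \(\chi_n\) tends to zero.
\item The cross terms \(\langle f_n,P_{12}f_n\rangle\to0\). This is where the choice \(q_0\) away from \(p\)-concentration matters. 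Since \(g\) is fixed and \(\chi_n\) concentrates, \(\langle f_n,P_{12}f_n\rangle=\int\chi_n(q)\overline{g(p)}\chi_n(p)g(q)\) is bounded by \(\|\chi_n\|_1^2\|g\|_\infty^2\). If \(g\) is merely \(L^2\), we first approximate it by bounded functions.
\item Hence \(F_n\) is a normalised Weyl sequence for \(H_{\lambda,\mu,\gamma}(K)\) on \(\cH^{\mathrm a}\).
\end{enumerate}
For the fermion channel the argument is analogous. We localise in the third-particle momentum \(r\) via \(\mathcal W_{12,K}\) and \eqref{eq:fermion-channel-direct-integral}; this is already antisymmetric. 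The mixed terms \(V_{13}+V_{23}\) are killed by the same \(L^1\)-smallness in \(r\).

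\textbf{The inclusion \(\sigma_{\mathrm{ess}}\subset\Sigma\).} This is the main obstacle, and we handle it by a Faddeev/Weinberg--van Winter type resolvent argument in the bounded lattice setting, following \cite[Section~3]{ALM2007} and \cite[Theorem~6.6]{ALM2004}.

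Fix \(z\notin\Sigma\), so every channel resolvent \(R_\alpha(z)\) exists. Write
\[
H=H_\gamma^{\mathrm{free}}+W_{12}+W_{13}+W_{23}.
\]
We want to show that \(H-z\) is Fredholm of index zero near \(z\). Equivalently, we want a parametrix whose error is compact.

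The key compactness fact is that for distinct pairs \(\alpha\ne\beta\), products of the form \(W_\alpha R_0(z')W_\beta\), and more generally the connected Faddeev kernels, are compact or have compact square. This follows because each \(W_\alpha\) is an integral operator in one momentum variable with a smooth (trigonometric polynomial) form factor, while a product of two different pair interactions integrates out both remaining variables.

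We would use the Faddeev equations for \(T\)-matrices \(T_\alpha(z)=W_\alpha-W_\alpha R_\alpha(z)W_\alpha\). These are well defined off \(\sigma(H_\alpha^{\mathrm{ch}})\). The Faddeev kernel \(\mathbf K(z)_{\alpha\beta}=-T_\alpha R_0 \) (with \(\alpha\ne\beta\)) has \(\mathbf K^2\) compact, and it is analytic on \(\C\setminus\Sigma\). The analytic Fredholm theorem then makes \(R(z)\) meromorphic on \(\C\setminus\Sigma\), with finite-rank residues. Hence spectral points off \(\Sigma\) are isolated eigenvalues of finite multiplicity, so they lie outside \(\sigma_{\mathrm{ess}}\).

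The resolvents of the mixed channels act on \(\cH\), and only their symmetric combination preserves \(\cH^{\mathrm a}\). We therefore carry out the Faddeev reduction on the full space \(\cH\) and project at the end.
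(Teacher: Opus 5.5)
\paragraph{The inclusion \(\Sigma\subset\sigma_{\mathrm{ess}}\).} This half of your proof is essentially the paper's Lemma~\ref{lem:ess-reverse}. You localise in the spectator variable, antisymmetrise, and make the other interactions and the exchange overlap small through the shrinking support of the localising factor.

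One repair is needed. For \(E\) inside a fibre band there is no fixed approximate eigenvector \(g\). First choose \(g_n\) with \(\|(H_{\mathrm{mix},\mu,\gamma}(K;q_0)-E)g_n\|\le 1/n\). Only then choose \(\chi_n\), with support so small that \(\int_{\operatorname{supp}\chi_n}|g_n|^2\,d\eta\le 1/n\). This bounds \(\langle f_n,P_{12}f_n\rangle=|\langle g_n,\chi_n\rangle|^2\) directly, with no \(L^\infty\) approximation. The paper avoids the issue by treating the free band \(I_0=[E_{-,\gamma}(K),E_{+,\gamma}(K)]\) separately and using genuine fibre eigenvectors off \(I_0\).

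\paragraph{The gap in the Faddeev half.} Running the Faddeev equations on the full space \(\cH\) requires \(R_{12}(z)\), hence \(T_{12}(z)\), on all of \(\cH\) for every \(z\notin\Sigma\). But \(\Sigma\) contains only the spectrum of the fermion channel on \(\cH^{\mathrm a}\).

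Suppose \(W_{12}=\lambda V_{12}\) with the full-space kernel \eqref{eq:fiber-fermion-interaction}. Then \(2\sum_j\cos(p_j-t_j)\) also acts on symmetric functions, through the form factors \(\sqrt2\cos(p_j-k_j/2)\). The symmetric part of \(H_\gamma^{\mathrm{free}}(K)+\lambda V_{12}\) therefore has its own pair bound-state branches, and these generally lie outside \(\Sigma\).

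For example, take \(K=r=\pi\mathbf1\). The symmetric threshold integral minus \(b_j\) equals \(\tfrac12\int_{\T^3}\cos(2u_j)\,[\sum_\ell(1-\cos u_\ell)]^{-1}\,\eta(du)\). This is a value of the positive lattice Green's function, so the symmetric threshold matrix has an eigenvalue exceeding \(\beta\). Hence for \(\lambda\) slightly below \(\lambda_{\mathrm{crit}}\), this channel already has spectrum above \(E_+\), whereas at \(\mu=\mu_{\mathrm c}^+\) one has \(\Sigma=[E_-,E_+]\). At such \(z\) your \(\mathbf K(z)\) is undefined. The argument then only gives \(\sigma_{\mathrm{ess}}(H)\subset\Sigma\cup\sigma(\text{symmetric fermion channel})\).

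\paragraph{How to repair it.} Use \(W_{12}=\lambda\Pi^{\mathrm a}V_{12}\Pi^{\mathrm a}\) with \(\Pi^{\mathrm a}=\tfrac12(I-P_{12})\). The full-space operator then commutes with \(P_{12}\) and restricts to \(H\) on \(\cH^{\mathrm a}\). Its \(12\)-channel spectrum is \(\sigma(H_{12,\lambda}^{\mathrm{ch}}(K))\cup I_0=\sigma(H_{12,\lambda}^{\mathrm{ch}}(K))\).

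You also need a point where \(I-\mathbf K(z)^2\) is invertible before invoking the analytic Fredholm theorem. Large \(|\operatorname{Im}z|\) works, and \(\C\setminus\Sigma\) is connected.

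With these changes \(\mathbf K^2\) is indeed compact. For \(\alpha\ne\beta\), \(T_\alpha R_0T_\beta R_0\) has a bounded kernel off \(I_0\), hence is Hilbert--Schmidt. Your route then even yields meromorphy of the resolvent off \(\Sigma\).

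\paragraph{Comparison with the paper's route.} The paper proves \(\sigma_{\mathrm{ess}}(H)\subset\Sigma\) in one step on \(\cH^{\mathrm a}\) (Lemma~\ref{lem:ess-inclusion}), without \(T\)-matrices. Using \(V_{13}+V_{23}=2\Psi^*\Psi\), it writes \(H=H_0+\mathcal L^*\mathcal L\) with \(\mathcal Lf=(\sqrt\lambda\,\Phi_Kf,\sqrt{2\mu}\,\Psi f)\). It then shows \(I+\mathcal LR_0(z)\mathcal L^*=\operatorname{diag}(D_{12}(z),M_{d_z})+\mathcal K_z\), where \(\mathcal K_z\) is Hilbert--Schmidt. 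The diagonal part is invertible precisely because \(z\) is off the fermion and mixed channel spectra. The Fredholm equivalence of \(I+XY\) and \(I+YX\) then makes \(H-z\) Fredholm.

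Because the antisymmetry is built into \(\Psi\) and \(\Phi_K\), the symmetric-sector problem never arises. What your route buys is the meromorphic structure of the resolvent, which the proposition does not need.
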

\begin{proof}
The formula follows from the two inclusions proved in
Lemmas~\ref{lem:ess-inclusion} and~\ref{lem:ess-reverse} below.
For the general lattice channel description, see \cite{ALM2007}.
\end{proof}
For the next two lemmas, fix \(\gamma>0\), \(\lambda,\mu\geq0\)
and \(K\in\T^3\), and set
\[
H=H_{\lambda,\mu,\gamma}(K),\qquad
H_0=H_\gamma^{\mathrm{free}}(K)|_{\cH^{\mathrm a}},\qquad
I_0=[E_{-,\gamma}(K),E_{+,\gamma}(K)].
\]
Let \(\Sigma\) denote the channel union on the right-hand side of
\eqref{eq:essential-spectrum-channel-union}. In each pair fibre,
the interaction has finite rank and preserves the free essential
spectrum. The shifted free pair bands fill \(I_0\). Thus the
direct-integral representations and local norm continuity imply
that \(I_0\) is contained in each channel spectrum; see
\cite[Theorem~XIII.85]{ReedSimon1978}.
\begin{lemma}[Essential spectrum: inclusion]
\label{lem:ess-inclusion}
\(\sigma_{\mathrm{ess}}(H)\subset\Sigma\).
\end{lemma}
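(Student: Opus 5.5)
We show that every \(z\notin\Sigma\) lies outside \(\sigma_{\mathrm{ess}}(H)\), using a resolvent (Faddeev-type) argument. Fix \(z\in\C\setminus\Sigma\). By the remarks preceding the lemma, \(z\notin I_0\), and the channel resolvents
\[
R_{12}(z)=(H_{12,\lambda}^{\mathrm{ch}}(K)-z)^{-1},\qquad
R_{\alpha}(z)=(H_{\alpha,\mu}^{\mathrm{ch}}(K)-z)^{-1}\quad(\alpha=13,23)
\]
exist as bounded operators on \(\cH^{\mathrm a}\) and on \(\cH\), respectively. Write \(R_0(z)=(H_0-z)^{-1}\). We aim to prove that \(H-z\) is Fredholm of index zero. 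Since \(H\) is self-adjoint, this excludes \(z\) from the essential spectrum.

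We use the three-channel structure \(H=H_0+W_{12}+W_{13}+W_{23}\), with \(W_{12}=\lambda V_{12}\) and \(W_\alpha=\mu V_\alpha\), the latter regarded on \(\cH\). On \(\cH\) we build a parametrix by the first step of the Faddeev iteration. Set \(T_\beta=W_\beta R_\beta(z)\), with \(R_\beta(z)\) the resolvent of the full-space channel \(H_0^{\cH}+W_\beta\). Then
\[
P(z)=R_0+\sum_\beta R_0T_\beta,
\]
which equals \(\sum_\beta R_\beta-2R_0\), and a direct computation gives
\[
(H-z)P(z)=I+\sum_{\beta\ne\beta'}W_{\beta'}R_0W_\beta R_\beta .
\]
The parametrix and the error are symmetric under \(P_{12}\), because \(P_{12}\) exchanges the 13 and 23 channels and fixes the 12 channel. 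Both therefore restrict to \(\cH^{\mathrm a}\). For the 12 term we use the antisymmetric channel directly; it commutes with \(P_{12}\).

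The main step is to show that each cross term \(W_{\beta'}R_0(z)W_\beta R_\beta(z)\), \(\beta\ne\beta'\), is compact. Taking \(\beta=13\) as the model case, we factor \(W_{13}\) through its fibre form factor. By \eqref{eq:fiber-V13} it is an integral over one variable \(t\) at fixed spectator \(q\). The factorisation \eqref{eq:V12-factorization} plays the same role for \(W_{12}\). A product \(V_{23}R_0V_{13}\) then has, in the reduced spectator variables, a kernel of the form
\[
(q,p')\mapsto(\E_\gamma(K;p',q)-z)^{-1},
\]
suitably weighted. Since \(z\notin I_0\), this kernel is continuous and bounded on \((\T^3)^2\), hence Hilbert--Schmidt. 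The remaining factors \(R_\beta\) are bounded, and in reduced form they act fibrewise by bounded multiplication and finite-rank operations. The pairs involving \(V_{12}\) work the same way. The form factors \(\phi_{j,K-r}\) are bounded and smooth, and composing with \(\Phi_K\) or \(\Phi_K^*\) yields integral operators with continuous kernels on compact tori.

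I expect this compactness step to be the main obstacle. The reason is that the ordering matters: each term must be written as (bounded)\(\times\)(Hilbert--Schmidt)\(\times\)(bounded). If needed, we remove the bounded channel-resolvent factor using \(W_\beta R_\beta=W_\beta R_0(I-W_\beta R_\beta)\) before exhibiting the Hilbert--Schmidt kernel.

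With compactness established, \(P(z)\) is a right parametrix on \(\cH^{\mathrm a}\) modulo compacts. Taking adjoints at \(\bar z\notin\Sigma\) gives a left parametrix. Hence \(H-z\) is Fredholm, and its index is zero because for nonreal \(z\) the operator is invertible and the index is constant on connected components of the Fredholm domain. Therefore \(z\notin\sigma_{\mathrm{ess}}(H)\).
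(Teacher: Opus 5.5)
Your proof is correct, but it takes a different route from the paper's. One sign slip: with \(R_\beta=(H_\beta-z)^{-1}\) one has \(R_0W_\beta R_\beta=R_0-R_\beta\). The parametrix equal to \(\sum_\beta R_\beta-2R_0\) is therefore \(R_0-\sum_\beta R_0T_\beta\), and the error is \(I-\sum_{\beta\ne\beta'}W_{\beta'}R_0W_\beta R_\beta\); this is irrelevant for Fredholmness.

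The paper never forms channel resolvents on the three-particle space. It writes \(H=H_0+\mathcal L^*\mathcal L\) with \(\mathcal Lf=(\sqrt\lambda\,\Phi_Kf,\sqrt{2\mu}\,\Psi f)\). It shows that \(I+\mathcal LR_0(z)\mathcal L^*\) on \(L^2(\T^3;\C^3)\oplus\frh\) is \(\operatorname{diag}(D_{12}(z),M_{d_z})\) plus a Hilbert--Schmidt operator. Invertibility of \(D_{12}(z)\) comes from \(z\notin\sigma(H_{12,\lambda}^{\mathrm{ch}}(K))\), and invertibility of \(M_{d_z}\) from the rank-one fibre formula. Fredholmness then transfers to \(H-z=(H_0-z)[I+R_0\mathcal L^*\mathcal L]\) by the \(I+XY\)/\(I+YX\) equivalence.

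The compact pieces in the paper (the off-diagonal blocks \(\Phi_KR_0\Psi^*\) and the integral part of \(\Psi R_0\Psi^*\)) are your connected products \(V_{12}R_0V_{13}\) and \(V_{23}R_0V_{13}\) in reduced form, so both proofs rest on the same bounded-kernel estimate. In fact each \(V_{\beta'}R_0V_\beta\) with \(\beta\ne\beta'\) already has a bounded kernel on \((\T^3)^2\times(\T^3)^2\) and is Hilbert--Schmidt on \(\cH\), so the ordering difficulty you anticipate does not arise.

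Your route lets the hypothesis \(z\notin\Sigma\) enter directly through the existence of the \(R_\beta\), with no translation into invertibility of reduced operators. The price is symmetry bookkeeping. Only \(R_{13}+R_{23}\) preserves \(\cH^{\mathrm a}\), and your switch to the antisymmetric \(12\) channel is essential rather than optional. On the full space, the operator \eqref{eq:fiber-fermion-interaction} also acts on the exchange-symmetric sector (through \(\sqrt2\cos x_j\) in centred coordinates), and that channel spectrum need not lie in \(\Sigma\).

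So take \(P=R_{12}^{\mathrm a}+(R_{13}+R_{23}-2R_0)|_{\cH^{\mathrm a}}\) and compute \((H-z)P\) using the \(P_{12}\)-invariant full-space Hamiltonian acting on \(f\in\cH^{\mathrm a}\). The error terms are then \(-\lambda\mu(V_{13}+V_{23})R_0V_{12}R_{12}^{\mathrm a}\), \(-\lambda\mu V_{12}R_0V_\alpha R_\alpha\) for \(\alpha\in\{13,23\}\), \(-\mu^2V_{23}R_0V_{13}R_{13}\) and \(-\mu^2V_{13}R_0V_{23}R_{23}\), all compact.

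The paper's factorised form stays on \(\cH^{\mathrm a}\) throughout and produces the block structure reused later for \(T_K^+\), \(\cW_K\) and \(\mathbb C_{\lambda,K}^+\). Your index-zero step is harmless but unnecessary: for self-adjoint \(H\) and real \(z\), Fredholmness of \(H-z\) already excludes \(z\) from \(\sigma_{\mathrm{ess}}(H)\).
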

\begin{proof}
Recall the factorisation \(V_{12}=\Phi_K^*\Phi_K\) on
\(\cH^{\mathrm a}\). Define
\[
\frh=L^2(\T^3,\eta),\qquad
\Psi:\cH^{\mathrm a}\longrightarrow\frh,\qquad
(\Psi f)(q)=\int_{\T^3}f(p,q)\,\eta(dp).
\]
Its adjoint and the mixed-interaction factorisation are
\begin{equation}
\label{eq:mixed-interaction-factorization}
(\Psi^*g)(p,q)=\frac12[g(q)-g(p)],
\qquad
V_{13}+V_{23}=2\Psi^*\Psi
\quad\text{on }\cH^{\mathrm a}.
\end{equation}
Set
\[
\mathcal Lf=
\bigl(\sqrt\lambda\,\Phi_Kf,\sqrt{2\mu}\,\Psi f\bigr),
\qquad
H=H_0+\mathcal L^*\mathcal L.
\]
For real \(z\notin\Sigma\), put \(R_0(z)=(H_0-z)^{-1}\).
This resolvent exists because \(I_0\subset\Sigma\).
On \(L^2(\T^3;\C^3)\oplus\frh\),
\[
I+\mathcal L R_0(z)\mathcal L^*
=
\begin{pmatrix}
D_{12}(z)&0\\
0&M_{d_z}
\end{pmatrix}
+\mathcal K_z,
\]
where
\[
D_{12}(z)=I+\lambda\Phi_KR_0(z)\Phi_K^*,
\qquad
d_z(q)=1+\mu\int_{\T^3}
\frac{\eta(dp)}{\E_\gamma(K;p,q)-z},
\]
and \(M_{d_z}\) denotes multiplication by \(d_z\).
The operator \(\mathcal K_z\) is compact. Indeed, the integral part
of the mixed diagonal block has kernel
\[
-\frac{\mu}{\E_\gamma(K;p,q)-z},
\]
and the off-diagonal blocks have kernels of the same type with
bounded pair form factors in the numerator. Since
\(\operatorname{dist}(z,I_0)>0\), these kernels are bounded and
therefore Hilbert--Schmidt.
The operator \(D_{12}(z)\) is invertible because
\(z\notin\sigma(H_{12,\lambda}^{\mathrm{ch}}(K))\) and \(z\notin I_0\).
The rank-one resolvent formula for the mixed fibres gives
\(d_z(q)\ne0\) for every \(q\), since
\(z\notin\sigma(H_{13,\mu}^{\mathrm{ch}}(K))\).
Continuity on the compact torus yields
\(\min_q|d_z(q)|>0\). Thus \(M_{d_z}\) is invertible.
Since \(D_{12}(z)\) and \(M_{d_z}\) are invertible and
\(\mathcal K_z\) is compact, the operator
\(I+\mathcal L R_0(z)\mathcal L^*\) is Fredholm.
The Fredholm equivalence of \(I+XY\) and \(I+YX\), together with
\[
H-z=(H_0-z)\bigl[I+R_0(z)\mathcal L^*\mathcal L\bigr],
\]
shows that \(H-z\) is Fredholm. Hence
\(z\notin\sigma_{\mathrm{ess}}(H)\).
\end{proof}
\begin{lemma}[Essential spectrum: reverse inclusion]
\label{lem:ess-reverse}
\(\Sigma\subset\sigma_{\mathrm{ess}}(H)\).
\end{lemma}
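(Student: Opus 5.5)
We prove the reverse inclusion by constructing singular Weyl sequences. For each \(z\in\Sigma\) we build a sequence \((f_n)\subset\cH^{\mathrm a}\) with \(\|f_n\|=1\), \(f_n\rightharpoonup0\) and \(\|(H-z)f_n\|\to0\). Weyl's criterion then gives \(z\in\sigma_{\mathrm{ess}}(H)\). Since \(\Sigma\) is the union of \(I_0\), the fermion-channel spectrum and the mixed-channel spectrum, we treat the two nontrivial channels separately. \(I_0\) needs no separate treatment, being contained in each channel spectrum.

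\emph{Fermion channel.} Let \(z\in\sigma(H_{12,\lambda}^{\mathrm{ch}}(K))\). By the fibre description \eqref{eq:fermion-channel-direct-integral} and the union property, there are a spectator momentum \(r_0\) and a point \(z\) in the spectrum of \(\mathcal H_{12}^{\mathrm{ch}}(K;r_0;\lambda,\gamma)\).

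Norm continuity in \(r\) gives, for each \(n\), a ball \(U_n\ni r_0\) of radius \(\to0\) and unit vectors \(u_r\in\frh^{\mathrm a}(K-r)\), chosen measurably and continuously after local centring, with \(\|(\mathcal H_{12}^{\mathrm{ch}}(K;r)-z)u_r\|<1/n\) for \(r\in U_n\). Set
\[
g_n=\mathcal W_{12,K}^{-1}\bigl(|U_n|^{-1/2}\mathbf1_{U_n}(r)u_r\bigr).
\]
Then \(\|(H_{12,\lambda}^{\mathrm{ch}}(K)-z)g_n\|<1/n\).

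The spectator supports shrink, so \(g_n\rightharpoonup0\). It remains to show \(\|\mu(V_{13}+V_{23})g_n\|\to0\). Using \eqref{eq:mixed-interaction-factorization}, it suffices to prove \(\|\Psi g_n\|\to0\), where \((\Psi g_n)(q)=\int g_n(p,q)\,\eta(dp)\). In the variables \((r,p)\) with \(q=K-r-p\), this is an integral over \(r\in U_n\) of \(|U_n|^{-1/2}u_r(K-r-q)\). By Cauchy--Schwarz in \(r\), its \(L^2(dq)\) norm is at most \(|U_n|^{1/2}\), which tends to \(0\).

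\emph{Mixed channel.} Let \(z\in\sigma(H_{13,\mu}^{\mathrm{ch}}(K))\). The same localisation in the spectator \(q\) about some \(q_0\), via \eqref{eq:mixed-channel-direct-integral}, gives \(h_n\in\cH\) with \(\|(H_{13,\mu}^{\mathrm{ch}}(K)-z)h_n\|\to0\), \(h_n\) supported in \(q\in U_n\) and \(h_n\rightharpoonup0\). Antisymmetrise by setting
\[
f_n=\tfrac1{\sqrt2}(h_n-P_{12}h_n).
\]
We need three estimates, all driven by the shrinking support.
\begin{enumerate}[(a)]
\item \(\langle h_n,P_{12}h_n\rangle\to0\), so that \(\|f_n\|\to1\). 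This inner product has absolute value at most \(\|\mathbf1_{\{p\in U_n\}}h_n\|\).
\item \(\|V_{23}h_n\|\to0\). Here \(V_{23}h_n(p,q)=\int h_n(p,t)\,\eta(dt)\), and Cauchy--Schwarz over \(t\in U_n\) bounds its norm by \(|U_n|^{1/2}\).
\item \(\|V_{12}h_n\|\to0\). This follows similarly from \eqref{eq:V12-factorization}, because the \(\Phi_K\) integral is again an integral over a set of small measure.
\end{enumerate}
Given (b) and (c), the identity \(H=H_{13,\mu}^{\mathrm{ch}}+\mu V_{23}+\lambda V_{12}\) on \(\cH\), together with the commutation of \(P_{12}\) with \(H\), shows that \(f_n\) is an approximate eigensequence.

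\emph{Main obstacle.} The delicate point is (a): the fibre vectors \(u_q\) could concentrate in \(p\) near \(U_n\), which is possible when the fibre approximate eigenvector is a near-delta in \(p\). We fix this by choosing the fibre Weyl vectors in \(p\) with their own shrinking supports avoiding \(q_0\). This is possible when \(z\) lies in the fibre essential spectrum, since we may localise in \(p\) away from \(q_0\). For a fibre eigenvalue the eigenfunction is a fixed \(L^2\) function in \(p\), and then \(\|\mathbf1_{U_n}u\|\to0\) by absolute continuity. Either way, (a) holds.
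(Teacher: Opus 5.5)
Your proposal is correct and follows essentially the paper's route: singular Weyl sequences built by localising in the spectator momentum, antisymmetrising in the mixed channel, and removing the remaining interactions by Cauchy--Schwarz over sets of measure \(\eta(U_n)\to0\). The differences are organisational. The paper treats \(I_0\) by a direct two-point antisymmetrised construction, and it handles the channels only at points outside \(I_0\), where these are fibre eigenvalues with fixed eigenfunctions, so your ``main obstacle'' never arises. Also, \(V_{12}=\Phi_K^*\Phi_K\) holds only on \(\cH^{\mathrm a}\); for the non-antisymmetric \(h_n\), either estimate the cosine kernel \eqref{eq:fiber-fermion-interaction} directly, which gives the same small-measure bound, or estimate \(\|\Phi_K f_n\|\) as the paper does.
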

\begin{proof}
We construct singular Weyl sequences.
First let \(z_0\in I_0\), and choose \((p_0,q_0)\) with
\(\E_\gamma(K;p_0,q_0)=z_0\).
Choose normalized functions \(a_m,b_m\) with disjoint supports
shrinking to \(p_0,q_0\), respectively. When \(p_0=q_0\), take both
supports in a shrinking neighbourhood of this point, still disjoint.
Then
\[
f_m(p,q)=\frac{a_m(p)b_m(q)-b_m(p)a_m(q)}{\sqrt2}
\]
belongs to \(\cH^{\mathrm a}\), has norm one, and converges weakly
to zero. Continuity of the dispersion gives
\(\|(H_0-z_0)f_m\|\to0\). If \(A_m\) and \(B_m\) are the
supports of \(a_m\) and \(b_m\), respectively, Cauchy--Schwarz
in the factor maps and boundedness of the form factors yield
\[
\|\Psi f_m\|^2+\|\Phi_Kf_m\|^2
\leq C\bigl[\eta(A_m)+\eta(B_m)\bigr]\longrightarrow0.
\]
Consequently, \(\|(H-z_0)f_m\|\to0\), proving
\(I_0\subset\sigma_{\mathrm{ess}}(H)\).
Next let \(z_0\in\sigma(H_{12,\lambda}^{\mathrm{ch}}(K))\setminus I_0\).
By the direct-integral representation, there is \(r_0\in\T^3\)
such that \(z_0\) is an eigenvalue of
\(\gamma\varepsilon(r_0)I+h_{12,\lambda}(K-r_0)\).
Let \(u_0\) be a normalized eigenfunction. Using a local lift of
\(K-r\), transport \(u_0\) by the centred-coordinate translation to
normalized functions \(u_r\in\frh^{\mathrm a}(K-r)\).
Norm continuity of the centred fibres implies
\[
\|[\gamma\varepsilon(r)I+h_{12,\lambda}(K-r)-z_0]u_r\|
\longrightarrow0
\quad\text{as }r\to r_0.
\]
Choose \(a_m\) of norm one supported in balls \(B_m\) shrinking to
\(r_0\), and set
\[
f_m(p,q)=a_m(K-p-q)\,u_{K-p-q}(p),
\]
extending the product by zero outside \(K-p-q\in B_m\).
These functions are antisymmetric, normalized, and weakly null.
The preceding continuity gives
\[
\|(H_{12,\lambda}^{\mathrm{ch}}(K)-z_0)f_m\|
\longrightarrow0.
\]
For each fixed \(q\), the function \(p\mapsto f_m(p,q)\)
vanishes outside \(K-q-B_m\). By Cauchy--Schwarz and
translation invariance of Haar measure,
\[
\begin{aligned}
|\Psi f_m(q)|^2
&=
\left|\int_{K-q-B_m}f_m(p,q)\,\eta(dp)\right|^2\\
&\le \eta(B_m)\int_{\T^3}|f_m(p,q)|^2\,\eta(dp).
\end{aligned}
\]
Integrating in \(q\) and using \(\|f_m\|=1\), we obtain
\[
\|\Psi f_m\|^2\le \eta(B_m)\|f_m\|^2
=\eta(B_m)\longrightarrow0.
\]
Since
\[
H=H_{12,\lambda}^{\mathrm{ch}}(K)+2\mu\Psi^*\Psi
\quad\text{on }\cH^{\mathrm a},
\]
it follows that
\[
\|(H-z_0)f_m\|
\le \|(H_{12,\lambda}^{\mathrm{ch}}(K)-z_0)f_m\|
+2\mu\|\Psi^*\|\,\|\Psi f_m\|
\longrightarrow0.
\]
Thus \((f_m)\) is a singular Weyl sequence for \(H\) at \(z_0\).
Finally, let
\(z_0\in\sigma(H_{13,\mu}^{\mathrm{ch}}(K))\setminus I_0\).
There are \(q_0\in\T^3\) and a normalized \(u_0\in\frh\) such that
\[
[\varepsilon(q_0)I+h_{\mu,\gamma}(K-q_0)]u_0=z_0u_0.
\]
For normalized \(a_m\) supported in balls \(B_m\) shrinking to
\(q_0\), put \(g_m(p,q)=u_0(p)a_m(q)\).
Norm continuity of the mixed fibres gives
\[
\|(H_{13,\mu}^{\mathrm{ch}}(K)-z_0)g_m\|\longrightarrow0.
\]
Let \((P_{12}g)(p,q)=g(q,p)\) and
\(F_m=g_m-P_{12}g_m\in\cH^{\mathrm a}\). The explicit kernels,
Cauchy--Schwarz and boundedness of the form factors give
\[
\|V_{23}g_m\|^2+\|\Phi_KF_m\|^2
\leq C\eta(B_m)\longrightarrow0.
\]
Indeed,
\[
\|V_{23}g_m\|^2
\le C\left|\int a_m\,d\eta\right|^2
\le C\eta(B_m),
\]
and similarly for \(\|\Phi_KF_m\|^2\). For the second term,
the spectator-fibre integral is supported in \(K-r-B_m\) or
\(B_m\), each of measure \(\eta(B_m)\).
The operator $H_\gamma^{\mathrm{free}}(K)+\mu(V_{13}+V_{23})$
on $\cH$ commutes with $P_{12}$. Using
$F_m=(I-P_{12})g_m$ and $V_{12}=\Phi_K^*\Phi_K$ on
$\cH^{\mathrm a}$, we obtain
\[
\begin{split}
(H-z_0)F_m={}&(I-P_{12})
\bigl[(H_{13,\mu}^{\mathrm{ch}}(K)-z_0)g_m+\mu V_{23}g_m\bigr]
\\&+\lambda\Phi_K^*\Phi_KF_m.
\end{split}
\]
Thus \(\|(H-z_0)F_m\|\to0\). Moreover,
\[
\|g_m-P_{12}g_m\|^2
=2-2|\langle u_0,a_m\rangle|^2\longrightarrow2,
\]
because \(|\langle u_0,a_m\rangle|^2
\le\int_{B_m}|u_0|^2\,d\eta\to0\).
Therefore
\[
f_m=\frac{g_m-P_{12}g_m}{\|g_m-P_{12}g_m\|}
\]
is a normalized antisymmetric singular Weyl sequence for \(H\)
at \(z_0\). Its weak convergence to zero follows from the shrinking
supports in at least one momentum variable.
The \((23)+1\) channel has the same spectrum by exchange symmetry.
Thus \(\Sigma\subset\sigma_{\mathrm{ess}}(H)\).
\end{proof}
We now apply this formula at the critical mixed coupling.
For fixed \(\gamma>0\), retain a neighbourhood \(\cO_+\) on
which the mixed-channel results hold. When \(\gamma>1\), also
impose all reductions made in Proposition~\ref{prop:fermion-all-couplings}.
Write
\[
\mu_{\mathrm c}^+=\mu_{\mathrm c}^+(K,\gamma),
\qquad E_\pm=E_{\pm,\gamma}(K).
\]
\begin{definition}[Upper internal gap]
\label{def:upper-gap}
For \(\gamma>1\), \(K\in\cO_+\) and
\(\lambda>\lambda_{\mathrm{sep}}(K,\gamma)\), set
\[
\cG_\lambda^+(K)=(E_{+,\gamma}(K),\tau_{12}^+(K,\lambda)).
\]
This notation refers to a gap in the essential spectrum;
the full operator may have discrete eigenvalues in this interval.
\end{definition}
\begin{lemma}[Critical essential spectrum at the tuned coupling]
\label{lem:critical-essential-spectrum}
Fix \(\gamma>0\), \(K\in\cO_+\), \(\mu=\mu_{\mathrm c}^+\). Then for every \(\lambda\ge0\),
\[
\sigma_{\mathrm{ess}}\bigl(H_{\lambda,\mu_{\mathrm c}^+,\gamma}(K)\bigr)
=
\sigma\bigl(H_{12,\lambda}^{\mathrm{ch}}(K)\bigr).
\]
\end{lemma}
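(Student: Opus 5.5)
The plan is to combine Proposition~\ref{prop:channel-essential-spectrum} with the description of the mixed-channel spectra at the tuned coupling. By \eqref{eq:essential-spectrum-channel-union}, the essential spectrum equals
\[
\sigma\bigl(H_{12,\lambda}^{\mathrm{ch}}(K)\bigr)\cup\sigma\bigl(H_{13,\mu_{\mathrm c}^+}^{\mathrm{ch}}(K)\bigr)\cup\sigma\bigl(H_{23,\mu_{\mathrm c}^+}^{\mathrm{ch}}(K)\bigr).
\]
It therefore suffices to show that both mixed-channel spectra are contained in the fermion-channel spectrum. The two mixed-channel spectra coincide by exchange symmetry, so only \(H_{13,\mu_{\mathrm c}^+}^{\mathrm{ch}}(K)\) needs to be treated.

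The key step is to show that
\[
\sigma\bigl(H_{13,\mu_{\mathrm c}^+}^{\mathrm{ch}}(K)\bigr)=I_0=[E_{-,\gamma}(K),E_{+,\gamma}(K)].
\]
For the upper side, Corollary~\ref{cor:mixed-channel-upper-edge} already gives \(\sigma\cap(E_+,\infty)=\varnothing\). For the lower side, I will use positivity: \(\mu_{\mathrm c}^+V_{13}\ge0\), so \(H_{13,\mu_{\mathrm c}^+}^{\mathrm{ch}}(K)\ge H_\gamma^{\mathrm{free}}(K)\ge E_-\). Fibrewise this is also immediate, since \(h_{\mu,\gamma}(k)\ge m_\gamma(k)\), and so no spectrum lies below \(E_-\). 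The inclusion \(I_0\subset\sigma(H_{13,\mu_{\mathrm c}^+}^{\mathrm{ch}}(K))\) was noted before Lemma~\ref{lem:ess-inclusion}: the shifted free pair bands fill \(I_0\), and each band lies in the corresponding interacting fibre spectrum by Weyl's theorem. Together these give the displayed equality.

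It remains to check that \(I_0\subset\sigma(H_{12,\lambda}^{\mathrm{ch}}(K))\) for every \(\lambda\ge0\). The same remark preceding Lemma~\ref{lem:ess-inclusion} applies here. Explicitly, \(\sigma(H_{12,\lambda}^{\mathrm{ch}}(K))\) is the union of the fibre spectra \(\gamma\varepsilon(r)+\sigma(h_{12,\lambda}(K-r))\). Each of these contains \(\gamma\varepsilon(r)+[m_{12},M_{12}]\) by \eqref{eq:fermion-pair-essential-spectrum}, and the union of these intervals over \(r\) is \(I_0\).

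Combining the two steps, the union in \eqref{eq:essential-spectrum-channel-union} collapses to \(\sigma(H_{12,\lambda}^{\mathrm{ch}}(K))\), which proves the lemma.

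The only point requiring care is that Corollary~\ref{cor:mixed-channel-upper-edge} was established on \(\cO_+\) using Proposition~\ref{prop:upper-unique-spectator}, which holds for fixed \(\gamma>0\). No restriction to \(\gamma>1\) is needed, because the fermion-channel inclusion of \(I_0\) holds for every \(\lambda\ge0\). I expect no serious obstacle: the argument is a bookkeeping combination of results already proved.
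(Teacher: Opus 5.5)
Your proposal is correct and follows essentially the same route as the paper. The paper likewise reduces the claim to Proposition~\ref{prop:channel-essential-spectrum}. It then shows that both mixed-channel spectra equal \([E_-,E_+]\), using Corollary~\ref{cor:mixed-channel-upper-edge}, positivity of \(\mu_{\mathrm c}^+V_{13}\), and the fact that each channel spectrum contains the free band, and it concludes by noting that this band also lies in \(\sigma(H_{12,\lambda}^{\mathrm{ch}}(K))\).
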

\begin{proof}
By Corollary~\ref{cor:mixed-channel-upper-edge}, neither mixed
channel has spectrum above \(E_+\). Since \(\mu_{\mathrm c}^+>0\),
\[
H_{\alpha,\mu_{\mathrm c}^+}^{\mathrm{ch}}(K)
\ge H_\gamma^{\mathrm{free}}(K)\ge E_-I,
\qquad\alpha\in\{13,23\},
\]
on \(\cH\). \reviewedit{Each channel spectrum contains the free
band \([E_-,E_+]\). Consequently,}
\[
\sigma\bigl(H_{13,\mu_{\mathrm c}^+}^{\mathrm{ch}}(K)\bigr)
=\sigma\bigl(H_{23,\mu_{\mathrm c}^+}^{\mathrm{ch}}(K)\bigr)
=[E_-,E_+].
\]
Since this interval is also contained in
\(\sigma(H_{12,\lambda}^{\mathrm{ch}}(K))\), the claim follows from
Proposition~\ref{prop:channel-essential-spectrum}.
\end{proof}
\begin{corollary}[Weak-coupling regime]
\label{cor:weak-regime}
If \(\lambda=0\), or if \(\gamma>1\) and
\(0\le\lambda\le\lambda_{\mathrm{crit}}(K,\gamma)\), then
\[
\sigma_{\mathrm{ess}}\bigl(H_{\lambda,\mu_{\mathrm c}^+,\gamma}(K)\bigr)
=[E_-,E_+],
\]
and hence
\[
\sup\sigma_{\mathrm{ess}}\bigl(H_{\lambda,\mu_{\mathrm c}^+,\gamma}(K)\bigr)=E_+.
\]
\end{corollary}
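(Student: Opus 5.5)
This follows by combining Lemma~\ref{lem:critical-essential-spectrum} with the channel classification. By Lemma~\ref{lem:critical-essential-spectrum}, at \(\mu=\mu_{\mathrm c}^+\) and for every \(\lambda\ge0\), the essential spectrum of \(H_{\lambda,\mu_{\mathrm c}^+,\gamma}(K)\) equals \(\sigma(H_{12,\lambda}^{\mathrm{ch}}(K))\). It therefore suffices to compute the fermion-channel spectrum in the two stated cases.

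For \(\lambda=0\), the channel operator \(H_{12,0}^{\mathrm{ch}}(K)\) is the free operator \(H_\gamma^{\mathrm{free}}(K)|_{\cH^{\mathrm a}}\). Its spectrum is \([E_-,E_+]\) by Lemma~\ref{lem:free-essential-spectrum}, and no restriction on \(\gamma\) is needed. For \(\gamma>1\) and \(0\le\lambda\le\lambda_{\mathrm{crit}}(K,\gamma)\), Proposition~\ref{prop:fermion-weak-spectrum}, equivalently part~(i) of Proposition~\ref{prop:fermion-all-couplings}, gives \(\sigma(H_{12,\lambda}^{\mathrm{ch}}(K))=[E_-,E_+]\) for \(K\in\cO_+\). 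We may use this because the neighbourhood \(\cO_+\) fixed before Definition~\ref{def:upper-gap} was chosen to satisfy all the reductions of that proposition when \(\gamma>1\).

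Substituting either description into Lemma~\ref{lem:critical-essential-spectrum} yields \(\sigma_{\mathrm{ess}}=[E_-,E_+]\), so its supremum is \(E_+\). There is no real obstacle. The only point to check is that the same \(\cO_+\) serves both the mixed-channel results, through Corollary~\ref{cor:mixed-channel-upper-edge}, and the fermion-channel classification. This consistency is guaranteed by the way \(\cO_+\) was fixed.
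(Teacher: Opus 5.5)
Your proposal is correct and matches the paper's proof: it reduces to the fermion-channel spectrum via Lemma~\ref{lem:critical-essential-spectrum}. The case \(\lambda=0\) then follows from Lemma~\ref{lem:free-essential-spectrum}, and the case \(\gamma>1\), \(0\le\lambda\le\lambda_{\mathrm{crit}}\) from Proposition~\ref{prop:fermion-weak-spectrum}, on the commonly reduced neighbourhood \(\cO_+\).
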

\begin{proof}
By Lemma~\ref{lem:critical-essential-spectrum},
\(\sigma_{\mathrm{ess}}(H_{\lambda,\mu_{\mathrm c}^+,\gamma}(K))
=\sigma(H_{12,\lambda}^{\mathrm{ch}}(K))\).
For \(\lambda=0\), this is the free spectrum, so the claim follows
from Lemma~\ref{lem:free-essential-spectrum}.
For \(\gamma>1\), Proposition~\ref{prop:fermion-weak-spectrum}
gives \(\sigma(H_{12,\lambda}^{\mathrm{ch}}(K))=[E_-,E_+]\) on the
entire closed weak-coupling interval.
\end{proof}
\begin{corollary}[Separated regime and internal gap]
\label{cor:separated-regime}
If \(\gamma>1\) and \(\lambda>\lambda_{\mathrm{sep}}(K,\gamma)\), then
\(\cG_\lambda^+(K)\) is a nonempty internal gap in the essential
spectrum of \(H_{\lambda,\mu_{\mathrm c}^+,\gamma}(K)\), and both of its
endpoints belong to the essential spectrum. At \(K=\pi\mathbf1\),
\[
\cG_\lambda^+(\pi\mathbf1)=(24+12\gamma,\lambda+12).
\]
\end{corollary}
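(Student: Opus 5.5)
By Lemma~\ref{lem:critical-essential-spectrum}, the essential spectrum of \(H_{\lambda,\mu_{\mathrm c}^+,\gamma}(K)\) coincides with the fermion-channel spectrum \(\sigma(H_{12,\lambda}^{\mathrm{ch}}(K))\). The whole corollary is therefore a statement about that channel, and we read it off from Proposition~\ref{prop:fermion-all-couplings}. Since \(\lambda>\lambda_{\mathrm{sep}}(K,\gamma)>24\), formula \eqref{eq:fermion-channel-spectrum} applies and gives
\[
\sigma_{\mathrm{ess}}\bigl(H_{\lambda,\mu_{\mathrm c}^+,\gamma}(K)\bigr)
=[E_-,E_+]\cup\Sigma_{12}^+(K,\lambda).
\]
Assertion~(iii) of Proposition~\ref{prop:fermion-all-couplings} gives \(E_+<\tau_{12}^+(K,\lambda)\), so \(\cG_\lambda^+(K)\) is nonempty. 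It also gives \((E_+,\tau_{12}^+(K,\lambda))\subset\rho(H_{12,\lambda}^{\mathrm{ch}}(K))\), so the interval contains no essential spectrum.

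For the endpoints, \(E_+\) lies in the free band \([E_-,E_+]\). The point \(\tau_{12}^+(K,\lambda)\) is a minimum of the continuous branches \(Z_{12,j}^+\) over the compact torus. It is therefore attained and belongs to the compact set \(\Sigma_{12}^+(K,\lambda)\), hence to the essential spectrum. The interval is internal because the essential spectrum has points on both sides of it: the band lies below \(E_+\), and \(\Sigma_{12}^+\) starts at \(\tau_{12}^+\).

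At \(K=\pi\mathbf1\), we have \(E_{+,\gamma}(\pi\mathbf1)=12(2+\gamma)\) from the proof of Lemma~\ref{lem:analytic-upper-threshold}. We must show \(\tau_{12}^+(\pi\mathbf1,\lambda)=\lambda+12\). The lower bound \(\tau_{12}^+\ge\lambda+12\) is \eqref{eq:fermion-upper-channel-bottom}.

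For the upper bound, take \(r=0\), so the pair momentum is \(k=\pi\mathbf1\). Then \(\cos(k_\ell/2)=0\), so \(\mathcal E_{12}(\pi\mathbf1;\cdot)\equiv12\) and \(h_{12,\lambda}(\pi\mathbf1)=12I+\lambda v_{12}\). Its upper eigenvalue is exactly \(\lambda+12\), with multiplicity three. Since \(\varepsilon(0)=0\), this gives \(Z_{12,j}^+(\pi\mathbf1;0;\lambda)=\lambda+12\), and hence the minimum equals \(\lambda+12\).

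Here \(\lambda>\lambda_{\mathrm{sep}}(\pi\mathbf1,\gamma)=12(1+\gamma)\) by \eqref{eq:fermion-separation-symmetric}. The main point needing care is the identification of \(\tau_{12}^+\) with \(\lambda+12\) through this explicit fibre; everything else is bookkeeping with the earlier propositions.
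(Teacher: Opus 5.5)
Your proposal is correct and follows essentially the same route as the paper. Lemma~\ref{lem:critical-essential-spectrum} reduces the claim to the fermion channel, Proposition~\ref{prop:fermion-all-couplings}(iii) gives nonemptiness and the resolvent interval, and the endpoints lie in the free band and in the compact set \(\Sigma_{12}^+(K,\lambda)\); at \(K=\pi\mathbf1\), the fibre \(r=0\) with constant pair dispersion \(12\) shows that the bound \(\tau_{12}^+\ge\lambda+12\) is attained. The only cosmetic difference is your use of the full channel-spectrum formula \eqref{eq:fermion-channel-spectrum}: the paper does not need it, but it is valid here since \(\lambda>\lambda_{\mathrm{sep}}>24\).
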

\begin{proof}
By Lemma~\ref{lem:critical-essential-spectrum} and
Proposition~\ref{prop:fermion-all-couplings}(iii),
\[
(E_+,\tau_{12}^+(K,\lambda))
\subset\rho(H_{12,\lambda}^{\mathrm{ch}}(K)).
\]
The lower endpoint \(E_+\) belongs to the free band, and the upper
endpoint \(\tau_{12}^+(K,\lambda)\) belongs to the compact set
\(\Sigma_{12}^+(K,\lambda)\), as its minimum. Both endpoints
therefore belong to the essential spectrum.
At \(K=\pi\mathbf1\), the lower bound
\(\tau_{12}^+(K,\lambda)\ge\lambda+12\) is attained at \(r=0\),
because \(\mathcal E_{12}(\pi\mathbf1;p)\equiv12\) and the three
pair eigenvalues equal \(\lambda+12\).
Also \(E_{+,\gamma}(\pi\mathbf1)=24+12\gamma\).
This proves the stated form of the symmetric gap.
\end{proof}
\begin{corollary}[Intervening regime]
\label{cor:intervening-regime}
If \(\gamma>1\) and
\(\lambda_{\mathrm{crit}}(K,\gamma)<\lambda\le\lambda_{\mathrm{sep}}(K,\gamma)\),
then \([E_+,E_++a]\) is contained in the essential spectrum for
some \(a>0\). In particular, discrete eigenvalues cannot accumulate
at \(E_+\) from above in this regime.
\end{corollary}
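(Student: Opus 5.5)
The plan is to combine Lemma~\ref{lem:critical-essential-spectrum} with assertion~(ii) of Proposition~\ref{prop:fermion-all-couplings}. Fix \(\gamma>1\), \(K\in\cO_+\) and \(\lambda\) with \(\lambda_{\mathrm{crit}}(K,\gamma)<\lambda\le\lambda_{\mathrm{sep}}(K,\gamma)\), and put \(\mu=\mu_{\mathrm c}^+\).

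First, Lemma~\ref{lem:critical-essential-spectrum} identifies \(\sigma_{\mathrm{ess}}(H_{\lambda,\mu_{\mathrm c}^+,\gamma}(K))\) with \(\sigma(H_{12,\lambda}^{\mathrm{ch}}(K))\). No restriction on \(\lambda\) is needed there, because the two mixed channels contribute only the free band \([E_-,E_+]\) by Corollary~\ref{cor:mixed-channel-upper-edge}.

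Second, Proposition~\ref{prop:fermion-all-couplings}(ii) supplies an \(a>0\) with \([E_+,E_++a]\subset\sigma(H_{12,\lambda}^{\mathrm{ch}}(K))\). This includes the endpoint case \(\lambda=\lambda_{\mathrm{sep}}\). Recall how that proof produces the interval: for \(\delta\in(0,a]\), the function \(r\mapsto\lambda b_j(K,r,\delta)\) takes values above and below \(1\) on the connected torus, so it equals \(1\) somewhere. The fibre criterion~\eqref{eq:fermion-channel-all-couplings} then puts \(E_++\delta\) in the channel spectrum. Combining the two steps gives the first claim, \([E_+,E_++a]\subset\sigma_{\mathrm{ess}}\).

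Third, the non-accumulation statement is immediate from the definition of discrete spectrum. Discrete eigenvalues are isolated points of the spectrum with finite multiplicity, so they lie outside \(\sigma_{\mathrm{ess}}\). Hence no discrete eigenvalue lies in \([E_+,E_++a]\). Any sequence of discrete eigenvalues converging to \(E_+\) from above would eventually enter \((E_+,E_++a)\), which is a contradiction. So there is no accumulation at \(E_+\) from above, and in particular no analogue of~\eqref{eq:intro-upper-count} or~\eqref{eq:intro-gap-count} can hold in this regime.

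Nothing here is a genuine obstacle: the substantive work was already done in Proposition~\ref{prop:fermion-all-couplings}(ii). The only points to check are that the neighbourhood \(\cO_+\) used in Lemma~\ref{lem:critical-essential-spectrum} has been reduced as required in Proposition~\ref{prop:fermion-all-couplings}, which the convention before Definition~\ref{def:upper-gap} guarantees, and that the case \(\lambda=\lambda_{\mathrm{sep}}\) is covered, which the proof of (ii) handles explicitly.
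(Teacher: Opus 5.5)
Your proposal is correct and follows the paper's proof: combine Lemma~\ref{lem:critical-essential-spectrum} with Proposition~\ref{prop:fermion-all-couplings}(ii) to place \([E_+,E_++a]\) in the essential spectrum, then note that discrete eigenvalues cannot lie there. Your explicit argument for non-accumulation is just a more detailed version of the paper's one-line remark.
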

\begin{proof}
By Lemma~\ref{lem:critical-essential-spectrum} and
Proposition~\ref{prop:fermion-all-couplings}(ii), there is \(a>0\)
such that \([E_+,E_++a]\subset\sigma(H_{12,\lambda}^{\mathrm{ch}}(K))
=\sigma_{\mathrm{ess}}(H_{\lambda,\mu_{\mathrm c}^+,\gamma}(K))\).
Since the essential spectrum contains an interval immediately to
the right of \(E_+\), no discrete eigenvalues can accumulate there.
\end{proof}
\begin{theorem}[Critical essential spectrum and its internal gap]
\label{thm:upper-essential-edge}
Fix \(\gamma>0\). There is a neighbourhood \(\cO_+\) of
\(\pi\mathbf1\) such that, for every \(K\in\cO_+\), at
\(\mu_{\mathrm c}^+=\mu_{\mathrm c}^+(K,\gamma)\),
\[
\sigma_{\mathrm{ess}}\bigl(H_{\lambda,\mu_{\mathrm c}^+,\gamma}(K)\bigr)
=\sigma(H_{12,\lambda}^{\mathrm{ch}}(K)),\qquad\lambda\ge0.
\]
For \(\lambda=0\), this spectrum is the free band.
If \(\gamma>1\), the neighbourhood can be chosen so that,
writing \(E_\pm=E_{\pm,\gamma}(K)\), the following alternatives hold:
\begin{enumerate}[(i)]
\item For \(0\leq\lambda\leq\lambda_{\mathrm{crit}}(K,\gamma)\),
the essential spectrum is \([E_-,E_+]\).
\item For \(\lambda_{\mathrm{crit}}(K,\gamma)<\lambda
\leq\lambda_{\mathrm{sep}}(K,\gamma)\), it contains
\([E_+,E_++a]\) for some \(a>0\); consequently, discrete
eigenvalues cannot accumulate at \(E_+\) from above.
\item For \(\lambda>\lambda_{\mathrm{sep}}(K,\gamma)\),
\(\cG_\lambda^+(K)=(E_+,\tau_{12}^+(K,\lambda))\) is a nonempty
gap in the essential spectrum, and both endpoints belong to the
essential spectrum.
At \(K=\pi\mathbf1\), this gap is
\((24+12\gamma,\lambda+12)\).
\end{enumerate}
\end{theorem}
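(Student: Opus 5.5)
This theorem collects results already proved in this section, so the plan is to assemble them in order rather than introduce new estimates. First, fix \(\gamma>0\) and take \(\cO_+\) as in the paragraph preceding Lemma~\ref{lem:Delta-uniform-positivity}. It satisfies \(\overline{\cO_+}\subset\cO_1^+\subset\cO_0^+\), so Corollary~\ref{cor:mixed-channel-upper-edge} holds for every \(K\in\cO_+\). When \(\gamma>1\), further shrink \(\cO_+\) to satisfy all the reductions in Proposition~\ref{prop:fermion-all-couplings}; this is a finite number of reductions, so the intersection is still an open neighbourhood of \(\pi\mathbf1\). With this choice, Lemma~\ref{lem:critical-essential-spectrum} gives the displayed identity \(\sigma_{\mathrm{ess}}(H_{\lambda,\mu_{\mathrm c}^+,\gamma}(K))=\sigma(H_{12,\lambda}^{\mathrm{ch}}(K))\) for every \(\lambda\ge0\). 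The underlying mechanism is the channel-union formula of Proposition~\ref{prop:channel-essential-spectrum}, together with the fact that both mixed-channel spectra reduce to \([E_-,E_+]\), which is contained in the fermion channel spectrum.

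For \(\lambda=0\), the fermion channel is the free operator restricted to \(\cH^{\mathrm a}\). Lemma~\ref{lem:free-essential-spectrum} identifies its spectrum with \([E_-,E_+]\); this is the case \(\lambda=0\) of Corollary~\ref{cor:weak-regime}.

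Now let \(\gamma>1\) and treat the three regimes separately. Alternative (i) is Corollary~\ref{cor:weak-regime}, which rests on Proposition~\ref{prop:fermion-weak-spectrum} on the closed interval \([0,\lambda_{\mathrm{crit}}]\). Alternative (ii) is Corollary~\ref{cor:intervening-regime}, which rests on Proposition~\ref{prop:fermion-all-couplings}(ii). Discrete eigenvalues cannot accumulate at \(E_+\) from above there, because any accumulation point of discrete eigenvalues lies in the essential spectrum and \((E_+,E_++a]\) already consists of essential spectrum; so no sequence of discrete eigenvalues from above can exist near \(E_+\). Alternative (iii) is Corollary~\ref{cor:separated-regime}. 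Its ingredients are: Proposition~\ref{prop:fermion-all-couplings}(iii), which places \((E_+,\tau_{12}^+)\) in the resolvent set of the channel; the fact that \(E_+\) lies in the free band; and the fact that \(\tau_{12}^+\) is attained as the minimum of the compact branch set \(\Sigma_{12}^+\). At \(K=\pi\mathbf1\) we use \(E_{+,\gamma}(\pi\mathbf1)=24+12\gamma\). We also use \(\tau_{12}^+=\lambda+12\): the pair dispersion is identically \(12\) at total pair momentum \(0\), i.e.\ at \(r=0\), and the lower bound \eqref{eq:fermion-upper-channel-bottom} holds.

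The only point needing care is that the neighbourhood is chosen uniformly, one choice serving all \(\lambda\ge0\) at once. This is the step I expect to be the main obstacle, though it is mild. It works because every reduction in Propositions~\ref{prop:upper-unique-spectator} and~\ref{prop:fermion-all-couplings} depends only on \((K,\gamma)\), never on \(\lambda\). The thresholds \(\lambda_{\mathrm{crit}}\) and \(\lambda_{\mathrm{sep}}\) then vary with \(K\) inside this fixed neighbourhood, and the three alternatives exhaust \(\lambda\ge0\) because \(\lambda_{\mathrm{crit}}<\lambda_{\mathrm{sep}}\).
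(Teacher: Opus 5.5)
Your proposal is correct and matches the paper's proof, which derives the theorem directly from Lemma~\ref{lem:critical-essential-spectrum} and Corollaries~\ref{cor:weak-regime}--\ref{cor:intervening-regime}, on a neighbourhood whose reductions do not depend on \(\lambda\). One small slip in (iii): at \(K=\pi\mathbf1\) and \(r=0\) the pair momentum is \(K-r=\pi\mathbf1\), not \(0\). It is \(\mathcal E_{12}(\pi\mathbf1;\cdot)\equiv12\), not \(\mathcal E_{12}(0;\cdot)=2\varepsilon\), that makes all three pair eigenvalues equal to \(\lambda+12\).
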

\begin{proof}
Immediate from Lemma~\ref{lem:critical-essential-spectrum} and
Corollaries~\ref{cor:weak-regime}--\ref{cor:intervening-regime}.
\end{proof}
\section{Upper-threshold Efimov asymptotics and stability}
\label{sec:upper-efimov}
Under the critical mixed-channel tuning and the assumptions on the
mass ratio and total quasi-momentum stated below, we establish
logarithmic eigenvalue-counting asymptotics in the weak and separated
regimes. These asymptotics determine, in the logarithmic sense, the
exponential rate at which the discrete eigenvalues approach the free
upper edge from above. In the weak regime, the eigenvalues lie above
the essential spectrum; in the separated regime, they lie in the
internal gap whose lower endpoint is the free upper edge.

We use Sobolev's method~\cite{Sobolev1993}, adapted to the lattice
setting as in~\cite{ALM2004,ALD2007,AlbeverioLakaevKhalkhujaev2012}.
For the related continuum fermionic problem, see
Basti and Teta~\cite{BastiTeta2017}.

\subsection{Local threshold geometry and two-sided estimates}
\label{subsec:efimov-local-estimates}
Fix \(\gamma>0\) and \(K\in\cO_+\), and write
\(E_+=E_{+,\gamma}(K)\), \(k_+=k_K^+\).
\reviewedit{The free-energy expansion
\eqref{eq:upper-energy-expansion} and the determinant estimate
\eqref{eq:upper-determinant-two-sided} give constants
\(c,C,\rho>0\) such that}
\[
\begin{aligned}
c(\delta+|x|^2+|y|^2)
&\leq E_++\delta-\E_\gamma(K;k_++x,k_++y)
\leq C(\delta+|x|^2+|y|^2),\\
c\sqrt{\delta+|x|^2}
&\leq\Delta_K^+(k_++x,E_++\delta)
\leq C\sqrt{\delta+|x|^2}
\end{aligned}
\]
for all \(|x|,|y|\leq\rho\) and \(0\leq\delta\leq\rho^2\).
\reviewedit{Proposition~\ref{prop:upper-unique-spectator}
gives \(\Delta_K^+(q,E_+)>0\) for every \(q\ne k_+\).}

\subsection{Statement of the upper-threshold Efimov theorem}
\label{subsec:main-upper-statement}
Let \(q_*\in(0,1)\) be the unique solution of
\begin{equation}
\label{eq:upper-qstar}
\frac{2\bigl[q_*-\sqrt{1-q_*^2}\arcsin q_*\bigr]}
{\pi q_*^2\sqrt{1-q_*^2}}=1,
\end{equation}
and set
\begin{equation}
\label{eq:upper-gammastar}
\notationedit{\gamma_*=\frac{q_*}{1-q_*}\approx13.6069657.}
\end{equation}
Existence and uniqueness of \(q_*\), as well as \(\gamma_*>1\),
are proved in Subsection~\ref{subsec:upper-symmetric-point}.
\edit{This is the critical mass ratio for the homogeneous
threshold operator at \(K=\pi\mathbf1\). For each fixed
\(\gamma>\gamma_*\), its supercriticality persists in a
sufficiently small neighbourhood of that point.}
Fix \(\gamma>\gamma_*\). \reviewedit{Choose an open neighbourhood
\(\cO_+\) of \(\pi\mathbf 1\) sufficiently small that
Theorem~\ref{thm:upper-essential-edge} and the local threshold
estimates hold.} Throughout the proof, \(K\in\cO_+\) is
fixed, and dependence on \(\gamma\) is suppressed from the notation.
Unless stated otherwise, constants may depend on \(K\), \(\gamma\),
and the fixed cutoff, but are independent of sufficiently small
\(\delta>0\). Set
\[
E_+=E_{+,\gamma}(K),
\qquad
\mu_{\mathrm c}^+=\mu_{\mathrm c}^+(K,\gamma).
\]
For \(0\leq\lambda\leq\lambda_{\mathrm{crit}}(K,\gamma)\) and
\(z>E_+\), define
\[
N_+^{(\lambda)}(K,z)
=\dim\operatorname{Ran}\mathbf1_{(z,\infty)}
\bigl(H_{\lambda,\mu_{\mathrm c}^+,\gamma}(K)\bigr),\qquad
N_+(K,z)=N_+^{(0)}(K,z).
\]
\reviewedit{These counts are finite by
Theorem~\ref{thm:upper-essential-edge}(i).}
For a bounded self-adjoint operator \(A\) and \(\alpha\in\mathbb R\),
write
\[
n(\alpha,A)
=\dim\operatorname{Ran}\mathbf 1_{(\alpha,\infty)}(A),
\]
allowing the value \(+\infty\). If \(\alpha>\sup\sess A\), this
quantity is finite and equals the number of eigenvalues of \(A\)
strictly greater than \(\alpha\), counted with multiplicities.
In particular, \(n(\alpha,A)<\infty\) for compact \(A\) and
\(\alpha>0\).

\begin{definition}[Upper Efimov coefficient]
\label{def:upper-U}
For \(\alpha>0\), define
\begin{equation}
\label{eq:upper-U-alpha}
\cU_K^+(\alpha)
=\frac{1}{4\pi}\int_{\mathbb R}
n\bigl(\alpha,\widehat{\cS}_K^+(s)\bigr)\,ds,
\end{equation}
where \(\widehat{\cS}_K^+(s)\) is the compact self-adjoint
operator-valued Fourier symbol of the logarithmic model constructed
in Subsection~\ref{subsec:upper-model}. Set
\[
\cU_+(K)=\cU_K^+(1).
\]
\end{definition}

Proposition~\ref{prop:upper-model-asymptotic} shows that the integral
in \eqref{eq:upper-U-alpha} is finite for every \(\alpha>0\) and
that \(\cU_K^+\) is continuous on \((0,\infty)\).
Proposition~\ref{prop:upper-supercriticality} establishes
\(\cU_+(K)>0\) after a possible reduction of \(\cO_+\).
\reviewedit{For \(\lambda>\lambda_{\mathrm{sep}}(K,\gamma)\) and
\(z,\zeta\in\cG_\lambda^+(K)\) with \(z<\zeta\) and}
\(\zeta\notin\spec(H_{\lambda,\mu_{\mathrm c}^+,\gamma}(K))\), define
\begin{equation}
\label{eq:gap-counting-function-statement}
N_{\mathrm{gap}}^{(\lambda)}(K,z;\zeta)
=\dim\operatorname{Ran}\mathbf1_{(z,\zeta)}
\bigl(H_{\lambda,\mu_{\mathrm c}^+,\gamma}(K)\bigr).
\end{equation}
\reviewedit{This count is finite by
Corollary~\ref{cor:separated-regime}.}

\begin{theorem}[Critical upper-threshold accumulation and stability in a gap]
\label{thm:main-upper}
Assume \eqref{eq:cosine-dispersion-model} and \(\gamma>\gamma_*\).
There exists an open neighbourhood \(\cO_+\) of \(\pi\mathbf1\)
such that, for every \(K\in\cO_+\), with
\(\mu_{\mathrm c}^+=\mu_{\mathrm c}^+(K,\gamma)\), the coefficient in
Definition~\ref{def:upper-U} satisfies \(0<\cU_+(K)<\infty\),
and the following assertions hold.
\begin{enumerate}[(i)]
\item For every fixed \(0\leq\lambda\leq\lambda_{\mathrm{crit}}(K,\gamma)\),
\[
\sup\sess H_{\lambda,\mu_{\mathrm c}^+,\gamma}(K)=E_{+,\gamma}(K).
\]
The operator \(H_{\lambda,\mu_{\mathrm c}^+,\gamma}(K)\) has infinitely many discrete
eigenvalues accumulating at \(E_{+,\gamma}(K)\) from above, and
\begin{equation}
\label{eq:main-upper-asymptotic}
N_+^{(\lambda)}\bigl(K,E_{+,\gamma}(K)+\delta\bigr)
=\cU_+(K)|\log\delta|+o(|\log\delta|),
\qquad \delta\downarrow0.
\end{equation}
\item If \(\lambda>\lambda_{\mathrm{sep}}(K,\gamma)\), then
\(\cG_\lambda^+(K)\) is a nonempty open gap in the essential
spectrum of \(H_{\lambda,\mu_{\mathrm c}^+,\gamma}(K)\), and
\[
E_{+,\gamma}(K),\ \tau_{12}^+(K,\lambda)
\in\sess H_{\lambda,\mu_{\mathrm c}^+,\gamma}(K).
\]
Infinitely many discrete eigenvalues in this gap accumulate
at its lower endpoint. For every fixed
\[
\zeta\in\cG_\lambda^+(K)
\setminus\spec H_{\lambda,\mu_{\mathrm c}^+,\gamma}(K),
\]
the counting function defined in
\eqref{eq:gap-counting-function-statement} satisfies
\begin{equation}
\label{eq:upper-gap-Efimov-asymptotic}
N_{\mathrm{gap}}^{(\lambda)}
\bigl(K,E_{+,\gamma}(K)+\delta;\zeta\bigr)
=\cU_+(K)|\log\delta|+o(|\log\delta|),
\qquad \delta\downarrow0.
\end{equation}
The limit is taken with \(K,\gamma,\lambda,\zeta\) fixed and
\(0<\delta<\zeta-E_{+,\gamma}(K)\). In particular, the leading
coefficient is the same as in \eqref{eq:main-upper-asymptotic}
and is independent of \(\lambda\).
\item For
\(\lambda_{\mathrm{crit}}(K,\gamma)<\lambda\leq\lambda_{\mathrm{sep}}(K,\gamma)\),
the essential spectrum contains \([E_{+,\gamma}(K),E_{+,\gamma}(K)+a]\)
for some \(a>0\); no sequence of discrete eigenvalues can approach
\(E_{+,\gamma}(K)\) from above.
\end{enumerate}
\end{theorem}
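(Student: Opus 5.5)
The plan follows Sobolev's scheme as adapted in \cite{ALM2004,ALD2007}. The spectral assertions about the essential spectrum are already contained in Theorem~\ref{thm:upper-essential-edge}: items (i), (ii), (iii) there give the essential edge \(E_+\) in the weak regime, the gap \(\cG_\lambda^+(K)\) with both endpoints in the essential spectrum in the separated regime, and the interval \([E_+,E_++a]\) in the intervening regime. Item (iii) of the present theorem therefore needs no new argument. What remains is the counting asymptotics and positivity of \(\cU_+(K)\).

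\textbf{Step 1: the case \(\lambda=0\).} For \(z>E_+\), I will use the factorisation \(V_{13}+V_{23}=2\Psi^*\Psi\) from \eqref{eq:mixed-interaction-factorization}. This yields a Birman--Schwinger operator \(T_K^+(z)\) on \(\frh=L^2(\T^3)\). Dividing by the channel determinant, it takes the form \(\Delta_K^+(q,z)^{-1/2}\,[\text{integral kernel in }(p,q)\text{ with denominator }z-\E_\gamma(K;p,q)]\,\Delta_K^+(p,z)^{-1/2}\). A counting identity of the form \(N_+(K,z)=n(1,T_K^+(z))\) follows from the Birman--Schwinger principle, with Lemma~\ref{lem:Delta-uniform-positivity} ensuring the invertibility of \(\Delta\).

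Next I localise to \(|p-k_+|,|q-k_+|<\rho\). By Proposition~\ref{prop:upper-unique-spectator}, the part away from \(k_+\) is compact and norm-continuous up to \(z=E_+\), so it changes the count by \(O(1)\). Near \(k_+\), the two-sided estimates of Subsection~\ref{subsec:efimov-local-estimates} and Theorem~\ref{thm:upper-determinant-asymptotics} let me replace \(\Delta\) by \(\mu_c^+ d_K^+(x)\)-type homogeneous leading terms and the kernel by its quadratic approximation. The errors are Hilbert--Schmidt uniformly in \(\delta\), hence contribute \(o(|\log\delta|)\) via Weyl-type inequalities for \(n(\cdot,\cdot)\) and continuity of \(\cU_K^+(\alpha)\) in \(\alpha\).

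The resulting homogeneous operator of degree \(-3\), restricted to the region \(\delta^{1/2}\lesssim|x|\lesssim1\), becomes, in logarithmic radial coordinates \(t=\log|x|\), a convolution by \(\cS_K^+\) on an interval of length \(\tfrac12|\log\delta|\), tensored with \(L^2(\Sph^2)\) and projected to the antisymmetric sector. A standard Szeg\H{o}-type lemma for truncated operator-valued convolutions gives \(n(1,\cdot)=\tfrac{1}{4\pi}|\log\delta|\int n(1,\widehat\cS_K^+(s))\,ds+o(|\log\delta|)\). This is \(\cU_+(K)|\log\delta|\), using Proposition~\ref{prop:upper-model-asymptotic}. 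Positivity \(\cU_+(K)>0\) is Proposition~\ref{prop:upper-supercriticality}, which uses \(\gamma>\gamma_*\) through the angular comparison at \(K=\pi\mathbf1\) and continuity.

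\textbf{Step 2: adding \(\lambda\).} I form the \(2\times2\) block Birman--Schwinger operator \(\widetilde T_{\lambda,K}^+(z)\) on \(L^2(\T^3;\C^3)\oplus\frh\) built from \(\mathcal L\). I then eliminate the fermion block via the invertible \(D_{12}(z)\); this is invertible for \(z\) in the weak range above \(E_+\), and in the gap by Proposition~\ref{prop:fermion-all-couplings}(iii). The outcome is that the mixed component equals \(T_K^+(z)\) plus a correction. I will show this correction is compact and norm-continuous up to \(z=E_+\). The singularity of the free resolvent is at \(p=q=k_+\) only, and in the fermion channel the form factors \(\phi_{j,K-r}\) vanish at the relevant point. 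Concretely, \(\sin u_j\) vanishes at the maximiser, which kills the \(|x|^{-2}\) singularity and makes the cross kernels Hilbert--Schmidt uniformly down to \(\delta=0\). In the weak regime \(\lambda\le\lambda_{\mathrm{crit}}\), the quadratic bounds \eqref{eq:fermion-critical-bound} on \(d_{\lambda,j}\) control \(D_{12}(z)^{-1}\) uniformly, including the endpoint \(\lambda=\lambda_{\mathrm{crit}}\), where the zero of \(d\) at \(r_j(K)\) is away from \(\rho_+\). Compact perturbations that are continuous up to \(E_+\) do not change the leading coefficient, so the count is again \(\cU_+(K)|\log\delta|+o(|\log\delta|)\).

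In the gap, Proposition~\ref{prop:gap-BS-counting} (Pushnitski's framework) gives \(N_{\mathrm{gap}}^{(\lambda)}(K,z;\zeta)\) as a difference of Birman--Schwinger counts at \(z\) and \(\zeta\). The term at the fixed point \(\zeta\) is finite, so the asymptotics transfer. Infinitude of eigenvalues follows from \(\cU_+(K)>0\).

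\textbf{Main obstacle.} I expect the hardest step to be the uniform compactness and norm-continuity up to \(E_+\) of the fermion correction, in particular at \(\lambda=\lambda_{\mathrm{crit}}\). There \(D_{12}(E_+)\) loses bounded invertibility, so I would first try to show that the degenerate direction is orthogonal, uniformly, to the singular part localised at \(r=\rho_+\).
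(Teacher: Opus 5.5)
\textbf{Gap: the critical endpoint \(\lambda=\lambda_{\mathrm{crit}}\).} Your handling of the essential spectrum, of \(\lambda=0\), of \(\cU_+(K)>0\), and of the gap count via Proposition~\ref{prop:gap-BS-counting} matches the paper. The problem is the endpoint \(\lambda=\lambda_{\mathrm{crit}}\), which part~(i) explicitly includes. Two claims in your Step~2 fail there.

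First, \eqref{eq:fermion-critical-bound} does not control \(D_{12}(z)^{-1}\) uniformly. The multiplier \(d_{\lambda_{\mathrm{crit}},j}(K,r,0)\) vanishes at \(r=r_j(K)\), so \(\|D_{\lambda_{\mathrm{crit}},K}(E_++\delta)^{-1}\|\to\infty\) as \(\delta\downarrow0\); you concede this yourself in your last paragraph.

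Second, the zero at \(r_j(K)\) need not lie away from \(\rho_+=K-2k_K^+\). At \(K=\pi\mathbf1\in\cO_+\), Lemma~\ref{lem:fermion-critical-symmetric} places the maximum of every \(b_j(\pi\mathbf1,\cdot,0)\) at \(r=\pi\mathbf1\), so all three components are active. Meanwhile \(\rho_+=\pi\mathbf1-2\pi\mathbf1\equiv\pi\mathbf1\). The degenerate spectator point of the fermion multiplier therefore coincides with the singular point of the transition kernel \(\cW_K\). Your proposed remedy, showing that the degenerate direction is uniformly orthogonal to the singular part at \(r=\rho_+\), has no separation to exploit.

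\textbf{The missing idea.} Never invert \(D\) boundedly. Factor the correction as \(\cR_{\lambda,K}^+=2\mu\lambda\,L^*JL\), with \(L=|D_{\lambda,K}|^{-1/2}\cW_K\) and \(J=\operatorname{sgn}D_{\lambda,K}\). Then prove \(\mathfrak S_2\)-convergence of \(L_{\lambda,K}(\delta)\) using a \(\delta\)-independent square-integrable majorant, valid even when the singular points coincide.

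Put \(x=p-k_K^+\) and \(v=r-\rho_+\). Three ingredients combine to give \eqref{eq:fermion-weighted-majorant}: the zero of the odd form factor at \(k_K^+\), the square-root law \(\Delta_K^+\asymp(\delta+|x|^2)^{1/2}\), and the bound \(d_{\lambda_{\mathrm{crit}},j}\ge c(\delta+|v|^2)\). The result is
\[
|L_j(r,p;\delta)|^2\le\frac{C}{|v|^2(|x|^2+|v|^2)|x|}.
\]
Its integral over \(x,v\in\R^3\) near the origin reduces to \(\int_0^b\!\int_0^b t\,(t^2+s^2)^{-1}\,ds\,dt<\infty\). Without the form-factor zero, the middle factor would be squared and the integral would diverge, so all three ingredients are needed.

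When \(r_j(K)\ne\rho_+\), the same weighting is still required near \(r_j(K)\). There it works because \(d_{\T^3}(r,r_j(K))^{-2}\) is integrable in three dimensions. The ideal inequality then gives trace-norm continuity of \(\cR_{\lambda,K}^+\) up to \(\delta=0\), and Lemma~\ref{lem:upper-HS-stability} finishes as in your plan.

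As written, your argument covers \(0\le\lambda<\lambda_{\mathrm{crit}}\) and \(\lambda>\lambda_{\mathrm{sep}}\), but not the closed weak-coupling interval asserted in~(i).
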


\begin{remark}[Fibrewise character of the theorem]
The critical upper Efimov family is the graph
\[
\Gamma_+^{\mathrm{Ef}}
=\{(K,\mu_{\mathrm c}^+(K,\gamma)):K\in\cO_+\}
\subset\T^3\times(0,\infty).
\]
The theorem tunes \(\mu\) separately in each fibre; it does not
assert Efimov accumulation for one fixed coupling throughout
\(\cO_+\).
By Lemma~\ref{lem:upper-critical-regularity}, the function
\(K\mapsto\mu_{\mathrm c}^+(K,\gamma)\) is real analytic.
Thus \(\Gamma_+^{\mathrm{Ef}}\) is a real-analytic submanifold
of codimension one in \(\T^3\times(0,\infty)\).
\end{remark}

\subsection{Birman--Schwinger reduction}
\label{subsec:upper-BS}
\reviewedit{In this subsection, \(\lambda=0\) and
\(\frh=L^2(\T^3,\eta)\).}
Recall that
\[
(\Psi f)(q)
=
\int_{\T^3}f(p,q)\,\eta(dp),
\qquad
(\Psi^*g)(p,q)
=
\frac12\bigl(g(q)-g(p)\bigr),
\]
and
\[
V_{13}+V_{23}=2\Psi^*\Psi.
\]
\reviewedit{Recall that, for \(z>E_+\),}
\[
\Delta_K^+(q,z)
=
1-\mu_{\mathrm c}^+
\int_{\T^3}
\frac{\eta(dp)}
     {z-\E_\gamma(K;p,q)}.
\]
\reviewedit{Lemma~\ref{lem:Delta-uniform-positivity} gives}
\[
m_z
:=
\min_{q\in\T^3}\Delta_K^+(q,z)
>
0.
\]
If
\[
\bigl(H_{0,\mu_{\mathrm c}^+,\gamma}(K)-z\bigr)f=0,
\]
then \(g=\Psi f\) satisfies
\begin{align}
f(p,q)
&=
\mu_{\mathrm c}^+\,
\frac{g(q)-g(p)}
     {z-\E_\gamma(K;p,q)},
\label{eq:upper-Faddeev-f}
\\
\Delta_K^+(q,z)g(q)
&=
-\mu_{\mathrm c}^+
\int_{\T^3}
\frac{g(p)}
     {z-\E_\gamma(K;p,q)}
\,\eta(dp).
\label{eq:upper-Faddeev-g}
\end{align}
Conversely, any \(g\in\frh\) satisfying the second
relation determines, through the first, an element
\(f\in\cH^{\mathrm a}\), since
\[
z-\E_\gamma(K;p,q)\ge z-E_+>0.
\]
The symmetry of the free dispersion gives
\(f(p,q)=-f(q,p)\) almost everywhere. Integrating the first
relation and using the second, we obtain
\[
\Psi f=(1-\Delta_K^+(\cdot,z))g+\Delta_K^+(\cdot,z)g=g.
\]
Consequently, \(f\) satisfies the eigenvalue equation, and
\(f\mapsto\Psi f\) is a linear isomorphism between the two
solution spaces.
Define
\[
(D_zg)(q)
=
\Delta_K^+(q,z)^{1/2}g(q).
\]
The operator \(D_z\) is boundedly invertible, with
\[
\|D_z^{-1}\|\le m_z^{-1/2}.
\]
The substitution \(\varphi=D_zg\) transforms the channel
equation into
\[
T_K^+(z)\varphi=\varphi.
\]

\begin{definition}[Birman--Schwinger operator]
\label{def:upper-BS-operator}
For \(z>E_+\), define \(T_K^+(z):\frh\to\frh\) by
\begin{equation}
\label{eq:upper-BS-kernel}
(T_K^+(z)\varphi)(q)
=-
\mu_{\mathrm c}^+\int_{\T^3}
\frac{\varphi(p)\,\eta(dp)}
{\Delta_K^+(q,z)^{1/2}
 \Delta_K^+(p,z)^{1/2}
 [z-\E_\gamma(K;p,q)]}.
\end{equation}
\end{definition}

\revblock{
\begin{proposition}[Properties above the upper threshold]
\label{prop:upper-BS-properties}
For every \(z>E_+\), the operator \(T_K^+(z)\) is
self-adjoint and Hilbert--Schmidt. The map
\(z\mapsto T_K^+(z)\) is continuous on \((E_+,\infty)\)
in the Hilbert--Schmidt norm.
\end{proposition}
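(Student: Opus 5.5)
The plan is to read everything off the explicit kernel in \eqref{eq:upper-BS-kernel}, using only that \(z>E_+\) keeps both the free denominator and the determinant uniformly away from zero. Fix \(z>E_+\). The denominator satisfies
\[
z-\E_\gamma(K;p,q)\ge z-E_+>0
\]
for all \(p,q\), and Lemma~\ref{lem:Delta-uniform-positivity}, applied to the degenerate interval \([z,z]\), gives \(\Delta_K^+(q,z)\ge m_z>0\) for all \(q\). The upper bound \(\Delta_K^+(q,z)\le1\) is clear, since the integral subtracted in \eqref{eq:upper-channel-determinant} is positive. Hence the kernel
\[
t_z(q,p)=-\mu_{\mathrm c}^+\,\Delta_K^+(q,z)^{-1/2}\Delta_K^+(p,z)^{-1/2}\,[z-\E_\gamma(K;p,q)]^{-1}
\]
is bounded by \(\mu_{\mathrm c}^+ m_z^{-1}(z-E_+)^{-1}\) on \((\T^3)^2\). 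Because \(\eta\otimes\eta\) is a probability measure, \(t_z\in L^2\) and \(T_K^+(z)\) is Hilbert--Schmidt.

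For self-adjointness, I will check that the kernel is real and symmetric. The factor \(\Delta^{-1/2}\Delta^{-1/2}\) is symmetric by construction. The dispersion \(\E_\gamma(K;p,q)\) is symmetric in \((p,q)\), as noted after \eqref{eq:fiber-fermion-interaction}. Since \(\Delta\) is positive, all square roots are real, so \(t_z(q,p)=\overline{t_z(p,q)}\).

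For Hilbert--Schmidt continuity, I will show that \(t_{z'}\to t_z\) uniformly on \((\T^3)^2\) as \(z'\to z\) inside a compact interval \([a,b]\subset(E_+,\infty)\). Uniform convergence suffices, because the Hilbert--Schmidt norm is dominated by the sup norm of the kernel. On \((\T^3)^2\times[a,b]\):
\begin{itemize}
\item the map \((p,q,z)\mapsto[z-\E_\gamma(K;p,q)]^{-1}\) is jointly continuous and bounded by \((a-E_+)^{-1}\);
\item the map \((q,z)\mapsto\Delta_K^+(q,z)\) is jointly continuous, by dominated convergence applied to its defining integral, since the integrand is continuous and bounded by \((a-E_+)^{-1}\);
\item Lemma~\ref{lem:Delta-uniform-positivity} gives \(\inf_{[a,b]\times\T^3}\Delta_K^+\ge m>0\), so \(\Delta^{-1/2}\) is jointly continuous.
\end{itemize}
Then \(t_z(q,p)\) is jointly continuous on the compact set \((\T^3)^2\times[a,b]\), hence uniformly continuous there, which gives the uniform convergence.

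No step is a serious obstacle. The one point that needs care is the uniform lower bound on \(\Delta_K^+\) over a compact energy interval, and Lemma~\ref{lem:Delta-uniform-positivity} already provides it. Everything is routine precisely because \(z\) stays away from the threshold \(E_+\); the difficult analysis as \(z\downarrow E_+\) is not needed for this statement.
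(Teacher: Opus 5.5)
Your proposal is correct and follows essentially the same route as the paper. You get a bounded real symmetric kernel from \(z-\E_\gamma(K;p,q)\ge z-E_+\) and the uniform positivity of \(\Delta_K^+\) (Lemma~\ref{lem:Delta-uniform-positivity}), then use continuity of the kernel on compact energy intervals; the only difference is that you use uniform continuity on \((\T^3)^2\times[a,b]\) where the paper uses dominated convergence.
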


\begin{proof}
For each fixed \(z>E_+\), positivity and continuity of
\(\Delta_K^+(\cdot,z)\), together with
\(z-\E_\gamma(K;p,q)\ge z-E_+>0\), show that the
kernel in \eqref{eq:upper-BS-kernel} is bounded.
It is real and symmetric, so \(T_K^+(z)\) is self-adjoint
and Hilbert--Schmidt. The denominator bounds are uniform
on compact subintervals of \((E_+,\infty)\); continuity
of the kernel and dominated convergence therefore give
continuity in the Hilbert--Schmidt norm.
\end{proof}
}

\begin{theorem}[Upper-threshold Birman--Schwinger principle]
\label{thm:upper-BS-principle}
For every \(K\in\mathcal O_+\) and \(z>E_{+,\gamma}(K)\),
\[
\dim\ker\bigl(H_{0,\mu_{\mathrm c}^+,\gamma}(K)-z\bigr)
=
\dim\ker\bigl(T_K^+(z)-I\bigr),
\]
and
\begin{equation}
\label{eq:upper-BS-counting}
N_+(K,z)
=
n\bigl(1,T_K^+(z)\bigr).
\end{equation}
\end{theorem}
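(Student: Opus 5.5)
The kernel identity is essentially already contained in the reduction preceding Definition~\ref{def:upper-BS-operator}. I would make it precise as follows. For \(z>E_+\), the map \(f\mapsto\Psi f\) is a linear isomorphism from \(\ker(H_{0,\mu_{\mathrm c}^+,\gamma}(K)-z)\) onto the solution space of \eqref{eq:upper-Faddeev-g}. Injectivity follows from \eqref{eq:upper-Faddeev-f}: if \(\Psi f=0\), then \(f=0\). Surjectivity is the converse computation already carried out, giving \(\Psi f=g\). The substitution \(\varphi=D_zg\) is a bounded bijection, since \(D_z\) and \(D_z^{-1}\) are bounded by Lemma~\ref{lem:Delta-uniform-positivity}. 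Dividing \eqref{eq:upper-Faddeev-g} by \(\Delta_K^+(q,z)^{1/2}\) shows that this solution space is mapped onto \(\ker(T_K^+(z)-I)\). This yields the equality of dimensions.

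For the counting identity \eqref{eq:upper-BS-counting}, I would use a continuity and monotonicity argument in the spectral parameter, in the spirit of \cite[Lemmas~6.3, 6.5]{ALM2004}. First I establish the basic facts.
\begin{itemize}
\item \(H:=H_{0,\mu_{\mathrm c}^+,\gamma}(K)\) has only discrete spectrum in \((E_+,\infty)\), by Corollary~\ref{cor:weak-regime}.
\item \(n(1,T_K^+(z))\) is finite, since \(T_K^+(z)\) is compact by Proposition~\ref{prop:upper-BS-properties}.
\item For large \(z\) (say \(z>\|H\|+1\)), both sides vanish. Indeed, \(\|T_K^+(z)\|\to0\) because \(\Delta_K^+(\cdot,z)\to1\) uniformly and the kernel is \(O(1/z)\).
\end{itemize}

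Next I would show that the eigenvalues of \(T_K^+(z)\) are monotone in \(z\). Writing \(T_K^+(z)\) as \(-\) of a sandwiched resolvent-type operator is awkward because \(\Delta\) also depends on \(z\). Instead I use the symmetrized Birman--Schwinger form. With \(W=2\mu_{\mathrm c}^+\Psi^*\Psi=\mu_{\mathrm c}^+(V_{13}+V_{23})\ge0\), the classical statement is
\[
N_+(K,z)=n\bigl(1,\,W^{1/2}(z-H_0)^{-1}W^{1/2}\bigr).
\]
This holds for \(z>E_+\) because \(H_0\le E_+\). The standard proof goes through monotone decrease and continuity of the operator \(W^{1/2}(z-H_0)^{-1}W^{1/2}\), together with the kernel correspondence, and gives that each eigenvalue of \(H\) above \(z\) corresponds to an eigenvalue greater than \(1\). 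Equivalently, this operator has the same nonzero spectrum as \(G(z):=2\mu_{\mathrm c}^+\Psi(z-H_0)^{-1}\Psi^*\) on \(\frh\).

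Computing \(G(z)\) explicitly gives
\[
G(z)=(1-\Delta_K^+(\cdot,z))-Q(z),
\]
where \(Q(z)\) is the integral operator with kernel \(\mu_{\mathrm c}^+/(z-\E_\gamma)\), so that \(I-G(z)=\Delta+Q\). Then
\[
n(1,G(z))=\#\{\text{negative eigenvalues of }\Delta+Q\}.
\]
Here \(\Delta\) is multiplication by the positive function \(\Delta_K^+(\cdot,z)\). Congruence by \(D_z^{-1}\), via Sylvester's law of inertia for bounded invertible congruences, gives
\[
D_z^{-1}(\Delta+Q)D_z^{-1}=I-T_K^+(z).
\]
Hence
\[
n(1,G(z))=\dim\operatorname{Ran}\mathbf1_{(-\infty,0)}(I-T_K^+(z))=n(1,T_K^+(z)).
\]

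The main obstacle is this inertia step together with the classical counting formula for \(G(z)\). The inertia count must be justified for operators that are not compact perturbations of \(0\); here \(\Delta+Q\) is positive-invertible plus compact, so the negative spectral subspace is finite-dimensional. I would handle it through a max--min characterization: the dimension of a maximal subspace on which the quadratic form is negative is preserved under bounded invertible congruence. For the classical counting formula, I would prove it directly via the variational principle. A vector \(f\) with \(\langle (H-z)f,f\rangle>0\) corresponds to \(\psi=W^{1/2}f\) with \(\langle W^{1/2}(z-H_0)^{-1}W^{1/2}\psi,\psi\rangle>\|\psi\|^2\), by completing the square with \(z-H_0>0\). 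This gives equality of maximal dimensions and avoids tracking eigenvalue crossings.
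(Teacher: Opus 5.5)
Your proposal is correct and follows essentially the paper's route: the kernel correspondence \(f\mapsto D_z\Psi f\), the reduction of \(N_+(K,z)\) to \(n\bigl(1,2\mu_{\mathrm c}^+\Psi F_z^{-1}\Psi^*\bigr)\), and the congruence \(\cB_K^+(z)-I=D_z[T_K^+(z)-I]D_z\). For the middle step, the paper uses the congruence \(z-H=F_z^{1/2}(I-X_z^*X_z)F_z^{1/2}\) with \(X_z=\sqrt{2\mu_{\mathrm c}^+}\,\Psi F_z^{-1/2}\), whereas you complete the square with \(W^{1/2}\); the two are equivalent, and your preliminary monotonicity and large-\(z\) discussion becomes unnecessary once you use the variational argument.
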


\begin{proof}
The eigenfunction correspondence above identifies the kernels
through \(f\mapsto D_z\Psi f\). For the counting identity, put
\(F_z=z-H_\gamma^{\mathrm{free}}(K)>0\) and
\(X_z=\sqrt{2\mu_{\mathrm c}^+}\Psi F_z^{-1/2}\). Then
\[
z-H_{0,\mu_{\mathrm c}^+,\gamma}(K)=F_z^{1/2}(I-X_z^*X_z)F_z^{1/2}.
\]
The variational principle, invariance of negative inertia under
boundedly invertible congruences, and polar decomposition give
\(N_+(K,z)=n(1,X_z^*X_z)=n(1,X_zX_z^*)\).
Define \(\cB_K^+(z)=X_zX_z^*=2\mu_{\mathrm c}^+\Psi F_z^{-1}\Psi^*\).
The formula for \(\Psi^*\) yields
\[
(\cB_K^+(z)g)(q)
=[1-\Delta_K^+(q,z)]g(q)
-\mu_{\mathrm c}^+\int_{\T^3}\frac{g(p)\,\eta(dp)}{z-\E_\gamma(K;p,q)},
\]
and hence
\[
\cB_K^+(z)-I=D_z[T_K^+(z)-I]D_z.
\]
This congruence preserves positive inertia, since \(D_z\) is
boundedly invertible. \reviewedit{Thus
\(n(1,\cB_K^+(z))=n(1,T_K^+(z))\).}
\end{proof}

\subsection{Singular part and regular remainder}
\label{subsec:upper-HS}
To analyse \(T_K^+(E_++\delta)\) as \(\delta\downarrow0\),
we introduce the translated coordinates
\(p=k_K^++x\), \(q=k_K^++y\). The map
\[
(U_Kh)(x)=h(k_K^++x)
\]
is unitary on \(L^2(\T^3,\eta)\). In this subsection and the
homogeneous reduction below, we retain the notation
\(T_K^+(E_++\delta)\) for its conjugate under \(U_K\).
Thus the critical spectator momentum is represented by zero.
All integral kernels below are taken relative
to the normalised Haar measure unless another measure is specified.
Choose \(b>0\) sufficiently small that the local expansions and
estimates used below hold for \(|x|,|y|<b\), and that
\(B_b(0)\) lies in a single torus chart. Fix a real function
\(\chi\in C_c^\infty(B_b(0))\), with \(0\le\chi\le1\),
equal to one in a neighbourhood of zero. We regard \(\chi\) as a
function on the torus by extending it by zero.
The expansions \eqref{eq:upper-energy-expansion} and
\eqref{eq:upper-determinant-asymptotic} lead to the following
local models:
\begin{align}
Q_K^+(x,y)
&=
\langle B_K^+x,x\rangle
+2\langle C_K^+y,x\rangle
+\langle B_K^+y,y\rangle,
\label{eq:upper-Q}
\\
d_{K,\delta}^+(x)
&=
\mathfrak c_K^+
\sqrt{\delta+\langle R_K^+x,x\rangle}
-\langle\ell_K^+,x\rangle.
\label{eq:upper-d}
\end{align}
The block-matrix positivity preceding \eqref{eq:upper-R}
implies that \(Q_K^+\) is positive definite. Symmetry of
\(C_K^+\) gives \(Q_K^+(x,y)=Q_K^+(y,x)\).
By \eqref{eq:upper-R-Cauchy-Schwarz} and strict transversality,
\begin{equation}
\label{eq:upper-d-positive-bound}
d_{K,\delta}^+(x)\ge
(\mathfrak c_K^+-a_K)\sqrt{\delta+\langle R_K^+x,x\rangle}>0
\quad(\delta>0\text{ or }x\ne0),
\end{equation}
where \(a_K\) is defined in \eqref{eq:upper-aK}.
Define \(S_{K,\delta}^+\) on \(L^2(\T^3,\eta)\) by
\[
(S_{K,\delta}^+f)(x)
=\int_{\T^3}s_{K,\delta}^+(x,y)f(y)\,\eta(dy)
\quad\text{for almost every }x,
\]
where its kernel, relative to \(\eta(dy)=dy/(2\pi)^3\), is
\begin{equation}
\label{eq:upper-singular-kernel}
s_{K,\delta}^+(x,y)
=-\frac{\chi(x)\chi(y)}
{d_{K,\delta}^+(x)^{1/2}d_{K,\delta}^+(y)^{1/2}
 [\delta+Q_K^+(x,y)]}.
\end{equation}
Here \(\delta>0\), and the kernel is extended by zero outside
\(B_b(0)\times B_b(0)\). It is bounded, real and symmetric, so
\(S_{K,\delta}^+\) is self-adjoint and Hilbert--Schmidt for
every fixed \(\delta>0\).
The factor \(\mu_{\mathrm c}^+\) in the Birman--Schwinger kernel cancels
with the two factors \((\mu_{\mathrm c}^+)^{-1/2}\) supplied by
\eqref{eq:upper-determinant-asymptotic}. This gives precisely
\eqref{eq:upper-singular-kernel}, with no additional coupling
factor.
We write \(\mathfrak S_2(\mathcal H_1,\mathcal H_2)\) for the
Hilbert--Schmidt class and \(\mathfrak S_1(\mathcal H)\) for the
trace class, with their respective ideal norms. When the domain
and target coincide, we write \(\mathfrak S_2(\mathcal H)\).

\begin{lemma}[Kernel majorants for the Hilbert--Schmidt reduction]
\label{lem:kernel-majorants}
Let \(t_{K,\delta}^+\) be the kernel of \(T_K^+(E_++\delta)\)
in the translated coordinates, and let \(s_{K,\delta}^+\) be
the kernel defined in \eqref{eq:upper-singular-kernel}.
Put
\[
s_x=(\delta+|x|^2)^{1/2},\qquad
s_y=(\delta+|y|^2)^{1/2},\qquad
\rho=(\delta+|x|^2+|y|^2)^{1/2}.
\]
\revblock{There exist \(\delta_0>0\) and an integrable function
\(G\) on \((\T^3)^2\), independent of \(\delta\), such that}
\[
\bigl|\chi(x)\chi(y)t_{K,\delta}^+(x,y)-s_{K,\delta}^+(x,y)\bigr|^2
+\bigl|[1-\chi(x)\chi(y)]t_{K,\delta}^+(x,y)\bigr|^2
\le G(x,y)
\]
for almost every \((x,y)\) and \(0<\delta\le\delta_0\).
\end{lemma}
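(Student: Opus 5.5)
We bound the two terms separately, splitting $(\T^3)^2$ into the near region $\{|x|,|y|<b\}$, where $\chi(x)\chi(y)$ may be nonzero, and its complement. The aim is a majorant of the form
\[
G(x,y)=C\bigl[1+\rho_0^{-4}\,(|x|^{-1}+|y|^{-1})\bigr],\qquad \rho_0=(|x|^2+|y|^2)^{1/2},
\]
possibly with $|x|^{-1}$ replaced by $|x|^{-2+\epsilon}$. Such a $G$ is integrable on $(\R^3)^2$ near the origin: in six dimensions $\rho_0^{-4}|x|^{-1}$ has homogeneity $-5>-6$, and the $|x|^{-1}$ singularity is integrable in the three-dimensional $x$-variable. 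The tools are the uniform two-sided estimates of Subsection~\ref{subsec:efimov-local-estimates}, the determinant expansion \eqref{eq:upper-determinant-asymptotic}, the Taylor expansion \eqref{eq:upper-energy-expansion}, and the lower bound \eqref{eq:upper-d-positive-bound} for $d_{K,\delta}^+$.

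\emph{Remainder term.} Consider $[1-\chi(x)\chi(y)]t_{K,\delta}^+$. Its support lies where $|x|\ge b'$ or $|y|\ge b'$, with $b'>0$ the radius on which $\chi\equiv1$. Suppose $|y|\ge b'$. Proposition~\ref{prop:upper-unique-spectator}, continuity, and monotonicity in $\delta$ give $\Delta_K^+(k_++y,E_++\delta)\ge c>0$. The remaining factors are controlled by the local estimates: $\Delta^{-1/2}(x)\le Cs_x^{-1/2}\le C|x|^{-1/2}$ for small $|x|$ (bounded below otherwise), and the energy denominator is at least $c(\delta+|x|^2+|y|^2)\ge c b'^2$ near the origin, while away from the origin $z-\E_\gamma$ is bounded below uniformly. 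Hence the squared kernel is at most $C(1+|x|^{-1})$, uniformly in $\delta\le\delta_0$. This majorant is integrable, and the case $|x|\ge b'$ is symmetric.

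\emph{Singular difference.} On the near region we write
\[
t=-\frac{\mu_{\mathrm c}^+}{\Delta(x)^{1/2}\Delta(y)^{1/2}D},\qquad s=-\frac{\chi\chi}{d(x)^{1/2}d(y)^{1/2}D_0},
\]
with $D=E_++\delta-\E_\gamma$ and $D_0=\delta+Q_K^+$, and telescope the difference into three pieces.
\begin{enumerate}[(a)]
\item \emph{Energy denominator.} By \eqref{eq:upper-energy-expansion}, $|D-D_0|\le C\rho_0^3$ and $D,D_0\ge c\rho^2$. Hence $|D^{-1}-D_0^{-1}|\le C\rho^{-1}$; this uses $\rho_0^3/\rho^4\le\rho^{-1}$.
\item \emph{Determinant factors.} By \eqref{eq:upper-determinant-asymptotic}, $\Delta(x)=\mu_{\mathrm c}^+[d(x)+O(s_x^2)]$ with $d(x)\asymp s_x$. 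Therefore $|(\Delta/\mu_{\mathrm c}^+)^{-1/2}-d^{-1/2}|\le Cs_x^{-3/2}s_x^2=Cs_x^{1/2}$.
\item \emph{Assembly.} Each piece is bounded by products such as $s_x^{1/2}s_y^{-1/2}\rho^{-2}$ or $s_x^{-1/2}s_y^{-1/2}\rho^{-1}$. Squaring gives terms like $s_x s_y^{-1}\rho^{-4}\le\rho^{-3}|y|^{-1}$ and $(s_xs_y)^{-1}\rho^{-2}$. These are uniformly dominated by $\delta$-independent functions, e.g.\ $|x|^{-1}|y|^{-1}\rho_0^{-2}$, which has homogeneity $-4$ and is integrable. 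Monotonicity in $\delta$ of each bound, obtained by replacing $s_x$ by $|x|$ in the appropriate direction, removes the $\delta$-dependence.
\end{enumerate}

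\emph{Main obstacle.} The delicate step is the determinant factor in (b), namely the uniform quality of the error $O(\delta+|x|^2)$ relative to $d\asymp s_x$ all the way down to $\delta=0$. Any bound of the form $s_x^{-3/2}\times(\text{error})$ must still be square-integrable after combining with $\rho^{-2}$. The bound I would try first is $s_x^{1/2}s_y^{-1/2}\rho^{-2}$. Its square is at most $\rho^{-3}|y|^{-1}$, and with $\rho\ge\rho_0$ this is $\delta$-free, of homogeneity $-4$, and hence integrable near the origin. For every factor in which $\delta$ appears I would check that the bound is monotone in the right direction, so that the $\delta=0$ majorant dominates. Combining the two regions, and choosing $\delta_0\le\rho^2$ from the local estimates, yields $G$.
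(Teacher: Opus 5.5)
Your proposal is correct and follows essentially the same route as the paper. It uses the same three-term telescoping of the localized kernel difference, driven by $\bigl|\Delta_K^+(k_K^++x,E_++\delta)^{-1/2}-[\mu_{\mathrm c}^+d_{K,\delta}^+(x)]^{-1/2}\bigr|\le Cs_x^{1/2}$ and $|D^{-1}-D_0^{-1}|\le C\rho^{-1}$; it removes $\delta$ via $s_x\le\rho$, $s_y\ge|y|$ and $\rho\ge(|x|^2+|y|^2)^{1/2}$; and it bounds the complementary kernel by compactness and monotonicity. The only differences are cosmetic: you use $s_x/s_y\le\rho/|y|$ where the paper uses $s_x/s_y\le1+|x|/|y|$, and you handle the complementary region case by case rather than through the symmetric product majorant $C\,w(x)w(y)$ with $w(x)=1+d_{\T^3}(x,0)^{-1}$.
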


\begin{proof}
Theorem~\ref{thm:upper-determinant-asymptotics} and strict
transversality give
\[
\Delta_K^+(k_K^++x,E_++\delta)\asymp s_x,\qquad
\mu_{\mathrm c}^+ d_{K,\delta}^+(x)\asymp s_x,
\]
and
\[
\Delta_K^+(k_K^++x,E_++\delta)
=\mu_{\mathrm c}^+ d_{K,\delta}^+(x)+O(s_x^2).
\]
Thus the exact determinant and its local model are both
comparable to \(s_x\). The identity
\[
|a^{-1/2}-b^{-1/2}|
=
\frac{|a-b|}{\sqrt{ab}(\sqrt a+\sqrt b)},\qquad a,b>0,
\]
then implies
\[
\left|
\Delta_K^+(k_K^++x,E_++\delta)^{-1/2}
-[\mu_{\mathrm c}^+ d_{K,\delta}^+(x)]^{-1/2}
\right|
\le Cs_x^{1/2}.
\]
Write
\[
D_\delta(x,y)
=E_++\delta-\E_\gamma(K;k_K^++x,k_K^++y),
\qquad
D_\delta^0(x,y)=\delta+Q_K^+(x,y).
\]
The Taylor remainder in \eqref{eq:upper-energy-expansion} is
\(O((|x|+|y|)^3)\); nondegeneracy implies
\[
D_\delta(x,y)\asymp\rho^2,
\qquad D_\delta^0(x,y)\asymp\rho^2,
\]
so the identity for the difference of reciprocals gives
\[
\left|
D_\delta(x,y)^{-1}-D_\delta^0(x,y)^{-1}
\right|
\le C\frac{(|x|+|y|)^3}{\rho^4}
\le C\rho^{-1}.
\]
Set \(a_\delta(x)=\Delta_K^+(k_K^++x,E_++\delta)^{-1/2}\)
and \(a_\delta^0(x)=[\mu_{\mathrm c}^+ d_{K,\delta}^+(x)]^{-1/2}\).
In the difference of the localized kernels, use the exact
telescoping identity
\[
\begin{aligned}
\frac{a_\delta(x)a_\delta(y)}{D_\delta}
-\frac{a_\delta^0(x)a_\delta^0(y)}{D_\delta^0}
={}&\frac{[a_\delta(x)-a_\delta^0(x)]a_\delta(y)}{D_\delta}
\\
&+\frac{a_\delta^0(x)[a_\delta(y)-a_\delta^0(y)]}{D_\delta}
\\
&+a_\delta^0(x)a_\delta^0(y)
\left(\frac1{D_\delta}-\frac1{D_\delta^0}\right).
\end{aligned}
\]
The factor \(\mu_{\mathrm c}^+\chi(x)\chi(y)\) is bounded, and the preceding
estimates bound the absolute value of the localized difference by
\[
C\left[
\frac{s_x^{1/2}s_y^{-1/2}}{\rho^2}
+\frac{s_x^{-1/2}s_y^{1/2}}{\rho^2}
+\frac{s_x^{-1/2}s_y^{-1/2}}{\rho}
\right].
\]
Its square is bounded by
\[
C\left[
\frac{s_x}{s_y\rho^4}
+\frac{s_y}{s_x\rho^4}
+\frac{1}{s_xs_y\rho^2}
\right].
\]
For \(r=|x|>0\) and \(t=|y|>0\),
\[
\frac{s_x}{s_y}\le1+\frac rt,\qquad
\frac{s_y}{s_x}\le1+\frac tr,\qquad
s_xs_y\ge rt,\qquad \rho^2\ge r^2+t^2.
\]
Consequently, a \(\delta\)-independent majorant for the squared
difference is
\[
C\left[
\frac{1+r/t+t/r}{(r^2+t^2)^2}
+\frac{1}{rt(r^2+t^2)}
\right].
\]
This majorant is locally integrable in six dimensions. Indeed, after multiplication by the polar
Jacobian \(r^2t^2\), its four radial terms are bounded:
\[
\frac{r^2t^2}{(r^2+t^2)^2}\le\frac14,\qquad
\frac{r^3t}{(r^2+t^2)^2}\le\frac12,
\]
\[
\frac{rt^3}{(r^2+t^2)^2}\le\frac12,\qquad
\frac{rt}{r^2+t^2}\le\frac12.
\]
We next estimate the complementary kernel
\[
[1-\chi(x)\chi(y)]\,t_{K,\delta}^+(x,y).
\]
Its support is disjoint from a fixed neighbourhood of \((0,0)\).
Uniqueness of the joint maximum and compactness of the torus
therefore give a positive lower bound for its free denominator,
uniformly in \(\delta\ge0\).
Put \(w(x)=1+d_{\T^3}(x,0)^{-1}\) for \(x\ne0\), where
\(d_{\T^3}\) is the torus distance, and assign any value at zero.
Then \(w\in L^1(\T^3,\eta)\), and the following estimates hold
almost everywhere.
The local determinant estimate, positivity of the threshold
determinant away from zero, and its monotonicity in energy imply
\[
\Delta_K^+(k_K^++x,E_++\delta)^{-1}\le Cw(x)
\quad\text{for a.e. }x,\qquad 0<\delta\le\delta_0.
\]
Consequently, the squared absolute value of the complementary
kernel is at most \(Cw(x)w(y)\) for almost every \((x,y)\).
This product is integrable and independent of \(\delta\).
Combining the two estimates yields the function \(G\).

\end{proof}

\begin{proposition}[Hilbert--Schmidt reduction]
\label{prop:upper-HS-reduction}
For each fixed \(K\in\cO_+\) and cutoff \(\chi\) as above,
there is \(\delta_0>0\) such that
\begin{equation}
\label{eq:upper-HS-decomposition}
T_K^+(E_++\delta)=S_{K,\delta}^++\cR_{K,\delta}^+,
\qquad 0<\delta\le\delta_0,
\end{equation}
where \(\cR_{K,\delta}^+\) is self-adjoint and admits a
continuous extension to \(\delta=0\) in the Hilbert--Schmidt
norm on \(L^2(\T^3,\eta)\).
In particular, there exists \(\cR_{K,0}^+\) in this class
such that
\[
\lim_{\delta\downarrow0}
\|T_K^+(E_++\delta)-S_{K,\delta}^+
-\cR_{K,0}^+\|_{\mathfrak S_2}=0.
\]
\end{proposition}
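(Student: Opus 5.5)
The plan is to define the remainder directly by
\[
\cR_{K,\delta}^+:=T_K^+(E_++\delta)-S_{K,\delta}^+,
\]
so that \eqref{eq:upper-HS-decomposition} holds by construction. Self-adjointness is then immediate, since both kernels are real and symmetric in \((x,y)\). The kernel of \(\cR_{K,\delta}^+\) splits as
\[
r_\delta(x,y)=\bigl[\chi(x)\chi(y)t_{K,\delta}^+(x,y)-s_{K,\delta}^+(x,y)\bigr]
+\bigl[1-\chi(x)\chi(y)\bigr]t_{K,\delta}^+(x,y).
\]
Lemma~\ref{lem:kernel-majorants} shows that \(|r_\delta|^2\le 2G\), with \(G\in L^1((\T^3)^2)\) independent of \(\delta\in(0,\delta_0]\). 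Because the Hilbert--Schmidt norm of an integral operator on \(L^2(\T^3,\eta)\) is the \(L^2(\eta\otimes\eta)\) norm of its kernel, continuity of \(\delta\mapsto\cR_{K,\delta}^+\) in \(\mathfrak S_2\) reduces to \(L^2\) convergence of kernels. Dominated convergence supplies this once we have pointwise almost-everywhere convergence of \(r_\delta\) as \(\delta\to\delta'\in[0,\delta_0]\).

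For \(\delta'>0\), pointwise convergence follows from continuity in \(z\) of \(\Delta_K^+(q,z)\), of \(z-\E_\gamma\) and of \(d_{K,\delta}^+\). Continuity on \((0,\delta_0]\) in \(\mathfrak S_2\) is also already given by Proposition~\ref{prop:upper-BS-properties} for \(T\), and by the same domination for \(S\).

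The substantive step is \(\delta\downarrow0\). Define the candidate limit kernel \(r_0(x,y)\) by the same formulas with \(\delta=0\). For this we need:
\begin{itemize}
\item \(\Delta_K^+(k_K^++x,E_++\delta)\to\Delta_K^+(k_K^++x,E_+)\) for \(x\ne0\);
\item \(d_{K,\delta}^+(x)\to d_{K,0}^+(x)>0\) for \(x\ne0\), using \eqref{eq:upper-d-positive-bound};
\item \(E_++\delta-\E_\gamma\to E_+-\E_\gamma\) and \(\delta+Q_K^+\to Q_K^+\), both nonzero off \((x,y)=(0,0)\).
\end{itemize}
The convergence of \(\Delta\) holds because, for fixed \(q\ne k_K^+\), the integrand \((z-\E_\gamma(K;p,q))^{-1}\) increases monotonically to its threshold value as \(z\downarrow E_+\). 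That threshold value is integrable, since the denominator vanishes at most at isolated points with a quadratic zero in three dimensions. Monotone convergence then applies. Proposition~\ref{prop:upper-unique-spectator} gives \(\Delta_K^+(q,E_+)>0\) for \(q\ne k_K^+\), so the square roots and reciprocals pass to the limit. Hence \(r_\delta\to r_0\) on the complement of the null set \(\{x=0\}\cup\{y=0\}\). By dominated convergence with majorant \(2G\), \(r_0\in L^2\) and \(\|r_\delta-r_0\|_{L^2}\to0\). Setting \(\cR_{K,0}^+\) to be the operator with kernel \(r_0\) gives the continuous extension and the displayed limit. \(\cR_{K,0}^+\) is self-adjoint because \(r_0\) is real symmetric.

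The main obstacle is making sure the pointwise limit exists. Near the cutoff boundary and on the support of \(1-\chi\), one needs the threshold determinant bounded away from zero and continuity of the exact kernel down to \(\delta=0\), uniformly enough for the majorant to apply. I would verify this using the bound \(\Delta_K^+(\cdot,E_++\delta)^{-1}\le Cw\) from the proof of Lemma~\ref{lem:kernel-majorants} together with the monotonicity in \(\delta\). The majorant itself is fully supplied by that lemma, so once pointwise convergence is checked the proposition follows.
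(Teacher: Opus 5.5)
Your proposal is correct and follows essentially the same route as the paper. Both arguments define the remainder as the difference $T_K^+(E_++\delta)-S_{K,\delta}^+$, take the $\delta$-independent majorant $G$ from Lemma~\ref{lem:kernel-majorants}, prove almost-everywhere convergence of the kernels, and conclude by dominated convergence, which also gives continuity at each $\delta_*>0$. For the threshold continuity of $\Delta_K^+(q,\cdot)$ at $q\ne k_K^+$, the paper notes that $E_+-\E_\gamma(K;p,q)$ is bounded below, whereas you use monotone convergence; both arguments work.
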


\begin{proof}
The difference \(\cR_{K,\delta}^+=T_K^+(E_++\delta)-S_{K,\delta}^+\)
has real symmetric kernel
\(r_\delta=t_{K,\delta}^+-s_{K,\delta}^+\).
Lemma~\ref{lem:kernel-majorants} gives
\(|r_\delta|^2\le G\) almost everywhere, with
\(G\in L^1((\T^3)^2)\) independent of \(0<\delta\le\delta_0\).

For fixed \(q\ne k_K^+\), uniqueness of the free maximum
and compactness imply
\[
\inf_{p\in\T^3}[E_+-\E_\gamma(K;p,q)]>0.
\]
Hence the defining integral for \(\Delta_K^+(q,z)\) is continuous
at \(E_+\), with positive limit by \eqref{eq:upper-unique-zero}.
The translated kernel \eqref{eq:upper-BS-kernel} therefore has
a finite limit almost everywhere. The same holds for the model
kernel by \eqref{eq:upper-singular-kernel}, positivity of \(Q_K^+\),
and \eqref{eq:upper-d-positive-bound}. Thus \(r_\delta\to r_0\)
almost everywhere. Since \(|r_0|^2\le G\) and
\(|r_\delta-r_0|^2\le4G\), dominated convergence gives
\[
\|\cR_{K,\delta}^+-\cR_{K,0}^+\|_{\mathfrak S_2}^2
=\int_{(\T^3)^2}|r_\delta-r_0|^2\,\eta(dx)\eta(dy)
\longrightarrow0.
\]
The limit is self-adjoint because \(r_0\) is real and symmetric.

On any \([\delta_1,\delta_2]\subset(0,\delta_0]\),
\eqref{eq:upper-d-positive-bound} gives
\(d_{K,\delta}^+(x)\ge c\sqrt{\delta_1}\) on the cutoff support,
including at \(x=0\), and \(\delta+Q_K^+(x,y)\ge\delta_1\).
The exact denominators are uniformly positive by compactness
and monotonicity of \(\Delta_K^+\). The explicit kernels are
therefore pointwise continuous in \(\delta\); the same majorant
\(|r_\delta-r_{\delta_*}|^2\le4G\) proves Hilbert--Schmidt
continuity at each \(\delta_*>0\).
\end{proof}

We recall the following perturbation lemma; see
\cite[Lemma~4.7]{Sobolev1993} and
\cite[Lemma~7.2]{ALM2004}. For completeness, we include a short proof.

\begin{lemma}[Stability of eigenvalue-counting asymptotics]
\label{lem:upper-HS-stability}
Let \(A_\delta\), \(0<\delta\le\delta_0\), be a family of
compact self-adjoint operators on a Hilbert space \(\mathcal H\),
continuous in the operator norm for \(\delta>0\).
Let \(B_\delta\), \(0\le\delta\le\delta_0\), be a family
of compact self-adjoint operators on \(\mathcal H\),
continuous in the operator norm up to \(\delta=0\), and set
\[
T_\delta=A_\delta+B_\delta,
\qquad 0<\delta\le\delta_0.
\]
Suppose that \(f(\delta)>0\), \(f(\delta)\to0\) as
\(\delta\downarrow0\), and that, for every \(\alpha>0\),
\[
\lim_{\delta\downarrow0}
f(\delta)n(\alpha,A_\delta)=U(\alpha),
\]
where \(U\) is finite and continuous on \((0,\infty)\).
Then, for every \(\alpha>0\),
\[
\lim_{\delta\downarrow0}
f(\delta)n(\alpha,T_\delta)=U(\alpha).
\]
\end{lemma}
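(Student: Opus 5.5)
The plan is to split off the compact limit operator and control the error by Weyl's inequalities for eigenvalue counting functions of compact self-adjoint operators. For compact self-adjoint \(X,Y\) and \(\alpha>0\), \(0<\varepsilon<\alpha\), we have
\[
n(\alpha,X+Y)\le n(\alpha-\varepsilon,X)+n(\varepsilon,Y),\qquad
n(\alpha+\varepsilon,X)\le n(\alpha,X+Y)+n(\varepsilon,-Y).
\]
Both follow from the min--max principle, or equivalently from subadditivity of the dimension of positive spectral subspaces: a subspace on which \(X+Y>\alpha\) meets the sum of the two spectral subspaces only in a controlled way. I would record these two inequalities first, since they are the only analytic input.

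Next, write \(T_\delta=A_\delta+B_0+(B_\delta-B_0)\) for \(0<\delta\le\delta_0\). Because \(B_0\) is compact, for fixed \(\varepsilon>0\) the counts \(n(\varepsilon,B_0)\) and \(n(\varepsilon,-B_0)\) are finite and independent of \(\delta\). By norm continuity at \(\delta=0\), we have \(\|B_\delta-B_0\|<\varepsilon\) for \(\delta\) small, so \(n(\varepsilon,\pm(B_\delta-B_0))=0\). Applying the first inequality twice, first with \(X=A_\delta+B_0\) and \(Y=B_\delta-B_0\), then with \(X=A_\delta\) and \(Y=B_0\), gives
\[
n(\alpha,T_\delta)\le n(\alpha-2\varepsilon,A_\delta)+n(\varepsilon,B_0).
\]
The second inequality gives symmetrically
\[
n(\alpha+2\varepsilon,A_\delta)\le n(\alpha,T_\delta)+n(\varepsilon,-B_0).
\]
The constant terms are finite, and multiplying them by \(f(\delta)\to0\) kills them.

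Multiplying by \(f(\delta)\) and taking \(\limsup\) and \(\liminf\) gives
\[
U(\alpha+2\varepsilon)\le\liminf_{\delta\downarrow0} f(\delta)n(\alpha,T_\delta)\le\limsup_{\delta\downarrow0} f(\delta)n(\alpha,T_\delta)\le U(\alpha-2\varepsilon)
\]
for every \(0<\varepsilon<\alpha/2\). Letting \(\varepsilon\downarrow0\) and using continuity of \(U\) at \(\alpha\) yields the claim.

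The only delicate point is the correct use of the shifted thresholds \(\alpha\pm2\varepsilon\). Continuity of \(U\) on \((0,\infty)\) is exactly what closes the sandwich, and the restriction \(\varepsilon<\alpha/2\) keeps both thresholds positive, so every count is finite. Norm continuity of \(A_\delta\) for \(\delta>0\) is not needed for the argument, since only the behaviour as \(\delta\downarrow0\) matters.
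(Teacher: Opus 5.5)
Your argument is correct and is essentially the paper's proof. Weyl's inequalities sandwich \(n(\alpha,T_\delta)\) between counts of \(A_\delta\) at shifted levels, the \(B_\delta\) contribution is controlled through the compact limit \(B_0\) and removed by \(f(\delta)\to0\), and continuity of \(U\) closes the sandwich. The only cosmetic difference is that the paper absorbs \(B_\delta-B_0\) through \(\pm B_\delta\le\pm B_0+(\epsilon/2)I\), giving \(n(\epsilon,\pm B_\delta)\le n(\epsilon/2,\pm B_0)\), whereas you apply Weyl's inequality a second time to the split \(B_\delta=B_0+(B_\delta-B_0)\), hence your shift \(2\varepsilon\) instead of \(\epsilon\).
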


\begin{proof}
Fix \(\alpha>0\) and \(0<\epsilon<\alpha\).
Since \(B_\delta\to B_0\) in operator norm, for sufficiently
small \(\delta\) we have \(\|B_\delta-B_0\|<\epsilon/2\).
Hence \(\pm B_\delta\le\pm B_0+(\epsilon/2)I\) in the
quadratic-form sense, and the variational principle gives
\[
n(\epsilon,\pm B_\delta)
\le n(\epsilon/2,\pm B_0)<\infty.
\]
Compactness of \(B_0\) makes the right-hand side finite and
independent of \(\delta\). Since \(f(\delta)\to0\), it follows that
\[
f(\delta)n(\epsilon,\pm B_\delta)\longrightarrow0
\qquad\text{as }\delta\downarrow0.
\]
Applying Weyl's inequality to
\(T_\delta=A_\delta+B_\delta\) and
\(A_\delta=T_\delta-B_\delta\), we obtain
\[
\begin{aligned}
n(\alpha+\epsilon,A_\delta)-n(\epsilon,-B_\delta)
&\le n(\alpha,T_\delta)\\
&\le n(\alpha-\epsilon,A_\delta)+n(\epsilon,B_\delta).
\end{aligned}
\]
Multiplying by \(f(\delta)\) and letting
\(\delta\downarrow0\) at fixed \(\epsilon\), we find
\[
\begin{aligned}
U(\alpha+\epsilon)
&\le\liminf_{\delta\downarrow0}
f(\delta)n(\alpha,T_\delta)\\
&\le\limsup_{\delta\downarrow0}
f(\delta)n(\alpha,T_\delta)
\le U(\alpha-\epsilon).
\end{aligned}
\]
Letting \(\epsilon\downarrow0\) and using continuity of
\(U\) proves the assertion.
\end{proof}

\begin{remark}[Uniformly Hilbert--Schmidt perturbations]
\label{rem:upper-HS-stability}
For compact self-adjoint families \(A_\delta\) and \(B_\delta\),
\(0<\delta\le\delta_0\), the continuity assumptions in
Lemma~\ref{lem:upper-HS-stability} may be replaced by
\[
B_\delta\in\mathfrak S_2,\qquad
M:=\sup_{0<\delta\le\delta_0}
\|B_\delta\|_{\mathfrak S_2}<\infty.
\]
All other hypotheses are retained, but no limit operator
\(B_0\) is required.

Indeed, the spectral theorem gives, for each fixed \(\epsilon>0\),
\[
n(\epsilon,\pm B_\delta)
\le\epsilon^{-2}\|B_\delta\|_{\mathfrak S_2}^2
\le\epsilon^{-2}M^2,
\qquad
f(\delta)n(\epsilon,\pm B_\delta)\longrightarrow0
\quad\text{as }\delta\downarrow0.
\]
Thus the perturbation terms in the Weyl inequalities vanish
after multiplication by \(f(\delta)\). The rest of the proof is
unchanged and uses no continuity of \(A_\delta\).
\end{remark}

\subsection{Reduction to the logarithmic model}
\label{subsec:upper-model}
\reviewedit{Proposition~\ref{prop:upper-HS-reduction} isolates the local
principal operator \(S_{K,\delta}^+\). A homogeneous approximation
and a logarithmic change of the radial variable reduce the
problem to a convolution on an interval of length
\(\tfrac12|\log\delta|+O(1)\).}

\medskip
\noindent\textit{Rescaling and homogeneous approximation.}
We retain the translated coordinates, the cutoff \(\chi\),
and the local functions \(Q_K^+\), \(d_{K,\delta}^+\) introduced
in \eqref{eq:upper-Q}--\eqref{eq:upper-d}. The kernel of
\(S_{K,\delta}^+\) is given by \eqref{eq:upper-singular-kernel}.
The matrices satisfy
\[
R_K^+=B_K^+-(C_K^+)^T(B_K^+)^{-1}C_K^+,
\]
with \(B_K^+\), \(R_K^+\) positive definite and \(C_K^+\)
symmetric.
To normalize the quadratic scale, define
\begin{equation}
\label{eq:upper-rescaled-geometry}
\begin{aligned}
A_K^+&=(R_K^+)^{-1/2}B_K^+(R_K^+)^{-1/2},\\
D_K^+&=(R_K^+)^{-1/2}C_K^+(R_K^+)^{-1/2},\\
g_K^+&=(R_K^+)^{-1/2}\ell_K^+.
\end{aligned}
\end{equation}
The identity for \(R_K^+\) gives
\begin{equation}
\label{eq:upper-Schur-identity}
I=A_K^+-(D_K^+)^T(A_K^+)^{-1}D_K^+.
\end{equation}
Here \(A_K^+\) is positive definite and \(D_K^+\) is symmetric.
Strict transversality, \(|g_K^+|<\mathfrak c_K^+\), implies
\begin{equation}
\label{eq:upper-vartheta}
\vartheta_K^+(\omega)
=\mathfrak c_K^+-\langle g_K^+,\omega\rangle
\ge\mathfrak c_K^+-|g_K^+|>0,
\qquad\omega\in\Sph^2.
\end{equation}
Set
\[
L_{K,\delta}=\sqrt\delta\,(R_K^+)^{-1/2},
\qquad
J_{K,\delta}=\det L_{K,\delta}
=\frac{\delta^{3/2}}{\sqrt{\det R_K^+}}.
\]
Identify functions supported in the chosen torus chart with
functions of the local coordinate \(x\). For
\(U=\operatorname{supp}\chi\) and
\(\Omega_{K,\delta}=L_{K,\delta}^{-1}U\), the map
\[
\begin{aligned}
\cD_{K,\delta}:L^2\!\left(U,\frac{dx}{(2\pi)^3}\right)
&\longrightarrow L^2(\Omega_{K,\delta},d\xi),\\
(\cD_{K,\delta}f)(\xi)
&=\left(\frac{J_{K,\delta}}{(2\pi)^3}\right)^{1/2}
f(L_{K,\delta}\xi)
\end{aligned}
\]
is unitary; the formula holds for almost every \(\xi\).
We identify \(L^2(U,dx/(2\pi)^3)\) with the closed subspace
of functions vanishing almost everywhere outside \(U\).
Let \(\widetilde S_{K,\delta}^+\) be the extension by zero to
\(L^2(\R^3)\) of
\[
\cD_{K,\delta}(S_{K,\delta}^+|_{L^2(U)})\cD_{K,\delta}^{-1}.
\]
Since \(S_{K,\delta}^+\) vanishes on
\(L^2(U)^\perp\), nonzero eigenvalues and multiplicities are preserved.
Put
\[
\begin{aligned}
\chi_{K,\delta}(\xi)&=\chi(L_{K,\delta}\xi),\\
a_K^+(\xi)&=\mathfrak c_K^+\sqrt{1+|\xi|^2}
-\langle g_K^+,\xi\rangle,\\
\cQ_K^+(\xi,\eta)&=\langle A_K^+\xi,\xi\rangle
+2\langle D_K^+\eta,\xi\rangle
+\langle A_K^+\eta,\eta\rangle.
\end{aligned}
\]
By \eqref{eq:upper-Schur-identity},
\[
\cQ_K^+(\xi,\eta)
=\bigl\langle A_K^+(\xi+(A_K^+)^{-1}D_K^+\eta),
\xi+(A_K^+)^{-1}D_K^+\eta\bigr\rangle+|\eta|^2.
\]
The change \((\xi,\eta)\mapsto
(\xi+(A_K^+)^{-1}D_K^+\eta,\eta)\) is invertible.
Consequently, for some \(0<c_K\le1\),
\begin{equation}
\label{eq:upper-rescaled-coercivity}
c_K(|\xi|^2+|\eta|^2)\le\cQ_K^+(\xi,\eta)
\le c_K^{-1}(|\xi|^2+|\eta|^2).
\end{equation}
There are \(0<c_0<c_1\), independent of \(\delta\), such that
\(\chi_{K,\delta}=1\) on \(|\xi|\le c_0\delta^{-1/2}\)
and vanishes on \(|\xi|\ge c_1\delta^{-1/2}\).
In particular,
\[
a_K^+(\xi)\ge
(\mathfrak c_K^+-|g_K^+|)\sqrt{1+|\xi|^2}>0.
\]
The identities
\[
\begin{aligned}
d_{K,\delta}^+(L_{K,\delta}\xi)
&=\sqrt\delta\,a_K^+(\xi),\\
\delta+Q_K^+(L_{K,\delta}\xi,L_{K,\delta}\eta)
&=\delta[1+\cQ_K^+(\xi,\eta)]
\end{aligned}
\]
show that \(\widetilde S_{K,\delta}^+\) has kernel, relative
to Lebesgue measure \(d\eta\),
\begin{equation}
\label{eq:upper-scaled-principal-kernel}
\widetilde s_{K,\delta}^+(\xi,\eta)
=-\frac{1}{(2\pi)^3\sqrt{\det R_K^+}}
\frac{\chi_{K,\delta}(\xi)\chi_{K,\delta}(\eta)}
{a_K^+(\xi)^{1/2}a_K^+(\eta)^{1/2}
[1+\cQ_K^+(\xi,\eta)]}.
\end{equation}
Indeed, transformation of the kernel multiplies it by
\(J_{K,\delta}/(2\pi)^3\). The two inverse square-root factors
contribute \(\delta^{-1/2}\), and the inverse quadratic factor
contributes \(\delta^{-1}\). These powers cancel the
\(\delta^{3/2}\) in the Jacobian. Thus only the cutoff still
depends on \(\delta\).
For \(\xi,\eta\ne0\), write
\(\omega=\xi/|\xi|\), \(\nu=\eta/|\eta|\), and define
\begin{equation}
\label{eq:upper-homogeneous-kernel}
s_K^{+,\mathrm{hom}}(\xi,\eta)
=-\frac{1}{(2\pi)^3\sqrt{\det R_K^+}}
\frac{[|\xi||\eta|\vartheta_K^+(\omega)
\vartheta_K^+(\nu)]^{-1/2}}{\cQ_K^+(\xi,\eta)}.
\end{equation}
This is the homogeneous part of the rescaled kernel:
\[
s_K^{+,\mathrm{hom}}(a\xi,a\eta)
=a^{-3}s_K^{+,\mathrm{hom}}(\xi,\eta),\qquad a>0.
\]
\reviewedit{Choose \(0<\delta_0<\min\{1,c_0^2\}\) and set}
\(\rho_\delta=c_0\delta^{-1/2}>1\). For
\(0<\delta\le\delta_0\), let \(\mathsf H_{K,\delta}^+\)
be the integral operator on \(L^2(\R^3,d\xi)\) with kernel
\[
\mathbf1_{\{1<|\xi|<\rho_\delta\}}
s_K^{+,\mathrm{hom}}(\xi,\eta)
\mathbf1_{\{1<|\eta|<\rho_\delta\}}.
\]
Its kernel is real, symmetric and square integrable, so the
operator is self-adjoint and Hilbert--Schmidt.

\begin{lemma}[Homogeneous reduction]
\label{lem:upper-homogeneous-reduction}
For each fixed \(K\in\cO_+\),
\[
\sup_{0<\delta\le\delta_0}
\|\widetilde S_{K,\delta}^+-\mathsf H_{K,\delta}^+\|_{\mathfrak S_2}
<\infty.
\]
\end{lemma}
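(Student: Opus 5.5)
We bound the Hilbert--Schmidt norm of the difference directly by its kernel. Write
\[
\widetilde s_{K,\delta}^+-\mathbf1_{\{1<|\xi|<\rho_\delta\}}s_K^{+,\mathrm{hom}}\mathbf1_{\{1<|\eta|<\rho_\delta\}}
\]
and split $\R^3\times\R^3$ into three kinds of regions. The first is the core region $\{1<|\xi|,|\eta|<\rho_\delta\}$, where $\chi_{K,\delta}=1$. The second is the region where at least one variable satisfies $|\xi|\le1$ or $|\eta|\le1$, in which only $\widetilde s$ contributes. The third is the shell region, where at least one variable lies in $\rho_\delta\le|\xi|\le c_1\delta^{-1/2}$ and the other is not small; again only $\widetilde s$ contributes. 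The goal in each region is a $\delta$-independent integrable majorant, or a direct estimate bounded uniformly in $\delta$.

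\emph{Core region.} Here we compare the two kernels by a telescoping identity, as in Lemma~\ref{lem:kernel-majorants}. Two elementary estimates are used. First,
\[
|a_K^+(\xi)-|\xi|\vartheta_K^+(\omega)|=\mathfrak c_K^+\bigl(\sqrt{1+|\xi|^2}-|\xi|\bigr)\le C|\xi|^{-1},
\]
while both quantities are comparable to $|\xi|$ by \eqref{eq:upper-vartheta}. This gives
\[
|a_K^+(\xi)^{-1/2}-(|\xi|\vartheta_K^+(\omega))^{-1/2}|\le C|\xi|^{-3/2}.
\]
Second, by \eqref{eq:upper-rescaled-coercivity},
\[
|(1+\cQ_K^+)^{-1}-(\cQ_K^+)^{-1}|\le C(|\xi|^2+|\eta|^2)^{-2}.
\]
The squared difference is then bounded by
\[
C\Bigl[\frac{1}{|\xi|^{3}|\eta|(|\xi|^2+|\eta|^2)^2}+\frac{1}{|\xi||\eta|^{3}(|\xi|^2+|\eta|^2)^2}+\frac{1}{|\xi||\eta|(|\xi|^2+|\eta|^2)^4}\Bigr].
\]
We check integrability of this majorant over $\{|\xi|,|\eta|>1\}$ in polar coordinates with Jacobian $r^2t^2$. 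For the first term the radial integrand is $rt^{-1}\cdot t^2(r^2+t^2)^{-2}\,/\,r^{2}$, more simply $r^{-1}t(r^2+t^2)^{-2}$. On $r,t>1$ this is integrable: integrating in $t$ first gives $\lesssim r^{-3}$. The other two terms are handled the same way. The angular factors are bounded, so the bound is uniform in $\delta$ because the domain only grows.

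\emph{Small-variable region.} Here only $\widetilde s$ survives, with kernel bounded by
\[
C(1+|\xi|)^{-1/2}(1+|\eta|)^{-1/2}(1+|\xi|^2+|\eta|^2)^{-1}.
\]
If $|\xi|\le1$, its square integrated over $\eta\in\R^3$ gives $\int(1+t)^{-1}(1+t^2)^{-2}t^2\,dt<\infty$. So this region is uniformly bounded.

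\emph{Shell region.} This is the step I expect to be the main obstacle: the cutoff profile $\chi_{K,\delta}$ lives on the shell $c_0\delta^{-1/2}\le|\xi|\le c_1\delta^{-1/2}$, whose volume grows, and the bound must not grow with $\delta$. Take $|\xi|\in[\rho_\delta,c_1\delta^{-1/2}]$ and $\eta$ arbitrary. The squared kernel is at most
\[
C|\xi|^{-1}(1+|\eta|)^{-1}(|\xi|^2+|\eta|^2)^{-2}.
\]
Integrating in $\eta$ gives $\lesssim|\xi|^{-1}\int_0^\infty t(|\xi|^2+t^2)^{-2}dt\lesssim|\xi|^{-3}$. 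Integrating over the shell then gives
\[
\int_{\rho_\delta}^{c_1\delta^{-1/2}}r^{-1}\,dr=\log(c_1/c_0),
\]
which is independent of $\delta$. This scale-invariant logarithmic count is exactly why the shell contribution stays bounded rather than small, and it suffices. The symmetric case in $\eta$ is identical, and the overlaps between regions are covered by summing the three bounds. Adding the three contributions proves the claimed uniform Hilbert--Schmidt bound.
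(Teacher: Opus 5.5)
Your proof is correct and is essentially the paper's argument. Both use the same telescoping comparison at large radii, based on $a_K^+(\xi)-|\xi|\vartheta_K^+(\omega)=O(|\xi|^{-1})$ and $|(1+\cQ_K^+)^{-1}-(\cQ_K^+)^{-1}|\le C(|\xi|^2+|\eta|^2)^{-2}$, the same crude bound when one radius is at most one, and the same scale-invariant logarithmic count on the cutoff shell $[\rho_\delta,c_1\delta^{-1/2}]$.

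The differences are only in bookkeeping. You bound $\widetilde s_{K,\delta}^+$ itself on the shell, whereas the paper passes through the intermediate smooth-cutoff homogeneous operator $\mathsf G_{K,\delta}^+$ and then replaces the smooth cutoff by the sharp one. Also, your rate $|\xi|^{-3/2}$ for the square-root difference is weaker than the true $|\xi|^{-5/2}$, but it still gives an integrable majorant on $\{|\xi|,|\eta|>1\}$.
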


\begin{proof}
Put \(r=|\xi|\) and \(t=|\eta|\). We use
\eqref{eq:upper-rescaled-coercivity} to estimate all denominators.
Introduce the intermediate operator \(\mathsf G_{K,\delta}^+\)
whose kernel is
\[
\mathbf1_{\{r>1\}}\chi_{K,\delta}(\xi)
s_K^{+,\mathrm{hom}}(\xi,\eta)
\chi_{K,\delta}(\eta)\mathbf1_{\{t>1\}}.
\]
\emph{Large radii with the same cutoff.}
For \(r\ge1\), uniformly in \(\omega\in\Sph^2\),
\[
\begin{aligned}
a_K^+(r\omega)-r\vartheta_K^+(\omega)
&=\frac{\mathfrak c_K^+}{\sqrt{1+r^2}+r}=O(r^{-1}),\\
a_K^+(r\omega)^{-1/2}
-[r\vartheta_K^+(\omega)]^{-1/2}
&=O(r^{-5/2}).
\end{aligned}
\]
The second estimate follows from the first and the uniform
positive lower bound for \(\vartheta_K^+\).
For \(r,t\ge1\),
\[
\left|\frac1{1+\cQ_K^+}-\frac1{\cQ_K^+}\right|
=\frac1{\cQ_K^+(1+\cQ_K^+)}
\le C(r^2+t^2)^{-2}.
\]
Compare the scaled kernel with
\(\chi_{K,\delta}(\xi)s_K^{+,\mathrm{hom}}(\xi,\eta)
\chi_{K,\delta}(\eta)\) on \(r,t>1\).
Subtracting the factors one at a time bounds the absolute
kernel difference by
\[
C\frac{(rt)^{-1/2}}{r^2+t^2}
\left(r^{-2}+t^{-2}+(r^2+t^2)^{-1}\right).
\]

Using
\[
(a+b+c)^2\le3(a^2+b^2+c^2),
\qquad a,b,c\ge0,
\]
with \(a=r^{-2}\), \(b=t^{-2}\), and
\(c=(r^2+t^2)^{-1}\), we absorb all cross terms into
the sum of squares.
After integration over the angular variables and inclusion
of the radial Jacobian \(r^2t^2\), the squared
Hilbert--Schmidt norm is therefore bounded by a constant
times
\[
\int_1^\infty\!\int_1^\infty
\frac{rt}{(r^2+t^2)^2}
\left[r^{-4}+t^{-4}+(r^2+t^2)^{-2}\right]\,dt\,dr.
\]

This integral is finite. Indeed,
\[
\int_1^\infty\frac{rt}{(r^2+t^2)^2}\,dt
=\frac{r}{2(r^2+1)}\le\frac1{2r},
\]
and \((r^2+t^2)^{-2}\le r^{-4}\); symmetry treats the
term with \(t^{-4}\).
\emph{Small radii.}
The scaled kernel is bounded for \(r,t\le1\). For
\(r\le1<t\), its square is bounded by \(Ct^{-5}\), because
\(a_K^+(\xi)\ge c>0\), \(a_K^+(\eta)\ge ct\), and
\(1+\cQ_K^+(\xi,\eta)\ge ct^2\). Therefore its squared
norm on this region is bounded by
\[
C\int_0^1r^2\,dr\int_1^\infty t^{-3}\,dt<\infty.
\]
The region \(t\le1<r\) is treated symmetrically.
On these regions the kernels of \(\mathsf G_{K,\delta}^+\) and
\(\mathsf H_{K,\delta}^+\) vanish, so the kernel of
\(\widetilde S_{K,\delta}^+-\mathsf H_{K,\delta}^+\) is
the scaled kernel estimated above.
\emph{Replacement of the cutoff.}
It remains to compare the two homogeneous kernels on \(r,t>1\).
Set \(C_0=c_1/c_0>1\). Their cutoff difference is supported
where one radius belongs to \([\rho_\delta,C_0\rho_\delta]\) and
both are at most \(C_0\rho_\delta\). After angular integration,
the squared homogeneous kernel, including the radial measure,
is bounded by
\[
C\frac{rt}{(r^2+t^2)^2}\,dr\,dt.
\]
This measure is invariant under \((r,t)=\rho_\delta(u,v)\).
The transformed support is contained in
\(\{1\le u\le C_0,\ 0<v\le C_0\}\) and its transpose.
The integral over this fixed set is finite, since
\[
\int_1^{C_0}\!\int_0^{C_0}
\frac{uv}{(u^2+v^2)^2}\,dv\,du
\le\frac12\int_1^{C_0}\frac{du}{u}<\infty.
\]
All three bounds are independent of \(\delta\). The triangle
inequality in \(\mathfrak S_2\) proves the lemma.
\end{proof}

\medskip
\noindent\textit{The convolution in logarithmic coordinates.}
\reviewedit{Homogeneity gives an exact convolution representation
in logarithmic radial coordinates.}
Write \(\xi=e^t\omega\), \(\eta=e^{t'}\nu\), and use the unitary map
\[
\mathcal M:L^2(\R^3,d\xi)
\longrightarrow L^2(\R\times\Sph^2,dt\,d\omega),
\qquad (\mathcal Mf)(t,\omega)=e^{3t/2}f(e^t\omega)
\quad\text{a.e.}
\]
Here \(d\omega\) is surface measure on \(\Sph^2\), with total
area \(4\pi\); unitarity follows from
\(d\xi=e^{3t}\,dt\,d\omega\).
For the untruncated homogeneous kernel,
\[
e^{3(t+t')/2}s_K^{+,\mathrm{hom}}(e^t\omega,e^{t'}\nu)
=\cS_K^+(t-t';\omega,\nu),
\]
where
\begin{equation}
\label{eq:upper-log-kernel}
\cS_K^+(u;\omega,\nu)
=-
\frac{(2\pi)^{-3}}{\sqrt{\det R_K^+}}
\frac{[\vartheta_K^+(\omega)\vartheta_K^+(\nu)]^{-1/2}}
{e^u\langle A_K^+\omega,\omega\rangle
+2\langle D_K^+\nu,\omega\rangle
+e^{-u}\langle A_K^+\nu,\nu\rangle}.
\end{equation}
Let \(\cS_K^+(u)\) be the integral operator on
\(\frh_{\mathrm{ang}}=L^2(\Sph^2,d\omega)\) with kernel
\(\cS_K^+(u;\omega,\nu)\), relative to \(d\nu\).
Symmetry of \(D_K^+\) implies
\(\cS_K^+(-u;\omega,\nu)=\cS_K^+(u;\nu,\omega)\).
Thus
\(\cS_K^+(-u)=\cS_K^+(u)^*\), and
\begin{equation}
\label{eq:upper-cosh-bound}
\|\cS_K^+(u)\|_{\mathfrak S_2(\frh_{\mathrm{ang}})}\le\frac{C}{\cosh u}.
\end{equation}
Indeed, the denominator in \eqref{eq:upper-log-kernel} is at
least \(c_K(e^u+e^{-u})\). Define
\begin{equation}
\label{eq:upper-Fourier-family}
\widehat{\cS}_K^+(s)
=\int_{\R}e^{-isu}\cS_K^+(u)\,du
\end{equation}
in the Bochner sense in \(\mathfrak S_2(\frh_{\mathrm{ang}})\). For real \(s\),
this is compact and self-adjoint, continuous in \(s\), and tends to
zero in \(\mathfrak S_2\) as \(|s|\to\infty\).
With this Fourier convention, the Hilbert-space-valued
Plancherel identity reads
\[
\int_\R\|\widehat{\cS}_K^+(s)\|_{\mathfrak S_2}^2\,ds
=2\pi\int_\R\|\cS_K^+(u)\|_{\mathfrak S_2}^2\,du<\infty.
\]
This Fourier family is the symbol appearing in
Definition~\ref{def:upper-U}. Plancherel's identity and
\(n(\beta,T)\le\beta^{-2}\|T\|_{\mathfrak S_2}^2\) show that
the integral in \eqref{eq:upper-U-alpha} is finite for every
\(\beta>0\).
To connect this coefficient with the local operator, for
\(L>0\) define the self-adjoint Hilbert--Schmidt operator
\(\mathsf S_{K,L}^+\) on
\(L^2((0,L);\frh_{\mathrm{ang}})\) by
\[
(\mathsf S_{K,L}^+f)(t)
=\int_0^L\cS_K^+(t-t')f(t')\,dt'
\quad\text{for a.e. }t\in(0,L).
\]
The full kernel of \(\mathcal M\mathsf H_{K,\delta}^+\mathcal M^{-1}\)
is
\[
\mathbf1_{(0,L_\delta)}(t)
\cS_K^+(t-t';\omega,\nu)
\mathbf1_{(0,L_\delta)}(t').
\]
Hence this operator is the zero extension of
\(\mathsf S_{K,L_\delta}^+\) to \(L^2(\R;\frh_{\mathrm{ang}})\), where
\begin{equation}
\label{eq:upper-log-length}
L_\delta=\log \rho_\delta=\log c_0+\tfrac12|\log\delta|.
\end{equation}
\subsection{Logarithmic eigenvalue-counting asymptotics}
\label{subsec:upper-counting}

\begin{lemma}[Regularity of the counting density]
\label{lem:counting-density-regularity}
For each fixed \(K\in\cO_+\), the function \(\cU_K^+\) is
finite and continuous on \((0,\infty)\).
\end{lemma}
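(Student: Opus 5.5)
We have to show that for each fixed \(\alpha>0\) the quantity \(\cU_K^+(\alpha)=\frac1{4\pi}\int_\R n(\alpha,\widehat\cS_K^+(s))\,ds\) is finite, and that it depends continuously on \(\alpha\). The two tools will be the Hilbert--Schmidt Plancherel bound already recorded after \eqref{eq:upper-Fourier-family}, and the continuity of \(s\mapsto\widehat\cS_K^+(s)\) in \(\mathfrak S_2(\frh_{\mathrm{ang}})\).

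\emph{Finiteness.} From the spectral theorem, \(n(\alpha,T)\le\alpha^{-2}\|T\|_{\mathfrak S_2}^2\) for a self-adjoint Hilbert--Schmidt \(T\). Combining this with Plancherel and \eqref{eq:upper-cosh-bound} gives
\[
\int_\R n(\alpha,\widehat\cS_K^+(s))\,ds
\le\alpha^{-2}\cdot2\pi\int_\R\|\cS_K^+(u)\|_{\mathfrak S_2}^2\,du
\le 2\pi C^2\alpha^{-2}\int_\R\frac{du}{\cosh^2u}<\infty.
\]
Before this, we check that the integrand is measurable. By norm continuity of the symbol and the variational principle, \(s\mapsto n(\alpha,\widehat\cS_K^+(s))\) is lower semicontinuous. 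Indeed, eigenvalues strictly above \(\alpha\) persist under small norm perturbations.

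\emph{Monotonicity and continuity.} The function \(\alpha\mapsto n(\alpha,T)\) is nonincreasing and right-continuous. So \(\cU_K^+\) is nonincreasing, and by monotone convergence it is right-continuous. Left continuity at \(\alpha_0\) requires
\[
\lim_{\alpha\uparrow\alpha_0}n(\alpha,\widehat\cS_K^+(s))
=\dim\Ran\mathbf1_{[\alpha_0,\infty)}(\widehat\cS_K^+(s)).
\]
This limit exceeds \(n(\alpha_0,\widehat\cS_K^+(s))\) exactly when \(\alpha_0\) is an eigenvalue of \(\widehat\cS_K^+(s)\). On \(\alpha\in[\alpha_0/2,\alpha_0)\) the integrands are dominated by \(4\alpha_0^{-2}\|\widehat\cS_K^+(s)\|_{\mathfrak S_2}^2\), which is integrable. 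Dominated convergence then reduces continuity to the following claim:
\[
\bigl|\{s\in\R:\alpha_0\in\sigma_{\mathrm p}(\widehat\cS_K^+(s))\}\bigr|=0 .
\]

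\emph{Main obstacle: the measure-zero claim.} This does not follow from continuity alone. I would use real analyticity of the symbol. The kernel \eqref{eq:upper-log-kernel} is analytic in \(u\), and it extends holomorphically to the strip \(|\operatorname{Im}u|<\theta\) for small \(\theta>0\). There the denominator stays away from zero, because \(\cQ_K^+\) is positive definite and \(\cos\theta\) is close to \(1\). On the strip the extension still decays like \(e^{-|\operatorname{Re}u|}\) in \(\mathfrak S_2\). Hence \(\widehat\cS_K^+(s)\) satisfies
\[
\|\widehat\cS_K^+(s)\|_{\mathfrak S_2}\lesssim e^{-\theta'|s|}
\]
and extends to a holomorphic \(\mathfrak S_2\)-valued function of \(s\) in a strip around \(\R\). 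It is self-adjoint for real \(s\).

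For an analytic self-adjoint compact family on an interval, analytic perturbation theory (Kato–Rellich) gives the following. Near any \(s_0\) with \(\alpha_0\) an eigenvalue, the eigenvalues near \(\alpha_0\) are finitely many real-analytic branches. Each branch either equals \(\alpha_0\) at isolated points only, or is identically \(\alpha_0\) on the whole real line, by the identity theorem and connectedness. The second case is excluded because \(\|\widehat\cS_K^+(s)\|\to0\) as \(|s|\to\infty\). A constant branch would have to leave the finite cluster, but it cannot approach \(0\). So the exceptional set is locally finite, hence countable and null. This gives continuity of \(\cU_K^+\) on \((0,\infty)\).

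If holomorphic extension in \(u\) proves delicate, the fallback is a weaker route. Use only the \(C^\infty\) regularity in \(s\) that follows from the exponential decay of \(\cS_K^+(u)\) in \(u\). Then show that any level set of positive measure produces a constant eigenvalue branch, which contradicts the decay at infinity. I expect the analytic route to be cleaner.
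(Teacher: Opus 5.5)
Your proof is correct and is essentially the paper's argument. Holomorphy of \(\widehat{\cS}_K^+\) in a strip about \(\R\), together with \(\|\widehat{\cS}_K^+(s)\|\to0\), shows that a fixed positive level is an eigenvalue of \(\widehat{\cS}_K^+(s)\) for only finitely many real \(s\). Dominated convergence with the Plancherel-integrable majorant \(n(\beta,\widehat{\cS}_K^+(s))\le\beta^{-2}\|\widehat{\cS}_K^+(s)\|_{\mathfrak S_2}^2\) then gives finiteness and continuity of \(\cU_K^+\).

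The paper differs in three minor ways. It obtains the strip holomorphy directly from the exponential decay \eqref{eq:upper-cosh-bound}, so your continuation of the kernel in \(u\) is unnecessary. It uses the analytic Fredholm theorem for \(I-\beta^{-1}\widehat{\cS}_K^+(z)\) on the connected strip instead of Rellich branches, which makes your globalisation step automatic. It also records the Riemann integrability of \(s\mapsto n(\beta,\widehat{\cS}_K^+(s))\) that is used later in Lemma~\ref{lem:periodization}.
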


\begin{proof}
Write \(C(u)=\cS_K^+(u)\) and
\(\widehat C(s)=\widehat{\cS}_K^+(s)\). The estimate
\eqref{eq:upper-cosh-bound} gives
\[
\|C(u)\|_{\mathfrak S_2}\le b_K e^{-|u|}.
\]
The exponential bound implies that \(\widehat C\) extends
holomorphically, with values in \(\mathfrak S_2(\frh_{\mathrm{ang}})\),
to the strip \(|\operatorname{Im}z|<1\). Indeed, on
\(|\operatorname{Im}z|\le a<1\), the integrand and its
derivative are bounded in that norm by
\(b_K e^{-(1-a)|u|}\) and
\(b_K|u|e^{-(1-a)|u|}\), respectively.
Fix \(\beta>0\). Since \(\|\widehat C(s)\|\to0\) as
real \(|s|\to\infty\), the operator
\(I-\beta^{-1}\widehat C(z)\) is invertible at some point
of this strip. The analytic Fredholm theorem therefore implies
that its noninvertibility points are discrete. In particular,
\[
Z_\beta=\{s\in\R:\beta\in\spec\widehat C(s)\}
\]
is finite: it has no real accumulation point and is bounded
by the decay of \(\widehat C(s)\).
At any \(s\notin Z_\beta\), compactness gives a spectral gap
around the positive level \(\beta\); norm continuity of
\(\widehat C\) therefore makes the count locally constant.
Moreover,
\[
n(\beta,\widehat C(s))
\le\beta^{-2}\|\widehat C(s)\|_{\mathfrak S_2}^2
\le\beta^{-2}\left(\int_\R\|C(u)\|_{\mathfrak S_2}\,du\right)^2.
\]
Thus the counting function is bounded, vanishes for large
\(|s|\), and has at most finitely many discontinuities.
It is consequently Riemann integrable.
For every \(s\notin Z_\beta\), the spectral gap also makes
\(n(\beta',\widehat C(s))\) constant for \(\beta'\) near \(\beta\).
For \(\beta'\ge\beta/2\),
\[
n(\beta',\widehat C(s))
\le4\beta^{-2}\|\widehat C(s)\|_{\mathfrak S_2}^2.
\]
Plancherel's identity and dominated convergence therefore prove
continuity of \(\cU_K^+\) at every positive level.
\end{proof}

\begin{lemma}[Periodization and Fourier diagonalization]
\label{lem:periodization}
For each fixed \(K\in\cO_+\),
\[
\lim_{L\to\infty}\frac{n(\beta,\mathsf S_{K,L}^+)}L
=\frac1{2\pi}\int_\R n(\beta,\widehat C(s))\,ds,
\qquad\beta>0,
\]
where \(\widehat C(s)=\widehat{\cS}_K^+(s)\).
\end{lemma}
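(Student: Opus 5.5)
The approach is the classical Widom--Sobolev argument for truncated Wiener--Hopf operators with matrix (here compact-operator-valued) symbol. The plan is to compare \(\mathsf S_{K,L}^+\) with a periodized convolution on a circle of length \(L\), which is diagonalised exactly by Fourier series. The difference is controlled by the exponential decay \(\|C(u)\|_{\mathfrak S_2}\le b_Ke^{-|u|}\) from \eqref{eq:upper-cosh-bound}.

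First, define the circulant operator \(\mathsf P_L\) on \(L^2((0,L);\frh_{\mathrm{ang}})\) with kernel
\[
C_L(t-t')=\sum_{m\in\Z}C(t-t'+mL).
\]
The series converges in \(\mathfrak S_2\) by the exponential bound, and \(C_L(-u)=C_L(u)^*\) makes \(\mathsf P_L\) self-adjoint. Conjugating by the exponentials \(e^{2\pi ikt/L}\) splits \(\mathsf P_L\) into the orthogonal sum over \(k\in\Z\) of the compact self-adjoint operators \(\widehat C(2\pi k/L)\) on \(\frh_{\mathrm{ang}}\); this is Poisson summation. Therefore
\[
n(\beta,\mathsf P_L)=\sum_{k\in\Z}n\bigl(\beta,\widehat C(2\pi k/L)\bigr).
\]
Lemma~\ref{lem:counting-density-regularity} shows that \(s\mapsto n(\beta,\widehat C(s))\) is bounded, vanishes for large \(|s|\), and has finitely many discontinuities. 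It is therefore Riemann integrable, and \(L^{-1}n(\beta,\mathsf P_L)\) converges to \((2\pi)^{-1}\int_\R n(\beta,\widehat C(s))\,ds\).

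Second, estimate the difference \(\mathsf S_{K,L}^+-\mathsf P_L\), whose kernel is \(\sum_{m\neq0}C(t-t'+mL)\) for \(t,t'\in(0,L)\). Its squared Hilbert--Schmidt norm is bounded by
\[
\int_0^L\!\!\int_0^L\Bigl(\sum_{m\ne0}b_Ke^{-|t-t'+mL|}\Bigr)^2dt\,dt'.
\]
Only the terms \(m=\pm1\) matter, and they are large only near the corners \(t\approx0,\ t'\approx L\) and vice versa. Substituting \(u=t-t'+L\), say, bounds this integral by \(\int_0^{2L}\min(u,2L-u)e^{-2u}\,du\) plus exponentially small terms. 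This is \(O(1)\) uniformly in \(L\). The perturbation is therefore uniformly Hilbert--Schmidt.

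Third, apply the Weyl inequalities exactly as in Remark~\ref{rem:upper-HS-stability}, with \(f=1/L\):
\[
n(\beta+\epsilon,\mathsf P_L)-\epsilon^{-2}M^2\le n(\beta,\mathsf S_{K,L}^+)\le n(\beta-\epsilon,\mathsf P_L)+\epsilon^{-2}M^2.
\]
Divide by \(L\) and let \(L\to\infty\) to obtain bounds by \(2\cU_K^+(\beta\pm\epsilon)\); this is the first step's limit at levels \(\beta\pm\epsilon\). The normalisation \((2\pi)^{-1}\int=2\cU_K^+\) is consistent with \eqref{eq:upper-U-alpha}. Finally let \(\epsilon\downarrow0\), using continuity of \(\cU_K^+\) from Lemma~\ref{lem:counting-density-regularity}.

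The main obstacle is the Riemann-sum step in the first part. Convergence of the sum requires the discontinuity set \(Z_\beta\) to be finite (it has measure zero) and the summand to be uniformly bounded with compact support. Lemma~\ref{lem:counting-density-regularity} provides exactly this, but only at the exact level \(\beta\), not uniformly in nearby levels. This is why the sandwich is taken at fixed levels \(\beta\pm\epsilon\), each handled separately, rather than attempting uniform control. A minor point is the unitary diagonalisation with operator-valued symbols: the fibres must be identified as \(\widehat C(2\pi k/L)\) with the paper's Fourier convention \(e^{-isu}\), and the sign conventions should be checked to confirm that the \(k\)-th Fourier mode sees \(\widehat C(2\pi k/L)\) rather than its adjoint. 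Since \(\widehat C(s)\) is self-adjoint, either identification gives the same count.
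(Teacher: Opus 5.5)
Your proposal is correct and follows the paper's proof essentially step for step. You periodize the kernel and diagonalize the circulant operator into the fibres \(\widehat C(2\pi j/L)\). You pass to the Riemann-sum limit using Lemma~\ref{lem:counting-density-regularity} and bound \(\mathsf S_{K,L}^+\) minus the periodized operator uniformly in Hilbert--Schmidt norm via the exponential decay of \(\cS_K^+(u)\). You then conclude with the Weyl-inequality sandwich at levels \(\beta\pm\epsilon\), which is the content of Remark~\ref{rem:upper-HS-stability}, together with continuity of \(\cU_K^+\).
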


\begin{proof}
For \(L\ge1\), let \(P_L\) act on the same space as
\(\mathsf S_{K,L}^+\), with periodic kernel
\[
C_L^{\mathrm{per}}(u)=\sum_{j\in\mathbb Z}C(u+jL),
\qquad C(u)=\cS_K^+(u).
\]
The series converges in Hilbert--Schmidt norm, uniformly on
\([-L,L]\) for each fixed \(L\). It defines a self-adjoint
Hilbert--Schmidt integral operator. For \(|u|\le L\),
\[
\begin{aligned}
\|C_L^{\mathrm{per}}(u)-C(u)\|_{\mathfrak S_2}
&\le b_K\frac{e^{-(L-u)}+e^{-(L+u)}}{1-e^{-L}}\\
&\le b'_K e^{-(L-|u|)}.
\end{aligned}
\]
Using \(u=t-t'\), we obtain
\[
\begin{aligned}
\|P_L-\mathsf S_{K,L}^+\|_{\mathfrak S_2}^2
&\le (b'_K)^2\int_{-L}^L
(L-|u|)e^{-2(L-|u|)}\,du\\
&\le2(b'_K)^2\int_0^\infty v e^{-2v}\,dv<\infty,
\end{aligned}
\]
uniformly in \(L\ge1\). Thus making the interval periodic
introduces only a uniformly bounded Hilbert--Schmidt error.
For \(j\in\mathbb Z\), put \(s_j=2\pi j/L\). If
\(v\in\frh_{\mathrm{ang}}\), periodicity and \(e^{is_jL}=1\) give
\[
P_L\bigl(L^{-1/2}e^{is_jt}v\bigr)
=L^{-1/2}e^{is_jt}\widehat C(s_j)v.
\]
The Fourier series transform on \((0,L)\) therefore yields
\[
P_L\simeq\bigoplus_{j\in\mathbb Z}\widehat C(2\pi j/L),
\qquad
n(\beta,P_L)=\sum_{j\in\mathbb Z}
n(\beta,\widehat C(2\pi j/L)).
\]
\notationedit{Here \(\simeq\) denotes unitary equivalence.}
\reviewedit{Since \(\|\widehat C(s)\|\to0\) as \(|s|\to\infty\),
only finitely many summands are nonzero for each \(L\) and
\(\beta>0\). Lemma~\ref{lem:counting-density-regularity} shows that
\(s\mapsto n(\beta,\widehat C(s))\) is Riemann integrable
with compact support. The Riemann sums with mesh \(2\pi/L\)
therefore give}
\[
\lim_{L\to\infty}\frac{n(\beta,P_L)}L
=\frac1{2\pi}\int_\R n(\beta,\widehat C(s))\,ds
=2\cU_K^+(\beta).
\]
Extend both compact self-adjoint operators by zero to
\(L^2(\R;\frh_{\mathrm{ang}})\). Apply
Remark~\ref{rem:upper-HS-stability} with principal operator \(P_L\),
perturbation \(\mathsf S_{K,L}^+-P_L\), and small parameter and
normalizing factor \(L^{-1}\). The uniform Hilbert--Schmidt bound
was proved above, and the limit function \(2\cU_K^+\) is finite
and continuous by Lemma~\ref{lem:counting-density-regularity}.
Thus the same limit holds for \(\mathsf S_{K,L}^+\).
\end{proof}

\begin{proposition}[Logarithmic model asymptotics]
\label{prop:upper-model-asymptotic}
For each fixed \(K\in\cO_+\), the function
\(\cU_K^+\) is finite and continuous on \((0,\infty)\).
For every \(\alpha>0\),
\begin{equation}
\label{eq:upper-model-counting}
n(\alpha,S_{K,\delta}^+)
=\cU_K^+(\alpha)|\log\delta|
+o(|\log\delta|),
\qquad \delta\downarrow0.
\end{equation}
\end{proposition}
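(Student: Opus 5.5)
The proof chains the reductions already in place; the only analytic input still missing is the passage from the truncated logarithmic convolution to $S_{K,\delta}^+$ itself. Finiteness and continuity of $\cU_K^+$ on $(0,\infty)$ is exactly Lemma~\ref{lem:counting-density-regularity}, so only \eqref{eq:upper-model-counting} has to be proved.

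The first step identifies nonzero spectra. The operator $\cD_{K,\delta}$ is unitary, and $S_{K,\delta}^+$ vanishes on $L^2(U)^\perp$. Hence for every $\alpha>0$,
\[
n(\alpha,S_{K,\delta}^+)=n(\alpha,\widetilde S_{K,\delta}^+).
\]
The map $\mathcal M$ is also unitary. The zero extension of $\mathsf S_{K,L}^+$ has the same positive eigenvalues as $\mathsf S_{K,L}^+$, so
\[
n(\alpha,\mathsf H_{K,\delta}^+)=n(\alpha,\mathsf S_{K,L_\delta}^+),
\]
where $L_\delta$ is given by \eqref{eq:upper-log-length}.

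The second step is the model asymptotics. Lemma~\ref{lem:periodization} gives $n(\alpha,\mathsf S_{K,L}^+)/L\to 2\cU_K^+(\alpha)$ as $L\to\infty$. Since $L_\delta=\tfrac12|\log\delta|+O(1)\to\infty$ as $\delta\downarrow0$, and $L_\delta/|\log\delta|\to\tfrac12$, we obtain
\[
\lim_{\delta\downarrow0}\frac{n(\alpha,\mathsf H_{K,\delta}^+)}{|\log\delta|}
=\tfrac12\cdot2\cU_K^+(\alpha)=\cU_K^+(\alpha),
\qquad\alpha>0.
\]

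The third step transfers this limit to $\widetilde S_{K,\delta}^+$ by Remark~\ref{rem:upper-HS-stability}. Take $f(\delta)=|\log\delta|^{-1}$, principal family $A_\delta=\mathsf H_{K,\delta}^+$, and perturbation $B_\delta=\widetilde S_{K,\delta}^+-\mathsf H_{K,\delta}^+$. All these operators act on $L^2(\R^3)$ and are compact and self-adjoint. Lemma~\ref{lem:upper-homogeneous-reduction} gives $\sup_{0<\delta\le\delta_0}\|B_\delta\|_{\mathfrak S_2}<\infty$. The limit function $\cU_K^+$ is finite and continuous by Lemma~\ref{lem:counting-density-regularity}. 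The remark requires no continuity of $A_\delta$ in $\delta$, so it yields
\[
n(\alpha,\widetilde S_{K,\delta}^+)=\cU_K^+(\alpha)|\log\delta|+o(|\log\delta|),
\]
and the first step turns this into \eqref{eq:upper-model-counting}.

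The step that needs the most care is the compatibility of the Weyl-inequality argument with the varying length $L_\delta$. One has to check that the convergence in Lemma~\ref{lem:periodization} is along the continuous parameter $L$, so that the substitution $L=L_\delta$ is legitimate. That convergence came from Riemann sums with mesh $2\pi/L$ for a compactly supported, Riemann-integrable density, so it holds for real $L\to\infty$ and not only along a sequence. The Hilbert--Schmidt bounds in the remark are uniform in $\delta$, so no further difficulty arises there.
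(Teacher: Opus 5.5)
Your proposal is correct and follows the paper's proof essentially step for step. You take the model limit from Lemma~\ref{lem:periodization} together with $L_\delta/|\log\delta|\to\tfrac12$, transfer it via Remark~\ref{rem:upper-HS-stability} using the uniform Hilbert--Schmidt bound of Lemma~\ref{lem:upper-homogeneous-reduction}, and use that unitary conjugation and zero extension preserve counts at positive levels. The only cosmetic difference is that you apply the remark on $L^2(\mathbb R^3)$ with principal operator $\mathsf H_{K,\delta}^+$, whereas the paper first conjugates by $\mathcal M$ and uses $\mathsf S_{K,L_\delta}^+\oplus0$; the two are equivalent.
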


\begin{proof}
By Lemma~\ref{lem:periodization} and
\(L_\delta/|\log\delta|\to1/2\),
\begin{equation}
\label{eq:upper-homogeneous-counting}
\lim_{\delta\downarrow0}
\frac{n(\beta,\mathsf S_{K,L_\delta}^+)}{|\log\delta|}
=\frac1{4\pi}\int_\R n(\beta,\widehat C(s))\,ds
=\cU_K^+(\beta).
\end{equation}
\reviewedit{On the fixed space \(L^2(\R;\frh_{\mathrm{ang}})\),}
\[
\mathcal M\mathsf H_{K,\delta}^+\mathcal M^{-1}
=\mathsf S_{K,L_\delta}^+\oplus0,
\]
where \(\oplus0\) denotes extension by zero outside
\((0,L_\delta)\). Lemma~\ref{lem:upper-homogeneous-reduction}
therefore gives
\[
\sup_{0<\delta\le\delta_0}
\bigl\|\mathcal M\widetilde S_{K,\delta}^+\mathcal M^{-1}
-(\mathsf S_{K,L_\delta}^+\oplus0)\bigr\|_{\mathfrak S_2}
<\infty.
\]
Both operators in this difference are compact and self-adjoint.
These properties follow from the real symmetric square-integrable
kernels and are preserved by unitary conjugation and extension
by zero. Apply Remark~\ref{rem:upper-HS-stability}
with principal operator \(\mathsf S_{K,L_\delta}^+\oplus0\)
and perturbation equal to the displayed difference.
The uniform Hilbert--Schmidt bound has just been proved;
\(f(\delta)=|\log\delta|^{-1}>0\) tends to zero since
\(0<\delta\le\delta_0<1\). The principal counting limits are
\eqref{eq:upper-homogeneous-counting}, and their finiteness and
continuity follow from Lemma~\ref{lem:counting-density-regularity}.
Since unitary conjugation and extension by zero preserve counts
at positive levels, the conclusion is \eqref{eq:upper-model-counting}.
\end{proof}

\begin{corollary}[Birman--Schwinger counting asymptotics]
\label{cor:upper-model-HS-stability}
For every fixed \(K\in\cO_+\) and \(\alpha>0\),
\begin{equation}
\label{eq:upper-HS-stability}
\lim_{\delta\downarrow0}
\frac{n(\alpha,T_K^+(E_++\delta))}{|\log\delta|}
=\cU_K^+(\alpha).
\end{equation}
\end{corollary}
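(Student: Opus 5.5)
The plan is to combine the Hilbert--Schmidt reduction of Proposition~\ref{prop:upper-HS-reduction} with the model asymptotics of Proposition~\ref{prop:upper-model-asymptotic}, using the stability Lemma~\ref{lem:upper-HS-stability}. Take the small parameter to be \(\delta\), with \(0<\delta\le\delta_0\) as in Proposition~\ref{prop:upper-HS-reduction}, and the normalising factor \(f(\delta)=|\log\delta|^{-1}\), which is positive and tends to zero because \(\delta_0<1\). Take the principal family \(A_\delta=S_{K,\delta}^+\) and the perturbation \(B_\delta=\cR_{K,\delta}^+\) for \(\delta>0\). At \(\delta=0\) set \(B_0=\cR_{K,0}^+\). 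Then
\[
T_K^+(E_++\delta)=A_\delta+B_\delta
\]
is exactly the decomposition \eqref{eq:upper-HS-decomposition}.

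I would verify the hypotheses of Lemma~\ref{lem:upper-HS-stability} in turn.
\begin{enumerate}[(a)]
\item Each \(S_{K,\delta}^+\) is self-adjoint and Hilbert--Schmidt, since its kernel \eqref{eq:upper-singular-kernel} is bounded, real and symmetric for fixed \(\delta>0\).
\item The family \(\delta\mapsto S_{K,\delta}^+\) is norm continuous on \((0,\delta_0]\). Proposition~\ref{prop:upper-BS-properties} gives Hilbert--Schmidt continuity of \(T_K^+(E_++\delta)\) on \((E_+,\infty)\). Proposition~\ref{prop:upper-HS-reduction} gives continuity of \(\cR_{K,\delta}^+\) on \((0,\delta_0]\) and at \(\delta=0\). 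So \(S_{K,\delta}^+=T_K^+(E_++\delta)-\cR_{K,\delta}^+\) is continuous there. Alternatively, dominated convergence applies directly to the explicit kernel on compact subintervals, because the denominators there are bounded below by \(c\sqrt{\delta_1}\).
\item The family \(B_\delta\) consists of compact self-adjoint operators and is norm continuous up to \(\delta=0\). This is the content of Proposition~\ref{prop:upper-HS-reduction}, since \(\mathfrak S_2\)-continuity implies norm continuity.
\item For every \(\alpha>0\), the limit \(f(\delta)n(\alpha,A_\delta)\to\cU_K^+(\alpha)\) holds by \eqref{eq:upper-model-counting}.
\item \(\cU_K^+\) is finite and continuous on \((0,\infty)\) by Lemma~\ref{lem:counting-density-regularity}.
\end{enumerate}

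Lemma~\ref{lem:upper-HS-stability} then yields
\[
\lim_{\delta\downarrow0}\frac{n(\alpha,T_K^+(E_++\delta))}{|\log\delta|}=\cU_K^+(\alpha),
\]
which is \eqref{eq:upper-HS-stability}. Counts at positive levels are unaffected by the unitary translation \(U_K\), so the statement holds in the original coordinates.

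There is no substantial obstacle, since the analytic work already sits in the cited propositions. The one delicate point is the continuity hypothesis on \(A_\delta\). One can bypass it entirely with Remark~\ref{rem:upper-HS-stability}, which needs only
\[
\sup_{0<\delta\le\delta_0}\|\cR_{K,\delta}^+\|_{\mathfrak S_2}<\infty .
\]
This bound follows from the integrable majorant \(G\) of Lemma~\ref{lem:kernel-majorants}, since \(\|\cR_{K,\delta}^+\|_{\mathfrak S_2}^2\le\int G\). I would present the argument via this remark, to avoid any reliance on continuity in \(\delta\).
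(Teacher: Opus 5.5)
Your proposal is correct and is essentially the paper's proof. It makes the same choice \(A_\delta=S_{K,\delta}^+\), \(B_\delta=\cR_{K,\delta}^+\), \(f(\delta)=|\log\delta|^{-1}\) in Lemma~\ref{lem:upper-HS-stability}, and takes the hypotheses from Propositions~\ref{prop:upper-HS-reduction} and~\ref{prop:upper-model-asymptotic} and Lemma~\ref{lem:counting-density-regularity}.

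Your optional switch to Remark~\ref{rem:upper-HS-stability} also works, whereas the paper simply checks continuity of \(S_{K,\delta}^+\) directly by dominated convergence. The majorant of Lemma~\ref{lem:kernel-majorants} gives \(\|\cR_{K,\delta}^+\|_{\mathfrak S_2}^2\le 2\int G\). The harmless factor \(2\) arises because the two pieces of \(r_\delta\) overlap where \(0<\chi(x)\chi(y)<1\).
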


\begin{proof}
In Lemma~\ref{lem:upper-HS-stability}, take
\[
A_\delta=S_{K,\delta}^+,
\qquad B_\delta=\cR_{K,\delta}^+,
\qquad f(\delta)=|\log\delta|^{-1}.
\]
The kernel \eqref{eq:upper-singular-kernel} is real, symmetric
and square integrable, so \(A_\delta\) is compact and self-adjoint.
On every compact subinterval of \((0,\delta_0]\), its denominators
are uniformly bounded away from zero by
\eqref{eq:upper-d-positive-bound}; dominated convergence therefore
gives continuity in Hilbert--Schmidt norm, hence in operator norm.
Proposition~\ref{prop:upper-HS-reduction} gives the same continuity
for the self-adjoint family \(B_\delta\), including at
\(\delta=0\), with a Hilbert--Schmidt, hence compact, limit.
The choice \(0<\delta_0<1\) ensures that \(f\) is positive and
tends to zero. Proposition~\ref{prop:upper-model-asymptotic}
provides the counting limits for every positive level and a finite
continuous limit function. Thus all hypotheses of the lemma hold,
and \eqref{eq:upper-HS-stability} follows.
\end{proof}

\subsection{Positivity of the Efimov coefficient}
\label{subsec:upper-symmetric-point}
At \(K_0=\pi\mathbf1\), the logarithmic model is rotationally
invariant, so its Fourier family decomposes into spherical harmonics,
as in the continuum analysis of Basti and Teta~\cite{BastiTeta2017}.
For the symmetric-point calculation we allow any \(\gamma>0\).
We identify the mass threshold in the degree-one sector and then,
for \(\gamma>\gamma_*\), use continuity in \(K\).

\subsubsection{Fermionic mass threshold at the symmetric point}
At \(K_0=\pi\mathbf1\), set \(q=q_\gamma=\gamma/(1+\gamma)\).
Equations~\eqref{eq:upper-matrices-at-pi},
\eqref{eq:upper-cK}, and \(\ell_{K_0}^+=0\) give
\[
A_{K_0}^+=\frac{I}{1-q^2},\qquad
D_{K_0}^+=\frac{qI}{1-q^2},\qquad
\vartheta_{K_0}^+=\frac{1}{4\pi(1+\gamma)^{3/2}}.
\]
Substitution into \eqref{eq:upper-log-kernel}, with surface
measure of total area \(4\pi\), reduces the kernel to
\begin{equation}
\label{eq:upper-pi-kernel}
\cS_{\pi\mathbf 1}^+(u;\omega,\nu)
=-
\frac1{4\pi^2\sqrt{1-q_\gamma^2}}
\frac1{\cosh u+q_\gamma\,\omega\cdot\nu}.
\end{equation}
This is the analogue of the rotationally invariant kernel in
\cite{ALM2004} and \edit{\cite[Section~9]{ALD2007}}. The negative sign
comes from exchange antisymmetry.
For real \(s\), rotational invariance implies that
\(\widehat{\cS}_{K_0}^+(s)\) acts as a scalar
\(\lambda_l(s;q)\) on the \((2l+1)\)-dimensional space of
spherical harmonics of degree \(l\). With \(P_l(1)=1\), the
Funk--Hecke formula and our Fourier convention give
\begin{equation}
\label{eq:upper-angular-multiplier}
\lambda_l(s;q)
=-\frac{1}{2\pi\sqrt{1-q^2}}
\int_{\R}e^{-isu}\int_{-1}^1
\frac{P_l(t)}{\cosh u+qt}\,dt\,du.
\end{equation}
Both integrations are absolutely convergent. At \(s=0\),
\[
\int_{\R}\frac{du}{\cosh u+a}
=\frac{2\arccos a}{\sqrt{1-a^2}},
\qquad |a|<1,
\]
so \(P_1(t)=t\) yields
\[
\begin{aligned}
\lambda_1(0;q)
&=-\frac{1}{\pi\sqrt{1-q^2}}
\int_{-1}^1\frac{t\,\arccos(qt)}{\sqrt{1-q^2t^2}}\,dt\\
&=\frac{2}{\pi\sqrt{1-q^2}}
\int_0^1\frac{t\,\arcsin(qt)}{\sqrt{1-q^2t^2}}\,dt
=:\Lambda_1(q).
\end{aligned}
\]
Integration by parts gives the explicit expression
\begin{equation}
\label{eq:upper-Lambda-one}
\Lambda_1(q)
=\frac{2}{\pi q^2\sqrt{1-q^2}}
\bigl[q-\sqrt{1-q^2}\arcsin q\bigr],
\qquad 0<q<1.
\end{equation}
In the integral representation of \(\Lambda_1(q)\), the
integrand is positive for \(t>0\) and increases strictly with
\(q\); the positive prefactor also increases. Moreover,
\eqref{eq:upper-Lambda-one} gives
\(\Lambda_1(q)\to0\) as \(q\downarrow0\) and
\(\Lambda_1(q)\to\infty\) as \(q\uparrow1\).
Consequently, the solution \(q_*\) of \eqref{eq:upper-qstar}
is unique. Moreover,
\[
\Lambda_1(1/2)=\frac{8}{\pi\sqrt3}-\frac43<1,
\]
so \(q_*>1/2\) and \(\gamma_*>1\).
Numerically, \(\notationedit{q_*\approx0.931539512}\), giving the value of
\(\gamma_*\) stated in \eqref{eq:upper-gammastar}.
The following comparison shows that the degree-one sector is
the first angular sector to become supercritical.

\begin{lemma}[Comparison of angular sectors]
\label{lem:upper-angular-comparison}
For \(0<q<1\) and \(s\in\R\), the multipliers in
\eqref{eq:upper-angular-multiplier} satisfy
\[
\lambda_l(s;q)<0\quad\text{if }l\text{ is even},\qquad
0<\lambda_l(s;q)\le\lambda_l(0;q)\le\Lambda_1(q)
\quad\text{if }l\text{ is odd}.
\]
Consequently, at \(K_0=\pi\mathbf1\), only odd angular
sectors contribute to eigenvalue counts at positive levels.
\end{lemma}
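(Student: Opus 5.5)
The plan is to reduce \eqref{eq:upper-angular-multiplier} to a one-dimensional problem in which the sign and monotonicity structure is explicit. Expand the denominator in a geometric series in \(qt/\cosh u\):
\[
\frac{1}{\cosh u+qt}=\sum_{m\ge0}(-1)^m\frac{q^mt^m}{(\cosh u)^{m+1}},\qquad |qt|<1\le\cosh u,
\]
which converges uniformly on \([-1,1]\times\R\) because \(q<1\). Since \(\int_{-1}^1P_l(t)t^m\,dt=0\) for \(m<l\) or \(m\not\equiv l\pmod 2\), and is strictly positive for \(m\ge l\) with \(m\equiv l\pmod 2\), only the terms with \(m=l+2j\), \(j\ge0\), survive. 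Each surviving term carries the sign \((-1)^l\), so
\[
\lambda_l(s;q)=\frac{(-1)^{l+1}}{2\pi\sqrt{1-q^2}}\sum_{j\ge0}q^{l+2j}c_{l,j}\,F_{l+2j+1}(s),
\]
where \(c_{l,j}=\int_{-1}^1P_l(t)t^{l+2j}\,dt>0\) and \(F_n(s)=\int_\R e^{-isu}(\cosh u)^{-n}\,du\). Interchanging sum and integrals is justified by absolute convergence, since \(\sum_m q^m\int(\cosh u)^{-m-1}du<\infty\).

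The key analytic input is that each \(F_n\) is strictly positive and maximal at \(s=0\). For this we use the classical formula
\[
F_n(s)=\frac{2^{n-1}}{\Gamma(n)}\,\bigl|\Gamma\bigl(\tfrac{n+is}{2}\bigr)\bigr|^2,
\]
or, equivalently, positive definiteness: \((\cosh u)^{-1}\) has Fourier transform \(\pi/\cosh(\pi s/2)>0\), and powers are convolution powers on the Fourier side, hence positive functions. Then \(F_n(s)>0\) and \(|F_n(s)|\le F_n(0)\), the latter trivially from \(|e^{-isu}|=1\). With these facts:
\begin{itemize}
\item For even \(l\), every term of the series is negative, so \(\lambda_l(s;q)<0\).
\item For odd \(l\), every term is positive, so \(0<\lambda_l(s;q)\le\lambda_l(0;q)\), by termwise comparison \(F_n(s)\le F_n(0)\).
\end{itemize}

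It remains to prove \(\lambda_l(0;q)\le\Lambda_1(q)=\lambda_1(0;q)\) for odd \(l\). At \(s=0\) we can integrate the original expression directly in \(u\):
\[
\lambda_l(0;q)=-\frac{1}{\pi\sqrt{1-q^2}}\int_{-1}^1P_l(t)\,g(qt)\,dt,\qquad g(a)=\frac{\arccos a}{\sqrt{1-a^2}}.
\]
The function \(g\) is real analytic on \((-1,1)\), and its odd part \(-\tfrac12[g(a)-g(-a)]=-\arcsin a/\sqrt{1-a^2}\) has a Taylor series with all coefficients of one sign. Write \(\arcsin a/\sqrt{1-a^2}=\sum_k\beta_ka^{2k+1}\) with \(\beta_k>0\). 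Then
\[
\lambda_l(0;q)=\frac{2}{\pi\sqrt{1-q^2}}\sum_k\beta_kq^{2k+1}\int_0^1P_l(t)t^{2k+1}\,dt.
\]
The comparison then reduces to the moment inequality \(\int_0^1P_l(t)t^{m}\,dt\le\int_0^1P_1(t)t^m\,dt\) for odd \(l,m\). This follows from the Rodrigues formula, which gives \(\int_{-1}^1P_lt^m\,dt\) as an explicit ratio of factorials that decreases in \(l\) at fixed \(m\); in particular the moments vanish for \(l>m\).

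I expect this final step to be the main obstacle. Checking monotonicity in \(l\) of the explicit moment ratio, using \(\int_{-1}^1P_l(t)t^m\,dt=\frac{2\,m!\,((m-l)/2+1)\cdots}{\cdots}\) in closed form, is the only place where genuine computation enters. If the ratio turns out awkward, a fallback is to note that the bound is only needed at the scale of \(\sup\) over odd \(l\) for counting purposes, and to use \(|P_l|\le1\) together with the \(l=1\) extremality of \(t\) among odd functions bounded by \(1\) against the increasing weight \(g\)-odd part. The final assertion then follows immediately: at positive levels, the even sectors have negative multipliers, so only odd sectors contribute to \(n(\alpha,\widehat{\cS}_{K_0}^+(s))\).
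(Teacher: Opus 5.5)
The sign statements and the bound \(\lambda_l(s;q)\le\lambda_l(0;q)\) follow the paper's proof exactly. Both use the geometric series in \(qt/\cosh u\), the parity and positivity of the Legendre moments, and the facts \(F_n(s)>0\), \(|F_n(s)|\le F_n(0)\). Your Gamma-function formula is the paper's after the index shift \(n\mapsto n+1\), and your convolution-power argument is a valid alternative proof of positivity.

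There is a gap in the remaining inequality \(\lambda_l(0;q)\le\Lambda_1(q)\). Its only nontrivial input is that, for fixed odd \(m\), the moment \(b_{ml}=\int_{-1}^1t^mP_l(t)\,dt\) does not increase along odd \(l\le m\). You leave this unproved, and your closed form is only a placeholder. The paper settles it with Rodrigues' formula and integration by parts:
\[
b_{ml}=\frac{2\,m!}{(m-l)!!\,(m+l+1)!!}\quad(m\ge l,\ m-l\ \text{even}),\qquad
\frac{b_{m,l+2}}{b_{ml}}=\frac{m-l}{m+l+3}<1 .
\]
Hence \(b_{ml}\le b_{m1}\) for odd \(l\) and \(m\ge l\). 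A termwise comparison of the positive series for \(\lambda_l(0;q)\) and \(\lambda_1(0;q)\) then finishes, since the latter also contains the extra positive terms with \(1\le m<l\).

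Your detour through \(g(a)=\arccos a/\sqrt{1-a^2}\) is unnecessary. The series you already have at \(s=0\) has positive coefficients \(q^mF_{m+1}(0)\), and your \(\beta_k\) are just \(\tfrac12F_{2k+2}(0)\) in your indexing. The displayed odd part also has a sign slip: \(\tfrac12[g(a)-g(-a)]=-\arcsin a/\sqrt{1-a^2}\).

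Your fallback does not work. The function \(t\) is not extremal among odd functions bounded by \(1\) when paired with a positive weight on \((0,1)\); \(\mathrm{sgn}(t)\) does strictly better. So \(|P_l|\le1\) only gives a bound strictly larger than \(\Lambda_1(q)\). That bound would not make \(\gamma_*\) the supercriticality threshold, which is what the lemma is used for.
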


\begin{proof}
Expand
\[
\frac1{\cosh u+qt}
=\sum_{n=0}^{\infty}(-q)^nt^n(\cosh u)^{-n-1}.
\]
For fixed \(q<1\), the sum of absolute values is bounded by
\((1-q)^{-1}(\cosh u)^{-1}\), so termwise integration in
\eqref{eq:upper-angular-multiplier} is justified. Put
\[
b_{nl}=\int_{-1}^1 t^nP_l(t)\,dt,\qquad
F_n(s)=\int_{\R}e^{-isu}(\cosh u)^{-n-1}\,du.
\]
Rodrigues' formula and integration by parts give
\[
b_{nl}=\frac{2n!}{(n-l)!!(n+l+1)!!}>0
\quad(n\ge l,\ n-l\text{ even}),
\]
and \(b_{nl}=0\) otherwise; here \(0!!=1\).
The substitution \(v=e^{2u}\) in \(F_n\), followed by the
beta integral, gives
\[
F_n(s)=\frac{2^n}{\Gamma(n+1)}
\left|\Gamma\!\left(\frac{n+1+is}{2}\right)\right|^2>0.
\]
The defining integral also gives
\[
|F_n(s)|\le\int_\R |e^{-isu}|(\cosh u)^{-n-1}\,du=F_n(0).
\]
Thus
\[
\lambda_l(s;q)
=\frac{(-1)^{l+1}}{2\pi\sqrt{1-q^2}}
\sum_{\substack{n\ge l\\n-l\ \mathrm{even}}}
q^n b_{nl}F_n(s).
\]
This proves the signs and the bound by \(\lambda_l(0;q)\)
for odd \(l\). For \(n\ge l+2\) of the same parity,
\[
\frac{b_{n,l+2}}{b_{nl}}=\frac{n-l}{n+l+3}<1.
\]
Hence \(b_{nl}\le b_{n1}\) for odd \(l\) and \(n\ge l\).
Comparison of the positive series proves
\(\lambda_l(0;q)\le\lambda_1(0;q)=\Lambda_1(q)\).
\end{proof}

It follows that the homogeneous coefficient at the symmetric
point is positive precisely when \(\gamma>\gamma_*\): if
\(\gamma\le\gamma_*\), all multipliers are at most one, so the
strict level-one count vanishes for every real \(s\);
if \(\gamma>\gamma_*\), continuity in \(s\) keeps
\(\lambda_1(s;q_\gamma)>1\) on an interval containing zero.

\begin{proposition}[Supercriticality near the symmetric point]
\label{prop:upper-supercriticality}
If \(\gamma>\gamma_*\), then \(\cU_+(\pi\mathbf 1)>0\),
and after shrinking \(\cO_+\),
\[
\cU_+(K)>0,
\qquad K\in\cO_+.
\]
\end{proposition}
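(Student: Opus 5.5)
The proof has two parts. First I show \(\cU_+(\pi\mathbf1)>0\) from the angular computation already done. Then I extend positivity to nearby \(K\) by continuity of the Fourier symbol \(\widehat\cS_K^+(s)\) in \(K\).

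\emph{Symmetric point.} Fix \(\gamma>\gamma_*\) and put \(q_\gamma=\gamma/(1+\gamma)\). Since \(q\mapsto\gamma/(1+\gamma)\) is increasing and \(\gamma_*=q_*/(1-q_*)\), we have \(q_\gamma>q_*\). Strict monotonicity of \(\Lambda_1\) then gives \(\lambda_1(0;q_\gamma)=\Lambda_1(q_\gamma)>\Lambda_1(q_*)=1\). The multiplier \(s\mapsto\lambda_1(s;q_\gamma)\) is continuous, being the Fourier transform of an integrable function. Hence there is \(s_0>0\) with \(\lambda_1(s;q_\gamma)>1\) for \(|s|\le s_0\). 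On each such \(s\), the operator \(\widehat\cS_{\pi\mathbf1}^+(s)\) has the eigenvalue \(\lambda_1(s;q_\gamma)>1\) with multiplicity three, so \(n(1,\widehat\cS_{\pi\mathbf1}^+(s))\ge3\). Definition~\ref{def:upper-U} then yields
\[
\cU_+(\pi\mathbf1)\ge\frac{1}{4\pi}\cdot 3\cdot 2s_0>0 .
\]

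\emph{Continuity in \(K\).} I avoid continuity of the full integral \(\cU_K^+(1)\), since the counting function may jump. A lower bound suffices, and I get it by lower semicontinuity on a compact \(s\)-interval. The quantities \(A_K^+\), \(D_K^+\), \(g_K^+\), \(\mathfrak c_K^+\), \(\det R_K^+\) are continuous in \(K\). This follows from Lemma~\ref{lem:analytic-upper-threshold}, the analyticity of \(J_\gamma\) (Lemma~\ref{lem:J-finite}), and \(R_K^+>0\). On a closed neighbourhood of \(\pi\mathbf1\) we also have uniform positivity of \(\vartheta_K^+\), from Corollary~\ref{cor:upper-transversality-persistence}, and the uniform coercivity constant \(c_K\) of \eqref{eq:upper-rescaled-coercivity}. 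Therefore the kernels \eqref{eq:upper-log-kernel} depend continuously on \(K\), pointwise in \((u,\omega,\nu)\), and are dominated by \(C/\cosh u\) uniformly in \(K\). Dominated convergence gives \(\|\cS_K^+(u)-\cS_{\pi\mathbf1}^+(u)\|_{\mathfrak S_2}\to0\), with integrable majorant \(2C/\cosh u\). Hence
\[
\sup_{s\in\R}\|\widehat\cS_K^+(s)-\widehat\cS_{\pi\mathbf1}^+(s)\|
\le\int_\R\|\cS_K^+(u)-\cS_{\pi\mathbf1}^+(u)\|_{\mathfrak S_2}\,du\longrightarrow0
\]
as \(K\to\pi\mathbf1\).

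\emph{Conclusion.} Let \(\theta=\min_{|s|\le s_0}\lambda_1(s;q_\gamma)-1>0\), which is positive by compactness and continuity. Choose \(\cO_+\) so small that the sup-norm difference above is less than \(\theta/2\) for \(K\in\cO_+\). Take \(|s|\le s_0\) and restrict the quadratic form of \(\widehat\cS_K^+(s)\) to the three-dimensional degree-one subspace. The min--max principle gives at least three eigenvalues \(\ge1+\theta/2>1\), so \(n(1,\widehat\cS_K^+(s))\ge3\) on \([-s_0,s_0]\). Consequently \(\cU_+(K)\ge 3s_0/(2\pi)>0\) throughout \(\cO_+\).

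The main obstacle is the uniform-in-\(K\) domination of the kernels. This requires the transversality lower bound for \(\vartheta_K^+\) and the coercivity constant to be uniform on a compact neighbourhood. Both follow from continuity of the matrices and from \(\ell_{\pi\mathbf1}^+=0\).
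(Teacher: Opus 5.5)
Your proposal is correct and follows essentially the same route as the paper: \(\Lambda_1(q_\gamma)>1\) in the degree-one sector at \(\pi\mathbf1\), continuity in \(s\), a uniform bound \(\|\cS_K^+(u)\|_{\mathfrak S_2}\le C/\cosh u\) on a closed neighbourhood obtained from coercivity and strict transversality, then dominated convergence giving uniform-in-\(s\) operator-norm convergence of \(\widehat\cS_K^+(s)\), and finally the variational principle on \([-s_0,s_0]\). The only difference is that you test on the whole three-dimensional degree-one subspace, which gives \(n(1,\widehat\cS_K^+(s))\ge3\) and the lower bound \(3s_0/(2\pi)\); the paper uses a single normalized harmonic \(Y\), which gives \(n\ge1\) and the bound \(s_0/(2\pi)\).
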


\begin{proof}
At \(K_0=\pi\mathbf1\), choose a normalized spherical harmonic
\(Y\) of degree one. Then
\[
\langle\widehat{\cS}_{K_0}^+(0)Y,Y\rangle
=\Lambda_1(q_\gamma)>1.
\]
The matrix coefficients and \(\vartheta_K^+\) depend continuously
on \(K\). On a sufficiently small closed neighbourhood of \(K_0\),
positive definiteness and strict transversality give a common bound
\[
\|\cS_K^+(u)\|_{\mathfrak S_2}\le C e^{-|u|}.
\]
Pointwise continuity of the angular kernels and dominated convergence
therefore yield
\[
\int_\R\|\cS_K^+(u)-\cS_{K_0}^+(u)\|_{\mathfrak S_2}\,du
\longrightarrow0\quad(K\to K_0).
\]
In particular, the Fourier symbols converge in operator norm,
uniformly for real \(s\). Continuity in \(s\) now gives numbers
\(s_0,\epsilon_0>0\) and a neighbourhood \(\cO_+\) of \(K_0\)
such that
\[
\langle\widehat{\cS}_K^+(s)Y,Y\rangle\ge1+\epsilon_0,
\qquad K\in\cO_+,\quad |s|\le s_0.
\]
The variational principle implies
\(n(1,\widehat{\cS}_K^+(s))\ge1\) on this interval. Hence
\(\cU_+(K)\ge s_0/(2\pi)>0\).
\end{proof}

\subsection{Stability under the fermion--fermion interaction}
\label{subsec:fermion-stability}
Throughout this subsection, fix
\[
\gamma>\gamma_*,\qquad K\in\cO_+,\qquad \mu=\mu_{\mathrm c}^+,
\]
\reviewedit{and let \(\lambda\ge0\). We study the compact part of the
full channel system for every such coupling and relate its
logarithmic counting coefficient to the discrete eigenvalue counts
in the weak and separated regimes.}

\subsubsection{Fermion multipliers and the channel resolvent}
Recall the factorization \eqref{eq:V12-factorization}.
Independently of the chosen local lifts, the odd form factors satisfy
\begin{equation}
\label{eq:odd-form-factor-vanishing}
|\phi_{j,p+q}(p)|=\sqrt2\left|\sin\frac{p_j-q_j}{2}\right|,
\qquad\phi_{j,2k_K^+}(k_K^+)=0.
\end{equation}
\reviewedit{For every \(z>E_+\), put
\(R_0(K,z)=(H_\gamma^{\mathrm{free}}(K)-z)^{-1}\)} and
\begin{equation}
\label{eq:fermion-channel-D}
D_{\lambda,K}(z)=I+\lambda\Phi_KR_0(K,z)\Phi_K^*.
\end{equation}
\reviewedit{If \(z\) is in the fermion-channel resolvent set,
the factorization implies that \(D_{\lambda,K}(z)\) is invertible,
and the channel resolvent formula is}
\begin{equation}
\label{eq:fermion-channel-resolvent}
(H_{12,\lambda}^{\mathrm{ch}}(K)-z)^{-1}
=R_0(K,z)-\lambda R_0(K,z)\Phi_K^*D_{\lambda,K}(z)^{-1}
\Phi_KR_0(K,z).
\end{equation}
By \eqref{eq:fermion-critical-b}--\eqref{eq:fermion-critical-d},
\(D_{\lambda,K}(E_++\delta)\) is multiplication by
\begin{equation}
\label{eq:fermion-D-diagonal}
d_{\lambda,K}(r,E_++\delta)
=\operatorname{diag}_{1\le j\le3}\{1-\lambda b_j(K,r,\delta)\}.
\end{equation}
\revblock{Joint continuity from
Lemma~\ref{lem:fermion-critical-regularity} and compactness of
\(\T^3\) give the operator-norm limit}
\begin{equation}
\label{eq:upper-regular-lambda}
D_{\lambda,K}(E_+)=\lim_{\delta\downarrow0}D_{\lambda,K}(E_++\delta).
\end{equation}
For \(0\le\lambda<\lambda_{\mathrm{crit}}\), this limit is at least
\((1-\lambda\beta)I>0\); for \(\lambda>\lambda_{\mathrm{sep}}\),
it is uniformly negative. \revblock{For these noncritical couplings,}
Proposition~\ref{prop:fermion-all-couplings} and continuity on the
compact spectator torus therefore give
\begin{equation}
\label{eq:fermion-D-uniform-inverse}
\sup_{E_+\le z\le\zeta}\|D_{\lambda,K}(z)^{-1}\|<\infty
\end{equation}
for any fixed interior energy \(\zeta\) in the corresponding
right-hand channel resolvent interval. The inverse is norm-continuous
there. \reviewedit{Among the weak and separated couplings, only
\(\lambda=\lambda_{\mathrm{crit}}\) requires a separate threshold
estimate, namely \eqref{eq:fermion-critical-bound}.}

With \(D_z=M_{\sqrt{\Delta_K^+(\cdot,z)}}\) as in
Subsection~\ref{subsec:upper-BS}, define
\begin{equation}
\label{eq:fermion-mixed-transition}
\cW_K(z)=\Phi_KR_0(K,z)\Psi^*D_z^{-1}.
\end{equation}
The formula \eqref{eq:mixed-interaction-factorization} and the change
\(p\leftrightarrow K-r-p\) give, almost everywhere,
\begin{equation}
\label{eq:fermion-mixed-transition-kernel}
(\cW_K(z)g)_j(r)
=\int_{\T^3}\frac{\phi_{j,K-r}(p)g(p)\,\eta(dp)}
{[z-\E_\gamma(K;p,K-r-p)]\Delta_K^+(p,z)^{1/2}}.
\end{equation}
The positive sign follows from the negative free resolvent and
antisymmetry of the form factor.

\begin{lemma}[Threshold regularity of the fermion channel]
\label{lem:fermion-channel-HS}
Fix \(\gamma>\gamma_*\) and \(K\in\cO_+\). For
\(z>E_{+,\gamma}(K)\), the operator \(\cW_K(z)\) is
Hilbert--Schmidt from \(L^2(\T^3,\eta)\) to
\(L^2(\T^3,\eta;\C^3)\), and depends continuously on \(z\)
in the Hilbert--Schmidt norm. There exists a Hilbert--Schmidt
operator \(\cW_K(E_{+,\gamma}(K))\) such that
\[
\lim_{\delta\downarrow0}
\|\cW_K(E_{+,\gamma}(K)+\delta)
-\cW_K(E_{+,\gamma}(K))\|_{\mathfrak S_2}=0.
\]
\end{lemma}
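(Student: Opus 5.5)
The approach is the same kernel-majorant and dominated-convergence argument used in Lemma~\ref{lem:kernel-majorants} and Proposition~\ref{prop:upper-HS-reduction}, now applied to the kernel \eqref{eq:fermion-mixed-transition-kernel}. Write
\[
w_z(r,p)=\frac{\phi_{j,K-r}(p)}{[z-\E_\gamma(K;p,K-r-p)]\,\Delta_K^+(p,z)^{1/2}}.
\]
For fixed \(z>E_+\), the free denominator is at least \(z-E_+>0\). By Lemma~\ref{lem:Delta-uniform-positivity}, \(\Delta_K^+(\cdot,z)\ge m_z>0\), and the form factors are bounded. The kernel is therefore bounded, so \(\cW_K(z)\) is Hilbert--Schmidt. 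These lower bounds are uniform on compact subintervals of \((E_+,\infty)\), and the kernel is pointwise continuous in \(z\). Dominated convergence then gives Hilbert--Schmidt continuity on \((E_+,\infty)\).

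For the threshold limit, I would construct a single integrable majorant \(G(r,p)\ge|w_{E_++\delta}(r,p)|^2\), valid for \(0<\delta\le\delta_0\), together with pointwise a.e.\ convergence as \(\delta\downarrow0\).

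Pointwise convergence is straightforward. Away from the point where \(p=k_+\) and \(K-r-p=k_+\), that is \(r=\rho_+\), the free denominator has a positive limit. The factor \(\Delta_K^+(p,E_++\delta)\) converges to \(\Delta_K^+(p,E_+)\), which is positive for \(p\ne k_+\) by Proposition~\ref{prop:upper-unique-spectator}. Hence \(w_{E_++\delta}\to w_{E_+}\) almost everywhere, and this limit defines \(\cW_K(E_+)\) once the majorant is shown to be integrable.

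For the majorant, I would use the local coordinates \(p=k_++x\) and \(K-r-p=k_++y\). The volume change \((r,p)\to(x,y)\) has constant Jacobian. Two estimates then control the kernel. First, the local estimates of Subsection~\ref{subsec:efimov-local-estimates} give that the free denominator is bounded below by \(c(\delta+|x|^2+|y|^2)\ge c(|x|^2+|y|^2)\). Second, the determinant satisfies \(\Delta_K^+(k_++x,E_++\delta)\ge c\sqrt{\delta+|x|^2}\ge c|x|\). Away from the neighbourhood, I would use \(\Delta^{-1}\le Cw(x)\) from the proof of Lemma~\ref{lem:kernel-majorants}, together with monotonicity in energy.

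The key additional input is the vanishing of the form factor in \eqref{eq:odd-form-factor-vanishing}. The factor is \(\phi_{j,p+q}(p)\) with \(q=K-r-p\), and
\[
|\phi_{j,p+q}(p)|=\sqrt2\bigl|\sin\bigl((p_j-q_j)/2\bigr)\bigr|\le C|x-y|.
\]
Combining these bounds gives, near the origin,
\[
|w|^2\le C\,\frac{|x-y|^2}{(|x|^2+|y|^2)^2\,|x|}\le \frac{C}{(|x|^2+|y|^2)\,|x|}.
\]
Outside the neighbourhood the bound is \(Cw(x)\), which is integrable.

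The remaining step is to check local integrability of \(\bigl((|x|^2+|y|^2)|x|\bigr)^{-1}\) in six dimensions. In polar radii, including the Jacobian \(R^2T^2\) with \(R=|x|\) and \(T=|y|\), the integrand becomes \(RT^2/(R^2+T^2)\). This is bounded, so the majorant is integrable.

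This is the step I expect to be the main obstacle. Without the numerator's vanishing, the bound \(|x|^{-1}(|x|^2+|y|^2)^{-2}\) times the Jacobian \(R^2T^2\) gives \(RT^2/(R^2+T^2)^2\), which behaves like \(\rho^{-1}\) and fails to be integrable in the six-dimensional radial measure \(dR\,dT\) near \(0\). The vanishing form factor must therefore be tracked carefully, uniformly in the chosen lifts. Since the bound uses only \(|\phi|\), the lift choice is irrelevant.

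Once the majorant is in hand, dominated convergence gives \(\|\cW_K(E_++\delta)-\cW_K(E_+)\|_{\mathfrak S_2}\to0\).
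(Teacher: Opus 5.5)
Your proposal is correct and follows essentially the same route as the paper: bounded kernels with dominated convergence for \(z>E_+\); the local majorant \(C/((|x|^2+|y|^2)|x|)\), obtained from the vanishing form factor and the local estimates; a \(\delta\)-independent bound \(C[1+d_{\T^3}(p,k_+)^{-1}]\) away from a neighbourhood of \((\rho_+,k_+)\), where you should state explicitly, as the paper does, that compactness and uniqueness of the maximum bound the free denominator below; and a final dominated convergence. One correction to your side remark: without the vanishing numerator, the majorant \(|x|^{-1}(|x|^2+|y|^2)^{-2}\) is homogeneous of degree \(-5\) in six dimensions and hence still locally integrable (\(\rho^{-1}\) is integrable against \(dR\,dT=\rho\,d\rho\,d\theta\)), so the vanishing only improves the bound here; it is genuinely needed in the weighted estimate of Lemma~\ref{lem:fermion-weighted-HS}, where the extra factor \(|v|^{-2}\) appears.
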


\begin{proof}
On each compact subinterval of \((E_+,\infty)\), the kernel
\eqref{eq:fermion-mixed-transition-kernel} is bounded uniformly
and depends pointwise continuously on \(z\). Dominated convergence
gives Hilbert--Schmidt continuity there.
For the threshold limit, put \(q=K-r-p\), \(p=k_++x\),
\(q=k_++y\), and \(v=r-\rho_+\), where \(\rho_+=K-2k_+\).
Then \(y=-x-v\); the invertible map \((x,v)\mapsto(x,y)\)
has Jacobian of absolute value one, so \(x,y\) are independent
integration coordinates.
By \eqref{eq:odd-form-factor-vanishing} and
Subsection~\ref{subsec:efimov-local-estimates}, its absolute value
is bounded locally by
\begin{equation}
\label{eq:fermion-transition-majorant}
\frac{C(|x|+|y|)}
{(\delta+|x|^2+|y|^2)(\delta+|x|^2)^{1/4}}.
\end{equation}
The squared kernel consequently has the uniform majorant
\begin{equation}
\label{eq:fermion-transition-square-majorant}
\frac{C}{(|x|^2+|y|^2)|x|},
\end{equation}
which is locally integrable: its radial integral is bounded by
\(C\int_0^b\int_0^b rt^2/(r^2+t^2)\,dt\,dr<\infty\).
\revblock{
Fix a sufficiently small neighbourhood
\(\mathcal N\) of \((\rho_+,k_+)\) in the variables \((r,p)\),
on which the preceding local estimates hold.
The continuous function
\[
a_K(r,p)=E_+-\E_\gamma(K;p,K-r-p)
\]
is nonnegative and vanishes only at \((\rho_+,k_+)\).
Compactness therefore gives a constant \(c_{\mathcal N}>0\)
such that
\[
a_K(r,p)\ge c_{\mathcal N},\qquad
(r,p)\in(\T^3\times\T^3)\setminus\mathcal N.
\]
The same lower bound holds for \(a_K(r,p)+\delta\),
uniformly in \(\delta\ge0\).

Let \(W_j(r,p;\delta)\) denote the component kernel of
\(\cW_K(E_++\delta)\). Since the form factors are bounded,
the preceding lower bound, monotonicity of \(\Delta_K^+\)
in the energy, and its threshold estimates yield
\[
\begin{aligned}
|W_j(r,p;\delta)|^2
&\le \frac{C}{\Delta_K^+(p,E_++\delta)}\\
&\le \frac{C}{\Delta_K^+(p,E_+)}
\le C\bigl[1+d_{\T^3}(p,k_+)^{-1}\bigr]
\end{aligned}
\]
for almost every \((r,p)\notin\mathcal N\) and
\(0<\delta\le\delta_0\), with \(C\) independent of \(\delta\).
The last function is integrable with respect to
\(\eta(dr)\eta(dp)\). Together with the local majorant
\eqref{eq:fermion-transition-square-majorant}, this gives
an integrable bound for the squared kernels on the whole
product torus, independent of \(\delta\).
}
Pointwise convergence and dominated convergence prove the assertion.
\end{proof}

\reviewedit{For the following weighted estimates and channel counting
identities, assume either
\(0\le\lambda\le\lambda_{\mathrm{crit}}(K,\gamma)\) or
\(\lambda>\lambda_{\mathrm{sep}}(K,\gamma)\).}

\subsubsection{A weighted estimate including the critical coupling}
For \(z=E_++\delta\) in the right-hand channel resolvent interval,
define
\begin{equation}
\label{eq:fermion-weighted-transition}
L_{\lambda,K}(\delta)
=|D_{\lambda,K}(E_++\delta)|^{-1/2}\cW_K(E_++\delta),
\qquad
J_{\lambda,K}(\delta)=\operatorname{sgn}D_{\lambda,K}(E_++\delta).
\end{equation}
The sign is \(I\) in the closed weak-coupling interval, and \(-I\)
in the separated regime for \(0<\delta<\tau_{12}^+-E_+\).

\begin{lemma}[Hilbert--Schmidt continuity of the weighted operator]
\label{lem:fermion-weighted-HS}
For each fixed \(K\) as above and each
\(0\leq\lambda\leq\lambda_{\mathrm{crit}}(K,\gamma)\) or
\(\lambda>\lambda_{\mathrm{sep}}(K,\gamma)\), there is a Hilbert--Schmidt
operator \(L_{\lambda,K}(0)\) such that
\begin{equation}
\label{eq:fermion-weighted-limit}
\|L_{\lambda,K}(\delta)-L_{\lambda,K}(0)\|_{\mathfrak S_2}
\longrightarrow0,\qquad\delta\downarrow0.
\end{equation}
\reviewedit{At \(\lambda=\lambda_{\mathrm{crit}}\),
\(L_{\lambda,K}(0)\) is defined by the pointwise almost-everywhere
limit of the kernels of \(L_{\lambda,K}(\delta)\).}
\end{lemma}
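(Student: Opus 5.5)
The proof splits into the noncritical couplings and the critical coupling \(\lambda=\lambda_{\mathrm{crit}}\). In each case we control the kernel of \(L_{\lambda,K}(\delta)\) pointwise and by a \(\delta\)-independent integrable majorant, and then apply dominated convergence in \(\mathfrak S_2\). The kernel is
\[
\bigl|1-\lambda b_j(K,r,\delta)\bigr|^{-1/2}W_j(r,p;\delta),
\]
where \(W_j\) is the component kernel of \(\cW_K(E_++\delta)\) from \eqref{eq:fermion-mixed-transition-kernel}.

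\emph{Noncritical couplings.} Let \(0\le\lambda<\lambda_{\mathrm{crit}}\) or \(\lambda>\lambda_{\mathrm{sep}}\). By \eqref{eq:upper-regular-lambda} and \eqref{eq:fermion-D-uniform-inverse}, the diagonal multiplier \(d_{\lambda,K}(r,E_++\delta)\) is bounded away from zero uniformly in \(r\) and in small \(\delta\ge0\), and it has a constant sign. Hence \(|D_{\lambda,K}(E_++\delta)|^{-1/2}\) is multiplication by
\[
\operatorname{diag}\bigl(|1-\lambda b_j(K,r,\delta)|^{-1/2}\bigr),
\]
and this converges in operator norm as \(\delta\downarrow0\) by the joint continuity in Lemma~\ref{lem:fermion-critical-regularity} and compactness of \(\T^3\). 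Since \(\cW_K(E_++\delta)\to\cW_K(E_+)\) in \(\mathfrak S_2\) by Lemma~\ref{lem:fermion-channel-HS}, the product converges in \(\mathfrak S_2\) (bounded times Hilbert--Schmidt). We take \(L_{\lambda,K}(0)=|D_{\lambda,K}(E_+)|^{-1/2}\cW_K(E_+)\).

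\emph{Critical coupling.} Here \(1-\lambda_{\mathrm{crit}}b_j(K,r,0)\) vanishes at the maxima \(r_j(K)\), and the weight becomes singular.

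1. \emph{Lower bound on the multiplier.} From the quadratic bounds \eqref{eq:fermion-critical-quadratic}--\eqref{eq:fermion-critical-bound} and monotonicity in \(\delta\), I expect
\[
1-\lambda_{\mathrm{crit}}b_j(K,r,\delta)\ge c\bigl(|r-r_j(K)|^2+\delta\bigr),
\]
at least in the form \(\ge c|r-r_j(K)|^2\), uniformly for small \(\delta\). The weight is then bounded by \(C|r-r_j(K)|^{-1}\), which is square-integrable in three dimensions only if the weight is squared once, giving \(|r-r_j|^{-2}\).

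2. \emph{Majorant away from the threshold singularity.} Suppose \(r_j(K)\) stays away from \(\rho_+\); this holds since \(r_j(K_0)=K_0\) while \(\rho_{K_0}^+=\pi\mathbf1=K_0\). In that case the singular points coincide, which is exactly the difficulty. Away from \((\rho_+,k_+)\), the bound \(|W_j|^2\le C[1+d(p,k_+)^{-1}]\) from the proof of Lemma~\ref{lem:fermion-channel-HS} is available. Combining it with \(|r-r_j|^{-2}\) gives a product majorant integrable on \(\T^3\times\T^3\).

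3. \emph{Local majorant.} Near \((\rho_+,k_+)\), in the coordinates \(x\), \(y\), \(v=r-\rho_+=-(x+y)\), with \(r_j(K)\) near \(\rho_+\) for \(K\in\cO_+\), we multiply \eqref{eq:fermion-transition-majorant} squared by \(C|v-v_j|^{-2}\). The required integrability of
\[
\frac{(|x|+|y|)^2}{(|x|^2+|y|^2)^2|x|\,|x+y-w|^2}
\]
in six dimensions, uniformly in the small shift \(w\), should follow by scaling. The homogeneity degree is \(-5>-6\), so the singularity at the origin is integrable. The singularities along \(x=0\) (degree \(-1\)) and along the three-dimensional plane \(x+y=w\) (codimension \(3\), degree \(-2\)) are separately integrable, and they intersect transversally. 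This is the main obstacle. Uniformity in \(w\) when \(r_j(K)=\rho_+\) exactly, as at \(K_0\), must be checked directly with polar coordinates in \((x,v)\).

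4. \emph{Conclusion.} Pointwise a.e. convergence of the kernels holds off the null set where \(r=r_j(K)\) or \((r,p)=(\rho_+,k_+)\). Dominated convergence then gives \eqref{eq:fermion-weighted-limit}, with \(L_{\lambda,K}(0)\) defined by the a.e. limit kernel. This limit is Hilbert--Schmidt because its squared kernel obeys the same majorant.
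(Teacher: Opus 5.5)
Your proposal is correct and follows essentially the same route as the paper. The noncritical couplings are handled through the uniform inverse bound and Lemma~\ref{lem:fermion-channel-HS}. The critical coupling is handled through \eqref{eq:fermion-critical-bound}, the unweighted transition-kernel majorants and dominated convergence. The differences are minor: because \(K\) is fixed, the paper simply splits into \(r_j(K)\ne\rho_+\) (disjoint neighbourhoods, product majorant) and \(r_j(K)=\rho_+\) (majorant \(C/(|v|^2(|x|^2+|v|^2)|x|)\)), so the uniformity in the shift \(w\) that you flag is not needed. Also, your exceptional null set should include \(\{p=k_+\}\), where \(\Delta_K^+(p,E_+)\) vanishes.
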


\begin{proof}
If \(0\le\lambda<\lambda_{\mathrm{crit}}\) or
\(\lambda>\lambda_{\mathrm{sep}}\), the assertion follows directly
from \eqref{eq:fermion-D-uniform-inverse}, norm continuity of
\(|D_{\lambda,K}|^{-1/2}\), and Lemma~\ref{lem:fermion-channel-HS}.
It remains to treat \(\lambda=\lambda_{\mathrm{crit}}\).
Write \(k_+=k_K^+\), \(\rho_+=K-2k_+\). The component kernel is
\[
L_j(r,p;\delta)
=\frac{\phi_{j,K-r}(p)}
{d_{\lambda,j}(K,r,\delta)^{1/2}
 [E_++\delta-\E_\gamma(K;p,K-r-p)]
 \Delta_K^+(p,E_++\delta)^{1/2}}.
\]
\revblock{Let \(W_j(r,p;\delta)\) denote the \(j\)-th component
kernel of \(\cW_K(E_++\delta)\), given by
\eqref{eq:fermion-mixed-transition-kernel}.}

Put \(x=p-k_+\), \(v=r-\rho_+\), and \(y=-x-v\).
The volume-preserving change used in
Lemma~\ref{lem:fermion-channel-HS} and
\(|x|^2+|y|^2\asymp|x|^2+|v|^2\) turn
\eqref{eq:fermion-transition-square-majorant} into the locally
integrable bound
\[
|W_j(r,p;\delta)|^2\le\frac{C}{(|x|^2+|v|^2)|x|}.
\]
\revblock{On the complement of a fixed sufficiently small
neighbourhood of \((\rho_+,k_+)\) in the variables \((r,p)\),
the unweighted squared kernels are bounded by
\(C(1+d_{\T^3}(p,k_+)^{-1})\), uniformly for
\(0<\delta\leq\delta_0\), as in the proof of
Lemma~\ref{lem:fermion-channel-HS}.}

Fix a component \(j\). If
\(\beta>b_j(K,r_j(K),0)\), its critical multiplier is uniformly
positive at the threshold, and these unweighted estimates suffice.
Suppose now that \(j\) is active, namely
\(\beta=b_j(K,r_j(K),0)\).

If \(r_j(K)\ne\rho_+\), choose disjoint small neighbourhoods
of the two points. By Lemma~\ref{lem:fermion-critical-maxima},
the maximum at \(r_j(K)\) is nondegenerate, and
\eqref{eq:fermion-critical-bound} gives
\(d_{\lambda_{\mathrm{crit}},j}(K,r,\delta)
\ge c\,d_{\T^3}(r,r_j(K))^2\).
Near \(r_j(K)\), the free denominator is uniformly positive
in \(p\) and \(\delta\ge0\). The global determinant estimate
therefore yields, almost everywhere,
\[
|L_j(r,p;\delta)|^2
\leq
\frac{C[1+d_{\T^3}(p,k_+)^{-1}]}
{d_{\T^3}(r,r_j(K))^2},
\]
an integrable product majorant. On the complement of that
neighbourhood, the critical multiplier is bounded below by a
positive constant, so the unweighted estimates apply.

If \(r_j(K)=\rho_+\), the singular points coincide. By
\eqref{eq:fermion-critical-bound} and the estimates of Subsection~\ref{subsec:efimov-local-estimates},
\begin{equation}
\label{eq:fermion-weighted-majorant}
|L_j(r,p;\delta)|^2
\leq\frac{C}
{(\delta+|v|^2)(\delta+|x|^2+|v|^2)(\delta+|x|^2)^{1/2}}
\leq\frac{C}{|v|^2(|x|^2+|v|^2)|x|}.
\end{equation}
The last function is locally integrable. Indeed, with
\(t=|x|\) and \(s=|v|\), its integral over a product of balls
is bounded by
\begin{equation}
\label{eq:fermion-weighted-integrability}
C\int_0^b\int_0^b\frac{t}{t^2+s^2}\,ds\,dt
=C\int_0^b\arctan(b/t)\,dt<\infty.
\end{equation}
Outside the joint neighbourhood, the product
\[
C[1+d_{\T^3}(r,\rho_+)^{-2}]
 [1+d_{\T^3}(p,k_+)^{-1}]
\]
is a sufficient integrable majorant.
\revblock{
Thus every component admits an integrable squared-kernel
majorant independent of \(\delta\). For almost every \((r,p)\),
the kernel \(L_j(r,p;\delta)\) has a finite limit
\(L_j(r,p;0)\) as \(\delta\downarrow0\).
The resulting kernel defines a Hilbert--Schmidt operator
\(L_{\lambda,K}(0)\). If \(G_j\) denotes the corresponding
majorant, then
\[
|L_j(r,p;\delta)-L_j(r,p;0)|^2\le 4G_j(r,p)
\]
almost everywhere. Dominated convergence therefore gives
\[
\begin{aligned}
&\|L_{\lambda,K}(\delta)-L_{\lambda,K}(0)\|_{\mathfrak S_2}^2\\
&\qquad=\sum_{j=1}^3\int_{\T^3}\int_{\T^3}
|L_j(r,p;\delta)-L_j(r,p;0)|^2
\,\eta(dp)\eta(dr)\longrightarrow0.
\end{aligned}
\]
This proves \eqref{eq:fermion-weighted-limit}.
}
\end{proof}

Define the full fermion correction by
\begin{equation}
\label{eq:fermion-channel-correction}
\begin{aligned}
\cR_{\lambda,K}^+(E_++\delta)
&=2\mu\lambda\,\cW_K(E_++\delta)^*
D_{\lambda,K}(E_++\delta)^{-1}\cW_K(E_++\delta)\\
&=2\mu\lambda\,L_{\lambda,K}(\delta)^*
J_{\lambda,K}(\delta)L_{\lambda,K}(\delta).
\end{aligned}
\end{equation}

\begin{corollary}[Trace-norm continuity of the fermion correction]
\label{cor:fermion-channel-correction}
In either coupling regime, \(\cR_{\lambda,K}^+(E_++\delta)\)
extends continuously in trace norm to \(\delta=0\).
It is self-adjoint and nonnegative in the weak regime, and
nonpositive in the separated regime. Its threshold limit is
\[
\cR_{\lambda,K}^+(E_+)
=2\mu\lambda L_{\lambda,K}(0)^*J_0L_{\lambda,K}(0),
\qquad J_0=I\ \text{or}\ -I,
\]
respectively. The family is trace-norm continuous on every compact
interval from \(E_+\) to an interior point of the corresponding
right-hand channel resolvent interval.
\end{corollary}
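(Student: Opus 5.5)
The argument rests on the factorisation \(\cR_{\lambda,K}^+=2\mu\lambda\,L^*JL\) from \eqref{eq:fermion-channel-correction}, combined with the Hilbert--Schmidt continuity of \(L_{\lambda,K}\) in Lemma~\ref{lem:fermion-weighted-HS}. First I verify that the two expressions in \eqref{eq:fermion-channel-correction} agree. In the weak regime, \(D_{\lambda,K}(E_++\delta)\) is multiplication by the diagonal matrix \eqref{eq:fermion-D-diagonal} with positive entries for \(\delta>0\). In the separated regime, for \(0<\delta<\tau_{12}^+-E_+\), its entries are negative. Indeed, \(b_j(K,r,\delta)\) decreases strictly in \(\delta\), and no root of \(\lambda b_j=1\) lies below \(\tau_{12}^+\) by Proposition~\ref{prop:fermion-all-couplings}(iii), so \(\lambda b_j>1\) on this range. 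Hence \(D\) commutes with \(|D|^{-1/2}\) and \(\operatorname{sgn}D\), and
\[
D^{-1}=|D|^{-1/2}J|D|^{-1/2},\qquad J=J_{\lambda,K}(\delta)\equiv\pm I,
\]
which gives the second form. Since \(J\) is a constant sign \(J_0=\pm I\) on the whole relevant interval, \(\cR^+=\pm2\mu\lambda L^*L\). This operator is self-adjoint, and it is nonnegative or nonpositive according to the regime, because \(\mu,\lambda\ge0\).

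For trace-norm continuity, I use the elementary estimate
\[
\|L_\delta^*L_\delta-L_0^*L_0\|_{\mathfrak S_1}\le\|L_\delta-L_0\|_{\mathfrak S_2}\bigl(\|L_\delta\|_{\mathfrak S_2}+\|L_0\|_{\mathfrak S_2}\bigr),
\]
which follows from \(L_\delta^*L_\delta-L_0^*L_0=(L_\delta-L_0)^*L_\delta+L_0^*(L_\delta-L_0)\) and Hölder's inequality for Schatten ideals. Combined with \eqref{eq:fermion-weighted-limit}, it gives the limit at \(\delta=0\) and the stated formula for \(\cR_{\lambda,K}^+(E_+)\). The limit \(L_{\lambda,K}(0)\) is Hilbert--Schmidt, so the limit operator is trace class, and it inherits self-adjointness and the sign.

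Continuity on a compact interval \([E_+,\zeta]\) with \(\zeta\) interior requires Hilbert--Schmidt continuity of \(\delta\mapsto L_{\lambda,K}(\delta)\) at each positive \(\delta\), not only at zero. The plan here is as follows. Lemma~\ref{lem:fermion-channel-HS} gives Hilbert--Schmidt continuity of \(\cW_K(z)\) for \(z>E_+\). On any compact subinterval of \((0,\zeta-E_+]\), the entries \(1-\lambda b_j(K,r,\delta)\) are jointly continuous by Lemma~\ref{lem:fermion-critical-regularity} and bounded away from zero uniformly in \(r\), by \eqref{eq:fermion-D-uniform-inverse} or, in the weak case including \(\lambda_{\mathrm{crit}}\), by positivity of \(d_{\lambda,j}\) for \(\delta>0\) together with compactness. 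Hence \(|D|^{-1/2}\) is norm-continuous there, and the product \(L_{\lambda,K}(\delta)\) is Hilbert--Schmidt continuous. The same bilinear estimate then yields trace-norm continuity at interior points. Together with continuity at \(\delta=0\), this gives continuity on the closed interval.

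The main obstacle is the case \(\lambda=\lambda_{\mathrm{crit}}\). There \(|D|^{-1/2}\) has no bounded limit, so \(D(E_+)^{-1}\) does not exist as a bounded operator and the first expression in \eqref{eq:fermion-channel-correction} is meaningless at \(\delta=0\). For this reason the threshold value is defined through the factorised form, and all the delicate domination is carried by Lemma~\ref{lem:fermion-weighted-HS}. This corollary only needs to be careful to use the second expression as the definition at \(\delta=0\).
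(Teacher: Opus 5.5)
Your proposal is correct and follows essentially the same route as the paper. Both arguments combine the factorisation through \(L_{\lambda,K}\), the Hilbert--Schmidt limit of Lemma~\ref{lem:fermion-weighted-HS}, the ideal inequality in \(\mathfrak S_1\), and norm-continuity of the inverse multipliers together with Hilbert--Schmidt continuity of \(\cW_K\) away from the threshold; your explicit sign check of \(D_{\lambda,K}(E_++\delta)\) in each regime and the identity \(D^{-1}=|D|^{-1/2}J|D|^{-1/2}\) just spell out details the paper leaves implicit.
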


\begin{proof}
Apply Lemma~\ref{lem:fermion-weighted-HS} and the ideal inequality
\[
\|L_\delta^*J_0L_\delta-L_0^*J_0L_0\|_{\mathfrak S_1}
\leq(\|L_\delta\|_{\mathfrak S_2}+\|L_0\|_{\mathfrak S_2})
\|L_\delta-L_0\|_{\mathfrak S_2}.
\]
Away from the threshold all inverse multipliers are locally bounded
and norm-continuous, and the transition kernels are continuous in
Hilbert--Schmidt norm.
\end{proof}

\subsubsection{Birman--Schwinger reduction and counting}
For \(z>E_+\) in the fermion-channel resolvent set, define
\begin{equation}
\label{eq:gap-effective-BS}
\notationedit{\widetilde T}_{\lambda,K}^+(z)=T_K^+(z)+\cR_{\lambda,K}^+(z)
\end{equation}
and the generally noncompact family
\begin{equation}
\label{eq:gap-ordinary-BS}
\mathbb B_{\lambda,K}^+(z)
=-2\mu\Psi(H_{12,\lambda}^{\mathrm{ch}}(K)-z)^{-1}\Psi^*.
\end{equation}
Substituting \eqref{eq:fermion-channel-resolvent} and using the
congruence in Theorem~\ref{thm:upper-BS-principle} yields
\begin{equation}
\label{eq:gap-BS-congruence}
\mathbb B_{\lambda,K}^+(z)-I
=D_z[\notationedit{\widetilde T}_{\lambda,K}^+(z)-I]D_z.
\end{equation}
Here \(\notationedit{\widetilde T}_{\lambda,K}^+(z)\) is compact and self-adjoint,
and \(D_z\) is boundedly invertible for each fixed \(z>E_+\).
\reviewedit{The congruence preserves positive inertia, so
\(n(1,\mathbb B_{\lambda,K}^+(z))
=n(1,\widetilde T_{\lambda,K}^+(z))\).}

For \(0\le\lambda\le\lambda_{\mathrm{crit}}\),
\(z-H_{12,\lambda}^{\mathrm{ch}}(K)>0\). The ordinary
Birman--Schwinger argument of Theorem~\ref{thm:upper-BS-principle},
with this channel as reference operator, gives
\begin{equation}
\label{eq:weak-BS-counting}
N_+^{(\lambda)}(K,z)
=n(1,\mathbb B_{\lambda,K}^+(z))
=n(1,\notationedit{\widetilde T}_{\lambda,K}^+(z)),\qquad z>E_+.
\end{equation}
In the gap we use the corresponding relative identity. For its
general index formulation, see \cite{Pushnitski2009}; the following
proof records the crossing direction and endpoint convention for
our family.

\begin{proposition}[Birman--Schwinger counting in the gap]
\label{prop:gap-BS-counting}
Let \(\lambda>\lambda_{\mathrm{sep}}(K,\gamma)\), and fix
\[
\zeta\in\cG_\lambda^+(K)
\setminus\spec(H_{\lambda,\mu,\gamma}(K)).
\]
Then, for every \(E_{+,\gamma}(K)<z<\zeta\),
\begin{equation}
\label{eq:gap-BS-counting}
N_{\mathrm{gap}}^{(\lambda)}(K,z;\zeta)
=
n(1,\notationedit{\widetilde T}_{\lambda,K}^+(z))
-n(1,\notationedit{\widetilde T}_{\lambda,K}^+(\zeta)).
\end{equation}
\end{proposition}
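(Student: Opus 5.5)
We prove \eqref{eq:gap-BS-counting} by a spectral-flow (monotone crossing) argument on the segment \([z,\zeta]\), which lies in the channel resolvent interval \(\cG_\lambda^+(K)\) for the reference operator \(H_{12,\lambda}^{\mathrm{ch}}(K)\). Write \(H=H_{\lambda,\mu,\gamma}(K)\), \(H_{12}=H_{12,\lambda}^{\mathrm{ch}}(K)\), and use \(H=H_{12}+2\mu\Psi^*\Psi\) on \(\cH^{\mathrm a}\). Two ingredients are needed. The first is a kernel correspondence: for \(w\in(z,\zeta]\),
\[
\dim\ker(H-w)=\dim\ker(\mathbb B_{\lambda,K}^+(w)-I)=\dim\ker(\widetilde T_{\lambda,K}^+(w)-I).
\]
The second is the statement that, as \(w\) increases, eigenvalues of \(\widetilde T_{\lambda,K}^+(w)\) cross the level one only downward, and the number crossing at \(w\) equals this kernel dimension.

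For the kernel correspondence, take \(f\) with \((H-w)f=0\) and set \(g=\Psi f\). Since \(w\in\rho(H_{12})\), we get \(f=-2\mu(H_{12}-w)^{-1}\Psi^*g\). Hence \(\mathbb B_{\lambda,K}^+(w)g=g\). Conversely, a solution \(g\) produces \(f\) by this formula, and \(\Psi f=g\) ensures \(f\ne0\) whenever \(g\ne0\). The congruence \eqref{eq:gap-BS-congruence} with the invertible \(D_w\) then transfers the kernel to \(\widetilde T_{\lambda,K}^+(w)-I\). The counts \(n(1,\widetilde T_{\lambda,K}^+(w))\) are finite because \(\widetilde T_{\lambda,K}^+(w)\) is compact. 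This follows from Proposition~\ref{prop:upper-BS-properties} and Corollary~\ref{cor:fermion-channel-correction}, which also give norm continuity of \(w\mapsto\widetilde T_{\lambda,K}^+(w)\) on \([z,\zeta]\).

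For monotonicity we work with \(\mathbb B(w):=\mathbb B_{\lambda,K}^+(w)\) directly. Its derivative is
\[
\mathbb B'(w)=-2\mu\Psi(H_{12}-w)^{-2}\Psi^*\le0 .
\]
On \(\ker(\mathbb B(w)-I)\) this derivative is strictly negative. Indeed, \(\langle\mathbb B'(w)g,g\rangle=-(2\mu)^{-1}\|f\|^2\) for the associated eigenfunction \(f\ne0\). We then need to transfer this to the compact family \(\widetilde T(w):=\widetilde T_{\lambda,K}^+(w)\), which is where we expect the main obstacle. The trouble is that \(D_w\) also depends on \(w\), so monotonicity of \(\widetilde T(w)\) itself is not evident.

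We avoid this by counting inertia rather than eigenvalues. Define
\[
m(w)=n(1,\widetilde T(w))=\dim\operatorname{Ran}\mathbf 1_{(0,\infty)}\bigl(\mathbb B(w)-I\bigr),
\]
where the equality holds because congruence by \(D_w\) preserves positive inertia. The jumps of \(m\) can be read from either operator family. Near a point \(w_0\), split the space into \(\ker(\mathbb B(w_0)-I)\), which is finite dimensional because it equals \(D_{w_0}\ker(\widetilde T(w_0)-I)\), and its complement, on which \(\mathbb B(w)-I\) stays invertible for \(w\) near \(w_0\). A Schur-complement or Kato reduction argument then shows that \(m(w_0^-)-m(w_0^+)=\dim\ker(\mathbb B(w_0)-I)\). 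In this step the strict negativity of \(\mathbb B'(w_0)\) on the kernel gives the first-order crossing. We also need \(m\) to be left-continuous in the sense matching the strict inequality in \(n(1,\cdot)\): at a crossing \(w_0\), the count \(m(w_0)\) already excludes the kernel, so \(m(w_0)=m(w_0^+)\), while for \(w<w_0\) close to \(w_0\) we have \(m(w)=m(w_0)+\dim\ker\). Away from the discrete set of eigenvalues of \(H\), the count \(m\) is locally constant.

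Finally, we sum the jumps over the finitely many eigenvalues of \(H\) in \((z,\zeta)\). There are finitely many because \(\zeta\) lies in the gap, \(\zeta\notin\spec H\), and \(z>E_+\). An eigenvalue at \(z\) itself contributes nothing to \(m(z)-m(\zeta)\), since \(m(z)\) excludes its kernel by the strict inequality in \(n(1,\cdot)\). This gives \(m(z)-m(\zeta)=\dim\operatorname{Ran}\mathbf1_{(z,\zeta)}(H)\), which is \eqref{eq:gap-BS-counting}.
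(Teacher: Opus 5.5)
Your proposal is correct and is essentially the paper's own proof. Both arguments use the kernel correspondence \(g=\Psi f\) through the channel resolvent. Both use strict negativity of \(\mathbb B_{\lambda,K}^{+\,\prime}(w)\) on \(\ker(\mathbb B_{\lambda,K}^+(w)-I)\), so that level one is crossed only downward by the noncompact family. Both then use the congruence \eqref{eq:gap-BS-congruence} to transfer the counts to \(\widetilde T_{\lambda,K}^+\).

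The one step you should make explicit is isolation of level one. The paper obtains it from \(\sup\sigma_{\mathrm{ess}}(\mathbb B_{\lambda,K}^+(w))=1-\min_q\Delta_K^+(q,w)<1\). In your setup it follows because compactness of \(\widetilde T_{\lambda,K}^+(w)\) makes \(\mathbb B_{\lambda,K}^+(w)-I\) Fredholm, and this is what justifies invertibility on the complement of the kernel. Two small slips: the kernel is \(D_{w_0}^{-1}\ker(\widetilde T_{\lambda,K}^+(w_0)-I)\), not \(D_{w_0}\ker(\cdot)\), and the property \(m(w_0)=m(w_0^+)\) is right-continuity, not left-continuity.
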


\begin{proof}
Write \(H=H_{\lambda,\mu,\gamma}(K)\) and
\(B(z)=\mathbb B_{\lambda,K}^+(z)\). The channel resolvent exists
throughout \(\cG_\lambda^+(K)\) by
Proposition~\ref{prop:fermion-all-couplings}(iii). By
Corollary~\ref{cor:fermion-channel-correction} and the formula for
\(\cB_K^+(z)\) in Theorem~\ref{thm:upper-BS-principle},
\[
\sup\sigma_{\mathrm{ess}}(B(z))
=1-\min_q\Delta_K^+(q,z)<1.
\]
This separation is uniform on compact subintervals of the gap.
The resolvent identity gives the multiplicity-preserving maps
\[
g=\Psi f,\qquad
f=-2\mu(H_{12,\lambda}^{\mathrm{ch}}(K)-z)^{-1}\Psi^*g
\]
between \(\ker(H-z)\) and \(\ker(I-B(z))\).
The family \(B\) is norm-analytic in the gap, and for any nonzero
\(g\in\ker(I-B(z))\),
\[
\langle g,B'(z)g\rangle
=-2\mu\|(H_{12,\lambda}^{\mathrm{ch}}(K)-z)^{-1}\Psi^*g\|^2<0.
\]
The strict inequality follows because \(B(z)g=g\ne0\) implies
\(\Psi^*g\ne0\), and the channel resolvent is injective.
Thus the compression of \(B'(z)\) to the finite-dimensional
crossing eigenspace is strictly negative, so every crossing of
level one is downward. There are finitely many crossings on each
compact subinterval, since the spectrum of \(H\) there is discrete.
Their multiplicities therefore give
\[
N_{\mathrm{gap}}^{(\lambda)}(K,z;\zeta)
=n(1,B(z))-n(1,B(\zeta)).
\]
By the convention \(n(\alpha,A)=\dim\Ran\mathbf1_{(\alpha,\infty)}(A)\),
level one is excluded, so an eigenvalue at \(z\) is not counted
in \((z,\zeta)\). Finally,
\eqref{eq:gap-BS-congruence} gives
\(n(1,B(w))=n(1,\notationedit{\widetilde T}_{\lambda,K}^+(w))\) at both endpoints.
\end{proof}

\reviewedit{
To retain the fermion channel for arbitrary \(\lambda\ge0\), define
the compact part of the full channel system on
\(L^2(\T^3,\eta;\C^3)\oplus L^2(\T^3,\eta)\) by
\begin{equation}
\label{eq:full-channel-compact-part}
\mathbb C_{\lambda,K}^+(z)
=\begin{pmatrix}
0&-\sqrt{2\mu\lambda}\,\cW_K(z)\\
-\sqrt{2\mu\lambda}\,\cW_K(z)^*&T_K^+(z)
\end{pmatrix},\qquad z>E_+.
\end{equation}

\begin{proposition}[Stability under the fermion--fermion perturbation]
\label{prop:lambda-stability}
For every fixed \(\lambda\ge0\) and every \(\alpha>0\),
\begin{equation}
\label{eq:all-coupling-compact-asymptotic}
\lim_{\delta\downarrow0}
\frac{n\bigl(\alpha,\mathbb C_{\lambda,K}^+(E_++\delta)\bigr)}
{|\log\delta|}
=\cU_K^+(\alpha).
\end{equation}
For the compact part \(\mathbb C_{\lambda,K}^+\), the level-one
coefficient is therefore \(\cU_K^+(1)=\cU_+(K)\) for every
\(\lambda\ge0\).
If, in addition, \(0\leq\lambda\leq\lambda_{\mathrm{crit}}(K,\gamma)\)
or \(\lambda>\lambda_{\mathrm{sep}}(K,\gamma)\), then
\begin{equation}
\label{eq:lambda-stability-count}
\lim_{\delta\downarrow0}
\frac{n\bigl(1,\widetilde T_{\lambda,K}^+(E_++\delta)\bigr)}
{|\log\delta|}
=\cU_+(K),
\end{equation}
where \(\delta>0\) is sufficiently small that \(E_++\delta\)
lies in the right-hand resolvent interval of the fermion channel.
\end{proposition}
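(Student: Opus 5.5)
We split \(\mathbb C_{\lambda,K}^+(z)\) into a block-diagonal principal part and a block perturbation, and then apply Lemma~\ref{lem:upper-HS-stability} with \(f(\delta)=|\log\delta|^{-1}\).

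\emph{First assertion.} Write
\[
\mathbb C_{\lambda,K}^+(E_++\delta)
=\begin{pmatrix}0&0\\0&T_K^+(E_++\delta)\end{pmatrix}
+\begin{pmatrix}0&-\sqrt{2\mu\lambda}\,\cW_K(E_++\delta)\\
-\sqrt{2\mu\lambda}\,\cW_K(E_++\delta)^*&0\end{pmatrix}.
\]
For \(\alpha>0\), the first summand has the same count \(n(\alpha,\cdot)\) as \(T_K^+(E_++\delta)\). By Proposition~\ref{prop:upper-HS-reduction}, it splits further as \(S_{K,\delta}^+\oplus0\) plus \(\cR_{K,\delta}^+\oplus0\). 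The second piece is Hilbert--Schmidt continuous up to \(\delta=0\).

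The off-diagonal summand is self-adjoint. By Lemma~\ref{lem:fermion-channel-HS}, it is Hilbert--Schmidt and continuous in \(\mathfrak S_2\) up to \(\delta=0\), with compact limit built from \(\cW_K(E_+)\). This holds for every \(\lambda\ge0\), because \(\cW_K\) does not involve \(D_{\lambda,K}\).

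Take \(A_\delta=S_{K,\delta}^+\oplus0\). Take \(B_\delta\) to be the sum of \(\cR_{K,\delta}^+\oplus0\) and the off-diagonal block. Both families are compact, self-adjoint and norm continuous, and \(B_\delta\) extends continuously to \(\delta=0\). Proposition~\ref{prop:upper-model-asymptotic} supplies \(\lim f(\delta)n(\alpha,A_\delta)=\cU_K^+(\alpha)\), and Lemma~\ref{lem:counting-density-regularity} shows that this limit is finite and continuous. Lemma~\ref{lem:upper-HS-stability} then gives \eqref{eq:all-coupling-compact-asymptotic}, including the case \(\alpha=1\).

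\emph{Second assertion.} By \eqref{eq:gap-effective-BS}, we have \(\widetilde T_{\lambda,K}^+=T_K^+ +\cR_{\lambda,K}^+\). We use the same argument with \(A_\delta=S_{K,\delta}^+\) and \(B_\delta=\cR_{K,\delta}^++\cR_{\lambda,K}^+(E_++\delta)\). Corollary~\ref{cor:fermion-channel-correction} shows that \(\cR_{\lambda,K}^+(E_++\delta)\) is self-adjoint and continuous in trace norm, hence in operator norm, down to \(\delta=0\), with a compact limit. This holds in both the weak regime, including \(\lambda=\lambda_{\mathrm{crit}}\) via Lemma~\ref{lem:fermion-weighted-HS}, and the separated regime.

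Lemma~\ref{lem:upper-HS-stability} then yields \eqref{eq:lambda-stability-count} at level \(\alpha=1\). The lemma in fact gives the limit at every level. Norm continuity for \(\delta>0\) in the range where \(E_++\delta\) lies in the channel resolvent interval follows from the continuity statements already proved.

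\emph{Main obstacle.} The delicate input is uniform threshold control of the fermion correction at the critical coupling \(\lambda=\lambda_{\mathrm{crit}}\), where \(D_{\lambda,K}(E_+)\) is not boundedly invertible. This control is exactly what Lemma~\ref{lem:fermion-weighted-HS} provides through the weighted factorisation \(L^*JL\). Everything here depends on that lemma; the remaining work is to check the hypotheses of Lemma~\ref{lem:upper-HS-stability}, namely compactness, self-adjointness and continuity of each piece.

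A second, minor point concerns the matrix identity: one should note that \(n(\alpha,X\oplus0)=n(\alpha,X)\) for \(\alpha>0\), which is immediate.
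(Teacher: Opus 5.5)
Your proof is correct and follows the paper's argument. You split off the self-adjoint off-diagonal block (respectively the fermion correction \(\cR_{\lambda,K}^+\)), control it by Lemma~\ref{lem:fermion-channel-HS} (respectively Corollary~\ref{cor:fermion-channel-correction}, with Lemma~\ref{lem:fermion-weighted-HS} at \(\lambda=\lambda_{\mathrm{crit}}\)), and conclude with Lemma~\ref{lem:upper-HS-stability}. The only difference is bookkeeping: the paper takes \(\operatorname{diag}(0,T_K^+(E_++\delta))\) as the principal family and quotes Corollary~\ref{cor:upper-model-HS-stability} for its counting limits, whereas you go down to \(S_{K,\delta}^+\) and absorb \(\cR_{K,\delta}^+\) into the perturbation, which merges the two applications of the stability lemma into one.
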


\begin{proof}
We first identify the full channel system. Put
\(F_z=z-H_\gamma^{\mathrm{free}}(K)>0\) on \(\cH^{\mathrm a}\),
and set
\[
\mathcal L_\lambda f
=(\sqrt\lambda\,\Phi_Kf,\sqrt{2\mu}\,\Psi f),\qquad
S_z=\operatorname{diag}(I,D_z),\qquad
M_{b,\delta}=M_{\operatorname{diag}(b_j(K,\cdot,\delta))_{j=1}^3}.
\]
For \(z=E_++\delta\), the interaction factorizations and the
mixed-channel congruence in Theorem~\ref{thm:upper-BS-principle} give
\[
\mathcal L_\lambda F_z^{-1}\mathcal L_\lambda^*-I
=S_z\left[
\operatorname{diag}(\lambda M_{b,\delta},0)
+\mathbb C_{\lambda,K}^+(z)-I
\right]S_z.
\]
This identity uses the free resolvent and is valid for every
\(\lambda\ge0\) and \(z>E_+\).

Now write
\[
\mathbb C_{\lambda,K}^+(E_++\delta)
=\operatorname{diag}(0,T_K^+(E_++\delta))+Q_{\lambda,K}(\delta),
\]
where
\[
Q_{\lambda,K}(\delta)
=-\sqrt{2\mu\lambda}
\begin{pmatrix}
0&\cW_K(E_++\delta)\\
\cW_K(E_++\delta)^*&0
\end{pmatrix}.
\]
Proposition~\ref{prop:upper-BS-properties} shows that the principal
family \(A_\delta=\operatorname{diag}(0,T_K^+(E_++\delta))\)
is compact, self-adjoint and norm-continuous for \(\delta>0\).
The block formula defines a self-adjoint \(Q_{\lambda,K}(\delta)\)
also at \(\delta=0\), and Lemma~\ref{lem:fermion-channel-HS} yields
\[
\|Q_{\lambda,K}(\delta)-Q_{\lambda,K}(0)\|_{\mathfrak S_2}
=2\sqrt{\mu\lambda}\,
\|\cW_K(E_++\delta)-\cW_K(E_+)\|_{\mathfrak S_2}
\longrightarrow0.
\]
The same lemma gives Hilbert--Schmidt continuity for \(\delta>0\);
thus \(Q_{\lambda,K}\) is continuous in \(\mathfrak S_2\), hence
in operator norm, up to \(\delta=0\).
For every \(\alpha>0\),
\(n(\alpha,A_\delta)=n(\alpha,T_K^+(E_++\delta))\).
With \(f(\delta)=|\log\delta|^{-1}\) and \(0<\delta_0<1\),
Corollary~\ref{cor:upper-model-HS-stability} supplies the required
counting limits, whose finiteness and continuity follow from
Lemma~\ref{lem:counting-density-regularity}.
All hypotheses of Lemma~\ref{lem:upper-HS-stability} are therefore
satisfied on the direct-sum space, proving
\eqref{eq:all-coupling-compact-asymptotic}.

In the weak and separated regimes, apply the same lemma to
\[
\widetilde T_{\lambda,K}^+(E_++\delta)
=T_K^+(E_++\delta)+\cR_{\lambda,K}^+(E_++\delta).
\]
Choose \(\delta_0<1\) so that \(E_++\delta_0\) lies in the
right-hand channel resolvent interval. The hypotheses for the
principal family and its counting limits have just been verified.
Corollary~\ref{cor:fermion-channel-correction} gives self-adjointness
and trace-norm continuity of the correction on \([0,\delta_0]\),
hence compactness and operator-norm continuity there.
At \(\lambda=\lambda_{\mathrm{crit}}\), this follows from the
weighted Hilbert--Schmidt limit in
Lemma~\ref{lem:fermion-weighted-HS}; no bounded inverse of
\(D_{\lambda,K}(E_+)\) is needed. The stability lemma therefore
gives \eqref{eq:lambda-stability-count} at level one throughout
the stated regimes, including the critical endpoint.
\end{proof}

The all-coupling assertion \eqref{eq:all-coupling-compact-asymptotic}
concerns the compact part of the channel system. The full channel
system also contains the multiplication block \(\lambda M_{b,\delta}\),
which is noncompact for \(\lambda>0\).
The discrete counts are related to
\eqref{eq:lambda-stability-count} by the weak-regime and gap
identities above.
}

\subsection{Completion of the proof of Theorem~\ref{thm:main-upper}}
\label{subsec:main-upper-proof}

\begin{proof}[Proof of Theorem~\ref{thm:main-upper}]
\compactedit{The essential-spectrum assertions and part~(iii) follow from
Corollaries~\ref{cor:weak-regime}--\ref{cor:intervening-regime}.
For part~(i), \eqref{eq:weak-BS-counting} and
Proposition~\ref{prop:lambda-stability} give
\eqref{eq:main-upper-asymptotic} throughout the closed
weak-coupling interval, including
\(\lambda=\lambda_{\mathrm{crit}}(K,\gamma)\).}

For \(\lambda>\lambda_{\mathrm{sep}}\), fix \(\zeta\) as in part (ii).
Proposition~\ref{prop:gap-BS-counting} gives
\[
N_{\mathrm{gap}}^{(\lambda)}(K,E_++\delta;\zeta)
=n(1,\notationedit{\widetilde T}_{\lambda,K}^+(E_++\delta))
 -n(1,\notationedit{\widetilde T}_{\lambda,K}^+(\zeta)).
\]
\compactedit{The second term is finite by compactness of
\(\notationedit{\widetilde T}_{\lambda,K}^+(\zeta)\) and is independent of
\(\delta\).}

Division by
\(|\log\delta|\) and Proposition~\ref{prop:lambda-stability}
give \eqref{eq:upper-gap-Efimov-asymptotic} with the same coefficient.
By Proposition~\ref{prop:upper-supercriticality}, that coefficient
is positive. In either regime the relevant count tends to infinity,
while every compact interval strictly inside the resolvent interval
of the channel contains only finitely many eigenvalues of the full
Hamiltonian. The resulting sequences therefore accumulate at \(E_+\).
\end{proof}

\begin{corollary}[Exponential accumulation law]
\label{cor:upper-exponential-law}
Under the assumptions of Theorem~\ref{thm:main-upper}(i), let
\(E_1^+(K,\lambda)\ge E_2^+(K,\lambda)\ge\cdots>E_{+,\gamma}(K)\) be the
eigenvalues above \(E_{+,\gamma}(K)\), with multiplicities. Then
\begin{equation}
\label{eq:upper-exponential-law}
E_n^+(K,\lambda)-E_{+,\gamma}(K)
=\exp\left[-\frac{n}{\cU_+(K)}+o(n)\right],
\qquad n\to\infty.
\end{equation}
\end{corollary}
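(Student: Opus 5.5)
The plan is to deduce the exponential law from the counting asymptotics \eqref{eq:main-upper-asymptotic} of Theorem~\ref{thm:main-upper}(i) by a purely real-variable inversion argument, which is what Proposition~\ref{prop:counting-exponential-equivalence} in Appendix~\ref{app:abstract-counting} presumably records; I will nevertheless sketch it directly. Write \(U=\cU_+(K)\in(0,\infty)\), \(\epsilon_n=E_n^+(K,\lambda)-E_{+,\gamma}(K)>0\), and \(N(\delta)=N_+^{(\lambda)}(K,E_{+,\gamma}(K)+\delta)\). By Theorem~\ref{thm:main-upper}(i) the sequence is infinite and \(\epsilon_n\downarrow0\); since eigenvalues above \(E_{+,\gamma}(K)+\delta\) are finite in number for each \(\delta>0\), the enumeration with multiplicities in nonincreasing order is well defined. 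The counting function is \(N(\delta)=\#\{n:\epsilon_n>\delta\}\), a nonincreasing right-continuous step function with \(N(\delta)\to\infty\).

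First, I establish the two inequalities relating \(n\) to \(N\). For every \(n\) and every \(\delta<\epsilon_n\) one has \(N(\delta)\ge n\), while \(N(\epsilon_n)\le n-1\) because only the indices \(m<n\) (and possibly not all of them, in case of ties) satisfy \(\epsilon_m>\epsilon_n\). Fix \(\eta\in(0,U)\). By \eqref{eq:main-upper-asymptotic} there is \(\delta_\eta>0\) such that \((U-\eta)|\log\delta|\le N(\delta)\le(U+\eta)|\log\delta|\) for \(0<\delta<\delta_\eta\).

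Next I obtain the two-sided bound on \(\epsilon_n\). For the lower bound on \(|\log\epsilon_n|\), apply the upper inequality at \(\delta\uparrow\epsilon_n\) (valid once \(\epsilon_n<\delta_\eta\), i.e. for large \(n\)): \(n\le N(\delta)\le(U+\eta)|\log\delta|\), and letting \(\delta\uparrow\epsilon_n\) gives \(n\le(U+\eta)|\log\epsilon_n|\). For the upper bound, apply the lower inequality at \(\delta=\epsilon_n\): \((U-\eta)|\log\epsilon_n|\le N(\epsilon_n)\le n-1<n\). Hence
\[
\frac{n}{U+\eta}\le|\log\epsilon_n|\le\frac{n}{U-\eta}
\]
for all large \(n\). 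Since \(\eta\) is arbitrary, \(|\log\epsilon_n|=n/U+o(n)\), which is \eqref{eq:upper-exponential-law}.

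The only point needing care, and the expected obstacle, is ties: multiple eigenvalues or coincident values, where \(N\) jumps by more than one. The argument above handles this because it uses only the inequalities \(N(\epsilon_n)\le n-1\) and \(N(\delta)\ge n\) for \(\delta<\epsilon_n\), never equality. The other requirement is that \(\epsilon_n\to0\), which holds because the eigenvalues accumulate only at \(E_{+,\gamma}(K)\) (by Theorem~\ref{thm:main-upper}(i), each \((E_{+,\gamma}(K)+\delta,\infty)\) carries finitely many eigenvalues). Positivity and finiteness of \(U\) are supplied by Proposition~\ref{prop:upper-supercriticality} and Proposition~\ref{prop:upper-model-asymptotic}.
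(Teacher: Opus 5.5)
Your proposal is correct and follows essentially the paper's route. The corollary is proved by applying Proposition~\ref{prop:counting-exponential-equivalence} to \eqref{eq:main-upper-asymptotic}, and that proposition's proof is the same two-sided sandwich of \(|\log\epsilon_n|\) between \(n/(U\pm\eta)\) that you derive directly. Like yours, it uses only the inequalities \(N(\epsilon_n)\le n-1\) and \(N(\delta)\ge n\) for \(\delta<\epsilon_n\), so ties and multiplicities cause no difficulty.
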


\begin{proof}
Apply Proposition~\ref{prop:counting-exponential-equivalence} of
Appendix~\ref{app:abstract-counting} to
\eqref{eq:main-upper-asymptotic}.
\end{proof}

\begin{corollary}[Exponential accumulation in the gap]
\label{cor:gap-exponential-law}
Under the assumptions of Theorem~\ref{thm:main-upper}(ii), fix an
admissible \(\lambda\) and a reference energy \(\zeta\). Enumerate the
eigenvalues in \((E_{+,\gamma}(K),\zeta)\) in nonincreasing order,
with multiplicities, as \(E_n^+(K,\lambda;\zeta)\). Then
\[
E_n^+(K,\lambda;\zeta)-E_{+,\gamma}(K)
=
\exp\left[-\frac{n}{\cU_+(K)}+o(n)\right],
\qquad n\to\infty.
\]
\end{corollary}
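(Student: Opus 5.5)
The plan is to reduce the statement to the counting asymptotics \eqref{eq:upper-gap-Efimov-asymptotic} of Theorem~\ref{thm:main-upper}(ii) and then invoke the abstract equivalence, Proposition~\ref{prop:counting-exponential-equivalence} of Appendix~\ref{app:abstract-counting}, exactly as in the proof of Corollary~\ref{cor:upper-exponential-law}. Fix \(\lambda>\lambda_{\mathrm{sep}}(K,\gamma)\) and \(\zeta\in\cG_\lambda^+(K)\setminus\spec H_{\lambda,\mu_{\mathrm c}^+,\gamma}(K)\). The eigenvalues in \((E_+,\zeta)\) are discrete, with \(E_+\) as their only possible accumulation point, since every compact subinterval of the gap contains finitely many of them. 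They accumulate at \(E_+\) by Theorem~\ref{thm:main-upper}(ii). Hence the nonincreasing enumeration \(E_n^+(K,\lambda;\zeta)\) is well defined, infinite, and converges to \(E_+\).

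The key identity is
\[
N(\delta):=N_{\mathrm{gap}}^{(\lambda)}(K,E_++\delta;\zeta)=\#\{n:E_n^+-E_+>\delta\}.
\]
This holds because the counting function uses the open interval \((E_++\delta,\zeta)\) and \(\zeta\) is not an eigenvalue. The function \(N\) is nonincreasing in \(\delta\), right-continuous, finite for \(\delta>0\), and tends to infinity. Theorem~\ref{thm:main-upper}(ii) gives \(N(\delta)=\cU_+(K)|\log\delta|+o(|\log\delta|)\) with \(0<\cU_+(K)<\infty\).

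I then check that the hypotheses of Proposition~\ref{prop:counting-exponential-equivalence} are met. I expect them to be a sequence \(\epsilon_n\downarrow0\) together with the relation \(\#\{n:\epsilon_n>\delta\}=c|\log\delta|+o(|\log\delta|)\) for some \(c\in(0,\infty)\). Applying the proposition with \(\epsilon_n=E_n^+-E_+\) and \(c=\cU_+(K)\) yields \(\epsilon_n=\exp[-n/c+o(n)]\).

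If the abstract statement needs to be recalled, the inversion runs as follows. Evaluate at \(\delta=\epsilon_n\) and just below it to get
\[
N(\epsilon_n)\le n-1<n\le N(\epsilon_n-0).
\]
The asymptotics hold uniformly near \(\epsilon_n\), because \(|\log(\epsilon_n-0)|\) is the same to leading order. This gives \(n=c|\log\epsilon_n|(1+o(1))\), that is, \(\log\epsilon_n=-n/c+o(n)\). There is one subtle point, and I expect it to be the only real obstacle: multiplicities and repeated values mean that jumps of \(N\) can be large. A jump of size \(o(|\log\delta|)\) is harmless, and monotonicity combined with the \(o\)-term controls this. No further analysis of the operator is needed.
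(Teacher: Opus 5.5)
Your proposal is correct and matches the paper's proof: the paper likewise applies Proposition~\ref{prop:counting-exponential-equivalence} directly to the gap counting asymptotics \eqref{eq:upper-gap-Efimov-asymptotic}, after identifying \(N_{\mathrm{gap}}^{(\lambda)}(K,E_++\delta;\zeta)\) with the number of enumerated eigenvalues exceeding \(E_++\delta\). Your sandwich \(N(\epsilon_n)\le n-1<n\le N(\epsilon_n-0)\) already handles the jumps caused by multiplicities, so the point you flag as an obstacle is resolved by your own argument.
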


\begin{proof}
Apply Proposition~\ref{prop:counting-exponential-equivalence} to
\eqref{eq:upper-gap-Efimov-asymptotic}.
\end{proof}

\begin{remark}[Fixed parameters]
The result holds for each fixed admissible \(\lambda\) and interior
reference energy \(\zeta\). It makes no assertion about possible
accumulation at the upper endpoint \(\tau_{12}^+(K,\lambda)\).
\end{remark}

\subsection{Strong-coupling asymptotics of the upper gap endpoint}
\label{subsec:upper-edge-asymptotic}
We now let \(\lambda\to\infty\) and refine the lower bound
\(\tau_{12}^+(K,\lambda)\ge\lambda+12\) for the upper gap endpoint.

\begin{theorem}[Asymptotics of the lower edge of the upper
two-cluster essential spectrum]
\label{thm:tau12-asymptotic}
For each fixed \(K\in\cO_+\), as \(\lambda\to+\infty\),
\begin{equation}
\label{eq:tau12-asymptotic}
\tau_{12}^+(K,\lambda)
=
\lambda+12+\frac{b_*(K)}{\lambda}
+O(\lambda^{-2}),
\end{equation}
where
\begin{equation}
\label{eq:tau12-b-star}
b_*(K)
=
8\sum_{\ell=1}^3\cos^2\frac{K_\ell}{2}
-
4\max_{1\le j\le3}\cos^2\frac{K_j}{2}
\ge0.
\end{equation}
At \(K=\pi\mathbf{1}\),
\(\tau_{12}^+(\pi\mathbf{1},\lambda)=\lambda+12\) holds exactly
for all \(\lambda>24\).
\revblock{At this momentum, the interval
\((E_{+,\gamma}(\pi\mathbf1),\tau_{12}^+(\pi\mathbf1,\lambda))\)
is a nonempty internal gap precisely when
\(\lambda>12(1+\gamma)=\lambda_{\mathrm{sep}}(\pi\mathbf1,\gamma)\).}
\end{theorem}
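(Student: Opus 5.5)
The plan is to compute $\tau_{12}^+$ through the scalar equations \eqref{eq:fermion-pair-scalar-roots}. Since the pair Birman--Schwinger matrix is diagonal, each label $j$ can be treated separately, and eigenvalue crossings do not matter because $\tau_{12}^+$ is a minimum over $r$ and $j$. For $\lambda>24$, Proposition~\ref{prop:strong-fermion-pair} places each labelled root $\nu_{12,j}^+(k;\lambda)$ in $[\lambda+12,\lambda+24]$. Write $\nu=\lambda+12+\epsilon$ with $0\le\epsilon\le12$, and put
\[
A_k(x)=4\sum_{\ell=1}^3 c_\ell\cos x_\ell,\qquad c_\ell=\cos(k_\ell/2),\qquad |A_k|\le12 .
\]
The root equation then becomes
\[
\lambda=\int_{\T^3}\frac{2\sin^2x_j\,\eta(dx)}{(\lambda+\epsilon+A_k(x))/\lambda\cdot\lambda}\cdot\lambda,
\qquad\text{that is,}\qquad
\frac1\lambda=\int_{\T^3}\frac{2\sin^2x_j}{\lambda+\epsilon+A_k(x)}\,\eta(dx).
\]
I expand the integrand in a geometric series in $A_k/(\lambda+\epsilon)$, which is uniformly convergent once $\lambda>24$. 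The moments are $m_0=\int 2\sin^2x_j\,d\eta=1$ and $m_1=\int 2\sin^2x_j\,A_k\,d\eta=0$. For the second moment, the cross terms $\cos x_\ell\cos x_{\ell'}$ with $\ell\ne\ell'$ integrate to zero, while $\int2\sin^2x_j\cos^2x_\ell\,d\eta$ equals $1/2$ for $\ell\ne j$ and $1/4$ for $\ell=j$. This gives
\[
m_2=\int 2\sin^2x_j\,A_k^2\,d\eta
=8\sum_{\ell\ne j}c_\ell^2+4c_j^2
=8\sum_{\ell}c_\ell^2-4c_j^2=:\beta_j(k),
\]
and the higher moments satisfy $|m_n|\le12^n$.

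Inserting the expansion gives $\lambda/(\lambda+\epsilon)=1+\beta_j(k)(\lambda+\epsilon)^{-2}+O(\lambda^{-3})$. This forces first $\epsilon=O(\lambda^{-1})$, and then
\[
\epsilon=\frac{\beta_j(k)}{\lambda}+O(\lambda^{-2}),
\]
uniformly in $k\in\T^3$ and in $j$. As a consistency check, $\beta_j(k)=\|(h_{12,0}(k)-12)\phi_{j,k}\|^2$, because $\cos x_\ell\sin x_j$ is orthogonal to every $\sin x_i$. So this is exactly the second-order perturbation coefficient for a large projection.

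Next I minimise over the spectator momentum and the label. The branches are
\[
Z_{12,j}^+(K;r;\lambda)=\lambda+12+\gamma\varepsilon(r)+\frac{\beta_j(K-r)}{\lambda}+O(\lambda^{-2}).
\]
\emph{Upper bound:} taking $r=0$ and the $j$ maximising $\cos^2(K_j/2)$ gives $\tau_{12}^+\le\lambda+12+b_*(K)/\lambda+O(\lambda^{-2})$. \emph{Lower bound:} since $\beta_j\ge0$ is bounded by $24$, any near-minimiser must satisfy $\gamma\varepsilon(r)\le C/\lambda$, hence $|r|\le C\lambda^{-1/2}$. The function $\beta_j$ is smooth, so $\beta_j(K-r)=\beta_j(K)+O(|r|)$, which makes the contribution of $r$ to the $\beta_j$ term only $O(\lambda^{-3/2})$. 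To reach $O(\lambda^{-2})$, I use $\gamma\varepsilon(r)\ge c|r|^2$ and note that $c|r|^2-C|r|/\lambda\ge -C'\lambda^{-2}$. Together these give $\tau_{12}^+\ge\lambda+12+b_*(K)/\lambda-O(\lambda^{-2})$. The bound $b_*\ge0$ is immediate from $8\sum_\ell\cos^2\ge8\max_j\cos^2$.

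I expect the main obstacle to be making the $O(\lambda^{-2})$ remainder genuinely uniform in $(k,j)$ through the implicit root equation. The point is that every moment bound depends only on $|A_k|\le12$, so the inversion of the scalar equation is uniform. It is also worth noting that smoothness of $\beta_j$ in $k$ is needed only for the quadratic-versus-linear balance in the lower bound, not for the root expansion itself.

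Finally, the statements at $K=\pi\mathbf1$. At $r=0$ one has $c_\ell(\pi\mathbf1)=0$, so the pair dispersion is identically $12$. Then $P_kh_{12,0}P_k=12P_k$ and the three eigenvalues equal $\lambda+12$ exactly. Combined with the lower bound \eqref{eq:fermion-upper-channel-bottom}, this gives $\tau_{12}^+(\pi\mathbf1,\lambda)=\lambda+12$ for every $\lambda>24$. Since $E_{+,\gamma}(\pi\mathbf1)=24+12\gamma$, the interval is nonempty exactly when $\lambda>12(1+\gamma)$, and this value is $\lambda_{\mathrm{sep}}(\pi\mathbf1,\gamma)$ by \eqref{eq:fermion-separation-symmetric}. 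For such $\lambda$, Corollary~\ref{cor:separated-regime} identifies the interval as an internal gap in the essential spectrum.
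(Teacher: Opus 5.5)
Your proposal is correct and follows the paper's proof essentially step by step. The common ingredients are: the uniform moment expansion of the scalar root equations, with $m_0=1$, $m_1=0$ and $m_2=8\sum_\ell\cos^2(k_\ell/2)-4\cos^2(k_j/2)$; the upper bound at $r=0$; the lower bound obtained by balancing $\gamma\varepsilon(r)\ge c|r|^2$ against the Lipschitz variation of the second moment; and the exact value at $\pi\mathbf1$, which comes from the constant pair dispersion.

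Two displays contain slips that do not affect the argument. Your first form of the root equation is garbled; the correct equation is your ``that is'' version. The expansion should read $(\lambda+\epsilon)/\lambda=1+\beta_j(k)(\lambda+\epsilon)^{-2}+O(\lambda^{-3})$ rather than $\lambda/(\lambda+\epsilon)=\cdots$. Only the corrected form gives $\epsilon=\beta_j(k)/\lambda+O(\lambda^{-2})$ with the right sign.
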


\begin{proof}
We expand \eqref{eq:fermion-pair-scalar-roots} as
\(\lambda\to\infty\). \revblock{Put
\[
v_k(x)=+4\sum_{\ell=1}^3\cos(k_\ell/2)\cos x_\ell,
\qquad u_j=\nu_{12,j}^+(k;\lambda)-12=\lambda+O(1).
\]
The \(O(1)\) bound is uniform in \(k,j\) by
\eqref{eq:strong-pair-eigenvalue-bounds}.
At $w=\nu_{12,j}^+(k;\lambda)$, one has
$w-\mathcal E_{12}(k;p)=u_j+v_k(x)$, and the resolvent
expansion reads
\[
\frac{1}{u_j+v_k(x)}
=\frac1{u_j}-\frac{v_k(x)}{u_j^2}
+\frac{v_k(x)^2}{u_j^3}+O(u_j^{-4}).
\]}
Since \(\|v_k\|_\infty\le12\) and \(u_j\ge\lambda>24\), the
resolvent remainder is uniform in \(k,j\). \revblock{The zeroth moment is one; translation
by \(\pi\mathbf1\) preserves \(2\sin^2x_j\) and changes the sign
of \(v_k\), so the first moment vanishes.} Hence
\[
1=\frac{\lambda}{u_j}
  +\frac{\lambda \mathfrak m_{2,j}(k)}{u_j^3}+O(\lambda^{-3}),
\]
where
\[
\begin{aligned}
\mathfrak m_{2,j}(k)
&=\int_{\T^3}2\sin^2x_j\,v_k(x)^2\,\eta(dx)\\
&=8\sum_{\ell=1}^3\cos^2\frac{k_\ell}{2}
  -4\cos^2\frac{k_j}{2}.
\end{aligned}
\]
Multiplying the scalar equation by \(u_j\) gives
\[
\nu_{12,j}^+(k;\lambda)
=\lambda+12+\lambda^{-1}\mathfrak m_{2,j}(k)+O(\lambda^{-2}).
\]
The remainder is uniform in \(k\) and \(j\). Let
\(d(r)=d_{\T^3}(r,0)\). Since \(\gamma>0\), there are constants
\(c,L>0\) such that
\[
\gamma\varepsilon(r)\ge c d(r)^2,\qquad
|\mathfrak m_{2,j}(K-r)-\mathfrak m_{2,j}(K)|\le Ld(r).
\]
Hence
\[
\gamma\varepsilon(r)+\lambda^{-1}\mathfrak m_{2,j}(K-r)
\ge\lambda^{-1}\mathfrak m_{2,j}(K)-\frac{L^2}{4c\lambda^2}.
\]
An upper bound with leading term \(\lambda^{-1}\mathfrak m_{2,j}(K)\)
follows by taking \(r=0\).
Minimizing also over \(j\) gives
\(b_*(K)=\min_j \mathfrak m_{2,j}(K)\) and proves
\eqref{eq:tau12-asymptotic}. At \(K=\pi\mathbf1\) and \(r=0\), the pair dispersion is constant,
so all three pair eigenvalues equal \(\lambda+12\). Thus
\(\tau_{12}^+\le\lambda+12\), and
\eqref{eq:fermion-upper-channel-bottom} gives equality for every
\(\lambda>24\).
\end{proof}

\subsection{Spectral reflection and the attractive counterpart}
\label{subsec:spectral-reflection}

\begin{remark}[Spectral reflection for the cosine dispersion]
\label{rem:spectral-reflection}
Set \(K_0=\pi\mathbf1\) and \(C_\gamma=12(2+\gamma)\), and define
the unitary involution on \(\cH^{\mathrm a}\) by
\[
(\mathcal Jf)(p,q)=f(p+K_0,q+K_0)
\quad\text{for almost every }(p,q).
\]
Using \eqref{eq:free-three}, \(2K_0=0\) on \(\T^3\), and
\(\varepsilon(t+K_0)=12-\varepsilon(t)\), we obtain
\[
\begin{aligned}
\E_\gamma(K;p+K_0,q+K_0)
&=24-\varepsilon(p)-\varepsilon(q)+\gamma\varepsilon(K-p-q)\\
&=C_\gamma-\E_\gamma(K-K_0;p,q).
\end{aligned}
\]
The interaction operators are invariant under this simultaneous
translation, as follows directly from
\eqref{eq:fiber-V13}--\eqref{eq:fiber-fermion-interaction}. Hence,
using the same Hamiltonian formula for real couplings,
\begin{equation}
\label{eq:spectral-reflection}
\mathcal JH_{\lambda,\mu,\gamma}(K)\mathcal J^{-1}
=C_\gamma I-H_{-\lambda,-\mu,\gamma}(K-K_0).
\end{equation}
This relation reflects both the spectrum and the essential spectrum
about \(C_\gamma/2\), preserving eigenvalue multiplicities.
In particular, the upper-threshold formulation for repulsive
interactions corresponds to a lower-threshold formulation for
attractive interactions; an internal gap remains an internal gap.
\revblock{The involution \(\mathcal J\) commutes with \(P_{12}\),
since the simultaneous translation acts on both variables in the
same way. Hence it preserves \(\cH^{\mathrm a}\).}
\end{remark}

\begin{corollary}[Attractive lower-threshold counterpart]
\label{cor:lower-reflection}
Let \(\gamma>\gamma_*\), \(\cO_- = \cO_+-K_0\), and
\(K\in\cO_-\), where \(K_0\) and \(C_\gamma\) are as in
Remark~\ref{rem:spectral-reflection}. Put \(K^+=K+K_0\) and
\[
\mu_{\mathrm c}^-(K,\gamma)=\mu_{\mathrm c}^+(K^+,\gamma),
\qquad \cU_-(K)=\cU_+(K^+)>0,
\qquad
\widetilde H_\lambda(K)=H_{-\lambda,-\mu_{\mathrm c}^-(K,\gamma),\gamma}(K).
\]
Then \(E_{-,\gamma}(K)=C_\gamma-E_{+,\gamma}(K^+)\), and the
following assertions hold, with all parameters fixed as
\(\delta\downarrow0\).
\begin{enumerate}[(i)]
\item For \(0\leq\lambda\leq\lambda_{\mathrm{crit}}(K^+,\gamma)\),
\(\inf\sess\widetilde H_\lambda(K)=E_{-,\gamma}(K)\), and
\[
\dim\Ran\mathbf1_{(-\infty,E_{-,\gamma}(K)-\delta)}
\bigl(\widetilde H_\lambda(K)\bigr)
=\cU_-(K)|\log\delta|+o(|\log\delta|).
\]
\item For \(\lambda>\lambda_{\mathrm{sep}}(K^+,\gamma)\), put
\[
\tau_{12}^-(K,\lambda)=C_\gamma-\tau_{12}^+(K^+,\lambda).
\]
The interval
\(\cG_\lambda^-(K)=(\tau_{12}^-(K,\lambda),E_{-,\gamma}(K))\)
is a nonempty gap in \(\sess\widetilde H_\lambda(K)\), with both
endpoints in the essential spectrum. For every fixed
\(\zeta\in\cG_\lambda^-(K)\setminus\spec\widetilde H_\lambda(K)\),
\[
\dim\Ran\mathbf1_{(\zeta,E_{-,\gamma}(K)-\delta)}
\bigl(\widetilde H_\lambda(K)\bigr)
=\cU_-(K)|\log\delta|+o(|\log\delta|).
\]
\end{enumerate}
Thus infinitely many discrete eigenvalues accumulate at
\(E_{-,\gamma}(K)\) from below; in part~(ii), this is the upper
endpoint of the internal gap. The coefficient is the same as in
the reflected repulsive problem and is independent of \(\lambda\)
in the stated regime. All counts include multiplicities.
\end{corollary}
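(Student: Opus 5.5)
The plan is to apply the reflection identity \eqref{eq:spectral-reflection} of Remark~\ref{rem:spectral-reflection} at the quasi-momentum \(K^+=K+K_0\in\cO_+\), and then transport every assertion of Theorem~\ref{thm:main-upper} through the resulting affine spectral map \(E\mapsto C_\gamma-E\).

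\emph{Step 1: the conjugation.} Since \(2K_0=0\) on \(\T^3\), we have \(K^+-K_0=K\). Applying \eqref{eq:spectral-reflection} with couplings \((\lambda,\mu_{\mathrm c}^+(K^+,\gamma))\) gives
\[
\mathcal J H_{\lambda,\mu_{\mathrm c}^+(K^+,\gamma),\gamma}(K^+)\mathcal J^{-1}
=C_\gamma I-\widetilde H_\lambda(K).
\]
Here \(\mu_{\mathrm c}^-(K,\gamma)=\mu_{\mathrm c}^+(K^+,\gamma)\) by definition. The identity \eqref{eq:spectral-reflection} is stated for real couplings, and the Hamiltonian formula \eqref{eq:full-fiber-model-with-lambda} makes sense for negative \(\lambda,\mu\). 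Indeed, the interaction operators are bounded and invariant under \(\mathcal J\), and \(\mathcal J\) preserves \(\cH^{\mathrm a}\). The free part transforms by the displayed dispersion identity in the remark.

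\emph{Step 2: transfer of the spectral data.} Unitary equivalence combined with the affine map \(E\mapsto C_\gamma-E\) maps the spectrum, the essential spectrum, the discrete eigenvalues and their multiplicities bijectively, and it reverses order. Applying the dispersion identity to maxima gives \(E_{-,\gamma}(K)=C_\gamma-E_{+,\gamma}(K^+)\). The upper-edge statement \(\sup\sess=E_{+,\gamma}(K^+)\) of Theorem~\ref{thm:main-upper}(i) becomes \(\inf\sess\widetilde H_\lambda(K)=E_{-,\gamma}(K)\). Similarly, Theorem~\ref{thm:upper-essential-edge}(iii) and Corollary~\ref{cor:separated-regime} show that \(\cG_\lambda^+(K^+)\) is a nonempty gap with both endpoints in the essential spectrum. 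Its reflection is \((\tau_{12}^-(K,\lambda),E_{-,\gamma}(K))\), which therefore has the same properties.

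\emph{Step 3: transfer of the counts.} For the counting functions we use
\[
\mathbf1_{(-\infty,E_{-}-\delta)}(\widetilde H_\lambda(K))=\mathcal J\,\mathbf1_{(E_{+}+\delta,\infty)}\bigl(H(K^+)\bigr)\mathcal J^{-1},
\]
where \(E_\mp=E_{\mp,\gamma}(K)\) and \(E_{+}\) on the right means \(E_{+,\gamma}(K^+)\). The ranks agree, so \eqref{eq:main-upper-asymptotic} yields (i) with \(\cU_-(K)=\cU_+(K^+)\). For (ii), let \(\zeta\in\cG_\lambda^-(K)\setminus\spec\widetilde H_\lambda(K)\) and set \(\zeta^+=C_\gamma-\zeta\). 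Then \(\zeta^+\in\cG_\lambda^+(K^+)\setminus\spec H(K^+)\). The interval \((\zeta,E_--\delta)\) reflects to \((E_{+,\gamma}(K^+)+\delta,\zeta^+)\), so \eqref{eq:upper-gap-Efimov-asymptotic} gives the claim. Positivity of \(\cU_-(K)\) follows from Proposition~\ref{prop:upper-supercriticality}. Since the count tends to infinity, infinitely many eigenvalues accumulate at \(E_{-,\gamma}(K)\) from below. Independence of \(\lambda\) is inherited from the upper problem.

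\emph{Main obstacle.} The only real point is bookkeeping. The thresholds \(\lambda_{\mathrm{crit}},\lambda_{\mathrm{sep}}\) must be evaluated at \(K^+\), not \(K\). The sign conventions must also match: the attractive operator carries couplings \(-\lambda,-\mu_{\mathrm c}^-\) with \(\lambda\ge0\). The open/closed endpoint conventions of the counting functions must correspond exactly under reflection. No new analysis is needed.
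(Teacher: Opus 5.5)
Your proposal is correct and is the paper's argument. Conjugating by \(\mathcal J\) at \(K^+=K+K_0\) gives \(\widetilde H_\lambda(K)\simeq C_\gamma I-H_{\lambda,\mu_{\mathrm c}^+(K^+,\gamma),\gamma}(K^+)\). The map \(E\mapsto C_\gamma-E\) then carries \((E_{+,\gamma}(K^+)+\delta,\infty)\) and \((E_{+,\gamma}(K^+)+\delta,C_\gamma-\zeta)\) onto the stated intervals, preserving spectral-projection ranks, so both parts follow from Theorem~\ref{thm:main-upper}. Your additional checks are accurate: the thresholds are evaluated at \(K^+\), \(E_{-,\gamma}(K)=C_\gamma-E_{+,\gamma}(K^+)\) follows from the dispersion identity, and the endpoint conventions correspond under reflection.
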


\begin{proof}
By \eqref{eq:spectral-reflection}, \(\widetilde H_\lambda(K)\)
is unitarily equivalent to
\(C_\gamma I-H_{\lambda,\mu_{\mathrm c}^+(K^+,\gamma),\gamma}(K^+)\).
Spectral reflection maps \((E_{+,\gamma}(K^+)+\delta,\infty)\)
onto \((-\infty,E_{-,\gamma}(K)-\delta)\), and
\((E_{+,\gamma}(K^+)+\delta,C_\gamma-\zeta)\) onto
\((\zeta,E_{-,\gamma}(K)-\delta)\), preserving spectral-projection
ranks. Both assertions therefore follow from
Theorem~\ref{thm:main-upper}, including the endpoint statements.
\end{proof}

\section{Concluding remarks}
\label{sec:conclusion}
\compactedit{At critical mixed coupling, Theorem~\ref{thm:main-upper} gives the
same logarithmic coefficient for every fixed
\(0\leq\lambda\leq\lambda_{\mathrm{crit}}(K,\gamma)\) or
\(\lambda>\lambda_{\mathrm{sep}}(K,\gamma)\). In the separated regime,
the free upper edge is the lower gap endpoint. For intervening
couplings, two-cluster spectrum excludes discrete accumulation
immediately above that edge.

Extensions to two identical bosons or three distinguishable particles
require identifying the resonant channels, estimating the remaining contributions, and
analysing the corresponding homogeneous operator. Other dispersions
require new threshold expansions; degenerate extrema may change
the asymptotic law.
Pair interactions of rank at least two with attractive and repulsive
components may allow simultaneous Efimov accumulation at both band
edges for the same parameters, or at endpoints of distinct internal
gaps. Each endpoint requires suitable spectral geometry, a resonance,
and a supercritical homogeneous operator.

For the cosine dispersion, the reflection identity in
Remark~\ref{rem:spectral-reflection} extends to other pair interactions
satisfying \(\mathcal J V_\alpha\mathcal J^{-1}=V_\alpha\).}

\appendix
\section{Abstract counting-to-accumulation equivalence}
\label{app:abstract-counting}
The following elementary equivalence gives
Corollaries~\ref{cor:upper-exponential-law} and
\ref{cor:gap-exponential-law} from the corresponding counting
asymptotics. Multiplicities are included throughout.

\begin{proposition}[Counting asymptotics and exponential accumulation]
\label{prop:counting-exponential-equivalence}
Let \(E_*\in\R\), and let
\[
E_1\geq E_2\geq\cdots>E_*,
\qquad E_n\longrightarrow E_*,
\]
where every term is repeated according to its multiplicity. Define
\[
N(E_*+\delta)=\#\{n\in\N:E_n>E_*+\delta\},
\qquad \delta>0.
\]
For every \(\cU>0\), the following assertions are equivalent:
\begin{align}
N(E_*+\delta)
&=\cU|\log\delta|+o(|\log\delta|),
&&\delta\downarrow0,
\label{eq:abstract-counting-asymptotic}\\
\log(E_n-E_*)
&=-\frac{n}{\cU}+o(n),
&&n\to\infty,
\label{eq:abstract-log-eigenvalue-law}\\
(E_n-E_*)^{1/n}
&\longrightarrow e^{-1/\cU},
&&n\to\infty.
\label{eq:abstract-root-eigenvalue-law}
\end{align}
Equivalently, \eqref{eq:abstract-log-eigenvalue-law} can be written
as
\[
E_n-E_*=\exp\left[-\frac{n}{\cU}+o(n)\right].
\]
\end{proposition}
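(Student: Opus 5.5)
The plan is to prove the cycle \eqref{eq:abstract-counting-asymptotic}\(\Rightarrow\)\eqref{eq:abstract-log-eigenvalue-law}\(\Rightarrow\)\eqref{eq:abstract-counting-asymptotic}, and then to dispose of \eqref{eq:abstract-root-eigenvalue-law} separately. Write \(\delta_n=E_n-E_*>0\). Then \(\delta_n\) is nonincreasing and tends to zero, so \(L_n:=|\log\delta_n|=-\log\delta_n\) is nondecreasing and tends to \(+\infty\); for large \(n\) we have \(\delta_n<1\) and \(L_n>0\). The equivalence of \eqref{eq:abstract-log-eigenvalue-law} and \eqref{eq:abstract-root-eigenvalue-law} is immediate. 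Indeed, \(\frac1n\log\delta_n\to-1/\cU\) is the same as \(\log\delta_n=-n/\cU+o(n)\), and exponentiating, using continuity of \(\exp\) and \(\log\) on \((0,\infty)\) and \(e^{-1/\cU}>0\), gives \((\delta_n)^{1/n}\to e^{-1/\cU}\) and back. The exponential form stated after the display is a rewriting of \eqref{eq:abstract-log-eigenvalue-law}.

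The key tool is the sandwich relation between \(N\) and the sequence. Since the \(E_n\) are listed nonincreasingly with multiplicity, we have \(N(E_*+\delta)\ge n\) iff \(\delta_n>\delta\). Hence, for \(\delta_{n+1}\le\delta<\delta_n\), the count is exactly \(N(E_*+\delta)=n\). At \(\delta=\delta_n^-\) the count equals at least \(n\) counting ties, and it equals at most \(n-1\) at \(\delta=\delta_n\). From this, for every \(\epsilon>0\) small and every \(n\) large, one obtains
\[
N(E_*+\delta_n)\le n-1<n\le N\bigl(E_*+\delta_n(1-\epsilon)\bigr),
\]
the inequality \(N(E_*+\delta_n(1-\epsilon))\ge n\) holding because \(\delta_n>\delta_n(1-\epsilon)\).

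For \eqref{eq:abstract-counting-asymptotic}\(\Rightarrow\)\eqref{eq:abstract-log-eigenvalue-law}, insert \(\delta=\delta_n\) and \(\delta=\delta_n/2\) into the counting asymptotics. This gives \(n-1\ge N(E_*+\delta_n)=\cU L_n+o(L_n)\) and \(n\le N(E_*+\delta_n/2)=\cU(L_n+\log2)+o(L_n)\), so \(n/L_n\to\cU\). Since \(\cU>0\), this is \(L_n=n/\cU+o(n)\), which is \eqref{eq:abstract-log-eigenvalue-law}. Here we must use that \(\delta_n\downarrow0\) in order to apply a \(\delta\downarrow0\) asymptotic along the sequences \(\delta_n,\delta_n/2\); the \(o(\cdot)\) terms are uniform in the obvious sense, so no subtlety arises.

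For \eqref{eq:abstract-log-eigenvalue-law}\(\Rightarrow\)\eqref{eq:abstract-counting-asymptotic}, which I expect to be the only mildly delicate step, take small \(\delta\) and let \(n=N(E_*+\delta)\). This \(n\) is finite because \(E_n\to E_*\), and it tends to \(\infty\) because every \(\delta_m>0\). By the sandwich, \(\delta_{n+1}\le\delta<\delta_n\), so \(L_n<|\log\delta|\le L_{n+1}\). By hypothesis \(L_n=n/\cU+o(n)\) and \(L_{n+1}=(n+1)/\cU+o(n)\). Thus \(|\log\delta|=n/\cU+o(n)\), and since \(n\to\infty\) this inverts to \(n=\cU|\log\delta|+o(|\log\delta|)\). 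The care needed is only to check that \(n=N(E_*+\delta)\to\infty\) as \(\delta\downarrow0\), so that the \(o(n)\) terms are genuinely small; that \(n\) and \(|\log\delta|\) are comparable follows from the displayed two-sided bound. This closes the cycle.
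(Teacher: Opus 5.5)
Your proof is correct and takes essentially the same route as the paper. The paper sets \(a_n=-\log(E_n-E_*)\) and \(A(t)=N(E_*+e^{-t})\), and proves the converse with the same interlacing \(a_m<t\le a_{m+1}\), \(m=A(t)\). The only difference is cosmetic and lies in the forward direction: you evaluate the count at \(\delta_n\) and \(\delta_n/2\), that is, \(A\) at \(a_n\) and \(a_n+\log 2\), whereas the paper bounds \(a_n\) through \(A((1\pm\epsilon)n/\cU)\).
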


\begin{proof}
Put \(a_n=-\log(E_n-E_*)\) and
\(A(t)=N(E_*+e^{-t})=\#\{n:a_n<t\}\). Then \(a_n\uparrow\infty\).
If \(A(t)=\cU t+o(t)\), then for every \(0<\epsilon<1\) and all
sufficiently large \(n\),
\[
A((1-\epsilon)n/\cU)<n<A((1+\epsilon)n/\cU),
\]
whence \((1-\epsilon)n/\cU\le a_n<(1+\epsilon)n/\cU\).
Thus \(a_n/n\to1/\cU\). Conversely, these bounds on \(a_n\),
with \(m=A(t)\) and \(a_m<t\le a_{m+1}\), give
\[
\frac{\cU}{1+\epsilon}\frac{m}{m+1}
<\frac{m}{t}<\frac{\cU}{1-\epsilon}
\]
for large \(t\). Since \(m\to\infty\), letting \(t\to\infty\)
and then \(\epsilon\downarrow0\) proves \(A(t)/t\to\cU\).
The remaining equivalence follows by taking logarithms.
\end{proof}

\begin{remark}
\label{rem:no-geometric-ratio}
The equivalence in
Proposition~\ref{prop:counting-exponential-equivalence} concerns
only the leading logarithmic scale. In particular,
\eqref{eq:abstract-log-eigenvalue-law} does not imply the existence
of the limit of the ratios
\((E_{n+1}-E_*)/(E_n-E_*)\).
\end{remark}

\section{Technical lemmas}
\label{app:technical-lemmas}
\edit{This appendix contains four technical lemmas: finiteness
and analyticity of the mixed-pair threshold integral
\(J_\gamma(k)\), the square-root expansion at the mixed upper
edge, and two estimates for the maxima of the fermion-channel
threshold integrals. The latter are used in
Subsection~\ref{subsec:fermion-interaction} and in the critical
weighted estimate of Lemma~\ref{lem:fermion-weighted-HS}.}
The square-root law is the lattice analogue of the threshold
behaviour used in the continuum Efimov analysis of
\cite{Sobolev1993}; for the lattice case, see also
\cite[Section~5]{ALM2004} and \cite[Section~5]{ALD2007}.

\subsection{Finiteness and analyticity of the threshold integral}
\label{subsec:app-J}

\begin{lemma}[Finiteness and analyticity of \(J_\gamma(k)\)]
\label{lem:J-finite}
Fix \(\gamma>0\). For every regular \(k\), \(0<J_\gamma(k)<\infty\).
The function \(J_\gamma\) has a holomorphic extension to a complex
neighbourhood of each regular point. It is therefore real analytic
on the regular region. If \(\gamma\ne1\), this region is the whole
torus \(\T^3\). Moreover, \(J_\gamma\) is even in each component
of \(k\), and \(\nabla J_\gamma(0)=0\).
\end{lemma}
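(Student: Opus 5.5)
The plan is to work from the explicit coordinatewise form of the dispersion supplied by Lemma~\ref{lem:mixed-band}. For regular \(k\),
\[
M_\gamma(k)-\E_\gamma(k;p^+(k)+u)=2\sum_{j=1}^3r_{\gamma,j}(k)(1-\cos u_j).
\]
Translating \(p\mapsto p^+(k)+u\) preserves Haar measure, so
\[
J_\gamma(k)=\int_{\T^3}\frac{\eta(du)}{2\sum_j r_{\gamma,j}(k)(1-\cos u_j)}.
\]
In this form \(k\) enters only through the three positive numbers \(r_{\gamma,j}(k)\). Positivity and finiteness then follow at once. The integrand is positive. Since \(1-\cos u_j\ge 2u_j^2/\pi^2\) on \((-\pi,\pi]\), the denominator is bounded below by \(c\,|u|^2\), with \(c=\min_j r_{\gamma,j}(k)\cdot 4/\pi^2>0\), and \(|u|^{-2}\) is integrable in dimension three.

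For analyticity I would set
\[
F(r_1,r_2,r_3)=\int_{\T^3}\frac{\eta(du)}{2\sum_j r_j(1-\cos u_j)}
\]
and show that \(F\) extends holomorphically to the region \(\{\operatorname{Re} r_j>0\}\subset\C^3\). On a compact subset of this region, \(\operatorname{Re} r_j\ge a>0\), so the real part of the denominator is at least \(2a\sum_j(1-\cos u_j)\ge c'|u|^2\). The integrand is therefore dominated, uniformly in \(r\), by \(C|u|^{-2}\in L^1\). Holomorphy in each \(r_j\) then follows by Morera's theorem, or by differentiating under the integral, and Hartogs' theorem (or joint continuity) gives joint holomorphy.

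The maps \(k_j\mapsto r_{\gamma,j}(k)=\sqrt{1+\gamma^2+2\gamma\cos k_j}\) are holomorphic near any point where \(1+\gamma^2+2\gamma\cos k_j>0\). Their values stay in the right half-plane for complex \(k\) sufficiently near a real regular point. Composing gives a holomorphic extension of \(J_\gamma\) near each regular point, and hence real analyticity on the regular region. If \(\gamma\ne1\), then \(1+\gamma^2+2\gamma\cos k_j\ge(1-\gamma)^2>0\), so every \(k\) is regular.

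Evenness in each component is immediate, because \(r_{\gamma,j}\) depends on \(k_j\) only through \(\cos k_j\). Differentiability at \(0\), which is regular since \(r_{\gamma,j}(0)=1+\gamma>0\), then gives \(\partial_{k_j}J_\gamma(0)=0\): an even differentiable function of \(k_j\) has vanishing derivative at \(0\). Alternatively, the chain rule with \(\partial_{k_j}r_{\gamma,j}=-\gamma\sin k_j/r_{\gamma,j}\) gives the same conclusion directly.

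The only delicate point I expect is the uniform domination for complex \(r\), that is, keeping the real part of the denominator coercive. Using the real-part lower bound above handles this, since \(|z|\ge\operatorname{Re} z\).
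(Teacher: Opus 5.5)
Your proposal is correct and follows essentially the same route as the paper: the exact normal form reduces \(J_\gamma\) to an integral in which \(k\) enters only through the numbers \(r_{\gamma,j}(k)\), a dominated holomorphic parameter integral gives the local extension, and dependence on \(\cos k_j\) alone gives evenness and \(\nabla J_\gamma(0)=0\). The only cosmetic difference is that you factor through a holomorphic function of \((r_1,r_2,r_3)\) on the product of right half-planes with a real-part lower bound, whereas the paper uses the relative perturbation bound \(|a(k,u)-a(k_0,u)|\le c\,a(k_0,u)\), \(0<c<1\), near a regular point \(k_0\).
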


\begin{proof}
By the exact normal form \eqref{eq:mixed-upper-normal-form},
\[
J_\gamma(k)=\int_{\T^3}\frac{\eta(du)}{a(k,u)},
\qquad a(k,u)=2\sum_{j=1}^3r_{\gamma,j}(k)(1-\cos u_j).
\]
For regular real \(k\), the denominator is positive away from zero
and comparable to \(|u|^2\) near zero. Since \(|u|^{-2}\) is locally
integrable in three dimensions, \(0<J_\gamma(k)<\infty\).
Fix a regular point \(k_0\) and choose local coordinates near it.
The positive square-root branches in \eqref{eq:r-gamma} at \(k_0\)
extend holomorphically to a complex neighbourhood of \(k_0\).
After shrinking this neighbourhood, one has, uniformly for \(u\ne0\),
\[
|a(k,u)-a(k_0,u)|\le c\,a(k_0,u),\qquad 0<c<1.
\]
Thus \(a(k,u)^{-1}\) is holomorphic in \(k\) and bounded by the
integrable function \([(1-c)a(k_0,u)]^{-1}\). The holomorphic
parameter integral theorem proves the asserted local extension.
For \(\gamma\ne1\), the bound
\(r_{\gamma,j}(k)\ge|1-\gamma|>0\) makes every point regular.
Finally, each \(r_{\gamma,j}\) is even in \(k_j\), so the integral
representation gives evenness of \(J_\gamma\) in each component.
Since zero is regular for every \(\gamma>0\), it follows that
\(\nabla J_\gamma(0)=0\).
\end{proof}

\subsection{Square-root law at the mixed upper edge}
\label{subsec:app-square-root}

\begin{lemma}[Square-root law at the mixed upper edge]
\label{lem:upper-square-root-law}
Fix \(\gamma>0\). Let \(U\) be a sufficiently small
neighbourhood of \(0\) whose closure lies in the regular region
of the mixed-pair dispersion. For \(k\in U\) and \(s\ge0\), set
\[
J_\gamma(k,s)
=
\int_{\T^3}
\frac{\eta(dp)}
{M_\gamma(k)-\E_\gamma(k;p)+s}.
\]
Then \(J_\gamma(k,0)=J_\gamma(k)\), as defined in
\eqref{eq:J-gamma}, and
\begin{equation}
\label{eq:upper-square-root-law}
J_\gamma(k,s)
=
J_\gamma(k,0)
-
\frac{\sqrt{s}}
{4\pi\sqrt{\det B_\gamma^+(k)}}
+O(s),
\qquad s\downarrow0,
\end{equation}
uniformly for \(k\in U\). Here
\[
B_\gamma^+(k)
=
-\frac12D_p^2\E_\gamma(k;p^+(k)).
\]
\end{lemma}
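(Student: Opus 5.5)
The plan is to use the exact normal form \eqref{eq:mixed-upper-normal-form} of Lemma~\ref{lem:mixed-band}. It gives
\[
J_\gamma(k,s)=\int_{\T^3}\frac{\eta(du)}{a(k,u)+s},\qquad
a(k,u)=2\sum_{j=1}^3r_{\gamma,j}(k)(1-\cos u_j),
\]
and setting \(s=0\) gives \(J_\gamma(k,0)=J_\gamma(k)\) at once. Subtracting,
\[
J_\gamma(k,0)-J_\gamma(k,s)=s\int_{\T^3}\frac{\eta(du)}{a(k,u)\,(a(k,u)+s)}.
\]
The whole task is therefore to show that this \(s\)-weighted integral equals \(\sqrt s/(4\pi\sqrt{\det B_\gamma^+(k)})+O(s)\), uniformly in \(k\in U\). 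By \eqref{eq:mixed-upper-Hessian}, \(B_\gamma^+(k)=\operatorname{diag}(r_{\gamma,j}(k))\). On \(\overline U\) the functions \(r_{\gamma,j}\) are bounded above and below by positive constants, since the closure lies in the regular region.

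First I split the torus into a fixed ball \(|u|<\rho\) and its complement. On the complement \(a\ge c>0\) uniformly in \(k\), so that piece contributes \(O(s)\). On the ball I compare \(a(k,u)\) with the quadratic model \(a_0(k,u)=\sum_j r_{\gamma,j}(k)u_j^2\), using \(a-a_0=O(|u|^4)\) with constants uniform in \(k\). This is the step needing care, and I would handle it by bounding the difference of the integrands:
\[
\frac{s}{a(a+s)}-\frac{s}{a_0(a_0+s)} .
\]
Since \(a\asymp a_0\asymp|u|^2\), this difference is at most \(Cs|u|^4/(|u|^2(|u|^2+s))^{\,}\cdot|u|^{-2}\), which is \(\le Cs\,|u|^{-2}\). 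That is integrable in three dimensions, so the replacement costs \(O(s)\). The bound follows from the identity \(\frac{1}{a(a+s)}-\frac{1}{a_0(a_0+s)}=\frac{(a_0-a)(a+a_0+s)}{aa_0(a+s)(a_0+s)}\) together with \(a+a_0+s\lesssim (a+s)+(a_0+s)\).

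For the model integral I substitute \(u=B^{-1/2}w\) with \(B=B_\gamma^+(k)\). This produces the Jacobian \((\det B)^{-1/2}\) and the factor \((2\pi)^{-3}\) from Haar measure, and the ball maps to a region \(\Omega_k\) that contains and is contained in balls of fixed radii. Extending the integral to all of \(\R^3\) costs \(\int_{|w|>c}s\,|w|^{-4}\,dw=O(s)\). The full integral I compute in polar coordinates:
\[
(2\pi)^{-3}\int_{\R^3}\frac{s\,dw}{|w|^2(|w|^2+s)}
=\frac{4\pi}{(2\pi)^3}\sqrt s\int_0^\infty\frac{dt}{1+t^2}
=\frac{\sqrt s}{4\pi}.
\]
Multiplying by \((\det B)^{-1/2}\) yields exactly the leading term in \eqref{eq:upper-square-root-law}.

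Every constant above depends only on the uniform two-sided bounds on \(r_{\gamma,j}\) over \(\overline U\), so the \(O(s)\) remainder is uniform in \(k\in U\). The main obstacle is the quartic comparison estimate in the first step. I expect it to be manageable because, after multiplication by \(s\), the error has the integrable majorant \(C s|u|^{-2}\), with no dependence on \(k\) beyond those uniform bounds.
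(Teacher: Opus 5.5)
Your proposal is correct and follows essentially the same route as the paper. Both arguments use the exact normal form, discard the region away from the maximum at cost \(O(s)\), and compare \(a(k,u)\) with \(\langle B_\gamma^+(k)u,u\rangle\) through an \(O(|u|^4)\) error, which gives the integrable majorant \(Cs/(|u|^2+s)\); they then rescale by \(B_\gamma^+(k)^{1/2}\), extend the integral to \(\R^3\) at cost \(O(s)\), and evaluate the radial integral. The only difference is cosmetic: you bound the difference of integrands with an exact algebraic identity, whereas the paper applies the mean-value theorem to \(F_s(t)=1/(t+s)-1/t\).
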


\begin{proof}
Write
\[
r_j(k)
=
\sqrt{1+\gamma^2+2\gamma\cos k_j},
\qquad j=1,2,3.
\]
The neighbourhood \(U\) can be chosen so that the functions
\(r_j(k)\) are bounded above and bounded away from zero on
\(\overline U\). Translating \(p\) by the unique maximiser
\(p^+(k)\), we obtain the exact identity
\[
a_k(u)
:=
M_\gamma(k)-\E_\gamma(k;p^+(k)+u)
=
2\sum_{j=1}^3 r_j(k)(1-\cos u_j).
\]
Consequently,
\[
B_\gamma^+(k)
=
\operatorname{diag}(r_1(k),r_2(k),r_3(k)).
\]
Set \(q_k(u)=\langle B_\gamma^+(k)u,u\rangle\). On a
fixed sufficiently small ball \(D\) centred at the origin,
\[
a_k(u)=q_k(u)+O(|u|^4),
\qquad
c|u|^2\le a_k(u),q_k(u)\le C|u|^2,
\]
with constants independent of \(k\in U\).
For \(s>0\), put
\[
F_s(t)=\frac1{t+s}-\frac1t
=-\frac{s}{t(t+s)}.
\]
If \(t\asymp |u|^2\), then
\[
|F_s'(t)|
=
\frac{s(2t+s)}{t^2(t+s)^2}
\le
\frac{Cs}{|u|^4(|u|^2+s)}.
\]
The mean-value theorem therefore gives
\[
|F_s(a_k(u))-F_s(q_k(u))|
\le
\frac{Cs}{|u|^2+s},
\qquad u\in D\setminus\{0\}.
\]
Its integral over \(D\) is \(O(s)\), uniformly in \(k\).
On the complement of \(D\), \(a_k\) is uniformly bounded
away from zero; hence the integral of \(F_s(a_k)\) there
is also \(O(s)\). Thus
\[
J_\gamma(k,s)-J_\gamma(k,0)
=
\int_D F_s(q_k(u))\,\eta(du)+O(s)
\]
uniformly for \(k\in U\).
In the remaining integral, let
\(v=B_\gamma^+(k)^{1/2}u\). \reviewedit{Uniform positive
definiteness of \(B_\gamma^+(k)\) implies that
\(B_\gamma^+(k)^{1/2}D\) contains a fixed ball centred at zero.} Since
\[
|F_s(|v|^2)|\le \frac{s}{|v|^4}
\]
outside that ball, extending the integral to \(\mathbb R^3\)
introduces another uniform \(O(s)\) error. Hence
\[
\begin{aligned}
J_\gamma(k,s)-J_\gamma(k,0)
&=
\frac{1}{(2\pi)^3\sqrt{\det B_\gamma^+(k)}}
\int_{\mathbb R^3}
\left(
\frac1{|v|^2+s}-\frac1{|v|^2}
\right)dv
+O(s)
\\
&=
-\frac{\sqrt{s}}
{4\pi\sqrt{\det B_\gamma^+(k)}}
+O(s),
\end{aligned}
\]
because
\[
\frac1{(2\pi)^3}
\int_{\mathbb R^3}
\left(
\frac1{|v|^2+s}-\frac1{|v|^2}
\right)dv
=
-\frac{\sqrt{s}}{4\pi}.
\]
All estimates are uniform for \(k\in U\).
\end{proof}

\revblock{\subsection{Threshold estimates for the fermion channel}
\label{subsec:app-fermion-threshold}

In this subsection we prove the two technical lemmas that were
used in Subsection~\ref{subsec:fermion-interaction} to control
the threshold integrals
\(b_j(K,r,0)\) defined in \eqref{eq:fermion-critical-b}. We keep
the notation of that subsection: \(K\in\cO_+\),
\(E_+=E_{+,\gamma}(K)\), \(k_+=k_K^+\), \(\rho_+=K-2k_+\),
\(K_0=\pi\mathbf1\), \(\beta=\beta(K,\gamma)\) is defined in
\eqref{eq:fermion-critical-coupling}, and
\(d_{\lambda_{\mathrm{crit}},j}(K,r,\delta)\) is defined in
\eqref{eq:fermion-critical-d}. Throughout, \(\gamma>1\) is fixed.

\edit{\begin{lemma}[The threshold integrals at the symmetric point]
\label{lem:fermion-critical-symmetric}
Let \(K_0=\pi\mathbf1\) and \(\gamma>1\). For each
\(j\in\{1,2,3\}\), the function \(r\mapsto b_j(K_0,r,0)\)
has its unique global maximum at \(r=K_0\), and its Hessian
there is strictly negative definite. Moreover, there are
constants \(c,C>0\) such that, for all sufficiently small
\(v\in\mathbb R^3\),
\[
c|v|^2\leq b_j(K_0,K_0,0)-b_j(K_0,K_0+v,0)
\leq C|v|^2.
\]
\end{lemma}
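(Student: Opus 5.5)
The plan is to make the integral $b_j(K_0,r,0)$ completely explicit at $K_0=\pi\mathbf1$ and to compare denominators pointwise in the integration variable. At $K_0$ we have $E_+=24+12\gamma$. Put $k=K_0-r$ and use the centred coordinates $p=k/2+x$. Then $\cos(k_\ell/2)=\sin(r_\ell/2)=:s_\ell$ and $\gamma\varepsilon(r)=4\gamma\sum_\ell s_\ell^2$, so the denominator in \eqref{eq:fermion-critical-b} at $\delta=0$ becomes
\[
E_+-\gamma\varepsilon(r)-\mathcal E_{12}(K_0-r;p)
=4\sum_{\ell=1}^3\bigl[1+\gamma-\gamma s_\ell^2+s_\ell\cos x_\ell\bigr].
\]
The numerator is $2\sin^2x_j$. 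The substitution $x_\ell\mapsto x_\ell+\pi$ preserves $\sin^2x_j$ and flips the sign of $\cos x_\ell$. Using it, we may replace $s_\ell$ by $t_\ell=|s_\ell|\in[0,1]$ and $s_\ell\cos x_\ell$ by $-t_\ell\cos x_\ell$. This also removes the dependence on the choice of lift. Hence
\[
b_j(K_0,r,0)=\int_{\T^3}\frac{2\sin^2x_j\,\eta(dx)}{4\sum_\ell g(t_\ell,x_\ell)},
\qquad g(t,x)=1+\gamma-\gamma t^2-t\cos x .
\]

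The key step is the elementary pointwise identity
\[
g(t,x)-g(1,x)=(1-t)\bigl[\gamma(1+t)+\cos x\bigr],
\]
which yields $g(t,x)-g(1,x)\ge(\gamma-1)(1-t)\ge0$. This is exactly where $\gamma>1$ enters. Consequently, for every $x$ the denominator is minimal when all $t_\ell=1$, that is, when $r=K_0$. It is strictly larger, with a uniform gap, as soon as some $t_\ell<1$. Since $t_\ell=1$ for all $\ell$ only when $r_\ell=\pi$ for all $\ell$, this gives a unique global maximum at $r=K_0$. At the maximum the denominator is $4\sum_\ell(1-\cos x_\ell)\asymp|x|^2$ and the numerator is $O(x_j^2)$, so $b_j(K_0,K_0,0)$ is finite. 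This agrees with Lemma~\ref{lem:fermion-critical-regularity}.

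For the quadratic bounds, write $r=K_0+v$, so that $t_\ell=\cos(v_\ell/2)$ and $1-t_\ell=v_\ell^2/8+O(v_\ell^4)$. Let $D_0(x)=4\sum_\ell(1-\cos x_\ell)$ and $\Delta_v(x)=4\sum_\ell(1-t_\ell)[\gamma(1+t_\ell)+\cos x_\ell]$. Then
\[
c|v|^2\le\Delta_v(x)\le C|v|^2
\]
uniformly in $x$, and
\[
b_j(K_0,K_0,0)-b_j(K_0,K_0+v,0)=\int\frac{2\sin^2x_j\,\Delta_v(x)}{D_0(D_0+\Delta_v)}\,\eta(dx).
\]
For the upper bound, I would use $\Delta_v\le C|v|^2$ and the integrability of $\sin^2x_j/D_0^2\lesssim|x|^{-2}$ in three dimensions. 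For the lower bound, I would use $\Delta_v\ge c|v|^2$ and $D_0+\Delta_v\le C'$.

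The step I expect to need the most care is strict negative definiteness of the Hessian, since $C^2$ regularity alone converts the two-sided bound only into a semidefinite statement. I would divide the difference by $|v|^2$ along $v=\epsilon w$ with $|w|=1$ and pass to the limit $\epsilon\to0$ by dominated convergence, with majorant $C\sin^2x_j/D_0^2$. This gives the second-order term
\[
\int\frac{2\sin^2x_j\cdot\frac12\sum_\ell w_\ell^2(2\gamma+\cos x_\ell)}{D_0^2}\,\eta(dx),
\]
a quadratic form in $w$ that is bounded below by $(\gamma-\tfrac12)\,|w|^2$ times a positive constant. Hence minus the Hessian at $K_0$ is positive definite, which completes the lemma.
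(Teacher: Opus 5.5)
Your proposal is correct and follows the paper's proof essentially step by step. The paper uses the same shift to centred coordinates (its $p=\pi\mathbf1-v/2+u$) and the same pointwise identity $a(v,u)-a(0,u)=4\sum_\ell[1-\cos(v_\ell/2)]\{\cos u_\ell+\gamma[1+\cos(v_\ell/2)]\}$, and it derives uniqueness and the two quadratic bounds from the difference-of-reciprocals formula exactly as you do. The only divergence is at the Hessian, where your worry is unfounded: by the $C^2$ regularity of Lemma~\ref{lem:fermion-critical-regularity}, a second-order Taylor expansion together with your lower bound $c|v|^2$ already gives $-\tfrac12\langle Hw,w\rangle\ge c|w|^2$, which is how the paper concludes. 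Your explicit dominated-convergence computation is therefore valid but redundant, and it too needs twice differentiability before the limit can be identified with the Hessian.
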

\begin{proof}
Write \(r=K_0+v\), with \(v\in[-\pi,\pi]^3\), and change
variables to \(p=\pi\mathbf1-v/2+u\). Then
\[
\begin{aligned}
a(v,u)&=4\sum_{\ell=1}^3
\bigl[1-\cos(v_\ell/2)\cos u_\ell\bigr]
+2\gamma\sum_{\ell=1}^3(1-\cos v_\ell),\\
a(0,u)&=4\sum_{\ell=1}^3(1-\cos u_\ell),
\end{aligned}
\]
and
\[
b_j(K_0,K_0+v,0)
=2\int_{\T^3}\frac{\sin^2u_j}{a(v,u)}\,\eta(du).
\]
Since \(0\leq\cos(v_\ell/2)\leq1\) and \(\gamma>1\),
\begin{align}
\label{eq:fermion-critical-symmetric-denominator}
a(v,u)-a(0,u)
&=4\sum_{\ell=1}^3[1-\cos(v_\ell/2)]
\{\cos u_\ell+\gamma[1+\cos(v_\ell/2)]\}\notag\\
&\geq c_\gamma|v|^2.
\end{align}
The same difference is bounded above by \(C_\gamma|v|^2\).
Consequently,
\[
b_j(K_0,K_0,0)-b_j(K_0,K_0+v,0)
=2\int_{\T^3}\sin^2u_j
\frac{a(v,u)-a(0,u)}{a(0,u)a(v,u)}\,\eta(du).
\]
Since \(a(0,u),a(v,u)\le L:=12(2+\gamma)\), the last integral
is at least \(c_\gamma|v|^2/L^2\), using
\(2\int_{\T^3}\sin^2u_j\,\eta(du)=1\). For the upper
bound, use \(a(v,u)\geq a(0,u)\) and the integrability of
\(\sin^2u_j/a(0,u)^2\). Strict positivity of the difference
for \(v\ne0\) gives uniqueness of the global maximum.
By the \(C^2\)-regularity established in
Lemma~\ref{lem:fermion-critical-regularity}, the quadratic lower
bound implies that the Hessian at \(K_0\) is strictly negative
definite.
\end{proof}}

\begin{lemma}[Nondegenerate maxima of the threshold integrals]
\label{lem:fermion-critical-maxima}
After reducing \(\cO_+\), for each \(j\), the function
\(r\mapsto b_j(K,r,0)\) has a unique, nondegenerate global
maximum at a point \(r_j(K)\) near \(K_0\), depending
continuously on \(K\).
There are \(c>0\) and \(\delta_0>0\), uniform in this
neighbourhood, such that
\begin{equation}
\label{eq:fermion-critical-quadratic}
b_j(K,r_j(K),0)-b_j(K,r,0)
\geq c\,d_{\T^3}(r,r_j(K))^2,
\qquad r\in\T^3,
\end{equation}
and
\begin{equation}
\label{eq:fermion-critical-bound}
d_{\lambda_{\mathrm{crit}},j}(K,r,\delta)
\geq c\bigl(\delta+d_{\T^3}(r,r_j(K))^2\bigr),
\qquad 0\leq\delta\leq\delta_0.
\end{equation}
For every fixed \(\delta>0\), all three factors have a strictly
positive lower bound over \(r\). At \(\delta=0\), a factor
vanishes precisely when
\[
b_j(K,r_j(K),0)=\beta(K,\gamma)
\quad\text{and}\quad r=r_j(K).
\]
At least one index satisfies this equality.
\end{lemma}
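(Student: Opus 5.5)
The plan is to obtain Lemma~\ref{lem:fermion-critical-maxima} by perturbing from the symmetric point, using Lemma~\ref{lem:fermion-critical-symmetric} as the base case and the $C^2$ regularity of Lemma~\ref{lem:fermion-critical-regularity}. Fix $j$. At $K_0$, the function $r\mapsto b_j(K_0,r,0)$ has a unique global maximum at $K_0$ with negative definite Hessian. I would split the torus into a small closed ball $\overline B_\varepsilon(K_0)$ and its complement. On the complement, uniqueness and compactness give $b_j(K_0,K_0,0)-b_j(K_0,r,0)\ge 2\eta_\varepsilon>0$. Joint continuity in $(K,r)$ then keeps a gap of at least $\eta_\varepsilon$ for $K$ near $K_0$, so every maximiser lies in $B_\varepsilon(K_0)$.

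Inside the ball, the Hessian $D_r^2b_j(K,r,0)$ is jointly continuous by Lemma~\ref{lem:fermion-critical-regularity}. After shrinking $\varepsilon$ and $\cO_+$, it is therefore bounded above by $-2cI$ uniformly. Hence $b_j(K,\cdot,0)$ is strictly concave on the ball, so it has at most one critical point there, and the maximiser $r_j(K)$ is that unique nondegenerate critical point. Continuity of $K\mapsto r_j(K)$ follows from uniqueness and compactness; any limit point of $r_j(K_n)$ is a maximiser for the limit parameter. This step uses only continuity of $\nabla_r b_j$ and $D_r^2b_j$, not $C^1$ dependence on $K$, so I would not invoke the implicit function theorem.

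Taylor's formula with the concavity bound gives
\[
b_j(K,r_j(K),0)-b_j(K,r,0)\ge c|r-r_j(K)|^2
\]
on the ball. On the complement, the difference is at least $\eta_\varepsilon\ge\eta_\varepsilon\,\mathrm{diam}(\T^3)^{-2}d_{\T^3}(r,r_j(K))^2$. Taking the minimum of the two constants proves \eqref{eq:fermion-critical-quadratic}.

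For \eqref{eq:fermion-critical-bound}, write
\[
d_{\lambda_{\mathrm{crit}},j}(K,r,\delta)=1-\beta^{-1}b_j(K,r,\delta),
\]
and use $b_j(K,r,0)\le b_j(K,r_j(K),0)\le\beta$. This gives
\[
d_{\lambda_{\mathrm{crit}},j}(K,r,\delta)\ge \beta^{-1}\bigl[b_j(K,r,0)-b_j(K,r,\delta)\bigr]+\beta^{-1}\bigl[b_j(K,r_j,0)-b_j(K,r,0)\bigr].
\]
The second bracket is handled by \eqref{eq:fermion-critical-quadratic}. The first bracket is the main obstacle, since we need a lower bound linear in $\delta$, uniform in $(K,r)$. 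By definition,
\[
b_j(K,r,0)-b_j(K,r,\delta)=\delta\int\frac{|\phi_{j,K-r}|^2}{a(a+\delta)}\,\eta(dp),
\]
where $a=E_+-\E_\gamma\le L:=12(2+\gamma)+E_+$ is bounded. Hence the integrand is at least $|\phi|^2/(L(L+\delta_0))$. Because the form factors are normalised, $\int|\phi_{j,K-r}|^2\,d\eta=1$, so the first bracket is at least $\delta/(L(L+\delta_0))$. The same identity shows $b_j(K,r,\delta)$ is strictly decreasing in $\delta$ with $b_j(K,r,\delta)\le b_j(K,r,0)-\delta/(L(L+\delta))$. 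This yields a strictly positive lower bound over $r$ for each fixed $\delta>0$, since $b_j(K,r,\delta)<\beta$ uniformly.

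At $\delta=0$, the factor equals $\beta^{-1}[\beta-b_j(K,r,0)]$. It therefore vanishes exactly when $b_j(K,r,0)=\beta$, that is, when $r=r_j(K)$ by uniqueness and $b_j(K,r_j(K),0)=\beta$. Since $\beta$ is the maximum over the finitely many indices, at least one index attains it.
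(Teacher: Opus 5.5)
Your proposal is correct and follows essentially the same route as the paper. The paper perturbs from Lemma~\ref{lem:fermion-critical-symmetric} using the joint $C^2$ continuity of Lemma~\ref{lem:fermion-critical-regularity}, gets uniqueness of the maximiser and its continuity in $K$ by compactness, and splits $d_{\lambda_{\mathrm{crit}},j}$ into the same three nonnegative brackets with the bound $\delta/(L(L+\delta_0))$. Two small points need tightening. First, you need $\beta(K,\gamma)$ bounded above uniformly near $K_0$ (by continuity of $\beta$) so that $\beta^{-1}$, and hence $c$, is uniform; the paper states this explicitly. Second, the compactness argument should use the complement of the \emph{open} ball, since the complement of a closed ball is not compact.
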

\begin{proof}
At \(K=K_0\), Lemma~\ref{lem:fermion-critical-symmetric} gives
the strict concavity of \(r\mapsto b_j(K_0,r,0)\) near \(K_0\).
By the joint continuity of the second \(r\)-derivatives
established in Lemma~\ref{lem:fermion-critical-regularity}, the
Hessians remain uniformly strictly negative definite in a fixed
small coordinate ball about \(K_0\), for \(K\) sufficiently close
to \(K_0\). Thus strict concavity persists in this ball.
The strict gap between the maximum and the values outside this
ball also persists, by joint continuity and compactness.
Thus each function has a unique global maximiser \(r_j(K)\)
in the ball. Uniqueness and compactness imply continuity of
\(r_j(K)\). Taylor's formula gives the quadratic lower bound
near the maximiser; the uniform gap outside the ball extends
it to all of \(\T^3\).
Finally, put
\(a=E_+-\E_\gamma(K;p,K-r-p)\) and
\(L=12(2+\gamma)\). Since \(0\leq a\leq L\) and
\(\|\phi_{j,K-r}\|_2=1\) by \eqref{eq:odd-form-factors},
\[
\begin{split}
b_j(K,r,0)-b_j(K,r,\delta)
&=\delta\int_{\T^3}
\frac{|\phi_{j,K-r}(p)|^2}{a(a+\delta)}\,\eta(dp)\\
&\geq\frac{\delta}{L(L+\delta_0)},
\qquad 0<\delta\leq\delta_0.
\end{split}
\]
Using \(\lambda_{\mathrm{crit}}\beta=1\), decompose
\[
\begin{split}
d_{\lambda_{\mathrm{crit}},j}(K,r,\delta)
=\lambda_{\mathrm{crit}}\bigl[&\beta-b_j(K,r_j(K),0)\\
&+b_j(K,r_j(K),0)-b_j(K,r,0)\\
&+b_j(K,r,0)-b_j(K,r,\delta)\bigr].
\end{split}
\]
All three terms in brackets are nonnegative.
Joint continuity of the threshold integrals and compactness
of the spectator torus give a uniform upper bound
\(\beta(K,\gamma)\le C_0\) in a sufficiently small neighbourhood
of \(K_0\). Hence \(\lambda_{\mathrm{crit}}=\beta^{-1}\ge C_0^{-1}>0\)
uniformly there.
The last two terms yield the stated lower bound, and the first identifies the indices
whose factors vanish at \(\delta=0\). Strict decrease in
\(\delta\) and compactness give positivity for every fixed
\(\delta>0\).
\end{proof}
}

\section*{Acknowledgment}
The author is grateful to Professor Sergio Albeverio for many years of
inspiring discussions on Efimov physics and for his continuing interest in
the upper-threshold problem studied here.

\section*{Statements and Declarations}
\subsection*{Funding}
This work was supported by Grant OT-4-66 of the Fundamental Science
Foundation of Uzbekistan.

\subsection*{Competing interests}
The author has no relevant financial or non-financial interests to
disclose.

\subsection*{Data availability}
Data sharing is not applicable to this article, as no datasets were
generated or analysed during the current study.

\subsection*{Use of generative AI}
During the preparation of this manuscript, generative AI tools were used for language editing, assistance with the organization and presentation of the text, and as an auxiliary aid for checking mathematical arguments and calculations. All mathematical analysis, computations, proofs, results, and conclusions were performed by the author. The author independently verified all mathematical content and takes full responsibility for the final manuscript.


\end{document}